\begin{document}


\title{{\Huge Martingales et calcul stochastique} \\
\bigskip
\bigskip
\bigskip
\bigskip
{\Large Master 2 Recherche de Math\'ematiques}\\
\bigskip
\bigskip
{\Large Universit\'e d'Orl\'eans}}
\bigskip
\author{Nils Berglund}
\bigskip
\date{Version de Janvier 2014}   

\maketitle
\vfill
\newpage
\thispagestyle{empty}
\cleardoublepage
\thispagestyle{empty}
\setcounter{page}{-1}
\tableofcontents
\newpage
\thispagestyle{empty}


\part{Processus en temps discret}
\label{part_disc}


\chapter{Exemples de processus stochastiques}
\label{chap_ex}

D'une mani\`ere g\'en\'erale, un \defwd{processus stochastique \`a temps
discret}\/ est simplement une suite $(X_0, X_1, X_2, \dots)$ de variables
al\'eatoires, d\'efinies sur un m\^eme espace probabilis\'e. Avant de donner une
d\'efinition pr\'ecise, nous discutons quelques exemples de tels processus. 


\section{Variables al\'eatoires i.i.d.}
\label{sec_iid} 

Supposons que les variables al\'eatoires $X_0, X_1, X_2, \dots$ ont toutes la
m\^eme loi et sont ind\'ependantes. On dit alors qu'elles sont
\defwd{ind\'ependantes et identiquement distribu\'ees}\/,  abr\'eg\'e i.i.d.
C'est d'une certaine mani\`ere la situation la plus al\'eatoire que l'on puisse
imaginer. 
On peut consid\'erer que les $X_i$ sont d\'efinies chacune dans une copie
diff\'erente d'un espace probabilis\'e de base, et que le processus vit dans
l'espace produit de ces espaces. 

La suite des $X_i$ en soi n'est pas tr\`es int\'eressante. Par contre la suite
des sommes partielles
\begin{equation}
 \label{iid1}
S_n = \sum_{i=0}^{n-1} X_i 
\end{equation} 
admet plusieurs propri\'et\'es remarquables. En particulier, rappelons les
th\'eor\`emes de convergence suivants.
\begin{enum}
\item	La \defwd{loi faible des grands nombres}: Si l'esp\'erance $\expec{X_0}$
est finie, alors 
$S_n/n$ converge vers $\expec{X_0}$ en probabilit\'e, c'est-\`a-dire 
\begin{equation}
 \label{iid2}
\lim_{n\to\infty} \biggprob{\biggabs{\frac{S_n}{n}-\expec{X_0}} > \eps} = 0
\end{equation} 
pour tout $\eps>0$. 

\item	La \defwd{loi forte des grands nombres}: Si l'esp\'erance 
est finie, alors $S_n/n$ converge presque s\^urement vers $\expec{X_0}$,
c'est-\`a-dire
\begin{equation}
 \label{iid3}
\Bigprob{\lim_{n\to\infty}\frac{S_n}{n}=\expec{X_0}} = 1\;.
\end{equation} 

\item	Le \defwd{th\'eor\`eme de la limite centrale}: Si l'esp\'erance
et la variance $\variance(X_0)$ sont finies, alors
$(S_n-n\expec{X_0})/\sqrt{n\variance(X_0)}$ tend en loi vers une variable
normale centr\'ee r\'eduite, 
c'est-\`a-dire
\begin{equation}
 \label{iid4}
\lim_{n\to\infty} \biggprob{a \leqs
\frac{S_n-n\expec{X_0}}{\sqrt{n\variance(X_0)}} \leqs b}
= \int_a^b \frac{\e^{-x^2/2}}{\sqrt{2\pi}} \6x
\end{equation} 
pour tout choix de $-\infty\leqs a<b\leqs \infty$.
\end{enum}

Il existe d'autres th\'eor\`emes limites, comme par exemple des principes de
grandes d\'eviations et des lois du logarithme it\'er\'e. Le
th\'eor\`eme de Cram\'er affirme que sous certaines conditions de croissance sur
la loi des $X_i$, on a
\begin{equation}
 \label{iid5}
\lim_{n\to\infty} \frac{1}{n} \log 
\biggprob{\frac{S_n}{n} > x} = -I(x)\;, 
\end{equation}
o\`u la \defwd{fonction taux}\/ $I$ est la transform\'ee de Legendre de la
fonction g\'en\'eratrice des cumulants de $X_0$, \`a savoir
$\lambda(\theta)=\log\expec{\e^{\theta X_0}}$. C'est un principe de grandes
d\'eviations, qui implique que $\prob{S_n > nx}$ d\'ecro\^\i t
exponentiellement avec un taux $-n I(x)$. 

La loi du logarithme it\'er\'e, quant \`a elle, affirme que 
\begin{equation}
 \label{iid6}
\biggprob{
\limsup_{n\to\infty} \frac{S_n-n\expec{X_0}}{\sqrt{n \log(\log n)
\variance(X_0)}} = \sqrt{2}} = 1\;.
\end{equation} 
Les fluctuations de $S_n$, qui sont typiquement d'ordre $\sqrt{n\variance(X_0)}$
d'apr\`es le th\'eor\`eme de la limite centrale, atteignent donc avec
probabilit\'e $1$ la valeur $\sqrt{2n\log(\log n)\variance(X_0)}$, mais ne 
la d\'epassent, presque s\^urement, qu'un nombre fini de fois. 


\section{Marches al\'eatoires et \chaine s de Markov}
\label{sec_mc} 

La \defwd{marche al\'eatoire sym\'etrique sur $\Z^d$} est construite de la
mani\`ere suivante. On pose $X_0=0$ (l'origine de $\Z^d$), puis pour $X_1$ on
choisit l'un des $2d$ plus proches voisins de l'origine avec probabilit\'e
$1/2d$, et ainsi de suite. Chaque $X_n$ est choisi de mani\`ere \'equi\-probable
parmi les $2d$ plus proches voisins de $X_{n-1}$, ind\'ependamment des variables
$X_{n-2}, \dots, X_0$. On obtient ainsi une distribution de probabilit\'e sur
les lignes bris\'ees de $\Z^d$. 

La marche sym\'etrique sur $\Z^d$ est tr\`es \'etudi\'ee et relativement bien
comprise. On sait par exemple que 
\begin{enum}
\item	en dimensions $d=1$ et $2$, la marche est \defwd{r\'ecurrente}\/,
c'est-\`a-dire qu'elle revient presque s\^urement en z\'ero; 
\item	en dimensions $d\geqs 3$, la marche est
\defwd{transiente}\/, c'est-\`a-dire qu'il y a une probabilit\'e
strictement positive qu'elle ne revienne jamais en z\'ero.
\end{enum}
Il existe de nombreuses variantes des marches sym\'etriques sur $\Z^d$~:
Marches asym\'etriques, sur un sous-ensemble de $\Z^d$ avec r\'eflexion ou
absorption au bord, marches sur des graphes, sur des groupes\dots

\begin{figure}
\centerline{\includegraphics*[clip=true,width=150mm]{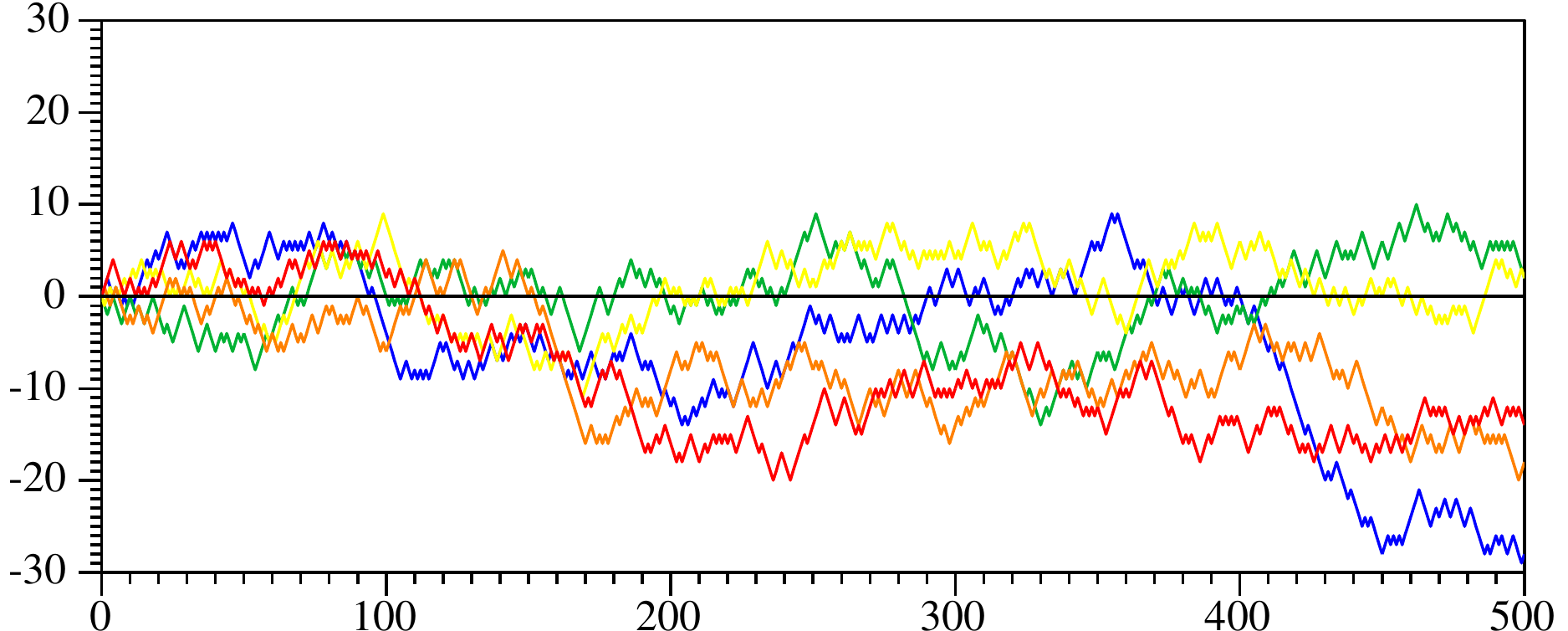}}
\caption[]{Cinq r\'ealisations d'une marche al\'eatoire unidimensionnelle
sym\'etrique.}
\label{fig_rw1}
\end{figure}

Tous ces exemples de marches font partie de la classe plus g\'en\'erale des
\defwd{\chaine s de Markov}\/. Pour d\'efinir une telle \chaine, on se donne un
ensemble d\'enombrable $\cX$ et une \defwd{matrice stochastique}\/
$P=(p_{ij})_{i,j\in\cX}$, satisfaisant les deux conditions
\begin{equation}
 \label{mc1}
p_{ij}\geqs 0 \quad\forall i,j\in\cX
\qquad
\text{et}
\qquad
\sum_{j\in\cX} p_{ij} = 1 \quad \forall i\in\cX \;.
\end{equation} 
On choisit alors une distribution initiale pour $X_0$, et on construit les
$X_n$ de telle mani\`ere que 
\begin{equation}
 \label{mc2}
\bigpcond{X_{n+1}=j}{X_n=i,X_{n-1}=i_{n-1},\dots,X_0=i_0}
=  \bigpcond{X_{n+1}=j}{X_n=i} = p_{ij}
\end{equation} 
pour tout temps $n$ et tout choix des $i_0, \dots, i_{n-1}, i, j \in\cX$. La
premi\`ere \'egalit\'e s'appelle la \defwd{propri\'et\'e de Markov}\/. Les
\chaine s de Markov ont un comportement un peu plus riche que les variables
i.i.d.\ parce que chaque variable d\'epend de celle qui la pr\'ec\`ede
imm\'ediatement. Toutefois, ces processus ont toujours une m\'emoire courte,
tout ce qui s'est pass\'e plus de deux unit\'es de temps dans le pass\'e
\'etant oubli\'e.  

Un exemple int\'eressant de \chaine\ de Markov, motiv\'e par la physique, est
le \defwd{mod\`ele d'Ehrenfest}\/. On consid\`ere deux r\'ecipients contenant
en tout $N$ mol\'ecules de gaz. Les r\'ecipients sont connect\'es par un petit
tube, laissant passer les mol\'ecules une \`a une dans les deux sens. Le but
est d'\'etudier comment \'evolue la proportion de mol\'ecules contenues dans
l'un des r\'ecipients au cours du temps. Si par exemple ce nombre est nul au
d\'ebut, on s'attend \`a ce qu'assez rapidement, on atteigne un \'equilibre
dans lequel le nombre de mol\'ecules dans les deux r\'ecipients est \`a peu
pr\`es \'egal, alors qu'il n'arrivera quasiment jamais qu'on revoie toutes les
mol\'ecules revenir dans le r\'ecipient de droite. 

\begin{figure}[hb]
\vspace{-3mm}
\begin{center}
\begin{tikzpicture}[->,>=stealth',auto,scale=0.9,node
distance=3.0cm, thick,main node/.style={circle,scale=0.7,minimum size=0.4cm,
fill=green!50,draw,font=\sffamily}]
  
  \pos{0}{0} \urntikz
  \pos{1.2}{0} \urntikz
  
  \node[main node] at(0.35,0.2) {};
  \node[main node] at(0.85,0.2) {};
  \node[main node] at(0.6,0.4) {};
  
  \pos{4}{0} \urntikz
  \pos{5.2}{0} \urntikz

  \node[main node] at(4.35,0.2) {};
  \node[main node] at(4.85,0.2) {};
  \node[main node] at(3.4,0.2) {};

  \pos{8}{0} \urntikz
  \pos{9.2}{0} \urntikz

  \node[main node] at(7.15,0.2) {};
  \node[main node] at(7.65,0.2) {};
  \node[main node] at(8.6,0.2) {};

  \pos{12}{0} \urntikz
  \pos{13.2}{0} \urntikz

  \node[main node] at(11.15,0.2) {};
  \node[main node] at(11.65,0.2) {};
  \node[main node] at(11.4,0.4) {};
  
  \node[minimum size=2.2cm] (0) at (0.1,0.5) {};
  \node[minimum size=2.2cm] (1) at (4.1,0.5) {};
  \node[minimum size=2.2cm] (2) at (8.1,0.5) {};
  \node[minimum size=2.2cm] (3) at (12.1,0.5) {};
  
  \path[shorten >=.3cm,shorten <=.3cm,every
        node/.style={font=\sffamily\footnotesize}]
    (0) edge [bend left,above] node {$1$} (1)
    (1) edge [bend left,above] node {$2/3$} (2)
    (2) edge [bend left,above] node {$1/3$} (3)
    (3) edge [bend left,below] node {$1$} (2)
    (2) edge [bend left,below] node {$2/3$} (1)
    (1) edge [bend left,below] node {$1/3$} (0)
    ;
\end{tikzpicture}
\end{center}
\vspace{-7mm} 
\caption[]{Le mod\`ele d'urnes d'Ehrenfest, dans le cas de $3$
boules.}
 \label{fig_ehrenfest0}
\end{figure}
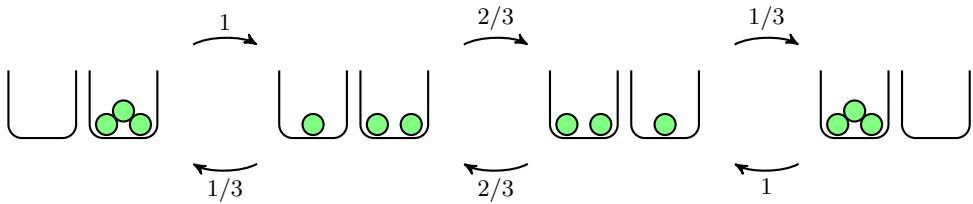

\begin{figure}[ht]
\centerline{\includegraphics*[clip=true,width=75mm]{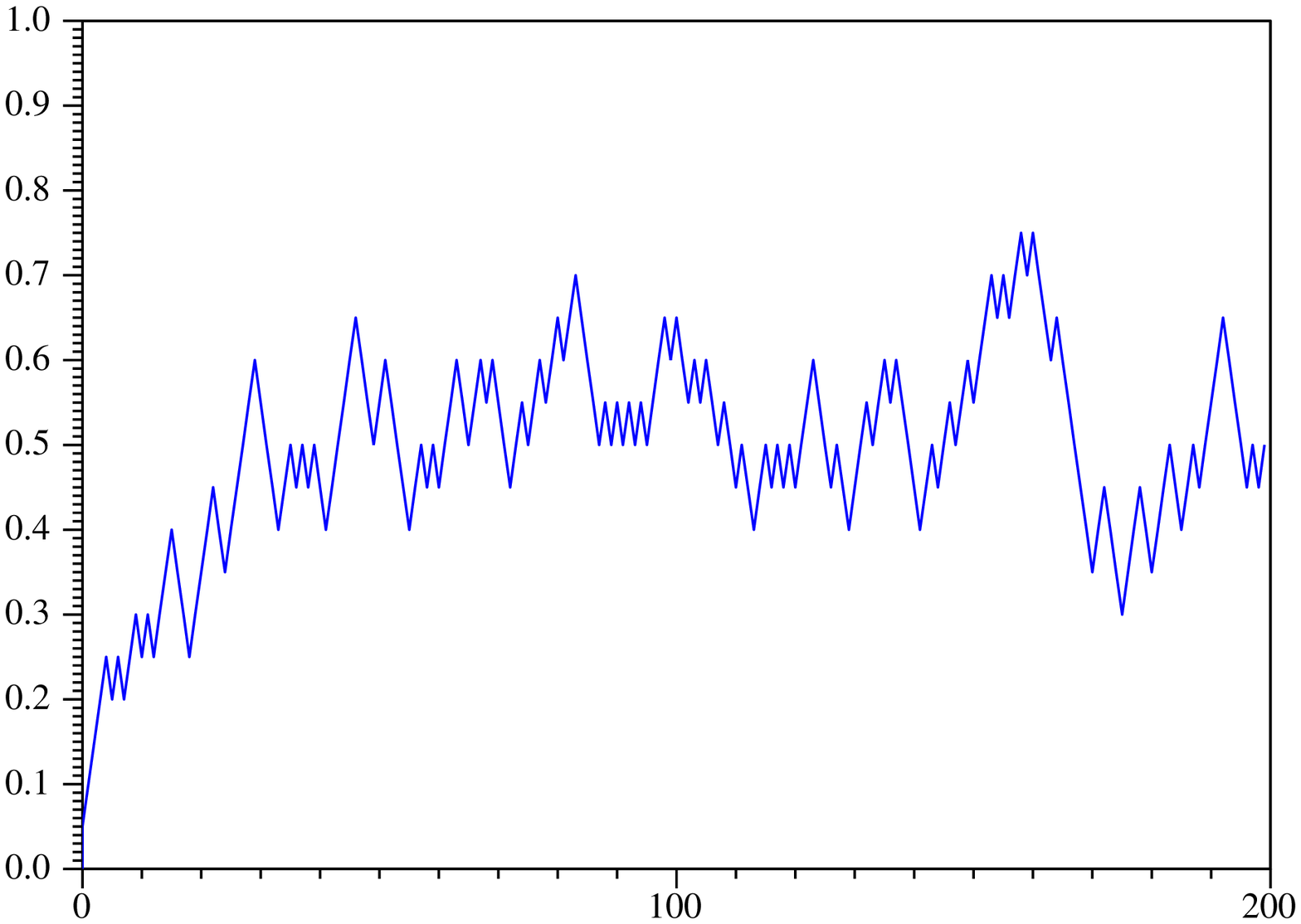}
\hspace{3mm}
\includegraphics*[clip=true,width=75mm]{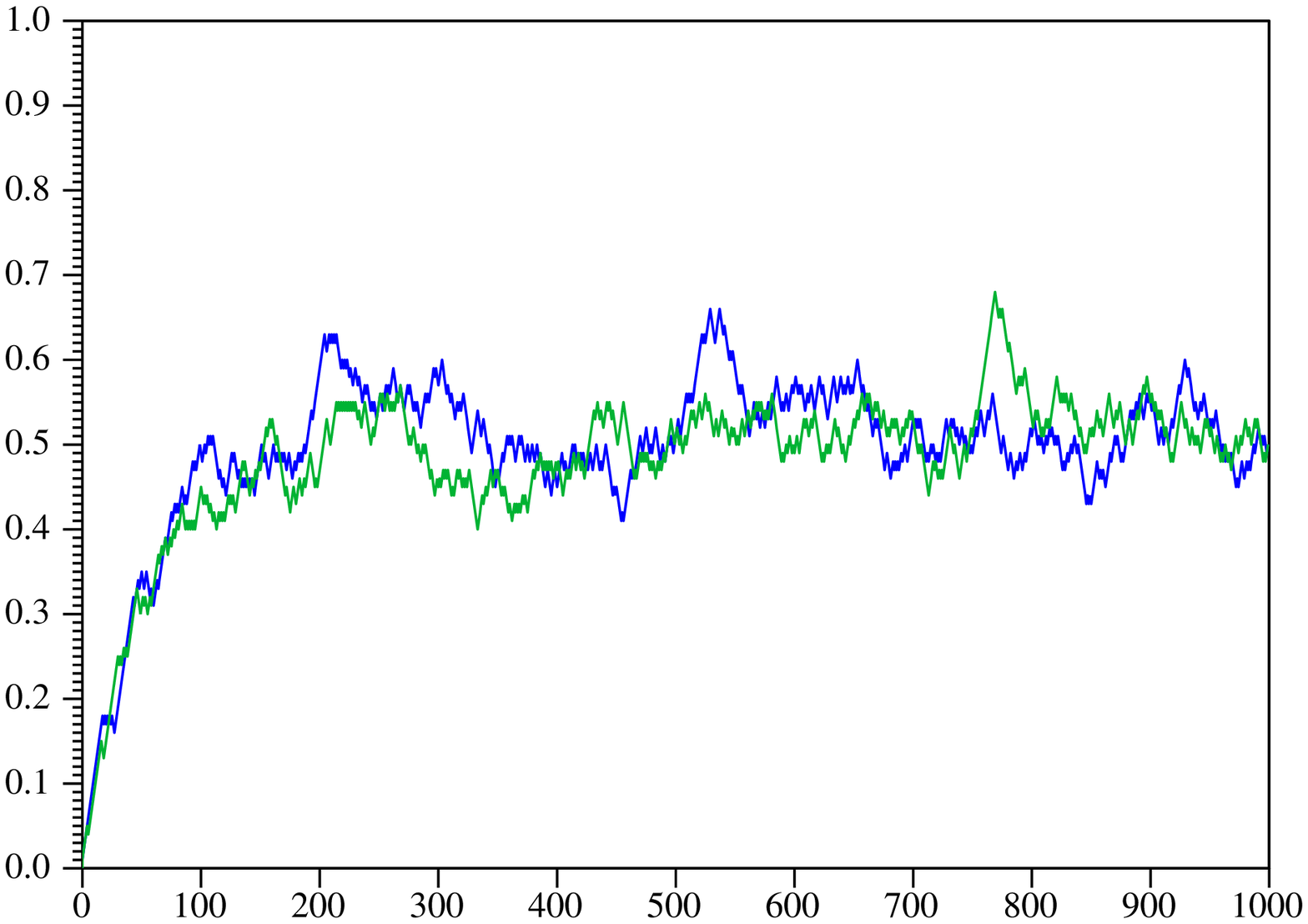}}
\caption[]{R\'ealisations du mod\`ele d'Ehrenfest, montrant la proportion de
boules dans l'urne de gauche en fonction du temps. Le nombre initial de boules
dans l'urne de gauche est $0$. La figure de gauche montre une r\'ealisation
pour $N=20$ boules, celle de droite montre deux r\'ealisations pour $N=100$
boules.}
\label{fig_ehrenfest}
\end{figure}

On mod\'elise la situation en consid\'erant deux urnes contenant en tout $N$
boules. A chaque unit\'e de temps, l'une des $N$ boules, tir\'ee uniform\'ement
au hasard, passe d'un r\'ecipient \`a l'autre. S'il y a $i$ boules dans
l'urne de gauche, alors on tirera avec probabilit\'e $i/N$ l'une de ces boules,
et le nombre de boules dans l'urne passera de $i$ \`a $i-1$. Dans le cas
contraire, ce nombre passe de $i$ \`a $i+1$. Si $X_n$ d\'esigne le nombre de
boules dans l'urne de gauche au temps $n$, l'\'evolution est donn\'ee par une
\chaine\ de Markov sur $\set{0,1,\dots,N}$ de probabilit\'es de transition 
\begin{equation}
 \label{mc3}
p_{ij} = 
\begin{cases}
\frac{i}{N} & \text{si $j=i-1$\;,} \\
1-\frac{i}{N} & \text{si $j=i+1$\;,} \\
0 & \text{sinon\;.}
\end{cases}
\end{equation} 

La \figref{fig_ehrenfest} montre des exemples de suites $\set{X_n/N}$.
On observe effectivement que $X_n/N$ approche assez rapidement la valeur $1/2$,
et tend \`a fluctuer autour de cette valeur sans trop s'en \'eloigner. En fait
on peut montrer que la loi de $X_n$ tend vers une loi binomiale $b(N,1/2)$, qui
est invariante sous la dynamique. L'esp\'erance de $X_n/N$ tend donc bien vers
$1/2$ et sa variance tend vers $1/4N$. De plus, le temps de r\'ecurrence moyen
vers un \'etat est \'egal \`a l'inverse de la valeur de la loi invariante en
cet \'etat. En particulier, le temps de r\'ecurrence moyen vers l'\'etat o\`u
toutes les boules sont dans la m\^eme urne est \'egal \`a $2^N$. Ceci permet
d'expliquer pourquoi dans un syst\`eme macroscopique, dont le nombre de
mol\'ecules $N$ est d'ordre $10^{23}$, on n'observe jamais toutes les
mol\'ecules dans le m\^eme r\'ecipient -- il faudrait attendre un temps d'ordre
$2^{10^{23}} \simeq 10^{3\cdot 10^{22}}$ pour que cela arrive. 


\section{Urnes de Polya}
\label{sec_polya} 

La d\'efinition du processus de l'urne de Polya est \`a priori semblable \`a
celle du mod\`ele d'Ehrenfest, mais elle r\'esulte en un comportement tr\`es
diff\'erent. On consid\`ere une urne contenant initialement $r_0\geqs1$ boules
rouges et $v_0\geqs1$ boules vertes. De mani\`ere r\'ep\'et\'ee, on tire une
boule de l'urne. Si la boule est rouge, on la remet dans l'urne, et on ajoute
$c\geqs1$ boules rouges dans l'urne. Si la boule tir\'ee est verte, on la remet
dans l'urne, ainsi que $c$ boules vertes. Le nombre $c$ est constant tout au
long de l'exp\'erience. 

Au temps $n$, le nombre total de boules dans l'urne est $N_n=r_0+v_0+nc$. 
Soient $r_n$ et $v_n$ le nombre de boules rouges et vertes au temps $n$, et 
soit $X_n=r_n/(r_n+v_n)=r_n/N_n$ la proportion de boules rouges. La
probabilit\'e de tirer une boule rouge vaut $X_n$, celle de tirer une boule
verte vaut $1-X_n$, et on obtient facilement les valeurs correspondantes de
$X_{n+1}$ en fonction de $X_n$ et $n$: 
\begin{equation}
 \label{polya1}
X_{n+1} = 
\begin{cases}
\dfrac{r_n+c}{r_n+v_n+c} = \dfrac{X_n N_n+c}{N_n+c}
& \quad\text{avec probabilit\'e $X_n$\;,} \\
&\\
\dfrac{r_n}{r_n+v_n+c} = \dfrac{X_n N_n}{N_n+c}
& \quad\text{avec probabilit\'e $1-X_n$\;.}
\end{cases} 
\end{equation} 

Le processus n'est plus \`a strictement
parler une \chaine\ de Markov, car l'ensemble des valeurs possibles de $X_n$
change au cours du temps.  

\begin{figure}
\centerline{\includegraphics*[clip=true,width=150mm]{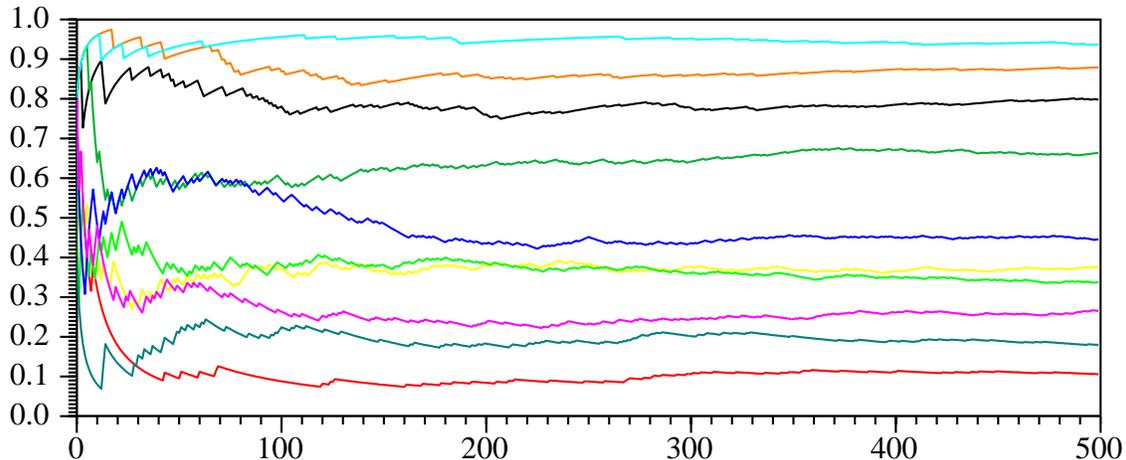}}
\caption[]{Dix r\'ealisations du processus d'urne de Polya, avec initialement
$r_0=2$ boules rouges et $v_0=1$ boule verte. A chaque pas, on ajoute $c=2$
boules.}
\label{fig_polya1}
\end{figure}

La \figref{fig_polya1} montre plusieurs r\'ealisations diff\'erentes du
processus $\set{X_n}_n$, pour les m\^emes valeurs initiales. Contrairement au
mod\`ele d'Ehrenfest, $X_n$ semble converger vers une constante, qui est
diff\'erente pour chaque r\'ealisation. Nous montrerons que c'est effectivement
le cas, en utilisant le fait que $X_n$ est une martingale born\'ee, qui dans ce
cas converge presque s\^urement. On sait par ailleurs que la loi limite de $X_n$
est une loi Beta.


\section{Le processus de Galton--Watson}
\label{sec_gw} 

Le processus de Galton--Watson est l'exemple le plus simple de processus
stochastique d\'ecrivant l'\'evolution d'une population. Soit $Z_0$ le nombre
d'individus dans la population au temps $0$ (souvent on choisit $Z_0=1$). 
On consid\`ere alors que chaque individu a un nombre al\'eatoire de
descendants. Les nombres de descendants des diff\'erents individus sont 
ind\'ependants et identiquement distribu\'es. 

Cela revient \`a supposer que 
\begin{equation}
 \label{gw1}
Z_{n+1} = 
\begin{cases}
\displaystyle
\sum_{i=1}^{Z_n} \xi_{i,n+1} & \text{si $Z_n\geqs 1$\;,} \\
0 &\text{si $Z_n=0$\;,} 
\end{cases}
\end{equation} 
o\`u les variables al\'eatoires $\set{\xi_{i,n}}_{i,n\geqs1}$, qui correspondent
au nombre de descendants de l'individu $i$ au temps $n$, sont i.i.d. On
supposera de plus que chaque $\xi_{i,n}$ admet une esp\'erance $\mu$ finie. 

Galton et Watson ont introduit leur mod\`ele dans le but d'\'etudier la
disparition de noms de famille (\`a leur \'epoque, les noms de famille \'etant
transmis de p\`ere en fils uniquement, on ne consid\`ere que les descendants
m\^ales). On observe \`a ce sujet que d\`es que $Z_n=0$ pour un certain $n=n_0$,
on aura $Z_n=0$ pour tous les $n\geqs n_0$. Cette situation correspond \`a
l'extinction de l'esp\`ece (ou du nom de famille). 

Nous montrerons dans ce cours que
\begin{itemiz}
\item	Si $\mu<1$, alors la population s'\'eteint presque s\^urement;
\item	Si $\mu>1$, alors la population s'\'eteint avec une probabilit\'e 
$\rho=\rho(\mu)$ comprise strictement entre $0$ et $1$.  
\end{itemiz}


\begin{figure}[t]
\centerline{\includegraphics*[clip=true,height=96mm]{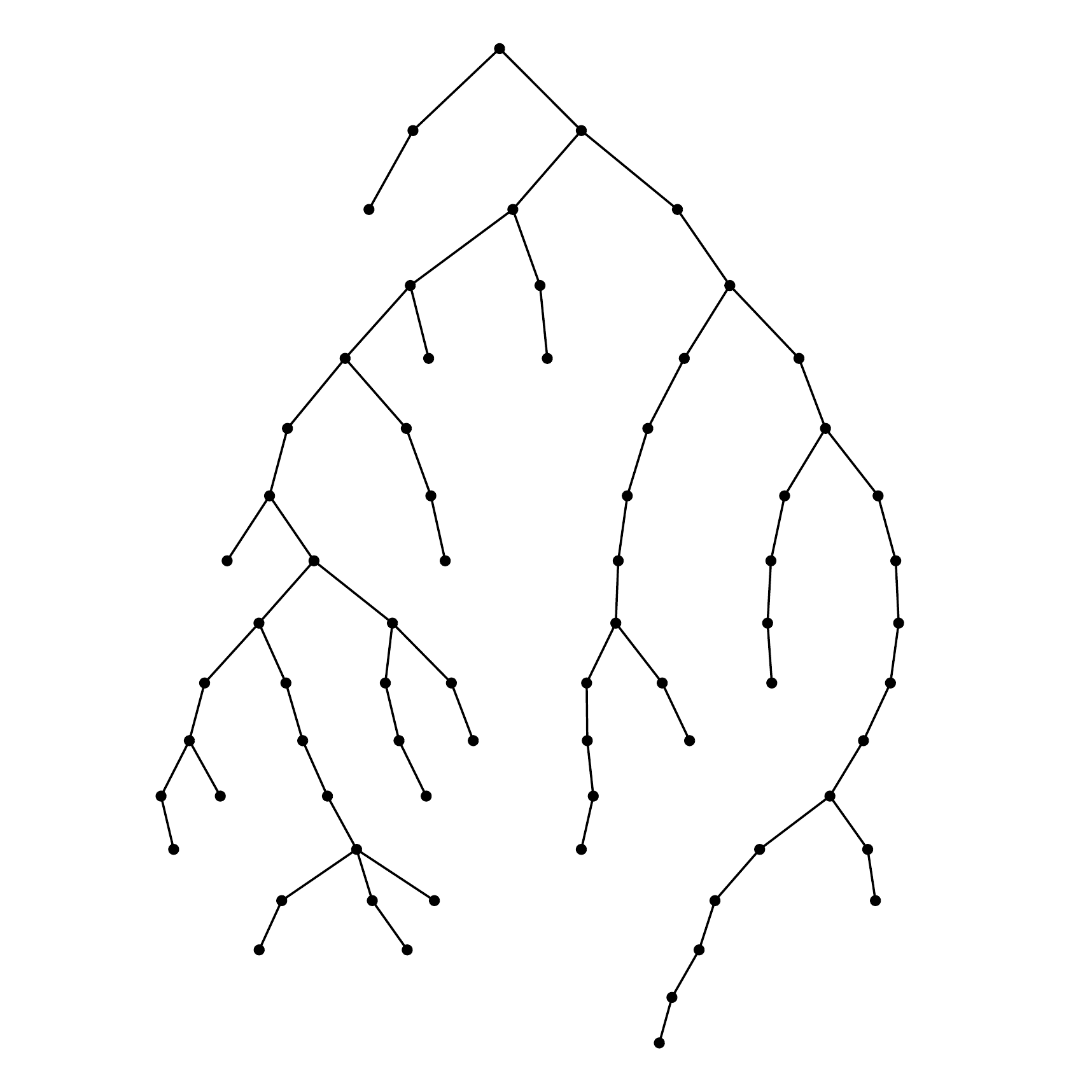}}
\caption[]{Une r\'ealisation du processus de Galton--Watson. La distribution
des descendants est binomiale, de param\`etres $n=3$ et $p=0.4$. Le temps
s'\'ecoule de haut en bas.
}
\label{fig_gw1}
\end{figure}


\begin{figure}[ht]
\centerline{\includegraphics*[clip=true,height=96mm]{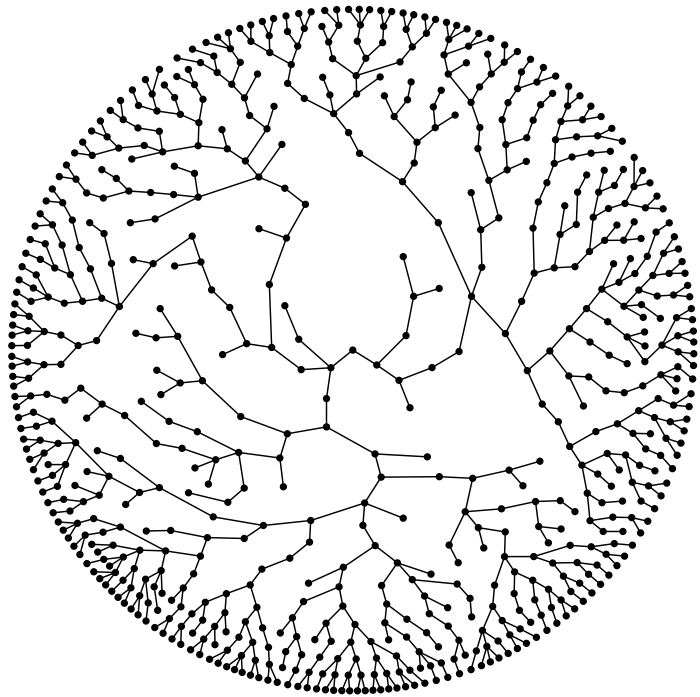}}
\caption[]{Une r\'ealisation du processus de Galton--Watson. La distribution
des descendants est binomiale, de param\`etres $n=3$ et $p=0.45$. L'anc\^etre
est au centre du cercle, et le temps s'\'ecoule de l'int\'erieur vers
l'ext\'erieur.}
\label{fig_gw2}
\end{figure}


\begin{figure}[ht]
\centerline{\includegraphics*[clip=true,height=96mm]{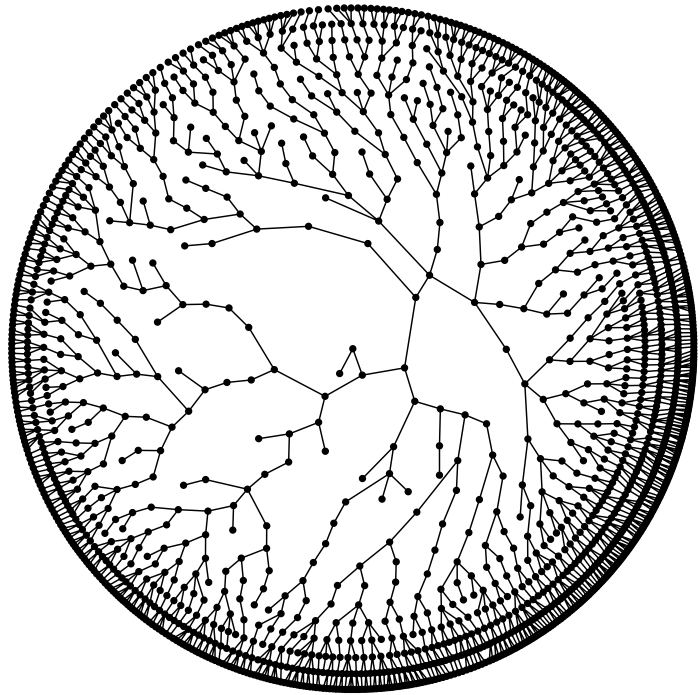}}
\caption[]{Une r\'ealisation du processus de Galton--Watson. La distribution
des descendants est binomiale, de param\`etres $n=3$ et $p=0.5$. }
\label{fig_gw3}
\end{figure}

La preuve s'appuie sur le fait que $X_n=Z_n/\mu^n$ est une martingale.


Les Figures~\ref{fig_gw1} \`a~\ref{fig_gw3} montrent des r\'ealisations
du processus de Galton--Watson sous forme d'arbre. Le temps s\'ecoule de haut
en bas. La distribution des descendants est binomiale $b(3,p)$ pour des valeurs
croissantes de $p$ (dans ce cas on a $\mu=3p$). Pour $p=0.4$, on observe
l'extinction de l'esp\`ece. Dans les deux autres cas, on a pris des exemples
dans lesquels l'esp\`ece survit, et en fait le nombre d'individus cro\^it
exponentiellement vite.

Il existe de nombreux mod\`eles d'\'evolution plus d\'etaill\'es que le
processus de Galton--Watson, tenant compte de la distribution g\'eographique,
de la migration, des mutations. 


\section{Marches al\'eatoires auto-\'evitantes}
\label{sec_nm} 

Les exemples pr\'ec\'edents jouissent tous de la propri\'et\'e de Markov,
c'est-\`a-dire que $X_{n+1}$ d\'epend uniquement de $X_n$, de l'al\'ea, et
\'eventuellement du temps $n$. Il est facile de modifier ces exemples afin de
les rendre non markoviens. Il suffit pour cela de faire d\'ependre chaque
$X_{n+1}$ de tous ses pr\'ed\'ecesseurs. 

Un exemple int\'eressant de processus non markovien est la \defwd{marche
al\'eatoire auto-\'evitante}\/, qui mod\'elise par exemple certains polym\`eres.
Il existe en fait deux variantes de ce processus. Dans la premi\`ere, on
d\'efinit $X_0$ et $X_1$ comme dans le cas de la marche sym\'etrique simple,
mais ensuite chaque $X_n$ est choisi uniform\'ement parmi tous les plus proches
voisins jamais visit\'es auparavant. La seconde variante, un peu plus simple 
\`a \'etudier, est obtenue en consid\'erant comme \'equiprobables toutes les
lignes bris\'ees de longueur donn\'ee ne passant jamais plus d'une fois au
m\^eme endroit. A strictement parler, il ne s'agit pas d'un processus
stochastique.

Une question importante pour les marches al\'eatoires auto-\'evitantes est le
comportement asymptotique du d\'eplacement quadratique moyen
$\expec{\norm{X_n}^2}^{1/2}$. On sait par exemple d\'emontrer rigoureusement
que pour $d\geqs5$, ce d\'eplacement cro\^\i t comme $n^{1/2}$, comme c'est le
cas pour les marches al\'eatoires simples. Pour les dimensions inf\'erieures,
la question de la vitesse de croissance est encore ouverte (le d\'eplacement
devrait cro\^\i tre plus rapidement que $n^{1/2}$ \`a cause des contraintes
g\'eom\'etriques). La conjecture (pour la seconde variante) est que la
croissance est en $n^{3/4}$ en dimension $2$, en $n^{\nu}$ avec $\nu\simeq 0.59$
en dimension $3$, et en $n^{1/2}(\log n)^{1/8}$ en dimension~$4$. 

\begin{figure}[t]
\centerline{\includegraphics*[clip=true,width=72mm]{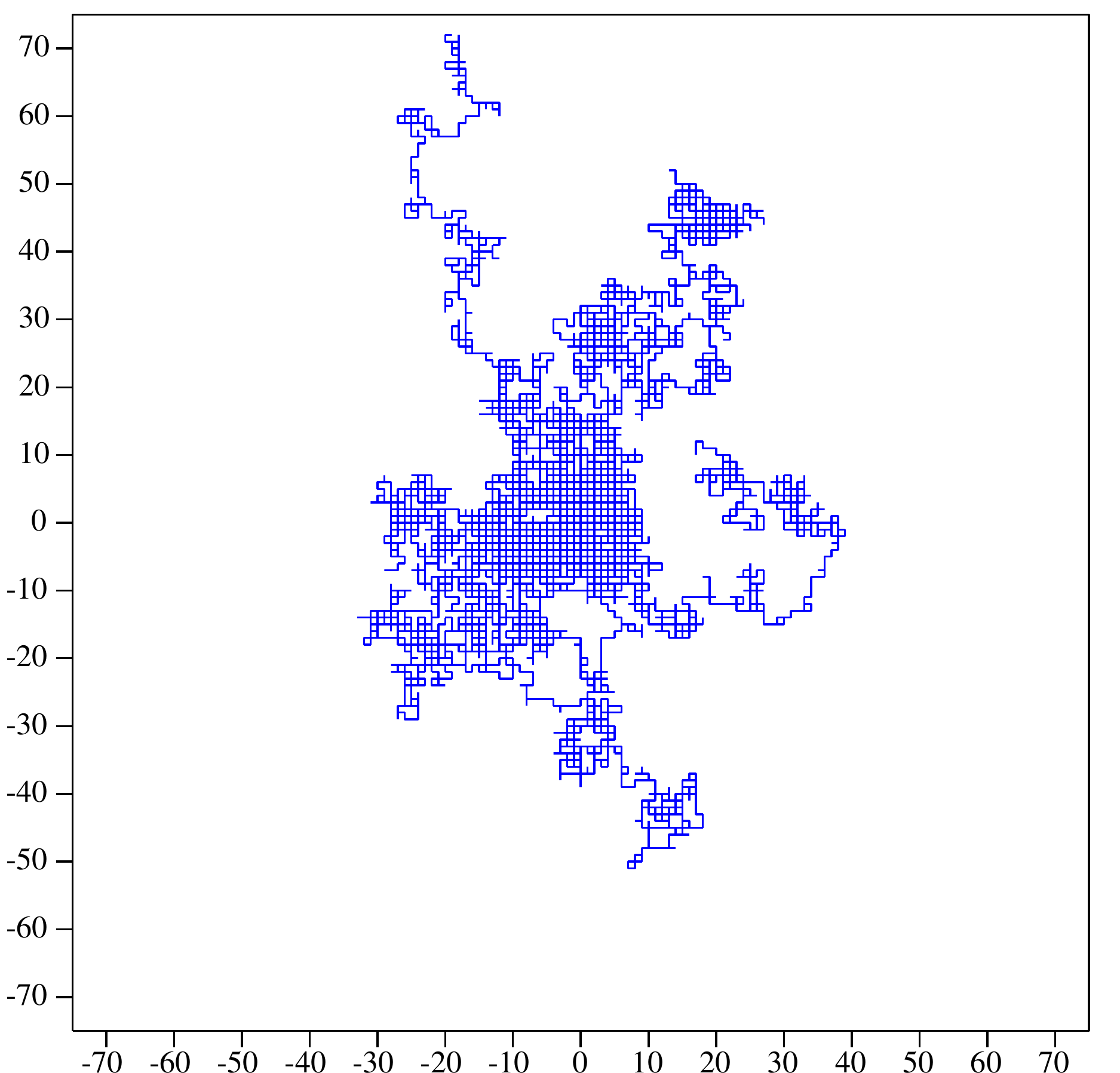}
\hspace{3mm}
\includegraphics*[clip=true,width=72mm]{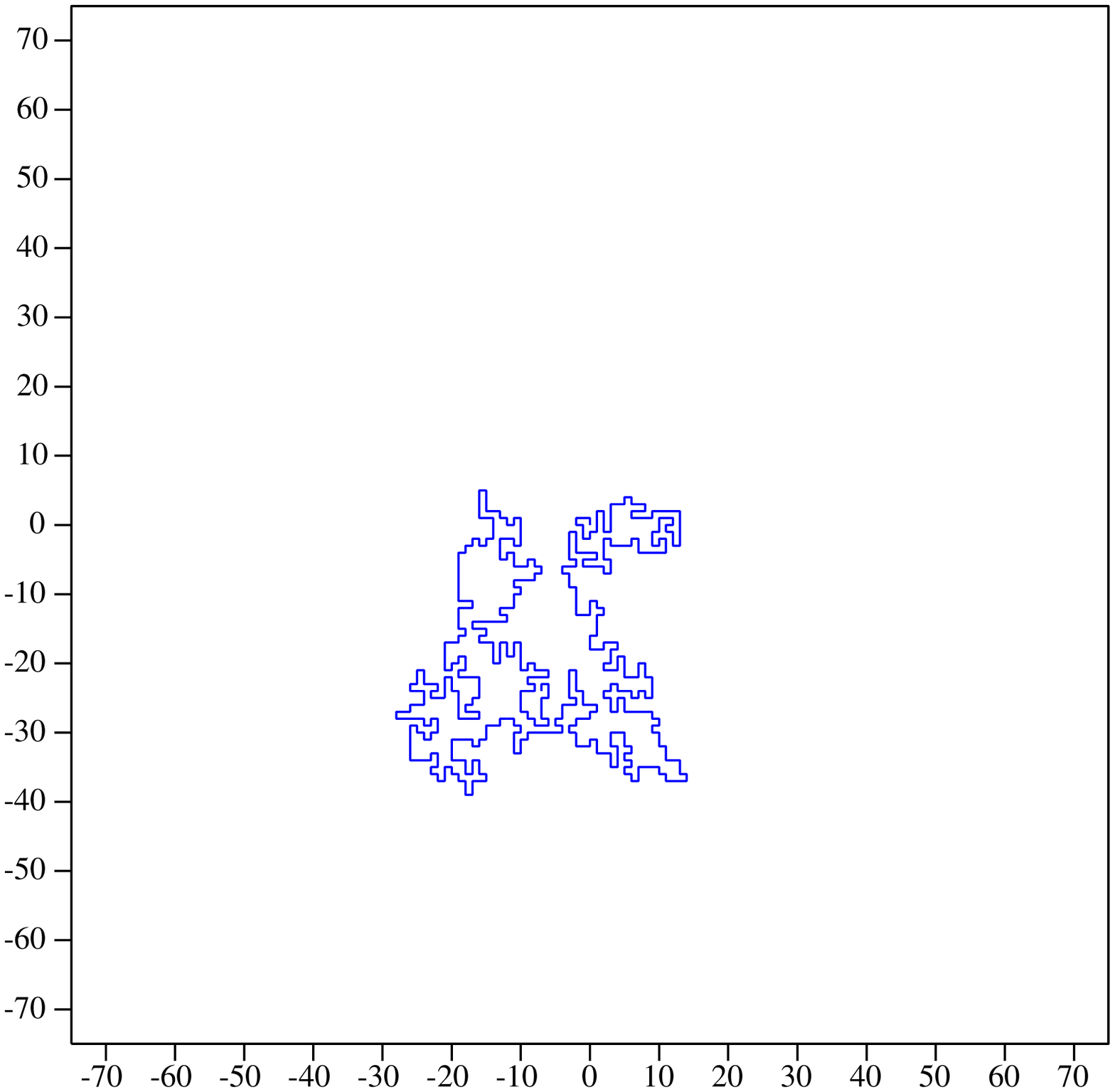}}
\caption[]{A gauche, une r\'ealisation d'une marche al\'eatoire
simple dans $\Z^2$. A droite, une r\'ealisation d'une marche
al\'eatoire auto-\'evitante dans $\Z^2$, se pi\'egeant apr\`es $595$ pas.}
\label{fig_sarw}
\end{figure}

\begin{figure}
\centerline{\includegraphics*[clip=true,width=72mm]{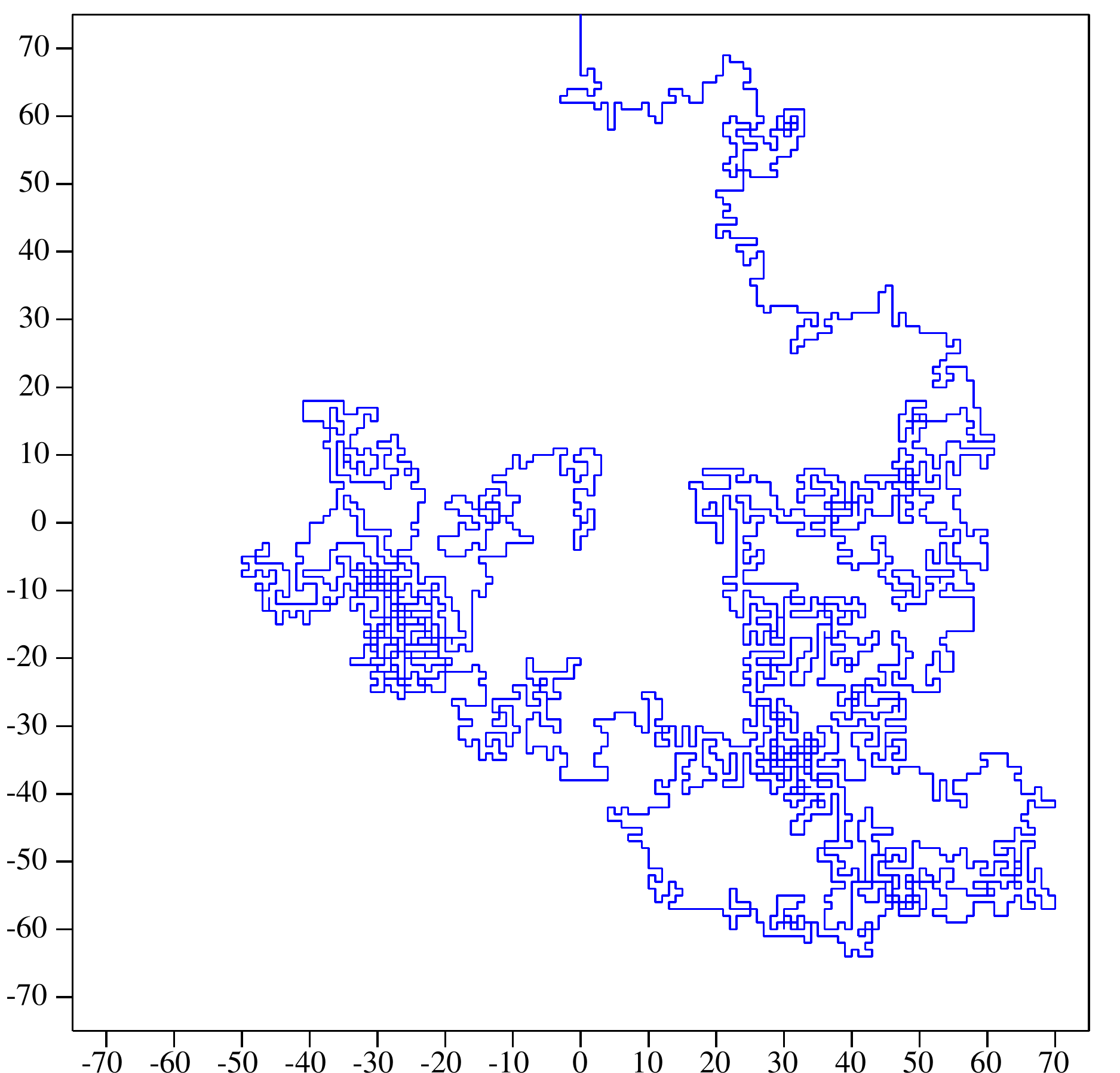}
\hspace{3mm}
\includegraphics*[clip=true,width=72mm]{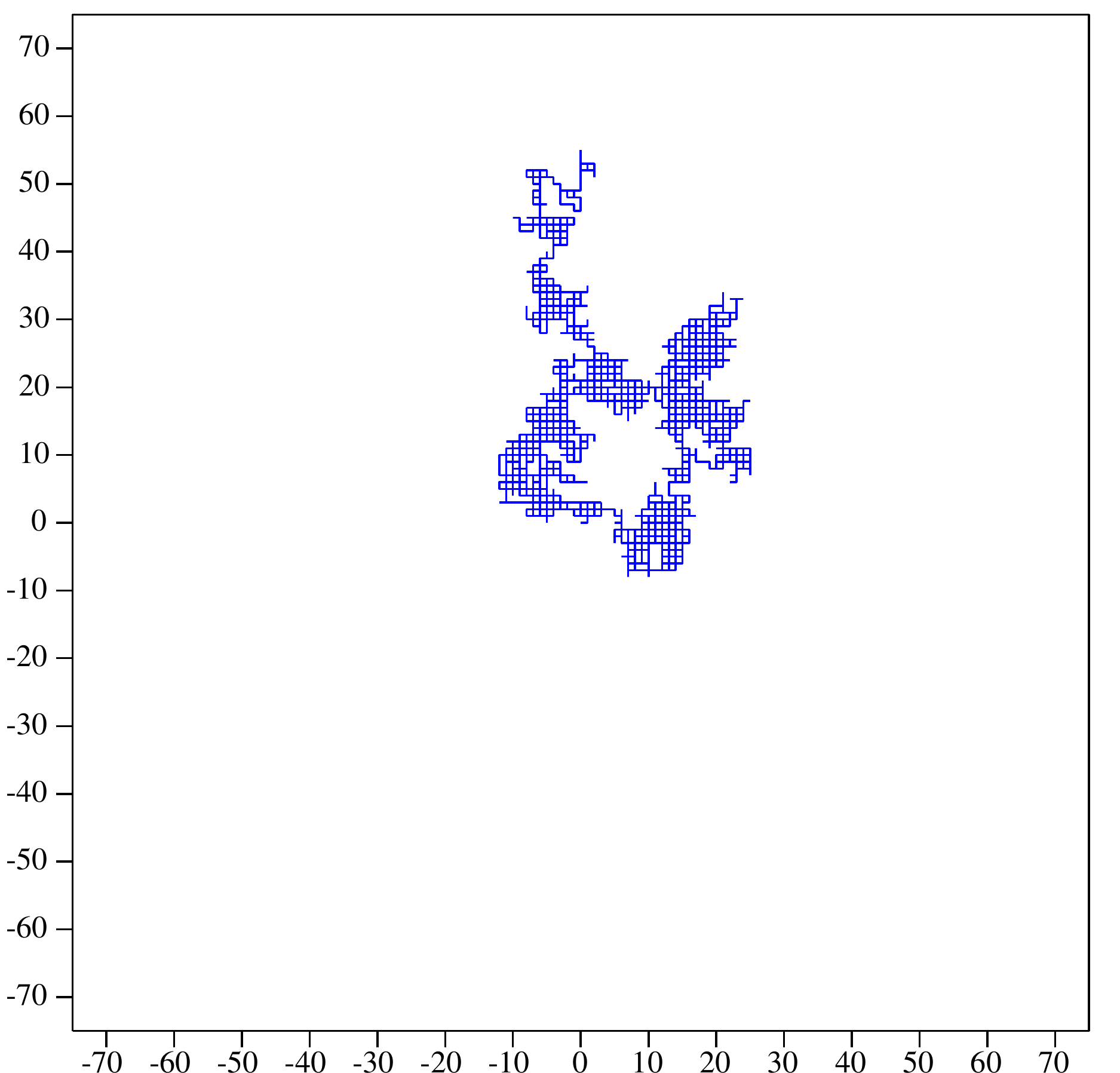}}
\caption[]{A gauche, une r\'ealisation d'une marche
al\'eatoire choisissant un site jamais visit\'e avec une probabilit\'e $10^5$
fois sup\'erieure \`a un site d\'ej\`a visit\'e. A droite, la marche choisit un
site d\'ej\`a visit\'e deux fois plus souvent qu'un site jamais visit\'e.}
\label{fig_sarw2}
\end{figure}

Parmi les variantes de ce mod\`ele, mentionnons les marches al\'eatoires
attir\'ees ou repouss\'ees par les sites d\'ej\`a visit\'es, c'est-\`a-dire
que quand elles passent \`a c\^ot\'e d'un tel site, la probabilit\'e d'y
revenir est soit plus grande, soit plus petite que la probabilit\'e de choisir
un site jamais encore visit\'e (\figref{fig_sarw2}). On peut \'egalement faire
d\'ependre les probabilit\'es du nombre de fois qu'un site a d\'ej\`a \'et\'e
visit\'e dans le pass\'e.


\section{Syst\`emes dynamiques}
\label{sec_sd} 

\begin{figure}[t]
\centerline{\includegraphics*[clip=true,height=96mm]{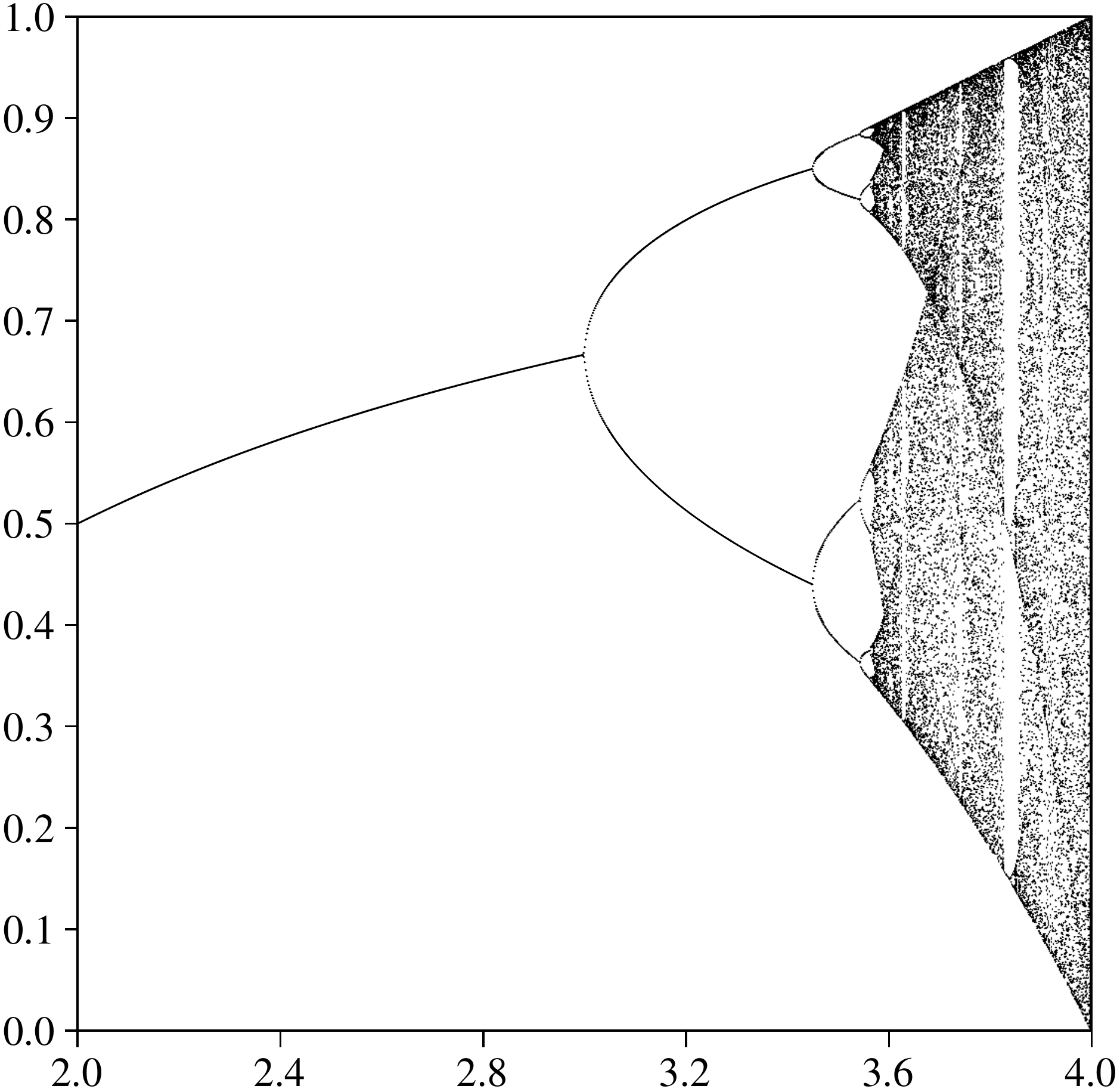}}
\caption[]{Diagramme de bifurcation de l'application logistique. Pour chaque
valeur du param\`etre $\lambda$ en abscisse, on a repr\'esent\'e les valeurs
$X_{1001}$ \`a $X_{1100}$. La condition initiale est toujours $X_0=1/2$.}
\label{fig_logistic}
\end{figure}

Les suites de variables i.i.d.\ forment les processus les plus al\'eatoires. A
l'autre extr\^eme, on trouve les syst\`emes dynamiques, d\'efinis par 
\begin{equation}
 \label{sd1}
X_{n+1} = f(X_n)\;, 
\end{equation} 
o\`u $f$ est une fonction fix\'ee. Dans ce cas, comme 
\begin{equation}
 \label{sd2}
\bigprob{X_{n+1}\in A} = \bigprob{X_n\in f^{-1}(A)}\;, 
\end{equation} 
les lois $\nu_n$ des variables al\'eatoires $X_n$ \'evoluent selon la r\`egle
simple
\begin{equation}
 \label{sd3}
\nu_{n+1} = \nu_n \circ f^{-1}\;.
\end{equation}
On pourrait penser que ces processus sont
plus simples \`a analyser, la seule source de hasard \'etant la distribution de
la condition initiale $X_0$. Dans certaines situations, par exemple si $f$ est
contractante, c'est effectivement le cas, mais de mani\`ere g\'en\'erale il n'en
est rien. Par exemple dans le cas de l'application logistique 
\begin{equation}
 \label{sd4}
f(x) = \lambda x (1-x)\;, 
\end{equation} 
le comportement de la suite des $X_n$ est chaotique pour certaines valeurs de
$\lambda$, en particulier pour $\lambda=4$. En fait l'existence d'une
composante probabiliste tend plut\^ot \`a simplifier l'\'etude de la suite des
$X_n$. Par exemple, pour $\lambda=4$ on conna\^it la mesure de probabilit\'e
invariante du processus, c'est-\`a-dire la mesure $\nu$ telle que $\nu \circ
f^{-1}=\nu$.


\chapter[Construction g\'en\'erale des processus]
{Construction g\'en\'erale de processus stochastiques \`a temps discret}
\label{chap_gen}


\section{Pr\'eliminaires et rappels}
\label{ssec_pr} 

Rappelons quelques notions de base de th\'eorie de la mesure et
de probabilit\'es.

Soit $\Omega$ un ensemble non vide, et soit $\cF$ une \defwd{tribu}\/
sur $\Omega$, c'est-\`a-dire une collection de sous-ensembles de $\Omega$, 
contenant $\Omega$ et stable par r\'eunion d\'enombrable et compl\'ementaire.
On dit que $(\Omega,\cF)$ forme un \defwd{espace mesurable}\/. 

Une \defwd{mesure}\/ sur $(\Omega,\cF)$ est une application
$\mu:\cF\to[0,\infty]$ telle que $\mu(\emptyset)=0$ et satisfaisant 
\begin{equation}
 \label{gen1}
\mu\biggpar{\bigcup_{n=1}^\infty A_n} = \sum_{n=1}^\infty \mu(A_n)
\end{equation} 
pour toute famille $\set{A_n}$ d'\'el\'ements deux \`a deux disjoints de $\cF$
(\defwd{$\sigma$-additivit\'e}\/). Si \mbox{$\mu(\Omega)=1$} on dit que c'est
une \defwd{mesure de probabilit\'e}\/, qu'on notera souvent $\fP$. Le triplet
$(\Omega,\cF,\fP)$ est appel\'e un \defwd{espace probabilis\'e}. 

Si $(E,\cE)$ est un espace mesurable, une application $f:\Omega\to E$
est dite \defwd{$\cF$-$\cE$-mesurable}\/ si elle satisfait 
\begin{equation}
 \label{gen2}
f^{-1}(A) \in \cF \quad \forall A\in\cE\;. 
\end{equation} 
Une \defwd{variable al\'eatoire \`a valeurs dans $(E,\cE)$}\/ sur un espace
probabilis\'e $(\Omega,\cF,\fP)$ est une application $\cF$-$\cE$-mesurable
$X:\Omega\to E$. Nous aurons souvent affaire au cas $E=\R$, avec $\cE$ la tribu
des bor\'eliens $\cB$. Dans ce cas nous dirons simplement que $X$ est
$\cF$-mesurable, et \'ecrirons $X\measurable\cF$. La \defwd{loi}\/ d'une
variable al\'eatoire $X$ \`a valeurs dans $(E,\cE)$ est l'application 
\begin{equation}
 \label{gen3}
\begin{array}{llll}
 \fP X^{-1} : &\cE &\to & [0,1] \\
              &A & \mapsto & \fP(X^{-1}(A)) = \prob{X\in A}\;.
\end{array}
\end{equation} 

L'\defwd{esp\'erance}\/ d'une variable al\'eatoire $X$ est d\'efinie
comme l'int\'egrale de Lebesgue 
\begin{equation}
 \label{gen4}
\expec{X} = \int_\Omega X \,\6\fP\;.  
\end{equation} 
Rappelons que cette int\'egrale est d\'efinie en approchant $X$ par une suite
de fonctions \'etag\'ees $X_n=\sum_i a_i\indicator{A_i}$, pour lesquelles
$\int_\Omega X_n \,\6\fP = \sum_i a_i\fP(A_i)$. 


\section{Distributions de dimension finie}
\label{ssec_dfin} 

Soit $(E,\cE)$ un espace mesurable. L'ensemble $E^n = E\times E\times
\dots \times E$ peut \^etre muni
d'une tribu $\cE^{\otimes n}$, d\'efinie comme la tribu engendr\'ee par
tous les \'ev\'enements du type 
\begin{equation}
\label{gps1}
A^{(i)} = \setsuch{\omega\in E^n}{\omega_i\in A}\;, 
\qquad A\in\cE\;,
\end{equation}
appel\'es \defwd{cylindres}.
On d\'enote par $E^\N$ l'ensemble des applications $x: \N\to E$,
c'est-\`a-dire l'ensemble des suites $(x_0, x_1, x_2, \dots)$ \`a valeurs
dans $E$. Cet ensemble peut \`a nouveau \^etre muni d'une tribu construite
\`a partir de tous les cylindres, not\'ee $\cE^{\otimes \N}$.

\begin{definition}[Processus stochastique, distributions de dimension finie]
\label{def_gps1}
\hfill
\begin{itemiz}
\item	Un \defwd{processus stochastique}\/ \`a valeurs dans $(E,\cE)$ est
une suite $\set{X_n}_{n\in\N}$ de variables al\'eatoires \`a valeurs dans
$(E,\cE)$, d\'efinies sur un m\^eme espace probabilis\'e $(\Omega,\cF,\fP)$
(autrement dit, chaque $X_n$ est une application $\cF$-$\cE$-mesurable de
$\Omega$ dans $E$). C'est donc \'egalement une variable al\'eatoire \`a
valeurs dans $(E^\N,\cE^{\otimes\N})$. 

\item	Soit $\Q$ une mesure de probabilit\'e sur
$(E^\N,\cE^{\otimes\N})$. Les \defwd{distributions de dimension finie}\/
de $\Q$ sont les mesures sur $(E^{n+1},\cE^{\otimes n+1})$ d\'efinies par 
\begin{equation}
\label{gps2}
\Q^{(n)} = \Q \circ (\pi^{(n)})^{-1}\;,
\end{equation}
o\`u $\pi^{(n)}$ est la projection $\pi^{(n)}:E^\N\to E^{n+1}$, $(x_0, x_1,
x_2, \dots) \mapsto (x_0, \dots, x_n)$.
\end{itemiz}
\end{definition}

On se convainc facilement que la suite des $\set{\Q^{(n)}}_{n\in\N}$
d\'etermine $\Q$ univoquement. Inversement, pour qu'une suite donn\'ee
$\set{\Q^{(n)}}_{n\in\N}$ corresponde effectivement \`a une mesure $\Q$,
les $\Q^{(n)}$ doivent satisfaire une \defwd{condition de compatibilit\'e}
: 

Soit $\ph_n$ la projection $\ph_n:E^{n+1}\to E^n$, $(x_0, \dots, x_n)
\mapsto (x_0, \dots, x_{n-1})$. Alors on a
$\pi^{(n-1)}=\ph_n\circ\pi^{(n)}$, donc pour tout $A\in\cE^{\otimes n}$, 
$(\pi^{(n-1)})^{-1}(A) = (\pi^{(n)})^{-1}(\ph_n^{-1}(A))$. La condition
de compatibilit\'e s'\'ecrit donc 
\begin{equation}
\label{gps3}
\Q^{(n-1)} = \Q^{(n)} \circ \ph_n^{-1}\;.
\end{equation}
Le diagramme suivant illustre la situation (toutes les projections
\'etant mesurables, on peut les consid\'erer \`a la fois comme
applications entre ensembles et entre tribus) : 


\vspace{-2mm}
\begin{center}
\begin{tikzpicture}[->,>=stealth',shorten >=2pt,shorten <=2pt,auto,node
distance=2.5cm, thick
]

  \node (N) {$(E^\N,\cE^{\otimes\N})$};
  \node (n+1) [below of=N] {$(E^{n+1},\cE^{\otimes n+1})$};
  \node (n) [below of=n+1] {$(E^n,\cE^{\otimes n})$};
  \node[node distance=0.9cm] (N-) [left of=N] {};
  \node[node distance=0.9cm] (n-) [left of=n] {};
  \node[node distance=0.8cm] (N+) [right of=N] {};
  \node[node distance=0.8cm] (n+) [right of=n] {};
  \node[node distance=1.1cm] (n+1+) [right of=n+1] {};
  \node[node distance=4.5cm] (01) [right of=n+1] {$[0,1]$};

   \path
     (N) edge [right] node {$\pi^{(n)}$} (n+1)
     (n+1) edge [right] node {$\ph_n$} (n)
     (N-) edge [bend right, distance=2.0cm, left] node {$\pi^{(n-1)}$} (n-)
     (N+) edge [above right] node {$\Q$} (01)
     (n+1+) edge [above] node {$\Q^{(n)}$} (01)
     (n+) edge [below right] node {$\Q^{(n-1)}$} (01)
;
\end{tikzpicture}
\end{center}


\section{Noyaux markoviens}
\label{ssec_nmark} 

Nous allons voir comment construire une suite de $\Q^{(n)}$ satisfaisant
la condition~\eqref{gps3}. 

\begin{definition}[Noyau markovien]
\label{def_gps2}
Soient $(E_1,\cE_1)$ et $(E_2,\cE_2)$ deux espaces mesurables. Un
\defwd{noyau marko\-vien de $(E_1,\cE_1)$ vers $(E_2,\cE_2)$}\/  est une
application $K: E_1\times\cE_2 \to [0,1]$ satisfaisant les deux conditions 
\begin{enum}
\item	Pour tout $x\in E_1$, $K(x,\cdot)$ est une mesure de
probabilit\'e sur $(E_2,\cE_2)$. 
\item	Pour tout $A\in\cE_2$, $K(\cdot,A)$ est une application
$\cE_1$-mesurable. 
\end{enum}
\end{definition}

\begin{example}
\label{ex_gps1}
\hfill
\begin{enum}
\item	Soit $\mu$ une mesure de probabilit\'e sur $(E_2,\cE_2)$. Alors
$K$ d\'efini par $K(x,A)=\mu(A)$ pour tout $x\in E_1$ est un noyau
markovien. 

\item	Soit $f:E_1\to E_2$ une application mesurable. Alors $K$ d\'efini
par  $K(x,A)=\indicator{A}(f(x))$ est un noyau markovien.

\item	Soit $\cX=\set{1,\dots,N}$ un ensemble fini, et posons
$E_1=E_2=\cX$ et $\cE_1=\cE_2=\cP(\cX)$. Alors $K$ d\'efini par 
\begin{equation}
\label{gps4}
K(i,A) = \sum_{j\in A} p_{ij}\;,
\end{equation}
o\`u $P=(p_{ij})_{i,j\in\cX}$ est une matrice stochastique, est un noyau
markovien.
\end{enum}
\end{example}

Si $\mu$ est une mesure de probabilit\'e sur $(E_1,\cE_1)$ et $K$ est un
noyau markovien de $(E_1,\cE_1)$ vers $(E_2,\cE_2)$, on d\'efinit une
mesure de probabilit\'e $\mu\otimes K$ sur $\cE_1\otimes\cE_2$ par 
\begin{equation}
\label{gps5}
(\mu\otimes K)(A) \defby \int_{E_1} K(x_1,A_{x_1})
\mu(\6x_1)\;,
\end{equation}
o\`u $A_{x_1} = \setsuch{x_2\in E_2}{(x_1,x_2)\in A} \in \cE_2$ est la
\defwd{section}\/ de $A$ en $x_1$. On v\'erifie que c'est bien une mesure
de probabilit\'e. Afin de comprendre sa signification, calculons ses
marginales. Soient $\pi_1$ et $\pi_2$ les projections d\'efinies par
$\pi_i(x_1,x_2)=x_i$, $i=1,2$. 
\begin{enum}
\item	Pour tout ensemble mesurable $A_1\in\cE_1$, on a 
\begin{align}
\nonumber
\bigpar{(\mu\otimes K)\circ\pi_1^{-1}}(A_1) 
&= (\mu\otimes K)(A_1\times E_2) \\
\nonumber
&= \int_{E_1} \indicator{A_1}(x_1)K(x_1,E_2) \mu(\6x_1) \\
&= \int_{A_1} K(x_1,E_2) \mu(\6x_1)
= \mu(A_1)\;,
\label{gps6}
\end{align}
o\`u on a utilis\'e le fait que la section $(A_1\times E_2)_{x_1}$ est
donn\'ee par $E_2$ si $x_1\in A_1$, et $\emptyset$ sinon. Ceci implique 
\begin{equation}
\label{gps7}
(\mu\otimes K)\circ\pi_1^{-1} = \mu\;.
\end{equation}
La premi\`ere marginale de $\mu\otimes K$ est donc simplement $\mu$. 

\item	Pour tout ensemble mesurable $A_2\in\cE_2$, on a 
\begin{align}
\nonumber
\bigpar{(\mu\otimes K)\circ\pi_2^{-1}}(A_2) 
&= (\mu\otimes K)(E_1\times A_2) \\
&= \int_{E_1} K(x_1,A_2) \mu(\6x_1) \;.
\label{gps8}
\end{align}
La seconde marginale de $\mu\otimes K$ s’interpr\`ete comme suit: c'est la
mesure sur $E_2$ obtenue en partant avec la mesure $\mu$ sur $E_1$, et en
\lq\lq allant de tout $x\in E_1$ vers $A_2\in\cE_2$ avec
probabilit\'e $K(x_1,A_2)$\rq\rq. 
\end{enum}

Enfin, par une variante du th\'eor\`eme de Fubini--Tonelli, on v\'erifie
que  pour toute fonction $(\mu\otimes K)$-int\'egrable $f:E_1\times
E_2\to\R$, on a  
\begin{equation}
\label{gps9}
\int_{E_1\times E_2} f \6\,(\mu\otimes K) 
= \int_{E_1} \biggpar{\int_{E_2} f(x_1,x_2) K(x_1,\6x_2)} \mu(\6x_1)\;.
\end{equation}

Nous pouvons maintenant proc\'eder \`a la construction de la suite
$\set{\Q^{(n)}}_{n\in\N}$ de distributions de dimension finie,
satisfaisant la condition de compatibilit\'e~\eqref{gps3}. Sur
l'espace mesurable $(E,\cE)$, on se donne une mesure de probabilit\'e
$\nu$, appel\'ee \defwd{mesure initiale}. On se donne pour tout $n\in\N$
un noyau markovien $K_n$ de $(E^{n+1},\cE^{\otimes n+1})$ vers $(E,\cE)$. On
d\'efinit alors la suite $\set{\Q^{(n)}}_{n\in\N}$ de mesures de
probabilit\'e sur $(E^{n+1},\cE^{\otimes n+1})$ r\'ecursivement par 
\begin{align}
\nonumber
\Q^{(0)} &= \nu \;,\\
\Q^{(n)} &= \Q^{(n-1)}\otimes K_{n-1}\;, 
& 
n&\geqs 1\;.
\label{gps10}
\end{align}
Par~\eqref{gps7}, on a $\Q^{(n)}\circ\ph_n^{-1}=(\Q^{(n-1)}\otimes
K_{n-1})\circ\ph_n^{-1}=\Q^{(n-1)}$, donc la condition de compatibilit\'e
est bien satisfaite. 

L'interpr\'etation de~\eqref{gps10} est simplement que chaque noyau
markovien $K_n$ d\'ecrit les probabilit\'es de transition entre les temps
$n$ et $n+1$, et permet ainsi de d\'efinir une mesure sur les
segments de trajectoire plus longs d'une unit\'e. Remarquons enfin qu'on
peut \'egalement construire pour tout $m, n$ un noyau $K_{n,m}$ de
$(E^n,\cE^{\otimes n})$ vers $(E^m,\cE^{\otimes m})$ tel que
$\Q^{(n+m)}=\Q^{(n)}\otimes K_{n,m}$. 

On peut noter par ailleurs que si $\psi_n:E^{n+1}\to E$ d\'esigne la
projection sur la derni\`ere composante $(x_0,\dots,x_n)\mapsto x_n$,
alors la formule~\eqref{gps8} montre que la loi de $X_n$, qui est
donn\'ee par la marginale $\nu_n=\Q^{(n)}\circ\psi_n^{-1}$, s'exprime
comme 
\begin{equation}
\label{gps10B}
\prob{X_n\in A} = 
\nu_n(A) = \int_{E^n} K_{n-1}(x,A) \Q^{(n-1)}(\6x)\;.
\end{equation}
La situation est illustr\'ee par le diagramme suivant :


\vspace{-2mm}
\begin{center}
\begin{tikzpicture}[->,>=stealth',shorten >=2pt,shorten <=2pt,auto,node
distance=2.5cm, thick
]

  \node (N) {$(E,\cE)$};
  \node (n+1) [below of=N] {$(E^{n+1},\cE^{\otimes n+1})$};
  \node (n) [below of=n+1] {$(E^n,\cE^{\otimes n})$};
  \node[node distance=0.8cm] (N+) [right of=N] {};
  \node[node distance=0.8cm] (n+) [right of=n] {};
  \node[node distance=1.1cm] (n+1+) [right of=n+1] {};
  \node[node distance=4.5cm] (01) [right of=n+1] {$[0,1]$};

   \path
     (n+1) edge [right] node {$\psi_n$} (N)
     (n+1) edge [right] node {$\ph_n$} (n)
     (N+) edge [above right] node {$\nu_n$} (01)
     (n+1+) edge [above] node {$\Q^{(n)}$} (01)
     (n+) edge [below right] node {$\Q^{(n-1)}$} (01)
;
\end{tikzpicture}
\end{center}


\section{Le th\'eor\`eme de Ionescu--Tulcea}
\label{ssec_til} 

Nous donnons maintenant, sans d\'emonstration, le r\'esultat g\'en\'eral
assurant la l\'egitimit\'e de toute la proc\'edure. 

\begin{theorem}[Ionescu--Tulcea]
Pour la suite de mesures $\set{\Q^{(n)}}_{n\in\N}$ construites
selon~\eqref{gps10}, il existe une unique mesure de probabilit\'e $\Q$ sur
$(E^\N,\cE^{\otimes \N})$ telle que $\Q^{(n)}=\Q\circ(\pi^{(n)})^{-1}$
pour tout $n$, c'est-\`a-dire que les $\Q^{(n)}$ sont les distributions de
dimension finie de $\Q$. 
\end{theorem}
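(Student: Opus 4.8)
The plan is to construct $\Q$ directly on the cylinder algebra and then invoke a Carathéodory-type extension, where the only non-routine point is the $\sigma$-additivity (equivalently: continuity at $\emptyset$) of the set function built from the finite-dimensional distributions. First I would introduce the algebra $\cA$ of \emph{cylinder sets} in $E^\N$, i.e.\ sets of the form $C = (\pi^{(n)})^{-1}(B)$ with $n\in\N$ and $B\in\cE^{\otimes n+1}$; since $\cE^{\otimes n+1}$ sits inside $\cE^{\otimes m+1}$ for $m\geqs n$ via $\ph$'s, $\cA$ is indeed an algebra, and $\cE^{\otimes\N}$ is the $\sigma$-algebra it generates. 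On $\cA$ one \emph{must} set $\Q(C) = \Q^{(n)}(B)$; the compatibility condition~\eqref{gps3} is exactly what makes this well defined (a cylinder has many representations, all giving the same value), and finite additivity on $\cA$ follows immediately from finite additivity of each $\Q^{(n)}$. Uniqueness of the extension is then the standard $\pi$-$\lambda$ / monotone class argument: two probability measures on $\cE^{\otimes\N}$ agreeing on the generating algebra $\cA$ coincide.

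The substantial step is to verify that $\Q$ is continuous at $\emptyset$ along $\cA$, i.e.\ that for any decreasing sequence $C_1\supseteq C_2\supseteq\cdots$ of cylinders with $\lim_k\Q(C_k)=\delta>0$ one has $\bigcap_k C_k\neq\emptyset$. Here is where the kernel structure~\eqref{gps10} is used, rather than any topological regularity: one disintegrates. Writing each $C_k$ (after passing to a representation of length $n_k\uparrow\infty$) via the iterated kernels, one introduces the functions $g_k^{(0)}(x_0)=\ldots$ obtained by integrating $\indicator{C_k}$ against $K_0,K_1,\dots$ over the tail coordinates: concretely, using~\eqref{gps9} repeatedly, $\Q(C_k)=\int_E g_k^{(0)}(x_0)\,\nu(\6x_0)$ where $g_k^{(0)}(x_0)=\int_E K_0(x_0,\6x_1)\,g_k^{(1)}(x_0,x_1)$, and so on. The functions $g_k^{(m)}$ are measurable, bounded by $1$, and decreasing in $k$ for each fixed prefix $(x_0,\dots,x_m)$. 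Since $\int g_k^{(0)}\6\nu\to\delta>0$, by dominated convergence $\int \bigpar{\lim_k g_k^{(0)}}\6\nu=\delta>0$, so there is $x_0^\star$ with $\lim_k g_k^{(0)}(x_0^\star)\geqs\delta>0$; repeating the argument with $K_0(x_0^\star,\cdot)$ in place of $\nu$ yields $x_1^\star$ with $\lim_k g_k^{(1)}(x_0^\star,x_1^\star)>0$, and inductively a sequence $x^\star=(x_0^\star,x_1^\star,\dots)\in E^\N$ such that $\lim_k g_k^{(m)}(x_0^\star,\dots,x_m^\star)>0$ for every $m$.

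It then remains to check that $x^\star\in C_k$ for every $k$. Fix $k$ and let $n_k$ be the length of the chosen representation $C_k=(\pi^{(n_k)})^{-1}(B_k)$. For $m\geqs n_k$, whether $g_k^{(m)}(x_0^\star,\dots,x_m^\star)$ is positive or zero is governed purely by the indicator $\indicator{B_k}(x_0^\star,\dots,x_{n_k}^\star)$ (the kernels beyond index $n_k$ integrate a function that no longer depends on those coordinates, so $g_k^{(m)}$ equals that indicator for $m\geqs n_k$); since the limit in $k'$\dots — more precisely since $g_k^{(m)}(x^\star_{0:m})$ is itself $\geqs \lim_{k'} g^{(m)}_{k'}(x^\star_{0:m})>0$ by monotonicity, it is positive, hence $=1$, hence $(x_0^\star,\dots,x_{n_k}^\star)\in B_k$, i.e.\ $x^\star\in C_k$. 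Thus $\bigcap_k C_k\ni x^\star$, contradicting $\bigcap_k C_k=\emptyset$; so $\Q$ is $\sigma$-additive on $\cA$, and Carathéodory extension delivers the desired $\Q$ on $(E^\N,\cE^{\otimes\N})$. The main obstacle, and the place deserving care, is precisely the bookkeeping in this continuity-at-$\emptyset$ step: defining the $g_k^{(m)}$ cleanly, justifying the use of~\eqref{gps9} and dominated convergence at each level, and confirming that positivity of the limiting $g_k^{(m)}$ for $m\geqs n_k$ forces membership in $C_k$.
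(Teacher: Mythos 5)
The paper states this theorem explicitly \emph{sans d\'emonstration}, so there is no in-text argument to compare yours against; judged on its own, your plan is the classical Ionescu--Tulcea proof and it is essentially correct. You put the set function on the cylinder algebra $\cA$, use the compatibility relation~\eqref{gps3} for well-definedness and finite additivity, and you correctly isolate the one substantial issue: continuity at $\emptyset$, obtained not from topological regularity (as in Kolmogorov's extension theorem, which needs it) but from the kernel structure~\eqref{gps10} via~\eqref{gps9} --- that is exactly the point of this theorem. Your bookkeeping is sound: the functions $g_k^{(m)}$ are decreasing in $k$ and bounded by $1$, the monotone passage to the limit under $\nu$ and then under each $K_m(x^\star_{0:m},\cdot)$ selects $x^\star$ coordinate by coordinate, and for $m\geqs n_k$ the function $g_k^{(m)}$ reduces to the indicator of $B_k$ evaluated at $(x^\star_0,\dots,x^\star_{n_k})$, so positivity forces the value $1$ and hence $x^\star\in\bigcap_k C_k$, contradicting emptiness. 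Two minor remarks. First, at level $0$ you claim a point with $\lim_k g_k^{(0)}(x_0^\star)\geqs\delta$; this is in fact true (a function bounded by $1$ that is everywhere $<\delta$ has integral $<\delta$ against a probability measure), but all you need, and all you can propagate at the later levels --- as you correctly do --- is strict positivity of the limit. Second, to make the algebra step airtight, record that any finite family of cylinders can be rewritten over a common index $n$ (by composing with the projections $\ph$), which is what gives both the algebra structure of $\cA$ and the reduction of finite additivity to that of a single $\Q^{(n)}$; uniqueness then follows as you say, the cylinders forming a $\pi$-system generating $\cE^{\otimes\N}$.
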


\begin{example}
\label{ex_gps2}
\hfill
\begin{enum}
\item	{\bf Mesures produit:} On se donne une suite
$\set{\mu_n}_{n\in\N}$ de mesures de probabilit\'e sur l'espace mesurable
$(E,\cE)$. Soit, pour tout $n$, 
$\Q^{(n)}=\mu_0\otimes\mu_1\otimes\dots\otimes\mu_n$ la mesure produit. 
C'est une mesure de la forme ci-dessus, avec noyau markovien
\begin{equation}
\label{gps11}
K_n(x,A) = \mu_{n+1}(A) 
\qquad \forall x\in E^n, \forall A\in\cE\;.
\end{equation}
La relation~\eqref{gps10B} montre que la loi $\nu_n$ de $X_n$ est donn\'ee
par $\mu_n$. On dit que les variables al\'eatoires $X_n$ sont
\defwd{ind\'ependantes}. Si tous les $\mu_n$ sont les m\^emes, on dit
qu'elles sont \defwd{ind\'ependantes et identiquement distribu\'ees
(i.i.d.)}\/.

\item	{\bf Syst\`eme dynamique:} On se donne une application mesurable
$f:E\to E$, une mesure de probabilit\'e initiale $\nu$ sur $E$. Soit pour
tout $n$ le noyau markovien 
\begin{equation}
\label{gps12}
K_n(x,A) = \indicator{A}{f(x_n)}\;, 
\end{equation}
et construisons les $\Q^{(n)}$ comme ci-dessus. Alors la
formule~\eqref{gps10B} montre qu'on a pour tout $A\in E$ 
\begin{align}
\nonumber
\nu_{n+1}(A) 
&= \int_{E^{n+1}} \indicator{A}{f(x_n)} \Q^{(n)}(\6x) \\
\nonumber
&= \int_{E} \indicator{A}{f(x_n)} \nu_n(\6x_n) \\
&= \int_{f^{-1}(A)} \nu_n(\6x_n) 
= \nu_n(f^{-1}(A))\;.
\label{gps13}
\end{align}
Il suit que 
\begin{equation}
\label{gps14}
\nu_n = \nu\circ f^{-n}
\qquad 
\forall n\in\N\;.
\end{equation}
Cette situation correspond \`a un syst\`eme dynamique d\'eterministe. Par
exemple, si $f$ est bijective et $\nu=\delta_{x_0}$ est concentr\'ee en un
point, on a $\nu_n = \delta_{f^n(x_0)}$.

\item	{\bf \Chaine s de Markov:}
Soit $\cX$ un ensemble fini ou d\'enombrable, muni de la tribu $\cP(\cX)$,
et $P=(p_{ij})_{i,j\in\cX}$ une matrice stochastique sur $\cX$,
c'est-\`a-dire que $0\leqs p_{ij}\leqs 1$ $\forall i,j\in\cX$ 
et $\sum_{j\in\cX}p_{ij}=1$ $\forall i\in\cX$. On se donne une mesure de
probabilit\'e $\nu$ sur $\cX$, et une suite $\smash{\Q^{(n)}}$ construite
\`a partir de~\eqref{gps10} avec pour noyaux markoviens 
\begin{equation}
\label{gps15}
K_n(i_{[0,n-1]},i_n) = p_{i_{n-1}i_n}\;.
\end{equation}
Le processus stochastique de mesure $\Q$, dont les distributions de
dimension finie sont les $\Q^{(n)}$, est la \chaine\ de Markov sur $\cX$
de distribution initiale $\nu$ et de matrice de transition $P$. Dans ce
cas la relation~\eqref{gps10B} se traduit en
\begin{equation}
\label{gps16}
\prob{X_n\in A} = \nu_n(A) = \sum_{i\in\cX} \sum_{j\in A} p_{ij}
\nu_{n-1}(\set{i}) = \sum_{j\in A} \sum_{i\in\cX} \prob{X_{n-1}=i}p_{ij}\;.
\end{equation}
\end{enum}
\end{example}


\chapter{Filtrations, esp\'erance conditionnelle}
\label{chap_ec}


\section{Sous-tribus et filtrations}
\label{sec_st}

Soit $(\Omega,\cF)$ un espace mesurable. Une \defwd{sous-tribu}\/ de $\cF$ est
une sous-famille $\cF_1\subset\cF$ qui est \'egalement une tribu. 
On dit parfois que la tribu $\cF_1$ est plus grossi\`ere que la tribu $\cF$, et
que $\cF$ est plus fine que $\cF_1$. On notera
que si $X$ est une variable al\'eatoire r\'eelle, on a 
l'implication 
\begin{equation}
 \label{st0}
X\measurable\cF_1 \Rightarrow X\measurable\cF\;. 
\end{equation}
Une fonction non mesurable peut donc \^etre rendue mesurable en choisissant une
tribu plus fine. 

\begin{example}\hfill
\label{ex_st1}
\begin{enum}
\item	La plus petite sous-tribu de $\cF$ est la \defwd{tribu triviale}\/
$\cF_0=\set{\emptyset,\Omega}$. On remarquera que si $X$ est mesurable par
rapport \`a $\cF_0$, alors la pr\'eimage $X^{-1}(y)$ d'un point $y$ doit \^etre
\'egale soit \`a $\Omega$ tout entier, soit \`a l'ensemble vide, ce qui implique
que $X$ doit \^etre constante. En d'autres termes, les variables al\'eatoires
mesurables par rapport \`a la tribu triviale sont les fonctions constantes.

\item	Soit $X$ une variable al\'eatoire \`a valeurs dans un espace
mesurable $(E,\cE)$. Alors on v\'erifie ais\'ement que 
\begin{equation}
 \label{st1}
\sigma(X) \defby \setsuch{X^{-1}(A)}{A\in\cE}\;,
\end{equation} 
o\`u $X^{-1}(A)\defby\setsuch{\omega\in\Omega}{X(\omega)\in A}$, 
est une sous-tribu de $\cF$. C'est la plus petite sous-tribu de $\cF$ par
rapport \`a laquelle $X$ soit mesurable. 

\item	Si $X_1, \dots, X_n$ sont des variables al\'eatoires \`a valeurs dans
$(E,\cE)$, alors 
\begin{equation}
 \label{st1b}
\sigma(X_1,\dots,X_n)  \defby
\setsuch{(X_1,\dots,X_n)^{-1}(A)}{A\in\cE^{\otimes n}}\;,
\end{equation} 
o\`u $(X_1,\dots,X_n)^{-1}(A)\defby\setsuch{\omega\in\Omega}{(X_1(\omega),\dots,
X_n(\omega))\in A}$, est une sous-tribu de $\cF$. C'est la plus petite
sous-tribu de $\cF$ par rapport \`a laquelle les variables al\'eatoires
$X_1,\dots X_n$ soient toutes mesurables. 
\end{enum}
\end{example}

L'exemple ci-dessus donne une interpr\'etation importante de la notion de
sous-tribu. En effet, $\sigma(X)$ repr\'esente l'ensemble des \'ev\'enements
qu'on est potentiellement capable de distinguer en mesurant la variable $X$.
Autrement dit, $\sigma(X)$ est l'information que $X$ peut fournir sur l'espace
probabilis\'e. 

\begin{definition}[Filtration, processus adapt\'e] \hfill
\label{def_filtration}
\begin{enum}
\item	Soit $(\Omega,\cF,\fP)$ un espace probabilis\'e. Une\/
\defwd{filtration}\/ de $(\Omega,\cF,\fP)$ est une suite croissante de
sous-tribus 
\begin{equation}
 \label{st2}
\cF_0 \subset \cF_1 \subset \dots \subset \cF_n \subset \dots \subset \cF\;.
\end{equation} 
On dit alors que $(\Omega,\cF,\set{\cF_n},\fP)$ est un \defwd{espace
probabilis\'e filtr\'e}\/.

\item	Soit $\set{X_n}_{n\in\N}$ un processus stochastique sur
$(\Omega,\cF,\fP)$. On dit que le processus est\/ \defwd{adapt\'e \`a la
filtration $\set{\cF_n}$}\/ si $X_n$ est mesurable par rapport \`a $\cF_n$ pour
tout $n$. 
\end{enum}
\end{definition}

Un choix minimal de filtration adapt\'ee est la \defwd{filtration canonique}\/
(ou\/ \defwd{naturelle}\/)
\begin{equation}
 \label{st3}
\cF_n = \sigma(X_0,X_1,\dots,X_n)\;. 
\end{equation} 
Dans ce cas, $\cF_n$ repr\'esente l'information disponible au temps $n$, si
l'on observe le processus stochastique. 

\begin{example}
Consid\'erons la marche al\'eatoire sym\'etrique sur $\Z$. Dans ce cas, $E=\Z$
et $\cE=\cP(\Z)$. Le choix de $\Omega$ est arbitraire, il suffit de le prendre
\lq\lq assez grand\rq\rq\ pour distinguer toutes les r\'ealisations possibles
de la marche. Un choix pratique est l'ensemble $\Omega=\set{-1,1}^\N$ des
suites infinies de $-1$ et de $1$, avec la convention que le $n$i\`eme
\'el\'ement de la suite sp\'ecifie la direction du $n$i\`eme pas de la marche.
En d'autres termes,
\begin{equation}
 \label{st4}
X_n(\omega) = \sum_{i=1}^n \omega_i\;. 
\end{equation} 
La tribu associ\'ee est
$\cF=\cP(\set{-1,1})^{\otimes\N}=
\set{\emptyset,\set{-1},\set{1},\set{-1,1}}^{\otimes\N}$. 

Construisons maintenant la filtration naturelle. Tout d'abord, $X_0=0$ n'est
pas vraiment al\'eatoire. Nous avons pour tout $A\subset\Z$ 
\begin{equation}
 \label{st5}
X_0^{-1}(A) = \setsuch{\omega\in\Omega}{X_0=0\in A} = 
\begin{cases}
\emptyset & \text{si $0\notin A$\;,} \\
\Omega & \text{si $0\in A$\;,} 
\end{cases}
\end{equation}
de sorte que 
\begin{equation}
 \label{st6}
\cF_0 = \set{\emptyset,\Omega} 
\end{equation} 
est la tribu triviale. 

Pour $n=1$, on observe que
$(X_0,X_1)(\Omega)=\set{(0,-1),(0,1)}$. Il y a donc quatre cas \`a distinguer,
selon qu'aucun, l'un ou l'autre, ou les deux points appartiennent \`a
$A\subset\Z^2$.
Ainsi, 
\begin{equation}
 \label{st7}
(X_0,X_1)^{-1}(A) = 
\begin{cases}
\emptyset & \text{si $(0,-1)\notin A$ et $(0,1)\notin A$\;,} \\
\setsuch{\omega}{\omega_1=1} & \text{si $(0,-1)\notin A$ et $(0,1)\in A$\;,} \\
\setsuch{\omega}{\omega_1=-1} & \text{si $(0,-1)\in A$ et $(0,1)\notin A$\;,} \\
\Omega & \text{si $(0,-1)\in A$ et $(0,1)\in A$\;,}
\end{cases}
\end{equation} 
et par cons\'equent 
\begin{equation}
 \label{st8}
\cF_1 = \set{
\emptyset,\setsuch{\omega}{\omega_1=1},\setsuch{\omega}{\omega_1=-1},\Omega}\;. 
\end{equation} 
Ceci traduit bien le fait que $\cF_1$ contient l'information disponible au
temps $1$~: On sait distinguer tous les \'ev\'enements d\'ependant du premier
pas de la marche. Les variables al\'eatoires mesurables par rapport \`a $\cF_1$
sont pr\'ecis\'ement celles qui ne d\'ependent que de~$\omega_1$. 

Jusqu'au temps $n=2$, il y a quatre trajectoires possibles, puisque 
$(X_0,X_1,X_2)(\Omega)=\set{(0,-1,-2),(0,-1,0),(0,1,0),(0,1,2)}$. Il suit par
un raisonnement analogue que $\cF_2$ contient $2^4=16$ \'el\'ements, qui se
distinguent par quels points parmi ces quatre sont contenus dans $A$. Les
variables al\'eatoires mesurables par rapport \`a $\cF_2$
sont pr\'ecis\'ement celles qui ne d\'ependent que de $\omega_1$ et $\omega_2$. 

Il est maintenant facile de g\'en\'eraliser \`a des $n$ quelconques. 
\end{example}


\section{Esp\'erance conditionnelle}
\label{sec_ec}

Dans cette section, nous fixons un espace probabilis\'e $(\Omega,\cF,\fP)$ et
une sous-tribu $\cF_1\subset\cF$. Nous avons vu que $\cF_1$ repr\'esente une
information partielle sur l'espace, obtenue par exemple en observant une
variable al\'eatoire $X_1$. L'esp\'erance conditionnelle d'une variable
al\'eatoire $X$ par rapport \`a $\cF_1$ repr\'esente la meilleure estimation que
l'on puisse faire de la valeur de $X$ \`a l'aide de l'information contenue dans
$\cF_1$. 

\begin{definition}[Esp\'erance conditionnelle]
\label{def_condesp}
Soit $X$ une variable al\'eatoire r\'eelle sur $(\Omega,\cF,\fP)$ telle que
$\expec{\abs{X}}<\infty$. On appelle \defwd{esp\'erance conditionnelle de $X$
sachant $\cF_1$}\/, et on note $\econd{X}{\cF_1}$, toute variable al\'eatoire
$Y$ satisfaisant les deux conditions
\begin{enum}
\item	$Y\measurable\cF_1$, c'est-\`a-dire $Y$ est $\cF_1$-mesurable;
\item	pour tout $A\in\cF_1$, on a 
\begin{equation}
 \label{ec1}
\int_A X\,\6\fP = \int_A Y\,\6\fP\;.
\end{equation} 
\end{enum}
Si $Z$ est une variable al\'eatoire r\'eelle sur $(\Omega,\cF,\fP)$, nous
abr\'egeons $\econd{X}{\sigma(Z)}$ par $\econd{X}{Z}$.
\end{definition}

En fait, toute variable al\'eatoire $Y$ satisfaisant la d\'efinition est
appel\'ee une \defwd{version}\/ de $\econd{X}{\cF_1}$. Le r\'esultat suivant
tranche la question de l'existence et de l'unicit\'e de l'esp\'erance
conditionnelle. 

\begin{theorem}\hfill
\label{thm_mart1}
\begin{enum}
\item	L'esp\'erance conditionnelle\/ $\econd{X}{\cF_1}$ existe.
\item	L'esp\'erance conditionnelle est unique dans le sens que si\/ $Y$ et\/
$Y'$ sont deux versions de\/ $\econd{X}{\cF_1}$, alors $Y=Y'$ presque
s\^urement.
\item	On a\/ $\expec{\econd{X}{\cF_1}}=\expec{X}$ et\/ 
$\expec{\abs{\econd{X}{\cF_1}}} \leqs \expec{\abs{X}}$.
\end{enum}
\end{theorem}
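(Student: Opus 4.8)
Le plan est d'\'etablir s\'epar\'ement les trois assertions, l'existence \'etant de loin le point d\'elicat; l'unicit\'e et les propri\'et\'es n'en seront ensuite que des cons\'equences directes de la relation de d\'efinition~\eqref{ec1}. Pour l'\textbf{existence}, on se ram\`ene d'abord au cas $X\geqs0$. Sur l'espace mesurable $(\Omega,\cF_1)$ on consid\`ere l'application $\mu:\cF_1\to[0,\infty]$ d\'efinie par $\mu(A)=\int_A X\,\6\fP$. Comme $X\geqs0$, le th\'eor\`eme de convergence monotone (appliqu\'e aux sommes partielles $X\indicator{A_1\cup\cdots\cup A_n}$) montre que $\mu$ est $\sigma$-additive, c'est donc une mesure sur $\cF_1$; elle est finie car $\mu(\Omega)=\expec{X}<\infty$, et $\fP(A)=0\Rightarrow\mu(A)=0$. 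Par le th\'eor\`eme de Radon--Nikodym, $\mu$ admet une densit\'e $Y$ par rapport \`a la restriction de $\fP$ \`a $\cF_1$~: $Y$ est $\cF_1$-mesurable, positive, int\'egrable, et $\int_A Y\,\6\fP=\mu(A)=\int_A X\,\6\fP$ pour tout $A\in\cF_1$, ce qui est exactement~\eqref{ec1}; donc $Y$ est une version de $\econd{X}{\cF_1}$. Pour $X$ de signe quelconque on pose $\econd{X}{\cF_1}=\econd{X^+}{\cF_1}-\econd{X^-}{\cF_1}$, ce qui a un sens car $\expec{X^\pm}\leqs\expec{\abs X}<\infty$, et la lin\'earit\'e de l'int\'egrale redonne~\eqref{ec1}. (Variante~: on traite d'abord le cas $X\in L^2$ en prenant pour $Y$ la projection orthogonale de $X$ sur le sous-espace ferm\'e $L^2(\Omega,\cF_1,\fP)$ de $L^2(\Omega,\cF,\fP)$, l'orthogonalit\'e de $X-Y$ \`a chaque $\indicator{A}$, $A\in\cF_1$, \'etant pr\'ecis\'ement~\eqref{ec1}, puis on \'etend \`a $L^1$ par troncature $X\wedge n$ et convergence monotone.)

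Pour l'\textbf{unicit\'e}, soient $Y$ et $Y'$ deux versions. Pour $\eps>0$, l'ensemble $A=\set{Y-Y'\geqs\eps}$ appartient \`a $\cF_1$ puisque $Y$ et $Y'$ y sont mesurables. En soustrayant les deux \'egalit\'es~\eqref{ec1} on obtient $\int_A(Y-Y')\,\6\fP=0$, d'o\`u $\eps\,\fP(A)\leqs\int_A(Y-Y')\,\6\fP=0$, donc $\fP(A)=0$. En prenant $\eps=1/n$, $n\geqs1$, et en r\'eunissant, on trouve $\fP(Y>Y')=0$, et par sym\'etrie $\fP(Y'>Y)=0$, donc $Y=Y'$ presque s\^urement.

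Pour les \textbf{propri\'et\'es}, appliquer~\eqref{ec1} avec $A=\Omega\in\cF_1$ donne imm\'ediatement $\expec{\econd{X}{\cF_1}}=\expec{X}$. Pour l'in\'egalit\'e, soit $Y$ une version et $A=\set{Y\geqs0}\in\cF_1$~: alors $\int_A\abs Y\,\6\fP=\int_A Y\,\6\fP=\int_A X\,\6\fP\leqs\int_A\abs X\,\6\fP$, et de m\^eme $\int_{\Omega\setminus A}\abs Y\,\6\fP=-\int_{\Omega\setminus A}Y\,\6\fP=-\int_{\Omega\setminus A}X\,\6\fP\leqs\int_{\Omega\setminus A}\abs X\,\6\fP$; en additionnant, $\expec{\abs Y}\leqs\expec{\abs X}$.

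La \textbf{principale difficult\'e} est l'existence, qui repose enti\`erement sur le th\'eor\`eme de Radon--Nikodym (ou, dans la variante hilbertienne, sur le th\'eor\`eme de projection, compl\'et\'e par un passage \`a la limite monotone qui suppose acquise la positivit\'e de l'esp\'erance conditionnelle). Tout le reste est automatique, le seul point \`a surveiller \'etant que les ensembles de niveau employ\'es appartiennent bien \`a $\cF_1$, ce qui est assur\'e par la $\cF_1$-mesurabilit\'e des versions.
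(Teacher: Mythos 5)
Votre preuve est correcte et suit essentiellement la m\^eme d\'emarche que celle du cours~: existence par le th\'eor\`eme de Radon--Nikodym (apr\`es r\'eduction au cas $X\geqs0$ puis d\'ecomposition $X=X^+-X^-$), unicit\'e via l'ensemble $\set{Y-Y'\geqs\eps}\in\cF_1$, et point~3 en prenant $A=\Omega$ puis en d\'ecoupant selon le signe de $Y$. La variante hilbertienne que vous mentionnez entre parenth\`eses est un bonus (elle r\'eappara\^it d'ailleurs plus loin dans le cours comme propri\'et\'e de projection), mais l'argument principal est identique.
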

\begin{proof}
Commen\c cons par prouver les assertions du point 3. La premi\`ere est un cas
particulier de~\eqref{ec1} avec $A=\Omega$. Pour montrer la seconde, soit
$Y=\econd{X}{\cF_1}$ et
$A=\set{Y>0}\defby\setsuch{\omega\in\Omega}{Y(\omega)>0}$. On a $A\in\cF_1$ par
mesurabilit\'e de $Y$. Or par~\eqref{ec1}, 
\begin{align}
\nonumber
\int_A Y\,\6\fP &= \int_A X\,\6\fP \leqs \int_A \abs{X}\,\6\fP\;, \\
\int_{A^c} -Y\,\6\fP &= \int_{A^c} -X\,\6\fP \leqs \int_{A^c} \abs{X}\,\6\fP\;.
 \label{ec2:1} 
\end{align} 
Le r\'esultat suit en ajoutant ces deux in\'egalit\'es.

Montrons l'unicit\'e. Si $Y$ et $Y'$ sont deux versions de
$\econd{X}{\cF_1}$, alors pour tout $A\in\cF_1$ 
\begin{equation}
 \label{ec2:2}
\int_A Y\,\6\fP = \int_A Y'\,\6\fP\;.
\end{equation} 
Prenons $A=\set{Y-Y'\geqs\eps}$ pour $\eps>0$. Alors 
\begin{equation}
 \label{ec2:3}
0 =  \int_A (Y-Y')\,\6\fP \geqs \eps \fP(A) = \eps\prob{Y-Y'\geqs\eps}\;.
\end{equation} 
Comme c'est vrai pour tout $\eps$, on a $Y\leqs Y'$ presque s\^urement. En
interchangeant les r\^oles de $Y$ et $Y'$, on obtient l'in\'egalit\'e inverse,
d'o\`u on d\'eduit l'\'egalit\'e presque s\^ure. 

Montrons finalement l'existence. Rappelons qu'une mesure $\nu$ sur
$(\Omega,\cF_1)$ est dite \defwd{absolument continue par rapport \`a la mesure
$\mu$}\/ si $\mu(A)=0$ implique $\nu(A)=0$ pour tout $A\in\cF_1$. On \'ecrit
alors $\nu\ll\mu$. Le th\'eor\`eme de Radon--Nikodym affirme qu'il existe alors
une fonction $f\measurable\cF_1$ telle que 
\begin{equation}
 \label{ec2:4}
\int_A f\,\6\mu = \nu(A)
\qquad\forall A\in\cF_1\;. 
\end{equation} 
La fonction $f$ est appel\'ee \defwd{d\'eriv\'ee de Radon--Nikodym}\/ et
not\'ee $\tdtot\nu\mu$. 

Supposons d'abord que $X\geqs0$. Posons $\mu=\fP$ et d\'efinissons $\nu$ par 
\begin{equation}
 \label{ec2:5}
\nu(A) = \int_A X\,\6\fP 
\qquad\forall A\in\cF_1\;,
\end{equation} 
de sorte que $\nu\ll\mu$. Nous avons $\tdtot\nu\mu\measurable\cF_1$ et pour
tout $A\in\cF_1$ 
\begin{equation}
\label{ec2:6}
\int_A X\,\6\fP  = \nu(A) = \int_A \dtot\nu\mu\,\6\fP\;.
\end{equation} 
Ceci montre que $\tdtot\nu\mu$ satisfait~\eqref{ec1}. De plus, prenant
$A=\Omega$ on voit que $\tdtot\nu\mu$ est int\'egrable. Par cons\'equent
$\tdtot\nu\mu$ est une version de $\econd{X}{\cF_1}$.

Finalement, un $X$ g\'en\'eral peut se d\'ecomposer $X=X^+-X^-$ avec $X^+,
X^-\geqs0$. Soit $Y_1=\econd{X^+}{\cF_1}$ et $Y_2=\econd{X^-}{\cF_1}$. Alors
$Y_1-Y_2$ est $\cF_1$-mesurable et int\'egrable et on a pour tout $A\in\cF_1$ 
\begin{equation}
 \label{ec2:7}
\int_A X\,\6\fP = \int_A X^+\,\6\fP - \int_A X^-\,\6\fP 
= \int_A Y_1\,\6\fP - \int_A Y_2\,\6\fP = \int_A (Y_1-Y_2)\,\6\fP\;.
\end{equation} 
Ceci montre que $Y_1-Y_2$ est une version de  $\econd{X}{\cF_1}$.
\end{proof}

Dans la suite, nous allons en g\'en\'eral ignorer l'existence de plusieurs
versions de l'esp\'e\-rance conditionnelle, puisque des variables al\'eatoires
\'egales presque partout sont indistinguables en pratique.

\begin{example}\hfill
\label{ex_ec1}
\begin{enum}
\item	Supposons que $X$ soit $\cF_1$-mesurable. Alors $\econd{X}{\cF_1}=X$
v\'erifie la d\'efinition. Cela traduit le fait que $\cF_1$ contient d\'ej\`a
toute l'information sur $X$. 

\item	L'autre extr\^eme est le cas de l'ind\'ependance. Rappelons que $X$ est
ind\'ependante de $\cF_1$ si pour tout $A\in\cF_1$ et tout bor\'elien
$B\subset\R$, 
\begin{equation}
 \label{ec3}
\fP\bigpar{\set{X\in B}\cap A} = \prob{X\in B} \fP(A)\;,
\end{equation}
et qu'alors on a $\expec{X\indicator{A}}=\expec{X}\fP(A)$. 
Dans ce cas nous avons $\econd{X}{\cF_1}=\expec{X}$, c'est-\`a-dire qu'en
l'absence de toute information, la meilleure estimation que l'on puisse faire
de $X$ est son esp\'erance. En particulier, on notera que toute variable
al\'eatoire est ind\'ependante de la tribu triviale $\cF_0$, et que par
cons\'equent on aura toujours $\econd{X}{\cF_0}=\expec{X}$.

Pour v\'erifier la premi\`ere condition de la d\'efinition, il suffit
d'observer que $\expec{X}$, \'etant une constante, est mesurable par rapport
\`a la tribu triviale $\cF_0$, donc aussi par rapport \`a $\cF_1$. Pour
v\'erifier la seconde assertion, prenons $A\in\cF_1$. Alors
$\indicator{A}\measurable\cF_1$ et par ind\'ependance 
\begin{equation}
 \label{ec4}
\int_A X\,\6\fP = \expec{X \indicator{A}} = \expec{X} \expec{\indicator{A}} 
= \expec{X} \fP(A) = \int_A \expec{X} \,\6\fP\;.
\end{equation} 

\item	Soit $\Omega_1,\Omega_2,\dots$ une partition de $\Omega$ telle que
$\fP(\Omega_i)$ soit strictement positif pour tout $i$. Soit
$\cF_1=\sigma(\Omega_1,\Omega_2,\dots)$ la tribu engendr\'ee par les $\Omega_i$.
Alors 
\begin{equation}
 \label{ec5}
\econd{X}{\cF_1}(\omega) = \frac{\expec{X\indicator{\Omega_i}}}{\fP(\Omega_i)}
= \frac{1}{\fP(\Omega_i)} \int_{\Omega_i} X\,\6\fP 
\qquad \forall \omega\in\Omega_i\;.
\end{equation} 
Dans ce cas, l'information contenue dans $\cF_1$ sp\'ecifie dans quel
$\Omega_i$ on se trouve, et la meilleure estimation de $X$ est donc sa moyenne
sur $\Omega_i$.

Pour le v\'erifier, observons d'abord que comme $\econd{X}{\cF_1}$ est
constante sur chaque $\Omega_i$, elle est mesurable par rapport \`a $\cF_1$. 
De plus, 
\begin{equation}
 \label{ec6}
\int_{\Omega_i} \econd{X}{\cF_1}\,\6\fP = \expec{X\indicator{\Omega_i}}
= \int_{\Omega_i} X\,\6\fP \;.
\end{equation} 
Comme $\cF_1$ est engendr\'ee par les $\Omega_i$, le r\'esultat suit par
$\sigma$-additivit\'e. 

\item	Consid\'erons une \chaine\ de Markov $\set{X_n}_{n\in\N}$ sur un
ensemble $\cX$, de matrice de transition $P=(p_{i,j})$, et soit $f:\cX\to\R$ une
fonction mesurable. Nous pr\'etendons que pour tout $n\in\N$, 
\begin{equation}
 \label{ec7}
\econd{f(X_{n+1})}{X_n}  = \sum_{j\in\cX} f(j) p_{X_n,j}\;.
\end{equation} 
En effet, cette expression \'etant constante sur tout ensemble o\`u $X_n$ est
constant, elle est mesurable par rapport \`a $\sigma(X_n)$. De plus, en
appliquant~\eqref{ec5} avec la partition
donn\'ee par $\Omega_i=\setsuch{\omega}{X_n=i}$, on a pour $\omega\in\Omega_i$
\begin{align}
\nonumber
\econd{f(X_{n+1})}{X_n}(\omega)  &= 
\frac{\expec{f(X_{n+1})\indexfct{X_n=i}}}{\prob{X_n=i}} \\
\nonumber
&= \sum_{j\in\cX} f(j) \frac{\prob{X_{n+1}=j,X_n=i}}{\prob{X_n=i}} \\
&= \sum_{j\in\cX} f(j) \pcond{X_{n+1}=j}{X_n=i} 
= \sum_{j\in\cX} f(j) p_{i,j}\;.
 \label{ec8}
\end{align} 
\end{enum}
\end{example}


\begin{prop}
L'esp\'erance conditionnelle a les propri\'et\'es suivantes~:\hfill
\begin{enum}
\item	Lin\'earit\'e~: $\econd{aX+Y}{\cF_1} = a\econd{X}{\cF_1} +
\econd{Y}{\cF_1}$. 
\item	Monotonie~: Si $X\leqs Y$ alors $\econd{X}{\cF_1} \leqs
\econd{Y}{\cF_1}$. 
\item	Convergence monotone~: Si $X_n\geqs0$ est une suite
croissante telle que $X_n\nearrow X$ avec $\expec{X}<\infty$ alors
$\econd{X_n}{\cF_1}\nearrow\econd{X}{\cF_1}$. 
\item	In\'egalit\'e de Jensen~: Si $\ph$ est convexe et $\expec{\abs{X}}$ et
$\expec{\abs{\ph(X)}}$ sont finies alors 
\begin{equation}
 \label{ec9} 
\ph(\econd{X}{\cF_1}) \leqs \econd{\ph(X)}{\cF_1}\;.
\end{equation} 
\item	Contraction dans $L^p$ pour $p\geqs 1$~: 
$\expec{\abs{\econd{X}{\cF_1}}^p}\leqs\expec{\abs{X}^p}$. 
\end{enum}
\end{prop}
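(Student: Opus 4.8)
Each of the five properties follows by checking the two defining conditions of conditional expectation from Définition~\ref{def_condesp}: $\cF_1$-measurability and the integral identity~\eqref{ec1}. So in every case I would exhibit an explicit candidate and verify these two points, using Theorem~\ref{thm_mart1} for existence/uniqueness of the objects involved.

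\textbf{Linearity.} Set $Z = a\econd{X}{\cF_1} + \econd{Y}{\cF_1}$. It is $\cF_1$-measurable as a linear combination of $\cF_1$-measurable functions. For $A\in\cF_1$, linearity of the Lebesgue integral and~\eqref{ec1} applied to $X$ and to $Y$ give $\int_A Z\,\6\fP = a\int_A X\,\6\fP + \int_A Y\,\6\fP = \int_A (aX+Y)\,\6\fP$, so $Z$ is a version of $\econd{aX+Y}{\cF_1}$.

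\textbf{Monotonie.} If $X\leqs Y$, set $W = \econd{Y}{\cF_1} - \econd{X}{\cF_1}$ and $A = \set{W<0} \in \cF_1$. By~\eqref{ec1}, $\int_A W\,\6\fP = \int_A (Y-X)\,\6\fP \geqs 0$, while the integrand on the left is $<0$ on $A$; hence $\fP(A)=0$, i.e. $\econd{X}{\cF_1}\leqs\econd{Y}{\cF_1}$ a.s. (This is the same argument already used for uniqueness in the proof of Theorem~\ref{thm_mart1}.)

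\textbf{Convergence monotone.} By monotonie, $Y_n \defby \econd{X_n}{\cF_1}$ is a.s. nondecreasing and bounded above by $\econd{X}{\cF_1}$, so $Y_\infty \defby \lim_n Y_n$ exists a.s., is $\cF_1$-measurable, and $Y_\infty \leqs \econd{X}{\cF_1}$. For $A\in\cF_1$, the classical monotone convergence theorem applied twice gives $\int_A Y_\infty\,\6\fP = \lim_n \int_A Y_n\,\6\fP = \lim_n \int_A X_n\,\6\fP = \int_A X\,\6\fP$, where the middle equality is~\eqref{ec1} for each $X_n$. Hence $Y_\infty$ is a version of $\econd{X}{\cF_1}$, i.e. $\econd{X_n}{\cF_1}\nearrow\econd{X}{\cF_1}$.

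\textbf{Inégalité de Jensen.} A convex $\ph$ on $\R$ is a supremum of countably many affine functions $\ell_k(x)=a_k x + b_k$ (take, e.g., the tangent/support lines at rational points). For each $k$, monotonie and linearity give $\ell_k(\econd{X}{\cF_1}) = a_k\econd{X}{\cF_1}+b_k = \econd{\ell_k(X)}{\cF_1} \leqs \econd{\ph(X)}{\cF_1}$ a.s., the last step by monotonie since $\ell_k(X)\leqs\ph(X)$; the integrability of $\ph(X)$ makes $\econd{\ph(X)}{\cF_1}$ well-defined. Taking the supremum over the countable family $k$ (a countable union of null sets is null) yields $\ph(\econd{X}{\cF_1}) = \sup_k \ell_k(\econd{X}{\cF_1}) \leqs \econd{\ph(X)}{\cF_1}$ a.s. The main subtlety here is precisely this measurable selection of support lines so as to stay within a \emph{countable} family; this is the one step requiring care.

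\textbf{Contraction dans $L^p$.} Apply Jensen with the convex function $\ph(x)=\abs{x}^p$ (convex for $p\geqs1$): $\abs{\econd{X}{\cF_1}}^p \leqs \econd{\abs{X}^p}{\cF_1}$ a.s., provided $\abs{X}^p$ is integrable (if not, the claimed inequality is trivial). Taking expectations and using $\expec{\econd{\abs{X}^p}{\cF_1}} = \expec{\abs{X}^p}$ from Theorem~\ref{thm_mart1}(3) gives $\expec{\abs{\econd{X}{\cF_1}}^p}\leqs\expec{\abs{X}^p}$.

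I expect the routine parts (linearity, monotonie, contraction) to be immediate; the genuine content is the monotone-convergence step, where one must produce the limit via monotonie before invoking ordinary MCT, and the Jensen step, where the obstacle is organizing the affine minorants into a countable family so that the a.s.\ inequalities can be intersected.
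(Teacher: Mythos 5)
Votre démonstration est correcte; le polycopié laisse d'ailleurs cette preuve en exercice, de sorte qu'il n'y a pas d'argument de référence à comparer, mais votre approche (vérification directe des deux conditions de la Définition~\ref{def_condesp} pour la linéarité et la convergence monotone, l'argument d'unicité du Théorème~\ref{thm_mart1} pour la monotonie, puis Jensen via un supremum dénombrable de minorantes affines et la contraction $L^p$ comme corollaire) est exactement la voie standard attendue. Les deux points délicats que vous identifiez — le passage par la monotonie avant le théorème de convergence monotone classique, et le caractère dénombrable de la famille de droites d'appui pour pouvoir intersecter les ensembles négligeables — sont bien traités.
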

%

\goodbreak

Nous laissons la preuve en exercice. 
Le r\'esultat suivant d\'ecrit comment les esp\'erances conditionnelles se
comportent par rapport \`a des sous-tribus. Il dit en r\'esum\'e que c'est
toujours la tribu la plus grossi\`ere qui l'emporte. 

\begin{prop}
\label{prop_econd1}
Si $\cF_1\subset\cF_2$, alors 
\begin{enum}
\item	$\econd{\econd{X}{\cF_1}}{\cF_2} = \econd{X}{\cF_1}$;
\item	$\econd{\econd{X}{\cF_2}}{\cF_1} = \econd{X}{\cF_1}$. 
\end{enum} 
\end{prop}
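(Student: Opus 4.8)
The plan is to reduce both statements to the defining properties of conditional expectation from Definition~\ref{def_condesp} (measurability and the integral identity~\eqref{ec1}) together with the existence-uniqueness result, Theorem~\ref{thm_mart1}. For part~1, the key observation is that $\econd{X}{\cF_1}$ is by construction $\cF_1$-measurable, hence $\cF_2$-measurable since $\cF_1\subset\cF_2$. Then I would invoke the first item of Example~\ref{ex_ec1}: for any random variable $Y$ that is $\cG$-measurable and integrable one has $\econd{Y}{\cG}=Y$. Applying this with $Y=\econd{X}{\cF_1}$ and $\cG=\cF_2$ gives $\econd{\econd{X}{\cF_1}}{\cF_2}=\econd{X}{\cF_1}$ immediately. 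Integrability of $\econd{X}{\cF_1}$ is guaranteed by Theorem~\ref{thm_mart1}(3).

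For part~2, set $Z=\econd{\econd{X}{\cF_2}}{\cF_1}$ and verify that $Z$ satisfies the two conditions characterising $\econd{X}{\cF_1}$. Condition~(i) of Definition~\ref{def_condesp}, namely $Z\measurable\cF_1$, holds by construction of $Z$. For condition~(ii), fix $A\in\cF_1$. By the defining property~\eqref{ec1} of $Z=\econd{\,\cdot\,}{\cF_1}$ applied to the variable $\econd{X}{\cF_2}$, one has $\int_A Z\,\6\fP=\int_A\econd{X}{\cF_2}\,\6\fP$. Now the inclusion $\cF_1\subset\cF_2$ gives $A\in\cF_2$, so the defining property~\eqref{ec1} of $\econd{X}{\cF_2}$ applies to this same set $A$ and yields $\int_A\econd{X}{\cF_2}\,\6\fP=\int_A X\,\6\fP$. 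Chaining the two equalities gives $\int_A Z\,\6\fP=\int_A X\,\6\fP$ for every $A\in\cF_1$, which is exactly~\eqref{ec1}. Since $Z$ is also integrable (again by Theorem~\ref{thm_mart1}(3)), $Z$ is a version of $\econd{X}{\cF_1}$, and the uniqueness statement in Theorem~\ref{thm_mart1}(2) lets me conclude $Z=\econd{X}{\cF_1}$ almost surely.

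I do not anticipate a genuine obstacle here: the whole argument is an exercise in unwinding definitions, and the only point requiring the slightest care is to notice that in part~2 the set $A\in\cF_1$ must be re-interpreted as an element of $\cF_2$ in order to apply the characterising identity for $\econd{X}{\cF_2}$ — this is precisely where the hypothesis $\cF_1\subset\cF_2$ enters. One might also remark, for intuition, that part~1 says refining the conditioning $\sigma$-algebra after the fact changes nothing, while part~2 is the tower property: conditioning on coarser information "wins", consistent with the informal statement preceding the proposition.
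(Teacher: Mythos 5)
Your proof is correct and follows essentially the same route as the paper: part~1 by noting $\econd{X}{\cF_1}\measurable\cF_2$ and invoking the first item of Example~\ref{ex_ec1}, part~2 by chaining the defining identity~\eqref{ec1} over a set $A\in\cF_1\subset\cF_2$ viewed in both $\sigma$-algebras. The only cosmetic difference is the direction of the verification in part~2 (you check that $\econd{\econd{X}{\cF_2}}{\cF_1}$ is a version of $\econd{X}{\cF_1}$, the paper checks the reverse), which is immaterial given the uniqueness statement of Théorème~\ref{thm_mart1}.
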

\begin{proof}
Pour montrer la premi\`ere identit\'e, il suffit de noter que
$\econd{X}{\cF_1}\measurable\cF_2$ et d'appliquer le premier point de
l'exemple~\ref{ex_ec1}. 
Pour la seconde relation, on observe que pour tout $A\in\cF_1\subset\cF_2$, 
\begin{equation}
 \label{ec10}
\int_A \econd{X}{\cF_1} \,\6\fP = \int_A X \,\6\fP 
= \int_A \econd{X}{\cF_2} \,\6\fP\;,
\end{equation} 
la premi\`ere \'egalit\'e suivant de
$\econd{X}{\cF_1}\measurable\cF_1$ et la seconde de 
$\econd{X}{\cF_2}\measurable\cF_2$.
\end{proof}

Le r\'esultat suivant montre que les variables al\'eatoires $\cF_1$-mesurables
se comportent comme des constantes relativement aux esp\'erances conditionnelles
par rapport \`a $\cF_1$. 

\begin{theorem}
\label{thm_ec2}
Si $X\measurable\cF_1$ et\/ $\expec{\abs{Y}}, \expec{\abs{XY}} <  \infty$,
alors 
\begin{equation}
 \label{ec11}
\econd{XY}{\cF_1} = X \econd{Y}{\cF_1}\;. 
\end{equation} 
\end{theorem}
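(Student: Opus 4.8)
The plan is the usual measure-theoretic bootstrap: establish the identity first for $X$ an indicator of a set in $\cF_1$, then for nonnegative $\cF_1$-measurable simple $X$, then for arbitrary nonnegative $\cF_1$-measurable $X$ (and $Y$), and finally for general $X$ and $Y$ by splitting into positive and negative parts. Throughout, the measurability requirement of Definition \ref{def_condesp} is free: since $X\measurable\cF_1$ and $\econd{Y}{\cF_1}\measurable\cF_1$, the product $X\econd{Y}{\cF_1}$ is $\cF_1$-measurable, so condition 1 holds automatically and only condition 2, namely $\int_A XY\,\6\fP = \int_A X\econd{Y}{\cF_1}\,\6\fP$ for every $A\in\cF_1$, has to be checked.

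For the base case, take $X=\indicator{B}$ with $B\in\cF_1$. Then for any $A\in\cF_1$ we have $A\cap B\in\cF_1$, so the defining property \eqref{ec1} of $\econd{Y}{\cF_1}$ gives
\[
\int_A \indicator{B}Y\,\6\fP = \int_{A\cap B} Y\,\6\fP = \int_{A\cap B}\econd{Y}{\cF_1}\,\6\fP = \int_A \indicator{B}\econd{Y}{\cF_1}\,\6\fP.
\]
By linearity of the integral and of the conditional expectation, the identity extends to every nonnegative simple $\cF_1$-measurable $X=\sum_{i=1}^k a_i\indicator{B_i}$ with $B_i\in\cF_1$.

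Next I would treat a general $\cF_1$-measurable $X\geqs 0$ together with $Y\geqs 0$, without yet invoking any integrability: the identity is then an equality in $[0,\infty]$. Pick nonnegative simple $\cF_1$-measurable functions $X_n\nearrow X$. Since $Y\geqs 0$ forces $\econd{Y}{\cF_1}\geqs 0$ (monotonicity of conditional expectation), we have $X_nY\nearrow XY$ and $X_n\econd{Y}{\cF_1}\nearrow X\econd{Y}{\cF_1}$ pointwise, so the monotone convergence theorem applied to both sides of the identity for $X_n$ yields $\int_A XY\,\6\fP = \int_A X\econd{Y}{\cF_1}\,\6\fP$ for all $A\in\cF_1$. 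For the general case, write $X=X^+-X^-$, $Y=Y^+-Y^-$ and expand $XY=X^+Y^+-X^+Y^--X^-Y^++X^-Y^-$; the hypothesis $\expec{\abs{XY}}<\infty$ forces each $\econd{X^{\pm}Y^{\pm}}{\cF_1}$ to be finite a.s., so the nonnegative case applies to each of the four terms, and recombining by linearity gives $X\econd{Y}{\cF_1}=\econd{XY}{\cF_1}$, which is integrable by $\expec{\abs{XY}}<\infty$.

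The main obstacle I anticipate is the bookkeeping around integrability: $X$ itself is not assumed integrable, so the monotone-convergence step must be carried out on the products $X_nY$ (controlled in the limit by the integrable $XY$) rather than on $X_n$ alone, and the four-term decomposition must never create an $\infty-\infty$. The cleanest way to sidestep this is exactly the order above: first prove the statement for $X,Y\geqs 0$ as an identity in $[0,\infty]$ with no finiteness assumption, and only afterwards use $\expec{\abs{Y}}<\infty$ and $\expec{\abs{XY}}<\infty$ to legitimise the subtractions.
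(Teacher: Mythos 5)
Your proposal is correct and follows essentially the same route as the paper's proof: indicators $\indicator{B}$ with $B\in\cF_1$, then simple functions by linearity, then monotone convergence for $X,Y\geqs0$, and finally decomposition into positive and negative parts. The only difference is that you spell out the integrability bookkeeping (treating the nonnegative case as an identity in $[0,\infty]$ and using $\expec{\abs{XY}}<\infty$ to justify the subtractions), which the paper leaves implicit.
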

\begin{proof}
Le membre de droite \'etant $\cF_1$-mesurable, la premi\`ere condition de la
d\'efinition est v\'erifi\'ee. Pour v\'erifier la seconde condition, nous
commen\c cons par consid\'erer le cas $X=\indicator{B}$ avec $B\in\cF_1$. Alors
pour tout $A\in\cF_1$ on a 
\begin{equation}
 \label{ec11:1}
\int_A \indicator{B}\, \econd{Y}{\cF_1} \,\6\fP 
= \int_{A\cap B} \econd{Y}{\cF_1} \,\6\fP 
= \int_{A\cap B} Y \,\6\fP 
= \int_A \indicator{B}\, Y \,\6\fP\;.
\end{equation} 
Ceci montre que la seconde condition est v\'erifi\'ee pour des fonctions
indicatrices. Par lin\'earit\'e, elle est aussi vraie pour des fonctions
\'etag\'ees $\sum a_i\indicator{B_i}$. Le th\'eor\`eme de convergence monotone
permet d'\'etendre le r\'esultat aux variables $X, Y\geqs 0$. Enfin, pour le
cas g\'en\'eral, il suffit de d\'ecomposer $X$ et $Y$ en leurs parties positive
et n\'egative. 
\end{proof}

Enfin, le r\'esultat suivant montre que l'esp\'erance conditionnelle peut
\^etre consid\'er\'ee comme une projection de
$L^2(\cF)=\setsuch{Y\measurable\cF}{\expec{Y^2}<\infty}$ dans $L^2(\cF_1)$. 

\begin{theorem}
\label{thm_ec3}
Si $\expec{X^2}<\infty$, alors $\econd{X}{\cF_1}$ est la variable
$Y\measurable\cF_1$ qui minimise $\expec{(X-Y)^2}$.  
\end{theorem}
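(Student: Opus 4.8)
The plan is to show that for any $Y \measurable \cF_1$ with $\expec{Y^2} < \infty$, one has $\expec{(X-Y)^2} \geqs \expec{(X-\econd{X}{\cF_1})^2}$, with equality only when $Y = \econd{X}{\cF_1}$ almost surely. First I would introduce the shorthand $Z = \econd{X}{\cF_1}$; note that $Z \measurable \cF_1$ and, by the contraction property in $L^2$ (the case $p=2$ of the last item of the Proposition above, or Jensen applied to $\ph(x)=x^2$), $\expec{Z^2} \leqs \expec{X^2} < \infty$, so $Z \in L^2(\cF_1)$ and the minimisation makes sense. The key algebraic step is the orthogonal decomposition
\begin{equation}
\label{ec_decomp}
X - Y = (X - Z) + (Z - Y)\;,
\end{equation}
where $Z - Y \measurable \cF_1$. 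Squaring and taking expectations gives
\begin{equation}
\label{ec_expand}
\expec{(X-Y)^2} = \expec{(X-Z)^2} + 2\,\expec{(X-Z)(Z-Y)} + \expec{(Z-Y)^2}\;.
\end{equation}

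The heart of the argument is that the cross term vanishes. Set $W = Z - Y$, which is $\cF_1$-measurable. I would like to say $\expec{(X-Z)W} = 0$. If $W$ were additionally bounded (or an indicator), this follows directly: for $W = \indicator{A}$ with $A \in \cF_1$ we have $\expec{(X-Z)\indicator{A}} = \int_A X\,\6\fP - \int_A Z\,\6\fP = 0$ by the defining property \eqref{ec1} of conditional expectation; by linearity this extends to simple $\cF_1$-measurable functions. To handle general $W \in L^2(\cF_1)$ one approximates: since $X - Z \in L^2$ and $W \in L^2$, the product $(X-Z)W$ is integrable by Cauchy--Schwarz, and truncating $W$ to $W_n = W\indicator{|W|\leqs n}$ gives $\expec{(X-Z)W_n} = 0$ for each $n$ (each $W_n$ is bounded and $\cF_1$-measurable, so it is an $L^1$-limit of simple functions on which the identity holds, or one invokes Theorem~\ref{thm_ec2} with the roles reading $\econd{(X-Z)W}{\cF_1} = W\,\econd{X-Z}{\cF_1} = 0$); then $(X-Z)W_n \to (X-Z)W$ in $L^1$ by dominated convergence with dominating function $|X-Z|\,|W| \in L^1$, so $\expec{(X-Z)W} = 0$.

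With the cross term eliminated, \eqref{ec_expand} reduces to
\begin{equation}
\label{ec_pythag}
\expec{(X-Y)^2} = \expec{(X-Z)^2} + \expec{(Z-Y)^2} \geqs \expec{(X-Z)^2}\;,
\end{equation}
which shows $Z = \econd{X}{\cF_1}$ achieves the minimum. Moreover equality forces $\expec{(Z-Y)^2} = 0$, i.e. $Y = Z$ almost surely, so the minimiser is unique (up to a null set), consistent with the uniqueness of conditional expectation established in Theorem~\ref{thm_mart1}. I expect the only delicate point to be the vanishing of the cross term for unbounded $\cF_1$-measurable $W$; everything else is the standard Hilbert-space projection computation. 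An alternative and cleaner route avoiding the truncation is to invoke Theorem~\ref{thm_ec2} directly: $\econd{(X-Z)W}{\cF_1} = W\,(\econd{X}{\cF_1} - Z) = 0$ since $W \measurable \cF_1$, hence by the first item of Theorem~\ref{thm_mart1} (item 3, $\expec{\econd{\cdot}{\cF_1}} = \expec{\cdot}$) we get $\expec{(X-Z)W} = \expec{\econd{(X-Z)W}{\cF_1}} = 0$ — provided one has checked $\expec{|(X-Z)W|} < \infty$, which again is Cauchy--Schwarz.
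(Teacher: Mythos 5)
Your proof is correct and follows essentially the same route as the paper: orthogonality of $X-\econd{X}{\cF_1}$ against $L^2(\cF_1)$ via Th\'eor\`eme~\ref{thm_ec2} (plus $\expec{\econd{\cdot}{\cF_1}}=\expec{\cdot}$), then expansion of the square. The truncation detour is unnecessary since the integrability needed for Th\'eor\`eme~\ref{thm_ec2} is already guaranteed by Cauchy--Schwarz, which is exactly the shortcut you note at the end and the one the paper uses.
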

\begin{proof}
Pour tout $Z\in L^2(\cF_1)$, le th\'eor\`eme pr\'ec\'edent montre que 
$Z\econd{X}{\cF_1}=\econd{ZX}{\cF_1}$. Prenant l'esp\'erance, on obtient 
\begin{equation}
 \label{ec12}
\bigexpec{Z\econd{X}{\cF_1}} = \bigexpec{\econd{ZX}{\cF_1}} = \expec{ZX}\;, 
\end{equation} 
ou encore 
\begin{equation}
 \label{ec13}
\bigexpec{Z\brak{X-\econd{X}{\cF_1}}} = 0\;. 
\end{equation} 
Si $Y=\econd{X}{\cF_1}+Z\measurable\cF_1$, alors 
\begin{equation}
 \label{ec14}
\bigexpec{(X-Y)^2} 
= \bigexpec{(X-\econd{X}{\cF_1}-Z)^2}
= \bigexpec{(X-\econd{X}{\cF_1})^2} + \bigexpec{Z^2}\;,
\end{equation} 
qui est minimal quand $Z=0$. 
\end{proof}


\section{Exercices}
\label{sec_exo_expcond}

\begin{exercice}
\label{exo_ec1} 
Dans une exp\'erience consistant \`a jeter deux t\'etra\`edres parfaitement
sym\'e\-triques, dont les faces sont num\'erot\'ees de $1$ \`a $4$, on
consid\`ere
les variables al\'eatoires $X$, \'egale \`a la somme des points, et $Y$, \'egale
\`a leur diff\'erence (en valeur absolue). 

\begin{enum}
\item	Sp\'ecifier un espace probabilis\'e permettant de d\'ecrire cette
exp\'erience.
\item	D\'eterminer la loi conjointe de $X$ et $Y$ ainsi que leurs
esp\'erances.
\item	Calculer $\econd{X}{Y}$ et $\econd{Y}{X}$.
\end{enum}
\end{exercice}

\begin{exercice}
\label{exo_ec2}
On jette un d\'e sym\'etrique, puis on jette une pi\`ece de monnaie autant de
fois que le d\'e indique de points. Soit $X$ le nombre de Pile obtenus.
D\'eterminer $\expec{X}$.
\end{exercice}

\begin{exercice}
\label{exo_ec3}
Soit $(\Omega,\cF,\fP)$ un espace probabilis\'e, et $\cF_1$ une sous-tribu de
$\cF$. Pour tout $A\in\cF$, on pose
$\pecond{A}{\cF_1}=\econd{\indicator{A}}{\cF_1}$. Montrer
l'in\'egalit\'e de
Bienaym\'e--Chebyshev
\[
\bigpecond{\set{\abs{X}\geqs a}}{\cF_1} \leqs \frac1{a^2} \econd{X^2}{\cF_1}\;.
\]
\end{exercice}

\begin{exercice}
\label{exo_ec4}
Soient $X, Y$ des variables al\'eatoires r\'eelles int\'egrables telles que 
$XY$ soit \'egalement int\'egrable. 
Montrer les implications~: \\
$X, Y$ ind\'ependantes $\Rightarrow \econd{Y}{X}=\expec{Y} \Rightarrow
\expec{XY}=\expec{X}\expec{Y}$. 

\noindent
{\bf Indication~:} Commencer par consid\'erer des fonctions indicatrices.
\end{exercice}

\begin{exercice}
\label{exo_ec5}
Soient $X$ et $Y$ des variables al\'eatoires \`a valeurs dans $\set{-1,0,1}$.
Exprimer les trois conditions de l'exercice~\ref{exo_ec4} \`a l'aide de la loi
conjointe de $X$ et $Y$. Donner des contre-exemples aux implications inverses. 

\noindent
{\bf Indication~:} On peut supposer $\expec{Y}=0$.
\end{exercice}

\begin{exercice}
\label{exo_ec6}
Soit $(\Omega,\cF,\fP)$ un espace probabilis\'e, et $\cF_1\subset\cF_2$ des
sous-tribus de $\cF$. 
\begin{enum}
\item	Montrer que 
\[
\bigexpec{\brak{X-\econd{X}{\cF_2}}^2}
+ \bigexpec{\brak{\econd{X}{\cF_2}-\econd{X}{\cF_1}}^2}
= \bigexpec{\brak{X-\econd{X}{\cF_1}}^2}
\]
\item	On pose $\varcond{X}{\cF_1}=\econd{X^2}{\cF_1} - \econd{X}{\cF_1}^2$.
Montrer que 
\[
\Variance(X) = \bigexpec{\varcond{X}{\cF_1}} +
\Variance\bigpar{\econd{X}{\cF_1}}\;.
\]
\item	Soit $Y_1, Y_2, \dots$ une suite de variables al\'eatoires i.i.d.\
d'esp\'erance $\mu$ et de variance $\sigma^2$. Soit $N$ une variable
al\'eatoire \`a valeurs dans $\N$, ind\'ependante de tous les $Y_i$. Soit
finalement $X=Y_1+Y_2+\dots+Y_N$. Montrer que 
\[
\Variance(X) = \sigma^2 \expec{N} + \mu^2 \Variance(N)\;.
\]
\item	D\'eterminer la variance de la variable al\'eatoire $X$ de
l'exercice~\ref{exo_ec2}.
\end{enum}
\end{exercice}

\begin{exercice} 
\label{exo_ec7}
Soit $(\Omega,\cF,\fP)$ un espace probabilis\'e, et $\cG$ une sous-tribu de
$\cF$. On consid\`ere deux variables al\'eatoires $X$ et $Y$ telles que 
\[
\econd{Y}{\cG}=X
\qquad\text{et}\qquad 
\expec{X^2}=\expec{Y^2}
\]

\begin{enum}
\item	Calculer $\varcond{Y-X}{\cG}$. 
\item	En d\'eduire $\Variance(Y-X)$. 
\item	Que peut-on en d\'eduire sur la relation entre $X$ et $Y$?
\end{enum}
\end{exercice}

\begin{exercice}[Le processus ponctuel de Poisson]
\label{exo_ec8}
Un processus ponctuel de Poisson d'intensit\'e $\lambda>0$ est un processus
$\set{N_t}_{t\geqs0}$ tel que 
\begin{enum}
\renewcommand{\labelenumi}{\roman{enumi}.}
\item	$N_0 = 0$;
\item	pour $t>s\geqs 0$, $N_t-N_s$ est ind\'ependant de $N_s$;
\item	pour $t>s\geqs 0$, $N_t-N_s$ suit une loi de Poisson de param\`etre
$\lambda(t-s)$: 
\[
\prob{N_t-N_s = k} = \e^{-\lambda(t-s)} \frac{(\lambda (t-s))^k}{k!}\;, 
\qquad k \in \N\;. 
\]
\end{enum}
On se donne des variables al\'eatoires i.i.d.\ $\set{\xi_k}_{k\in\N}$ \`a
valeurs dans $\N$ et de carr\'e int\'egrables. Soit 
\[
X_t = \sum_{k=1}^{N_t} \xi_k\;.
\]

\begin{enum}
\item	Calculer $\expec{X_t}$.
\item	Calculer $\Variance(X_t)$.  
\end{enum}
\end{exercice}


\chapter{Martingales}
\label{chap_mart}

Le nom \defwd{martingale}\/ est synonyme de \defwd{jeu \'equitable}\/,
c'est-\`a-dire d'un jeu o\`u le gain que l'on peut esp\'erer faire en tout
temps ult\'erieur est \'egal \`a la somme gagn\'ee au moment pr\'esent. En
probabilit\'es, on appelle donc martingale un processus stochastique
$\set{X_n}_n$ tel que l'esp\'erance conditionnelle $\econd{X_m}{X_n}$ est
\'egale \`a $X_n$ pour tout $m\geqs n$. Les martingales, ainsi que leurs
variantes les sous-martingales et les surmartingales, jouissent de nombreuses
propri\'et\'es qui les rendent tr\`es utiles dans l'\'etude de processus
stochastiques plus g\'en\'eraux. Nous allons voir quelques-unes de ces
propri\'et\'es dans ce chapitre et les suivants. 


\section{D\'efinitions et exemples}
\label{sec_mdef}

\begin{definition}[Martingale, sous- et surmartingale]
\label{def_martingale}
Soit $(\Omega,\cF,\set{\cF_n}_n,\fP)$ un espace probabilis\'e filtr\'e. Une
\defwd{martingale}\/ par rapport \`a la filtration $\set{\cF_n}_n$ est un
processus stochastique $\set{X_n}_{n\in\N}$ tel que 
\begin{enum}
\item	$\expec{\abs{X_n}}<\infty$ pour tout $n\in\N$;
\item	$\set{X_n}_n$ est adapt\'e \`a la filtration $\set{\cF_n}_n$;
\item	$\econd{X_{n+1}}{\cF_n} = X_n$ pour tout $n\in\N$.
\end{enum}
Si la derni\`ere condition est rempla\c c\'ee par $\econd{X_{n+1}}{\cF_n}
\leqs X_n$ on dit que $\set{X_n}_n$ est une\/ \defwd{surmartingale}, et si elle
est rempla\c c\'ee par $\econd{X_{n+1}}{\cF_n} \geqs X_n$ on dit que c'est
une\/ \defwd{sous-martingale}\/.
\end{definition}

On notera qu'en termes de jeu, une surmartingale est d\'efavorable au joueur,
alors qu'une sous-martingale lui est favorable (la terminologie vient de la
notion de fonction sous-harmonique).

\begin{example}\hfill
\label{ex_martingale} 
\begin{enum}
\item	Soit $\set{X_n}_n$ la marche al\'eatoire sym\'etrique sur $\Z$, et soit
$\cF_n=\sigma(X_0,\dots,X_n)$ la filtration canonique. On remarque que
$X_{n+1}-X_n$ est ind\'ependant de $\cF_n$, ce qui permet d'\'ecrire 
\begin{equation}
 \label{mart1}
\econd{X_{n+1}}{\cF_n} =  
\econd{X_n}{\cF_n} + \econd{X_{n+1}-X_n}{\cF_n} =  
X_n + \expec{X_{n+1}-X_n} = X_n\;,
\end{equation} 
montrant que la marche al\'eatoire sym\'etrique est une martingale. Cela
traduit le fait que si l'on sait o\`u se trouve la marche au temps $n$, alors
la meilleure estimation de sa position aux temps ult\'erieurs est donn\'ee par
celle au temps $n$.

\item	Soit $\cX\subset\R$ un ensemble d\'enombrable, et soit $\set{X_n}_n$
une \chaine\ de Markov sur $\cX$, de matrice de transition
$P=(p_{i,j})_{i,j\in\cX}$. Alors l'\'equation~\eqref{ec7} implique 
\begin{equation}
 \label{mart2}
\econd{X_{n+1}}{\cF_n} = \sum_{j\in\cX} j p_{X_n,j}\;.
\end{equation} 
Par cons\'equent la \chaine\ est une martingale si $\sum_j jp_{i,j}=i$ pour
tout $i\in\cX$, une surmartingale si $\sum_j jp_{i,j}\leqs i$ pour
tout $i\in\cX$, et une sous-martingale si $\sum_j jp_{i,j}\geqs i$ pour
tout $i\in\cX$. Comme $\sum_j p_{i,j}=1$, la condition d'\^etre une martingale
peut aussi s'\'ecrire $\sum_j (j-i)p_{i,j}=0$ (respectivement $\leqs 0$ pour
une surmartingale et $\geqs 0$ pour une sous-martingale).

\item	{\bf Urne de Polya~:} On consid\`ere une urne contenant $r$ boules
rouges et $v$ boules vertes. De mani\`ere r\'ep\'et\'ee, on tire une boule de
l'urne, puis on la remet en ajoutant un nombre fix\'e $c$ de boules de la
m\^eme couleur. Soit $r_n$ le nombre de boules rouges apr\`es le $n$i\`eme
tirage, $v_n$ le nombre de boules vertes, et $X_n=r_n/(r_n+v_n)$ la proportion
de boules rouges. On aura donc 
\begin{equation}
 \label{mart2b}
X_{n+1} = 
\begin{cases}
\dfrac{r_n+c}{r_n+v_n+c} & \text{avec probabilit\'e
$\dfrac{r_n}{r_n+v_n}=X_n$\;,}\\
\\
\dfrac{r_n}{r_n+v_n+c} & \text{avec probabilit\'e
$\dfrac{v_n}{r_n+v_n}=1-X_n$\;,}
\end{cases} 
\end{equation} 
de sorte que 
\begin{equation}
 \label{mart2c}
\econd{X_{n+1}}{X_n} = 
 \frac{r_n+c}{r_n+v_n+c}  \, \dfrac{r_n}{r_n+v_n} + 
 \frac{r_n}{r_n+v_n+c} \, \dfrac{v_n}{r_n+v_n} 
= \frac{r_n}{r_n+v_n} = X_n\;.
\end{equation} 
La suite $\set{X_n}_n$ est donc une martingale (par rapport \`a la filtration
canonique).
\end{enum}
\end{example}


\section{In\'egalit\'es}
\label{sec_min}

Commen\c cons par v\'erifier que le fait de ne faire intervenir que des temps
cons\'ecutifs $n$ et $n+1$ dans la d\'efinition n'est pas une restriction, mais
que la monotonie s'\'etend \`a tous les temps.

\begin{prop}
\label{prop_mart1}
Si $\set{X_n}_n$ est une surmartingale (respectivement une sous-martingale,
une martingale), alors 
\begin{equation}
 \label{mart3}
\econd{X_n}{\cF_m} \leqs X_m 
\qquad \forall n>m\geqs0
\end{equation}  
(respectivement $\econd{X_n}{\cF_m} \geqs X_m$, $\econd{X_n}{\cF_m} = X_m$). 
\end{prop}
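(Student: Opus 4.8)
Le plan est de proc\'eder par r\'ecurrence sur l'\'ecart $n-m$, \`a $m$ fix\'e, et de ne traiter que le cas des surmartingales, les cas des sous-martingales et des martingales s'en d\'eduisant en renversant les in\'egalit\'es (respectivement en les rempla\c cant par des \'egalit\'es) \`a chaque \'etape. Le cas de base $n=m+1$ est exactement la troisi\`eme condition de la D\'efinition~\ref{def_martingale}, soit $\econd{X_{m+1}}{\cF_m}\leqs X_m$.

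Pour l'h\'er\'edit\'e, je supposerais $\econd{X_n}{\cF_m}\leqs X_m$ pour un $n\geqs m+1$ donn\'e. Comme la filtration est croissante, on a $\cF_m\subset\cF_n$, de sorte que la propri\'et\'e de tour (Proposition~\ref{prop_econd1}, point~2) donne
\begin{equation*}
\econd{X_{n+1}}{\cF_m} = \bigecond{\econd{X_{n+1}}{\cF_n}}{\cF_m}\;.
\end{equation*}
La propri\'et\'e de surmartingale fournit $\econd{X_{n+1}}{\cF_n}\leqs X_n$; en appliquant successivement la monotonie de l'esp\'erance conditionnelle (deuxi\`eme point de la proposition \'enon\c cant ses propri\'et\'es) puis l'hypoth\`ese de r\'ecurrence, on obtient
\begin{equation*}
\bigecond{\econd{X_{n+1}}{\cF_n}}{\cF_m} \leqs \econd{X_n}{\cF_m} \leqs X_m\;,
\end{equation*}
ce qui ach\`eve la r\'ecurrence et donc la preuve.

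Je ne vois pas de v\'eritable obstacle ici: l'argument n'est que la combinaison de la propri\'et\'e de tour et de la monotonie de l'esp\'erance conditionnelle, d\'ej\`a \'etablies. Le seul point auquel il faut prendre garde est d'invoquer la propri\'et\'e de tour dans le bon sens, c'est-\`a-dire en laissant \lq\lq gagner\rq\rq\ la tribu la plus grossi\`ere $\cF_m$, et de s'assurer au passage que toutes les esp\'erances conditionnelles manipul\'ees sont bien d\'efinies, ce qui r\'esulte de la condition d'int\'egrabilit\'e $\expec{\abs{X_n}}<\infty$ incluse dans la d\'efinition des (sur)martingales.
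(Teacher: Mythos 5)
Votre preuve est correcte et suit essentiellement la m\^eme d\'emarche que celle du polycopi\'e~: cas de base $n=m+1$ donn\'e par la d\'efinition, puis r\'ecurrence via la propri\'et\'e de tour (Proposition~\ref{prop_econd1}) combin\'ee \`a la monotonie de l'esp\'erance conditionnelle. Le traitement des cas sous-martingale et martingale par renversement des in\'egalit\'es est \'equivalent \`a l'argument du texte (passage \`a $-X_n$ puis combinaison des deux cas).
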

\begin{proof}
Consid\'erons le premier cas. Le r\'esultat suit de la d\'efinition si $n=m+1$.
Si $n=m+k$ avec $k\geqs2$, alors par la Proposition~\ref{prop_econd1},
\begin{equation}
 \label{mart3:1}
\econd{X_{m+k}}{\cF_m} 
= \econd{\econd{X_{m+k}}{\cF_{m+k-1}}}{\cF_m} 
\leqs \econd{X_{m+k-1}}{\cF_m}\;.
\end{equation} 
Le r\'esultat suit alors par r\'ecurrence. Si $\set{X_n}_n$ est une
sous-martingale, il suffit d'observer que $\set{-X_n}_n$ est une surmartingale
et d'appliquer la lin\'earit\'e. Enfin, si $\set{X_n}_n$ est une martingale,
alors c'est \`a la fois une surmartingale et une sous-martingale, d'o\`u la
conclusion (ce raisonnement en trois pas est typique pour les martingales). 
\end{proof}

Un deuxi\`eme type important d'in\'egalit\'e s'obtient en appliquant une
fonction convexe \`a un processus. Nous \'enoncerons simplement les
r\'esultats dans l'un des trois cas sur- sous- ou martingale, mais souvent
d'autres in\'egalit\'es s'obtiennent dans les autres cas par lin\'earit\'e.

\begin{prop}\hfill
\label{prop_mart2} 
\begin{enum}
\item	Soit\/ $X_n$ une martingale par rapport \`a $\cF_n$, et soit $\ph$ une
fonction convexe telle que\/ $\expec{\abs{\ph(X_n)}}<\infty$  pour tout $n$.
Alors\/ $\ph(X_n)$ est une sous-martingale par rapport \`a\/~$\cF_n$. 
\item	Soit\/ $X_n$ une sous-martingale par rapport \`a\/ $\cF_n$ et $\ph$ une
fonction convexe croissante telle que\/ $\expec{\abs{\ph(X_n)}}<\infty$ pour
tout $n$. Alors\/ $\ph(X_n)$ est une sous-martingale par rapport \`a\/ $\cF_n$. 
\end{enum}
\end{prop}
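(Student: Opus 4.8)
The strategy is to apply the conditional Jensen inequality (point 4 of the Proposition listed just before Proposition~\ref{prop_mart2}) together with the monotonicity of conditional expectation, treating the two cases separately but along parallel lines. In both cases the integrability hypothesis $\expec{\abs{\ph(X_n)}}<\infty$ guarantees that $\ph(X_n)$ is itself a legitimate integrable random variable, and adaptedness is immediate since $X_n\measurable\cF_n$ and $\ph$ is (Borel-)measurable, so $\ph(X_n)\measurable\cF_n$. Thus only the submartingale inequality $\econd{\ph(X_{n+1})}{\cF_n}\geqs\ph(X_n)$ needs checking.

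For point 1, let $X_n$ be a martingale. By conditional Jensen applied with the sub-$\sigma$-algebra $\cF_n$,
\begin{equation}
\label{plan1}
\econd{\ph(X_{n+1})}{\cF_n} \geqs \ph\bigpar{\econd{X_{n+1}}{\cF_n}} = \ph(X_n)\;,
\end{equation}
where the last equality uses the martingale property $\econd{X_{n+1}}{\cF_n}=X_n$. This already gives the submartingale property, and convexity of $\ph$ alone suffices here.

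For point 2, let $X_n$ be a submartingale, so $\econd{X_{n+1}}{\cF_n}\geqs X_n$, and let $\ph$ be convex \emph{and} nondecreasing. Conditional Jensen still gives $\econd{\ph(X_{n+1})}{\cF_n}\geqs\ph\bigpar{\econd{X_{n+1}}{\cF_n}}$; now, since $\ph$ is nondecreasing and $\econd{X_{n+1}}{\cF_n}\geqs X_n$ pointwise, we conclude $\ph\bigpar{\econd{X_{n+1}}{\cF_n}}\geqs\ph(X_n)$, and chaining the two inequalities yields $\econd{\ph(X_{n+1})}{\cF_n}\geqs\ph(X_n)$. The only subtlety worth a word is why the monotonicity hypothesis on $\ph$ is genuinely needed in the second case (and not in the first): without it, applying $\ph$ to the inequality $\econd{X_{n+1}}{\cF_n}\geqs X_n$ need not preserve the direction of the inequality. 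That is the single point where the argument could go wrong if one is careless, so I would state it explicitly; the rest is a direct two-line invocation of results already established.
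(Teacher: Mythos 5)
Votre preuve est correcte et suit exactement la m\^eme d\'emarche que celle du texte~: l'in\'egalit\'e de Jensen conditionnelle donne $\econd{\ph(X_{n+1})}{\cF_n}\geqs\ph(\econd{X_{n+1}}{\cF_n})$, puis on conclut par la propri\'et\'e de martingale dans le premier cas et par la croissance de $\ph$ combin\'ee \`a la propri\'et\'e de sous-martingale dans le second. Votre remarque explicite sur la n\'ecessit\'e de la monotonie de $\ph$ au point 2 est un compl\'ement bienvenu, mais le c{\oe}ur de l'argument est identique.
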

\begin{proof}\hfill
\begin{enum}
\item	Par l'in\'egalit\'e de Jensen, 
$\econd{\ph(X_{n+1})}{\cF_n} \geqs \ph(\econd{X_{n+1}}{\cF_n}) = \ph(X_n)$. 
\item	Par l'in\'egalit\'e de Jensen, 
$\econd{\ph(X_{n+1})}{\cF_n} \geqs \ph(\econd{X_{n+1}}{\cF_n}) \geqs \ph(X_n)$. 
\qed
\end{enum}
\renewcommand{\qed}{}
\end{proof}

Plusieurs cas particuliers de fonctions $\ph$ joueront un r\^ole dans la suite.
Pour deux nombres r\'eels $a$ et $b$, nous notons $a\wedge b$ leur minimum et
$a\vee b$ leur maximum. De plus, $a^+=a\vee0$ d\'enote la partie positive de
$a$ et $a^-=(-a)\vee 0$ sa partie n\'egative.

\begin{cor} \hfill
\label{cor_mart}
\begin{enum}
\item	Si\/ $p\geqs1$ et\/ $X_n$ est une martingale telle que\/
$\expec{\abs{X_n}^p}<\infty$ pour tout\/ $n$, alors\/ $\abs{X_n}^p$ est une
sous-martingale.
\item	Si $X_n$ est une sous-martingale, alors $(X_n-a)^+$ est
une sous-martingale.
\item	Si $X_n$ est une surmartingale, alors $X_n\wedge a$ est une
surmartingale.
\end{enum}
\end{cor}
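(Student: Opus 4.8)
The plan is to obtain all three assertions as immediate consequences of Proposition~\ref{prop_mart2}, by feeding it a well-chosen convex function in each case and checking that the (mild) integrability hypothesis holds. For~(1) I would take $\ph(x)=\abs{x}^p$, which is convex on $\R$ for every $p\geqs1$; the assumption $\expec{\abs{X_n}^p}<\infty$ is exactly the hypothesis $\expec{\abs{\ph(X_n)}}<\infty$ of the first part of Proposition~\ref{prop_mart2}, so that statement directly gives that $\abs{X_n}^p$ is a sous-martingale. For~(2) I would take $\ph(x)=(x-a)^+$, which is convex and croissante; since $\set{X_n}_n$ is a sous-martingale we already have $\expec{\abs{X_n}}<\infty$, and the elementary bound $0\leqs(X_n-a)^+\leqs\abs{X_n}+\abs{a}$ gives $\expec{\abs{\ph(X_n)}}<\infty$, so the second part of Proposition~\ref{prop_mart2} applies and $(X_n-a)^+$ is a sous-martingale.

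For~(3), since Proposition~\ref{prop_mart2} is phrased for convex functions, I would first pass to $Y_n=-X_n$, which is a sous-martingale. Applying the same reasoning as in~(2) with the convex croissante function $\ph(y)=y\vee(-a)$, together with the bound $\abs{Y_n\vee(-a)}\leqs\abs{Y_n}+\abs{a}$, shows that $Y_n\vee(-a)$ is a sous-martingale. But $Y_n\vee(-a)=(-X_n)\vee(-a)=-(X_n\wedge a)$, so $-(X_n\wedge a)$ is a sous-martingale, which by linearity means precisely that $X_n\wedge a$ is a surmartingale.

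I expect no genuine obstacle here: the proof is a matter of choosing the right $\ph$ and carrying out three routine verifications, namely the convexity (and, for~(2) and~(3), the monotonicity) of the functions involved, the integrability estimates above, and the fact that composing an adapted process with a deterministic Borel function again yields an adapted process, so hypothesis~(ii) of Definition~\ref{def_martingale} is automatic. The one point to keep in mind is in~(3): one must route the argument through $-X_n$ rather than attempt to apply Proposition~\ref{prop_mart2} directly to the concave nondecreasing map $x\mapsto x\wedge a$.
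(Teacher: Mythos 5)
Votre démonstration est correcte et suit exactement la voie prévue par le texte : le corollaire est une application directe de la Proposition~\ref{prop_mart2} avec les choix $\ph(x)=\abs{x}^p$, $\ph(x)=(x-a)^+$, et, pour le point~3, le passage par la sous-martingale $-X_n$ avec $\ph(y)=y\vee(-a)$ convexe croissante (ou, de façon équivalente, l'observation élémentaire $\econd{X_{n+1}\wedge a}{\cF_n}\leqs \econd{X_{n+1}}{\cF_n}\wedge a\leqs X_n\wedge a$). Les vérifications d'intégrabilité et d'adaptation que vous mentionnez sont bien les seules à faire.
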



\section{D\'ecomposition de Doob, processus croissant}
\label{sec_mpc}

\begin{definition}[Processus pr\'evisible]
\label{def_previsible}
Soit\/ $\set{\cF_n}_{n\geqs0}$ une filtration. Une suite\/
$\set{H_n}_{n\geqs0}$ est un\/ \defwd{processus pr\'evisible} si
$H_n\measurable\cF_{n-1}$ pour tout $n\geqs1$.
\end{definition}

La notion de pr\'evisibilit\'e se comprend facilement dans le cadre de la
th\'eorie des jeux (et par cons\'equent \'egalement dans celle des
march\'es financiers, ce qui est essentiellement pareil). Supposons en effet
qu'un joueur mise de mani\`ere r\'ep\'et\'ee sur le r\'esultat d'une
exp\'erience al\'eatoire, telle que le jet d'une pi\`ece de monnaie. Une
strat\'egie est une mani\`ere de d\'ecider la somme mis\'ee \`a chaque tour, en
fonction des gains pr\'ec\'edents. Par exemple, le joueur peut d\'ecider de
doubler la mise \`a chaque tour, ou de miser une proportion fix\'ee de la somme
qu'il a gagn\'ee. Toute strat\'egie doit \^etre pr\'evisible, car elle ne peut
pas d\'ependre de r\'esultats futurs du jeu. De m\^eme, un investisseur d\'ecide
de la mani\`ere de placer ses capitaux en fonction de l'information disponible
au temps pr\'esent, \`a moins de commettre une d\'elit d'initi\'e.

Consid\'erons le cas particulier o\`u le joueur gagne un euro pour chaque euro
mis\'e si la pi\`ece tombe sur Pile, et perd chaque euro mis\'e si elle tombe
sur Face. Soit $X_n$ la somme totale qu'aurait gagn\'ee au temps $n$ un joueur
misant un Euro \`a chaque coup. Un joueur suivant la strat\'egie $H$ aura alors
gagn\'e au temps $n$ la somme 
\begin{equation}
 \label{mpc1}
(H\cdot X)_n \defby \sum_{m=1}^n H_m (X_m - X_{m-1})\;, 
\end{equation} 
puisque $X_m-X_{m-1}$ vaut $1$ ou $-1$ selon que la pi\`ece est tomb\'ee sur
Pile au Face lors du $n$i\`eme jet. Le r\'esultat suivant affirme qu'il
n'existe pas de strat\'egie gagnante dans ce jeu. 

\begin{prop}
\label{prop_mpc1}
Soit $\set{X_n}_{n\geqs0}$ une surmartingale et $H_n$ un processus
pr\'evisible, non-n\'egatif et born\'e pour tout $n$. Alors $(H\cdot X)_n$ est
une surmartingale.
\end{prop}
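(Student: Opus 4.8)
The goal is to show that if $\set{X_n}_{n\geqs0}$ is a surmartingale and $\set{H_n}_{n\geqs0}$ is a previsible process that is non-negative and bounded, then $\set{(H\cdot X)_n}_{n\geqs0}$, where $(H\cdot X)_n = \sum_{m=1}^n H_m(X_m-X_{m-1})$, is again a surmartingale. We need to verify the three defining conditions of Definition~\ref{def_martingale}: integrability, adaptedness, and the surmartingale inequality $\econd{(H\cdot X)_{n+1}}{\cF_n} \leqs (H\cdot X)_n$.

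\textbf{Integrability and adaptedness.} Fix the bound $\abs{H_m}\leqs C$ for all $m$ (the constant may be taken uniform, or we just use that each $H_m$ is bounded, which suffices). Each term $H_m(X_m-X_{m-1})$ satisfies $\expec{\abs{H_m(X_m-X_{m-1})}} \leqs C\bigpar{\expec{\abs{X_m}}+\expec{\abs{X_{m-1}}}} < \infty$ since $\set{X_n}_n$ is a surmartingale and hence $L^1$; summing finitely many such terms gives $\expec{\abs{(H\cdot X)_n}}<\infty$. For adaptedness, $H_m\measurable\cF_{m-1}\subset\cF_n$ and $X_m, X_{m-1}\measurable\cF_m\subset\cF_n$ for all $m\leqs n$, so $(H\cdot X)_n$ is a finite sum of $\cF_n$-measurable random variables, hence $\cF_n$-measurable.

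\textbf{The surmartingale inequality.} This is the heart of the argument. Write $(H\cdot X)_{n+1} = (H\cdot X)_n + H_{n+1}(X_{n+1}-X_n)$. Taking conditional expectation with respect to $\cF_n$ and using linearity,
\begin{equation}
 \label{plan_mpc1}
\econd{(H\cdot X)_{n+1}}{\cF_n} = (H\cdot X)_n + \econd{H_{n+1}(X_{n+1}-X_n)}{\cF_n}\;,
\end{equation}
where we used that $(H\cdot X)_n\measurable\cF_n$. Now $H_{n+1}\measurable\cF_n$ by previsibility, so Theorem~\ref{thm_ec2} lets us pull it out:
\begin{equation}
 \label{plan_mpc2}
\econd{H_{n+1}(X_{n+1}-X_n)}{\cF_n} = H_{n+1}\econd{X_{n+1}-X_n}{\cF_n} = H_{n+1}\bigpar{\econd{X_{n+1}}{\cF_n}-X_n}\;.
\end{equation}
Since $\set{X_n}_n$ is a surmartingale, $\econd{X_{n+1}}{\cF_n}-X_n\leqs 0$, and since $H_{n+1}\geqs 0$, the product is $\leqs 0$ (here we use monotonicity of the conditional expectation together with $H_{n+1}\geqs0$; the boundedness of $H$ is what guarantees $H_{n+1}(X_{n+1}-X_n)$ is integrable so that Theorem~\ref{thm_ec2} applies). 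Plugging back into \eqref{plan_mpc1} gives $\econd{(H\cdot X)_{n+1}}{\cF_n} \leqs (H\cdot X)_n$, as required.

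\textbf{Main obstacle.} There is no deep obstacle here; the only subtle point is the bookkeeping of integrability needed to invoke Theorem~\ref{thm_ec2}, which requires $\expec{\abs{H_{n+1}(X_{n+1}-X_n)}}<\infty$ — this is exactly where the hypothesis that $H_n$ is bounded is used. The sign analysis at the end relies on the elementary fact that a non-negative random variable times a conditionally non-positive one has non-positive conditional expectation, which follows from the monotonicity property of conditional expectation applied after factoring out $H_{n+1}$.
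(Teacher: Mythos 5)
Votre preuve est correcte et suit essentiellement la m\^eme d\'emarche que celle du polycopi\'e~: d\'ecomposition $(H\cdot X)_{n+1}=(H\cdot X)_n+H_{n+1}(X_{n+1}-X_n)$, extraction de $H_{n+1}$ gr\^ace \`a sa $\cF_n$-mesurabilit\'e (Th\'eor\`eme~\ref{thm_ec2}), puis utilisation de $H_{n+1}\geqs0$ et $\econd{X_{n+1}}{\cF_n}-X_n\leqs0$. Vos v\'erifications suppl\'ementaires d'int\'egrabilit\'e et d'adaptation, laiss\'ees implicites dans le polycopi\'e, sont bienvenues mais ne changent pas l'argument.
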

\begin{proof}
Comme $H_{n+1}\measurable\cF_n$ et $(H\cdot X)_n\measurable\cF_n$, la
lin\'earit\'e de l'esp\'erance conditionnelle implique 
\begin{align}
\nonumber
\econd{(H\cdot X)_{n+1}}{\cF_n} 
&= (H\cdot X)_n + \econd{H_{n+1}(X_{n+1}-X_n)}{\cF_n} \\
&= (H\cdot X)_n + H_{n+1} \econd{X_{n+1}-X_n}{\cF_n} 
\leqs (H\cdot X)_n\;,
\label{mpc2} 
\end{align}
puisque $H_{n+1}\geqs 0$ et $\econd{X_{n+1}-X_n}{\cF_n} =
\econd{X_{n+1}}{\cF_n}-X_n\leqs 0$.
\end{proof}

La d\'ecomposition de Doob permet d'\'ecrire une sous-martingale comme la
somme d'une martingale et d'un processus pr\'evisible. 

\begin{prop}[D\'ecomposition de Doob]
\label{prop_decDoob}  
Toute sous-martingale $\set{X_n}_{n\geqs0}$ peut \^etre \'ecrite d'une
mani\`ere unique comme $X_n=M_n+A_n$, o\`u $M_n$ est une martingale et
$A_n$ est un processus pr\'evisible croissant tel que $A_0=0$.  
\end{prop}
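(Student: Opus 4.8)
The plan is to produce the decomposition by an explicit recursion, check that it has all the required properties, and then establish uniqueness through the observation that a previsible martingale starting at $0$ is almost surely identically $0$.

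First I would set $A_0 \defby 0$ and, for $n\geqs 1$,
\begin{equation}
A_n \defby \sum_{m=1}^n \bigl(\econd{X_m}{\cF_{m-1}} - X_{m-1}\bigr)\;,
\end{equation}
so that $A_n - A_{n-1} = \econd{X_n}{\cF_{n-1}} - X_{n-1}$. Each such increment is $\cF_{n-1}$-measurable, hence $\set{A_n}$ is previsible; each increment is non-negative because $\set{X_n}$ is a sous-martingale, hence $\set{A_n}$ is croissant; and $A_0 = 0$ by construction. Moreover $\expec{\abs{A_n}}<\infty$, since $\expec{\abs{\econd{X_m}{\cF_{m-1}}}}\leqs\expec{\abs{X_m}}<\infty$ by the contraction property of conditional expectation (Theorem~\ref{thm_mart1}). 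Setting $M_n \defby X_n - A_n$ then gives a process that is integrable and adapted (as $X_n$ is adapted and $A_n$, being previsible, is a fortiori $\cF_n$-measurable), with $M_0 = X_0$.

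Next I would verify that $\set{M_n}$ is a martingale. Since $A_{n+1}$ and $A_n$ are $\cF_n$-measurable, linearity of conditional expectation gives
\begin{align}
\econd{M_{n+1}}{\cF_n}
&= \econd{X_{n+1}}{\cF_n} - A_{n+1} \\
&= \econd{X_{n+1}}{\cF_n} - A_n - \bigl(\econd{X_{n+1}}{\cF_n} - X_n\bigr) = X_n - A_n = M_n\;.
\end{align}
This settles existence.

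For uniqueness, suppose $X_n = M_n + A_n = M_n' + A_n'$ with $M,M'$ martingales and $A,A'$ previsible croissants with $A_0 = A_0' = 0$. Then $D_n \defby M_n - M_n' = A_n' - A_n$ is simultaneously a martingale and a previsible process, with $D_0 = 0$. Previsibility means $D_{n+1}$ is $\cF_n$-measurable, so $D_{n+1} = \econd{D_{n+1}}{\cF_n} = D_n$ almost surely; combined with $D_0 = 0$ an induction on $n$ forces $D_n = 0$ almost surely for every $n$, whence the two decompositions coincide up to the usual almost-sure identification. I do not expect a genuine obstacle: the only real idea is guessing the formula for $A_n$ (the increment is exactly the \lq\lq drift\rq\rq\ $\econd{X_n}{\cF_{n-1}} - X_{n-1}$ of the sous-martingale), and the one point to state carefully is that uniqueness holds only almost surely, resting on the fact that a previsible martingale with $D_0 = 0$ vanishes a.s.
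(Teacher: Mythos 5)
Your proof is correct and follows essentially the same route as the paper: you construct the same compensator $A_n=\sum_{m=1}^n\bigl(\econd{X_m}{\cF_{m-1}}-X_{m-1}\bigr)$ (the paper writes it as the equivalent recursion $A_n=A_{n-1}+\econd{X_n}{\cF_{n-1}}-X_{n-1}$), check previsibility, monotonicity and the martingale property of $M_n=X_n-A_n$ in the same way. Your uniqueness argument — the difference of two decompositions is a previsible martingale vanishing at $0$, hence identically zero a.s.\ — is just a repackaging of the paper's observation that any decomposition is forced to satisfy that recursion, so no substantive difference there.
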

\begin{proof}
La preuve est parfaitement constructive. Supposons d'abord que la
d\'ecompo\-sition existe, avec $\econd{M_n}{\cF_{n-1}}=M_{n-1}$ et
$A_n\measurable\cF_{n-1}$. Alors n\'ecessairement 
\begin{align}
\nonumber
\econd{X_n}{\cF_{n-1}} 
&= \econd{M_n}{\cF_{n-1}} + \econd{A_n}{\cF_{n-1}} \\
&= M_{n-1} + A_n = X_{n-1} - A_{n-1} + A_n\;.
\label{mpc3} 
\end{align}
Il en r\'esulte les deux relations 
\begin{equation}
 \label{pcm4}
A_n = A_{n-1} + \econd{X_n}{\cF_{n-1}} - X_{n-1}  
\end{equation} 
et
\begin{equation}
 \label{pcm5}
M_n = X_n - A_n\;. 
\end{equation} 
Ces deux relations d\'efinissent $M_n$ et $A_n$ univoquement. En effet, $A_0=0$
et donc $M_0=X_0$ par hypoth\`ese, de sorte que tous les $M_n$ et $A_n$ sont
d\'efinis par r\'ecurrence. Le fait que $X_n$ est une sous-martingale implique
que $A_n\geqs A_{n-1}\geqs 0$. Par r\'ecurrence, $A_n\measurable\cF_{n-1}$.
Enfin, 
\begin{align}
\nonumber
\econd{M_n}{\cF_{n-1}}
&= \econd{X_n-A_n}{\cF_{n-1}} \\
&= \econd{X_n}{\cF_{n-1}} - A_n = X_{n-1} - A_{n-1} = M_{n-1}\;,
\label{pcm6} 
\end{align}
ce qui montre que $M_n$ est bien une martingale.
\end{proof}

Un cas particulier important se pr\'esente si $X_n$ est une martingale
telle que $X_0=0$ et $\expec{X_n^2}<\infty$ pour tout $n$. Dans ce cas, le
Corollaire~\ref{cor_mart} implique que $X_n^2$ est une sous-martingale. La
d\'ecomposition de Doob de $X_n^2$ s'\'ecrit alors 
\begin{equation}
 \label{pcm7}
X_n^2 = M_n + A_n\;,
\end{equation} 
o\`u $M_n$ est une martingale, et~\eqref{pcm4} implique 
\begin{equation}
 \label{pcm8}
A_n = \sum_{m=1}^n \econd{X_m^2}{\cF_{m-1}} - X_{m-1}^2 
=  \sum_{m=1}^n \econd{(X_m-X_{m-1})^2}{\cF_{m-1}}\;.
\end{equation} 

\begin{definition}[Processus croissant]
\label{def_crochet} 
Le processus pr\'evisible~\eqref{pcm8} est appel\'e le\/ \defwd{processus
croissant} ou le\/ \defwd{crochet}, ou encore le\/ \defwd{compensateur}  de
$X_n$, et not\'e\/ $\braket{X}_n$.
\end{definition}

Le processus croissant $\braket{X}_n$ est une mesure de la variance de la
trajectoire jusqu'au temps $n$. Nous verrons que sous certaines conditions,
$\braket{X}_n$ converge vers une variable al\'eatoire $\braket{X}_\infty$, qui
mesure alors la variance totale des trajectoires. 


\section{Exercices}
\label{sec_exo_mart}

\begin{exercice}
\label{exo_mart1} 
Soient $Y_1, Y_2, \dots$ des variables i.i.d., et soit $X_n=\prod_{m=1}^n Y_m$.
Sous quelle condition la suite $X_n$ est-elle une surmartingale? Une
sous-martingale? Une martingale? 
\end{exercice}

\goodbreak

\begin{exercice}
\label{exo_mart3} 
Soit $(\Omega,\cF,\set{\cF_n},\fP)$ un espace probabilis\'e filtr\'e, et
$X_n=\sum_{m=1}^n\indicator{B_m}$, avec $B_n\in\cF_n$ $\forall n$. 
\begin{enum}
\item	Montrer que $X_n$ est une sous-martingale.
\item	Donner la d\'ecomposition de Doob de $X_n$. 
\item	Particulariser au cas $\cF_n=\sigma(X_1,\dots,X_n)$.
\end{enum}
\end{exercice}

\goodbreak

\begin{exercice}[Urne de Polya]
\label{exo_mart4} 
On consid\`ere le mod\`ele d'urne de Polya, avec param\`etres $(r,v,c)$. 
\begin{enum}
\item	D\'eterminer la loi de la proportion de boules vertes $X_n$ dans le cas
$r=v=c=1$. 
\item	Dans le cas g\'en\'eral, exprimer le processus croissant
$\braket{X}_n$ en fonction des $X_m$ et du nombre total de boules $N_m$ aux
temps $m\leqs n$. 
\item	Montrer que $\lim_{n\to\infty}\braket{X}_n<\infty$. 
\end{enum}
\end{exercice}

\goodbreak

\begin{exercice}[Martingales et fonctions harmoniques]
\label{exo_mart6} 
Une fonction continue $f: \C\to\R$ est dite \emph{sous-harmonique}\/ si 
pour tout $z\in\C$ et tout $r>0$, 
\[
f(z) \leqs \frac{1}{2\pi} \int_0^{2\pi} f(z+r\e^{\icx\theta})\6\theta\;.
\]
$f$ est dite \emph{surharmonique}\/ si $-f$ est sous-harmonique, et
\emph{harmonique}\/ si elle est \`a la fois sous-harmonique et surharmonique. 

On fixe $r>0$. Soit $\set{U_n}_{n\geqs1}$ une suite de variables al\'eatoires
i.i.d.\ de loi uniforme sur $\setsuch{z\in\C}{\abs{z}=r}$, et soit 
$\cF_n$ la tribu engendr\'ee par $(U_1,\dots,U_n)$. Pour tout $n\geqs1$ on pose 
\[
X_n = U_1 + \dots + U_n
\qquad\text{et}\qquad
Y_n=f(X_n)\;.
\]

\begin{enum}
\item 	Sous quelle condition sur $f$ la suite $\set{Y_n}_{n\geqs1}$ est-elle
une sous-martingale, une surmartingale, ou une martingale?

\item 	Que se passe-t-il si $f$ est la partie r\'eelle d'une fonction
analytique? 
\end{enum}
\end{exercice}

\goodbreak

\begin{exercice}[Le processus de Galton--Watson]
\label{exo_mart5} 
On se donne des variables al\'eatoires i.i.d.\ $\set{\xi_{n,i}}_{n,i\geqs1}$
\`a valeurs dans $\N$. On note leur distribution $p_k=\prob{\xi_{n,i}=k}$,
leur esp\'erance $\mu>0$ et leur variance $\sigma^2$. 
On notera $\cF_n$ la filtration $\cF_n=\sigma\set{\xi_{i,m},m\leqs n}$. 

\noindent
On d\'efinit un processus $\set{Z_n}_{n\geqs0}$ par $Z_0=1$ et pour $n\geqs0$
\[
Z_{n+1} = 
\begin{cases}
\xi_{1,n+1} + \dots + \xi_{Z_n,n+1} & \text{si $Z_n>0$\;,} \\
0 & \text{sinon\;.}
\end{cases}
\]
Ce processus mod\'elise l'\'evolution d'une population avec initialement $Z_0=1$
individu, et dans laquelle chaque individu $i$ donne naissance au temps 
$n$ \`a un nombre al\'eatoire $\xi_{n,i}$ d'enfants, ind\'ependamment et avec
la m\^eme loi que tous les autres individus. 

\begin{enum}
\item	Montrer que $X_n=Z_n/\mu^n$ est une martingale par rapport \`a $\cF_n$.

\item	Montrer que si $\mu<1$, alors $\prob{Z_n=0}\to 1$ lorsque $n\to\infty$,
et donc $X_n\to0$. 

\item	On suppose $\mu>1$. Soit $\varphi(s)=\expec{s^{\xi_{i,n}}} =
\sum_{k=0}^\infty p_k s^k$ la fonction g\'en\'eratrice de la distribution
d'enfants.
\begin{enum}
\item	Montrer que $\varphi$ est croissante et convexe sur $[0,1]$.
\item	Soit $\theta_m=\prob{Z_m=0}$. Montrer que
$\pcond{Z_m=0}{Z_1=k}=\theta_{m-1}^k$ et en d\'eduire que
$\theta_m=\varphi(\theta_{m-1})$. 
\item	Montrer que $\varphi$ admet un unique point fixe $\rho$ sur $[0,1)$.
\item	Montrer que $\theta_m\nearrow\rho$ lorsque $m\to\infty$.
\end{enum}
En d\'eduire que $\prob{Z_n>0\;\forall n}=1-\rho>0$.

\item	Galton et Watson on introduit leur mod\`ele afin de d\'ecrire la survie
de noms de famille. Au XVIIIe si\`ecle, ces noms n'\'etaient transmis que par
les enfants de sexe masculin. On suppose que chaque famille a trois enfants,
dont le sexe est d\'etermin\'e par une loi de Bernoulli de param\`etre $1/2$.
Le nombre de descendants m\^ales est donc d\'ecrit par un processus de
Galton--Watson de loi binomiale $p_0=p_3=1/8$, $p_1=p_2=3/8$. D\'eterminer la
probabilit\'e de survie du nom de famille. 

\item	D\'eterminer le processus croissant $\braket{X}_n$ de $X_n$. Calculer
$\lim_{n\to\infty}\braket{X}_n$.  
\end{enum}
\end{exercice}


\chapter{Temps d'arr\^et}
\label{chap_arret}

Dans un jeu de hasard, un temps d'arr\^et est un temps lors duquel le joueur
d\'ecide d'arr\^eter de jouer, selon un crit\`ere ne d\'ependant que du pass\'e
et du pr\'esent. Il peut par exemple d\'ecider d'arr\^eter de jouer d\`es qu'il
a d\'epens\'e tout son capital, d\`es qu'il a gagn\'e une certaine somme, d\`es
qu'il a gagn\'e un certain nombre de fois successives, ou selon toute
combinaison de ces crit\`eres. Les temps d'arr\^et ont donc deux propri\'et\'es
importantes~: ils sont al\'eatoires, puisqu'ils d\'ependent du d\'eroulement
ant\'erieur du jeu, et ils ne peuvent pas d\'ependre du futur, puisque le
joueur doit \`a tout moment pouvoir d\'ecider s'il arr\^ete ou non.


\section{D\'efinition et exemples}
\label{sec_arretdef}

\begin{definition}[Temps d'arr\^et]
\label{def_tarret}  
Soit\/ $\set{X_n}_{n\in\N}$ un processus stochastique et soit\/ 
$\cF_n=\sigma(X_0,\dots,X_n)$ la filtration canonique. Une variable
al\'eatoire\/ $N$ \`a valeurs dans\/ $\overline{\N\!}=\N\cup\set{\infty}$ est un
\defwd{temps d'arr\^et} si\/ 
$\set{N=n}\in\cF_n$ pour tout\/ $n\in\N$. 
\end{definition}

La condition $\set{N=n}\in\cF_n$ signifie qu'avec l'information disponible au
temps $n$, on doit pouvoir d\'ecider si oui ou non l'\'ev\'enement $\set{N=n}$
est r\'ealis\'e. En d'autres termes, un temps d'arr\^et ne \lq\lq peut pas voir
dans le futur\rq\rq.

\begin{example}\hfill
\label{ex_tarret1} 
\begin{enum}
\item	Si $N$ est constante presque s\^urement, alors c'est un temps d'arr\^et.
\item	Supposons que $X_n$ soit r\'eel, et soit $B\subset\R$ un bor\'elien.
Alors le \defwd{temps de premi\`ere atteinte}\/ de $B$
\begin{equation}
 \label{tarret1}
N_B = \inf\setsuch{n\in\N}{X_n\in B} 
\end{equation}  
est un temps d'arr\^et (par convention, on pose $N_B=\infty$ si le processus
n'atteint jamais l'ensemble $B$). Pour le v\'erifier, il suffit d'observer qu'on
a la d\'ecomposition 
\begin{equation}
 \label{tarret2}
\set{N_B=n} = \bigcap_{k=0}^{n-1} \set{X_k\in B^c} \cap \set{X_n\in B} \in
\cF_n\;. 
\end{equation} 
\item	Le temps de dernier passage dans $B$ avant un temps fix\'e $m$, 
\begin{equation}
 \label{tarret3}
\sup\setsuch{n \leqs m}{X_n\in B}
\end{equation} 
n'est pas un temps d'arr\^et. En effet, si par exemple $X_n\in B$ pour un
$n<m$, l'information disponible au temps $n$ ne permet pas de dire si oui ou
non le processus repassera dans l'ensemble $B$ jusqu'au temps $m$. Un joueur ne
peut pas d\'ecider qu'il jouera jusqu'au dernier tour avant le centi\`eme lors
duquel il poss\'edera au moins 100 Euros! 
\end{enum}
\end{example}

\begin{prop}\hfill
\label{prop_tarret1}
\begin{enum}
\item	Soient $N$ et $M$ des temps d'arr\^et. Alors $N\wedge M$ et $N\vee M$
sont des temps d'arr\^et. 
\item	Soit $N_k$, $k\in\N$, une suite de temps d'arr\^et telle que
$N_k\nearrow N$. Alors $N$ est un temps d'arr\^et.
\end{enum}
\end{prop}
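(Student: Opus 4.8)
Je commencerais par reformuler la condition définissant un temps d'arrêt sous une forme équivalente plus commode. Comme la filtration est indexée par $\N$, dire que $\set{N=n}\in\cF_n$ pour tout $n$ équivaut à dire que $\set{N\leqs n}\in\cF_n$ pour tout $n$ : en effet $\set{N\leqs n}=\bigcup_{k=0}^n\set{N=k}$ est une réunion finie, chaque $\set{N=k}$ appartenant à $\cF_k\subset\cF_n$ ; réciproquement $\set{N=n}=\set{N\leqs n}\setminus\set{N\leqs n-1}\in\cF_n$. Toute la suite se fera avec les événements $\set{\,\cdot\leqs n}$.

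Pour le point 1, la clé est alors l'observation ensembliste
\[
\set{N\wedge M\leqs n} = \set{N\leqs n}\cup\set{M\leqs n}\;,
\qquad
\set{N\vee M\leqs n} = \set{N\leqs n}\cap\set{M\leqs n}\;.
\]
Les membres de droite appartiennent à $\cF_n$ puisque $\set{N\leqs n}$ et $\set{M\leqs n}$ y appartiennent par hypothèse et qu'une tribu est stable par réunion et intersection finies ; d'où la conclusion pour $N\wedge M$ et $N\vee M$.

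Pour le point 2, la suite $\set{N_k}_k$ étant croissante, on a $N=\sup_k N_k$, ce qui assure déjà que $N$ est à valeurs dans $\overline{\N\!}$. Il reste à établir que, pour tout $n\in\N$,
\[
\set{N\leqs n} = \bigcap_{k\in\N}\set{N_k\leqs n}\;,
\]
l'inclusion directe provenant de $N_k\leqs N$, et la réciproque du fait que $\sup_k N_k(\omega)\leqs n$ dès que $N_k(\omega)\leqs n$ pour tout $k$. Chaque $\set{N_k\leqs n}$ appartenant à $\cF_n$ (les $N_k$ sont des temps d'arrêt) et une tribu étant stable par intersection dénombrable, il vient $\set{N\leqs n}\in\cF_n$.

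Il n'y a pas ici de véritable difficulté : le seul point demandant un minimum de soin est l'équivalence entre les deux formulations de la condition de temps d'arrêt, ainsi que la vérification que les identités ensemblistes ci-dessus restent valables lorsqu'on inclut la valeur $\infty$. Tout le reste ne repose que sur la stabilité des tribus par réunion finie et intersection dénombrable.
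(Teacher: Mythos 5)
Votre preuve est correcte et correspond pour l'essentiel à celle du corrigé : pour le point 2, l'argument est identique ($\set{N\leqs n}=\bigcap_{k\in\N}\set{N_k\leqs n}\in\cF_n$), et pour le point 1 vous employez la reformulation $\set{N\leqs n}\in\cF_n$ que le corrigé signale lui-même en remarque, là où sa preuve principale décompose directement $\set{N\wedge M=n}$ et $\set{N\vee M=n}$ en événements de $\cF_n$. Notez au passage que vos identités sont les bonnes ($\set{N\wedge M\leqs n}=\set{N\leqs n}\cup\set{M\leqs n}$ pour le minimum, intersection pour le maximum), la remarque du corrigé écrivant par inadvertance une intersection pour $N\wedge M$.
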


Nous laissons la preuve en exercice. 

\begin{definition}[Tribu des \'ev\'enements ant\'erieurs]
\label{def_tribu_anterieure} 
Soit $N$ un temps d'arr\^et. Alors la tribu 
\begin{equation}
 \label{tarret4}
\cF_N = \setsuch{A\in\cF}{A\cap\set{N=n}\in\cF_n \:\forall n<\infty} 
\end{equation} 
est appel\'ee la\/ \defwd{tribu des \'ev\'enements ant\'erieurs \`a $N$}.
\end{definition}

\begin{prop}
\label{prop_tarret2}
Soient $N$ et $M$ deux temps d'arr\^et tels que $M\leqs N$. Alors
$\cF_M\subset\cF_N$. 
\end{prop}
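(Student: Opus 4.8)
The plan is to verify the membership condition in the definition of $\cF_N$ directly. Take any $A \in \cF_M$; since $\cF_M \subset \cF$ we certainly have $A \in \cF$, so the only thing left to establish is that $A \cap \set{N=n} \in \cF_n$ for every $n < \infty$.

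The one substantive observation is that the hypothesis $M \leqs N$ forces $\set{N=n} \subset \set{M \leqs n} = \bigcup_{k=0}^n \set{M=k}$. Intersecting $A \cap \set{N=n}$ with this decomposition gives
\[
A \cap \set{N=n} = \bigcup_{k=0}^n \bigpar{A \cap \set{M=k}} \cap \set{N=n}\;.
\]
Then I would argue term by term: for each $k \leqs n$ we have $A \cap \set{M=k} \in \cF_k \subset \cF_n$ because $A \in \cF_M$, and $\set{N=n} \in \cF_n$ because $N$ is a stopping time; hence $\bigpar{A \cap \set{M=k}} \cap \set{N=n} \in \cF_n$. Since $\cF_n$ is stable under finite unions, it follows that $A \cap \set{N=n} \in \cF_n$, and therefore $A \in \cF_N$.

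I do not expect a real obstacle here: the entire content is the inclusion $\set{N=n} \subset \set{M \leqs n}$ (the only place where $M \leqs N$ is used), combined with closure of a $\sigma$-algebra under finite unions. If one prefers, the same computation with $\set{N \leqs n}$ in place of $\set{N=n}$ yields the equivalent description $\cF_N = \setsuch{A \in \cF}{A \cap \set{N \leqs n} \in \cF_n \ \forall n}$, and the argument reads off even more transparently in that formulation.
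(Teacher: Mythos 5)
Your proof is correct and follows essentially the same route as the paper: both hinge on the inclusion $\set{N=n}\subset\set{M\leqs n}$ coming from $M\leqs N$, and on writing $A\cap\set{M\leqs n}$ as the finite union $\bigcup_{k=0}^n(A\cap\set{M=k})\in\cF_n$ before intersecting with $\set{N=n}\in\cF_n$. The only difference is cosmetic (you distribute the union over the intersection term by term, the paper intersects first), so nothing further is needed.
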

\begin{proof}
Soit $A\in \cF_M$. Alors pour tout $n\geqs 0$, $N=n$ implique $M\leqs n$ d'o\`u 
\begin{equation}
 \label{tarret5:1}
A \cap \set{N=n} = A \cap \set{M\leqs n} \cap \set{N=n} \in \cF_n\;, 
\end{equation} 
puisque $A \cap \set{M\leqs n} = \bigcup_{k=0}^n (A\cap\set{M=k})\in\cF_n$ et
$\set{N=n}\in\cF_n$.
\end{proof}


\section{Processus arr\^et\'e, in\'egalit\'e de Doob}
\label{sec_indoob}

\begin{definition}[Processus arr\^et\'e]
\label{def_proc_arr}
Soit $\set{\cF_n}_n$ une filtration, $\set{X_n}_n$ un processus adapt\'e \`a la
filtration et $N$ un temps d'arr\^et. On appelle\/ \defwd{processus arr\^et\'e
en $N$} le processus $X^N$ d\'efini par  
\begin{equation}
 X^N_n = X_{N\wedge n}\;.
\end{equation}  
\end{definition}

Si par exemple $N$ est le temps de premi\`ere atteinte d'un ensemble $B$, alors
$X^N$ est le processus obtenu en \lq\lq gelant\rq\rq\ $X$ \`a l'endroit o\`u il
atteint $B$ pour la premi\`ere fois. 

\begin{prop}
\label{prop_indoob1}
Si $N$ est un temps d'arr\^et et $X_n$ est une surmartingale (par rapport \`a
la m\^eme filtration $\cF_n$), alors le processus arr\^et\'e $X_{N\wedge n}$
est une surmartingale. 
\end{prop}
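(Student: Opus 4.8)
The plan is to realize the stopped process as a discrete stochastic integral of a predictable, non-negative, bounded process against $X$, and then to invoke Proposition~\ref{prop_mpc1}.

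First I would establish the telescoping identity: for every $n\geqs 1$,
\[
X_{N\wedge n} - X_{N\wedge(n-1)} = \indexfct{N\geqs n}\,(X_n - X_{n-1})\;,
\]
since on the event $\set{N\geqs n}$ one has $N\wedge n = n$ and $N\wedge(n-1)=n-1$, whereas on $\set{N\leqs n-1}$ both sides vanish. Summing from $1$ to $n$ yields $X_{N\wedge n} = X_0 + (H\cdot X)_n$, where $H_m\defby\indexfct{N\geqs m}$ and $(H\cdot X)_n=\sum_{m=1}^n H_m(X_m-X_{m-1})$ in the sense of~\eqref{mpc1}.

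Next I would check that $H=\set{H_m}_{m\geqs1}$ is a predictable, non-negative, bounded process. It is visibly non-negative and bounded by $1$; for predictability, note that $\set{N\geqs m}=\set{N\leqs m-1}^c$ and $\set{N\leqs m-1}=\bigcup_{k=0}^{m-1}\set{N=k}\in\cF_{m-1}$ because $N$ is a stopping time, hence $H_m\measurable\cF_{m-1}$. Proposition~\ref{prop_mpc1} then gives that $(H\cdot X)_n$ is a supermartingale. It remains to add back $X_0$: the process $X_{N\wedge n}=X_0+(H\cdot X)_n$ is adapted, it is integrable (a finite sum of products of a bounded factor with an integrable one, plus the integrable $X_0$), and, $X_0$ being $\cF_n$-measurable,
\[
\econd{X_{N\wedge(n+1)}}{\cF_n}=X_0+\econd{(H\cdot X)_{n+1}}{\cF_n}\leqs X_0+(H\cdot X)_n=X_{N\wedge n}\;.
\]

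There is no genuine obstacle here; the only point deserving a word is the elementary verification that $\set{N\geqs m}\in\cF_{m-1}$. In fact one can bypass Proposition~\ref{prop_mpc1} entirely and argue directly: since $\set{N\geqs n+1}=\set{N\leqs n}^c\in\cF_n$, the identity above and linearity give
\[
\econd{X_{N\wedge(n+1)}-X_{N\wedge n}}{\cF_n}=\indexfct{N\geqs n+1}\,\econd{X_{n+1}-X_n}{\cF_n}\leqs 0\;,
\]
because the indicator is non-negative and $\econd{X_{n+1}-X_n}{\cF_n}\leqs 0$ by the supermartingale property of $X$. I would present the integral-based argument as the main route, since it makes the structure transparent and mirrors the gambling interpretation (stopping = betting $1$ as long as $N$ has not yet occurred).
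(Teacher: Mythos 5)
Your argument is correct and follows essentially the same route as the paper: writing $H_m=\indexfct{N\geqs m}$, noting $\set{N\geqs m}=\set{N\leqs m-1}^c\in\cF_{m-1}$ so that $H$ is predictable, bounded and non-negative, identifying $X_{N\wedge n}=X_0+(H\cdot X)_n$, and invoking Proposition~\ref{prop_mpc1}. Your extra remarks (integrability, adding back $X_0$, and the direct conditional-expectation variant) only make explicit what the paper leaves implicit.
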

\begin{proof}
Consid\'erons le processus $H_n=\indexfct{N\geqs n}$. Puisque l'on a
$\set{N\geqs n}=\set{N\leqs n-1}^c\in\cF_{n-1}$, le processus $H_n$ est
pr\'evisible. De plus, il est \'evidemment born\'e et non-n\'egatif. Par la
proposition~\ref{prop_mpc1}, $(H\cdot X)_n$ est une surmartingale. Or nous
avons 
\begin{equation}
 \label{indoob1}
(H\cdot X)_n 
= \sum_{m=1}^n \indexfct{N\geqs m} (X_m - X_{m-1})
= \sum_{m=1}^{N\wedge n} (X_m - X_{m-1})
= X_{N\wedge n}- X_0\;,
\end{equation} 
donc $X_{N\wedge n} = X_0 + (H\cdot X)_n$ est une surmartingale.
\end{proof}

Il suit directement de ce r\'esultat que si $X_n$ est une sous-martingale
(respectivement une martingale), alors $X_{N\wedge n}$ est une sous-martingale
(respectivement une martingale). 

\begin{cor}
\label{cor_indoob}
Soit $X_n$ une sous-martingale et soit $N$ un temps d'arr\^et satisfaisant 
\mbox{$\prob{N\leqs k}=1$} pour un $k\in\N$. Alors 
\begin{equation}
 \label{indoob2}
\expec{X_0} \leqs \expec{X_N} \leqs \expec{X_k}\;. 
\end{equation} 
\end{cor}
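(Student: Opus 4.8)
\emph{Plan.} The whole statement reduces to Proposition~\ref{prop_mpc1} through the stochastic--integral construction $(H\cdot X)_n=\sum_{m=1}^n H_m(X_m-X_{m-1})$, applied to two complementary predictable processes. First I would record the general fact I will use twice: since $\set{X_n}_n$ is a sub-martingale, $\set{-X_n}_n$ is a surmartingale, so by Proposition~\ref{prop_mpc1} and linearity, $(H\cdot X)_n$ is a sub-martingale whenever $\set{H_n}_n$ is predictable, non-negative and bounded. Taking expectations in $\econd{(H\cdot X)_{n+1}}{\cF_n}\geqs(H\cdot X)_n$ shows that $n\mapsto\expec{(H\cdot X)_n}$ is non-decreasing, and since $(H\cdot X)_0=0$ we obtain $\expec{(H\cdot X)_n}\geqs0$ for every $n$. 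Before doing this one should check integrability: the hypothesis $\prob{N\leqs k}=1$ gives $\abs{X_N}\leqs\sum_{j=0}^k\abs{X_j}$, which is integrable, so $X_N\in L^1$, and the same bound controls the relevant $(H\cdot X)_k$.

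\emph{Lower bound.} I would take $H_m=\indexfct{N\geqs m}$. This is predictable, since $\set{N\geqs m}=\set{N\leqs m-1}^c\in\cF_{m-1}$, and it is non-negative and bounded by $1$. Exactly as in the proof of Proposition~\ref{prop_indoob1} one computes
\begin{equation}
(H\cdot X)_n=\sum_{m=1}^{N\wedge n}(X_m-X_{m-1})=X_{N\wedge n}-X_0\;.
\end{equation}
Evaluating at $n=k$ and using $\prob{N\leqs k}=1$, so that $X_{N\wedge k}=X_N$ almost surely, the inequality $\expec{(H\cdot X)_k}\geqs0$ becomes $\expec{X_N}\geqs\expec{X_0}$. (Equivalently, this follows from the remark after Proposition~\ref{prop_indoob1} that the stopped process $X_{N\wedge n}$ is a sub-martingale, together with Proposition~\ref{prop_mart1}.)

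\emph{Upper bound.} Now I would take the complementary process $H_m=\indexfct{N\leqs m-1}=1-\indexfct{N\geqs m}$, which is again predictable, non-negative and bounded. Then
\begin{equation}
(H\cdot X)_n=\sum_{m=1}^n(X_m-X_{m-1})-\bigpar{X_{N\wedge n}-X_0}=X_n-X_{N\wedge n}\;,
\end{equation}
so at $n=k$, using once more $X_{N\wedge k}=X_N$ almost surely, the inequality $\expec{(H\cdot X)_k}\geqs0$ gives $\expec{X_k}\geqs\expec{X_N}$. Chaining the two bounds yields~\eqref{indoob2}. The only point that requires genuine care is the integrability of $X_N$ (and hence of these two integral processes at time $k$), which is precisely what the boundedness assumption $\prob{N\leqs k}=1$ provides; the rest is the mechanical identification of the telescoping sums $(H\cdot X)_n$ with $X_{N\wedge n}-X_0$ and $X_n-X_{N\wedge n}$.
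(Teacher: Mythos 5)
Your proof is correct and follows essentially the same route as the paper: the lower bound via the stopped process (equivalently the predictable process $H_m=\indexfct{N\geqs m}$ and the telescoping identity $(H\cdot X)_n=X_{N\wedge n}-X_0$), and the upper bound via the complementary process $K_m=\indexfct{N\leqs m-1}$ with $(K\cdot X)_n=X_n-X_{N\wedge n}$, exactly as in the text. The added remark on the integrability of $X_N$ under $\prob{N\leqs k}=1$ is a harmless refinement.
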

\begin{proof}
Comme $X_{N\wedge n}$ est une sous-martingale, on a 
\begin{equation}
 \label{indoob2:1}
\expec{X_0} = \expec{X_{N\wedge0}} \leqs \expec{X_{N\wedge k}} = \expec{X_N}\;. 
\end{equation} 
Pour prouver la seconde in\'egalit\'e, nous imitons la preuve de la proposition
en utilisant $K_n=1-H_n$, c'est-\`a-dire $K_n=\indexfct{N<n}=\indexfct{N\leqs
n-1}$. C'est \'egalement un processus pr\'evisible, donc $(K\cdot X)_n$ est une
sous-martingale (car $-X_n$ est une surmartingale). Or 
\begin{equation}
 \label{indoob2:2}
(K\cdot X)_n 
= \sum_{m=1}^n \indexfct{N\leqs m-1} (X_m - X_{m-1})
= \sum_{m=N+1}^{n} (X_m - X_{m-1})
= X_n- X_{N\wedge n}\;,
\end{equation} 
donc, comme $N\wedge k=N$ presque s\^urement,
\begin{equation}
 \label{indoob2:3}
\expec{X_k} - \expec{X_N} = \expec{(K\cdot X)_k} 
\geqs \expec{(K\cdot X)_0} = 0\;,
\end{equation} 
d'o\`u la seconde in\'egalit\'e.
\end{proof}

Nous sommes maintenant en mesure de prouver l'in\'egalit\'e de Doob, qui est
tr\`es utile, notamment pour estimer le supremum d'un processus stochastique. 
Dans la suite, nous posons 
\begin{equation}
 \label{indoob3}
\Xbar_n = \max_{0\leqs m\leqs n} X^+_m 
= \max_{0\leqs m\leqs n} X_m \vee 0\;.
\end{equation} 

\begin{theorem}[In\'egalit\'e de Doob]
\label{thm_Doob}  
Soit $X_n$ une sous-martingale. Alors pour tout $\lambda>0$, on a 
\begin{equation}
 \label{indoob4}
\bigprob{\Xbar_n \geqs \lambda} 
\leqs \frac1\lambda \bigexpec{X_n\indexfct{\Xbar_n \geqs \lambda}}
\leqs \frac1\lambda \bigexpec{X^+_n}\;. 
\end{equation} 
\end{theorem}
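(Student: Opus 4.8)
Le plan est de ramener l'in\'egalit\'e au Corollaire~\ref{cor_indoob} via un temps d'arr\^et bien choisi. J'introduirais d'abord le temps de premi\`ere atteinte du bor\'elien $[\lambda,\infty)$ par le processus, \`a savoir
\[
N = \inf\setsuch{m\in\N}{X_m\geqs\lambda}\;,
\]
qui est un temps d'arr\^et d'apr\`es l'Exemple~\ref{ex_tarret1}. Comme $\lambda>0$, la condition $X_m^+\geqs\lambda$ \'equivaut \`a $X_m\geqs\lambda$, donc $\set{\Xbar_n\geqs\lambda}=\set{N\leqs n}$~; de plus, l'infimum d\'efinissant $N$ \'etant atteint (on est en temps discret), on a $X_N\geqs\lambda$ sur cet \'ev\'enement.

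Ensuite, j'appliquerais le Corollaire~\ref{cor_indoob} \`a la sous-martingale $X_m$ et au temps d'arr\^et $N\wedge n$, born\'e par $n$~: cela donne $\expec{X_{N\wedge n}}\leqs\expec{X_n}$. L'\'etape cl\'e est alors la d\'ecomposition
\[
\expec{X_{N\wedge n}} = \bigexpec{X_N\indexfct{N\leqs n}} + \bigexpec{X_n\indexfct{N>n}}\;,
\]
obtenue en notant que $N\wedge n=N$ sur $\set{N\leqs n}$ et $N\wedge n=n$ sur $\set{N>n}$. En minorant le premier terme par $\lambda\bigprob{N\leqs n}=\lambda\bigprob{\Xbar_n\geqs\lambda}$ gr\^ace \`a $X_N\geqs\lambda$, puis en r\'earrangeant via $\expec{X_n}-\bigexpec{X_n\indexfct{N>n}}=\bigexpec{X_n\indexfct{\Xbar_n\geqs\lambda}}$, on obtient
\[
\lambda\,\bigprob{\Xbar_n\geqs\lambda}\leqs\bigexpec{X_n\indexfct{\Xbar_n\geqs\lambda}}\;,
\]
c'est-\`a-dire la premi\`ere in\'egalit\'e de~\eqref{indoob4}. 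La seconde est imm\'ediate puisque $X_n\leqs X_n^+$, d'o\`u $\bigexpec{X_n\indexfct{\Xbar_n\geqs\lambda}}\leqs\bigexpec{X_n^+\indexfct{\Xbar_n\geqs\lambda}}\leqs\bigexpec{X_n^+}$.

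Je n'anticipe pas d'obstacle s\'erieux~: le seul point m\'eritant de l'attention est la double v\'erification $\set{\Xbar_n\geqs\lambda}=\set{N\leqs n}$ et $X_N\geqs\lambda$ sur cet \'ev\'enement, o\`u l'hypoth\`ese $\lambda>0$ sert pr\'ecis\'ement \`a remplacer $X_m^+$ par $X_m$~; tout le reste n'est que comptabilit\'e d'esp\'erances. On pourrait d'ailleurs \'eviter le temps d'arr\^et en partitionnant $\set{\Xbar_n\geqs\lambda}$ en les \'ev\'enements $A_k=\set{X_0<\lambda,\dots,X_{k-1}<\lambda,\;X_k\geqs\lambda}\in\cF_k$, $0\leqs k\leqs n$, en \'ecrivant $\lambda\,\fP(A_k)\leqs\bigexpec{X_k\indexfct{A_k}}\leqs\bigexpec{X_n\indexfct{A_k}}$ gr\^ace \`a $\econd{X_n}{\cF_k}\geqs X_k$ (Proposition~\ref{prop_mart1}), puis en sommant sur $k$~; mais la premi\`ere approche reste la plus \'economique puisqu'elle r\'eutilise directement le Corollaire~\ref{cor_indoob}.
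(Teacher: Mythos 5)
Votre preuve est correcte et suit essentiellement la m\^eme voie que celle du polycopi\'e~: temps de premi\`ere atteinte de $[\lambda,\infty)$ tronqu\'e en $n$, minoration $X_N\geqs\lambda$ sur $\set{\Xbar_n\geqs\lambda}$, application du Corollaire~\ref{cor_indoob} pour obtenir $\expec{X_{N\wedge n}}\leqs\expec{X_n}$, et utilisation du fait que le processus arr\^et\'e vaut $X_n$ sur le compl\'ementaire. Votre d\'ecomposition de $\expec{X_{N\wedge n}}$ n'est qu'une r\'e\'ecriture de la cha\^ine d'in\'egalit\'es~\eqref{indoob4:1} du texte, et la v\'erification que $\lambda>0$ permet d'identifier $\set{\Xbar_n\geqs\lambda}$ et $\set{N\leqs n}$ est bien le seul point d\'elicat.
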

\begin{proof}
Soit $A=\set{\Xbar_n \geqs \lambda}$ et
$N=\inf\setsuch{m}{X_m\geqs\lambda}\wedge n$. Alors 
\begin{equation}
 \label{indoob4:1}
\lambda\fP(A) = \expec{\lambda\indicator{A}} 
\leqs \expec{X_N\indicator{A}}
\leqs \expec{X_n\indicator{A}}
\leqs \expec{X^+_n}\;.
\end{equation} 
La premi\`ere in\'egalit\'e suit du fait que $X_N\geqs\lambda$ dans $A$.
La seconde vient du fait que $X_N=X_n$ sur $A^c$ et que
$\expec{X_N}\leqs\expec{X_n}$ par le corollaire~\ref{cor_indoob}. La derni\`ere
in\'egalit\'e est triviale. 
\end{proof}

L'int\'er\^et de cette in\'egalit\'e est qu'elle permet de majorer une
quantit\'e faisant intervenir tout le processus jusqu'au temps $n$ par une
quantit\'e ne d\'ependant que de $X_n$, qui est souvent beaucoup plus simple
\`a estimer.

\begin{example}
\label{ex_indoob1} 
Soient $\xi_1, \xi_2, \dots$ des variables ind\'ependantes d'esp\'erance nulle
et variance finie, et soit $X_n=\sum_{m=1}^n\xi_m$. Alors $X_n$ est une
martingale (la preuve est la m\^eme que pour la marche al\'eatoire sym\'etrique
sur $\Z$ dans l'exemple~\ref{ex_martingale}), donc $X_n^2$ est une
sous-martingale. Une application de l'in\'egalit\'e de Doob donne
l'\defwd{in\'egalit\'e de Kolmogorov}
\begin{equation}
 \label{indoob5}
\Bigprob{\max_{1\leqs m\leqs n}\abs{X_m}\geqs\lambda} 
= \Bigprob{\max_{1\leqs m\leqs n}X^2_m\geqs\lambda^2}
\leqs \frac1{\lambda^2} \bigexpec{X^2_n} 
= \frac1{\lambda^2} \variance(X_n)\;.
\end{equation} 
\end{example}

\goodbreak
Une autre application de l'in\'egalit\'e de Doob est l'\defwd{in\'egalit\'e du
maximum $L^p$}~:

\begin{theorem}
\label{thm_Lpmax}
Si $X_n$ est une sous-martingale, alors pour tout $p>1$ 
\begin{equation}
 \label{indoob6}
\bigexpec{\Xbar_n^p} \leqs \biggpar{\frac{p}{p-1}}^p
\bigexpec{\brak{X^+_n}^p}\;. 
\end{equation}  
\end{theorem}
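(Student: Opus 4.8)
The plan is to deduce this from Doob's inequality (Theorem~\ref{thm_Doob}) via the layer-cake representation of the $p$-th moment, followed by an application of H\"older's inequality. A technical point forces one to work first with the truncated maximum $\Xbar_n\wedge K$: the H\"older step will produce a factor $\bigexpec{(\Xbar_n\wedge K)^p}^{(p-1)/p}$ on the right-hand side that we want to absorb on the left, and this is only legitimate once we know it is finite — which the truncation guarantees, since $\Xbar_n\wedge K\leqs K$.

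First I would fix $K>0$ and use the distribution-function formula together with the observation that $\set{\Xbar_n\wedge K\geqs\lambda}=\set{\Xbar_n\geqs\lambda}$ for $0<\lambda\leqs K$ and is empty for $\lambda>K$, to write
\begin{equation*}
\bigexpec{(\Xbar_n\wedge K)^p}
= \int_0^\infty p\lambda^{p-1}\,\bigprob{\Xbar_n\wedge K\geqs\lambda}\,\6\lambda
= \int_0^K p\lambda^{p-1}\,\bigprob{\Xbar_n\geqs\lambda}\,\6\lambda\;.
\end{equation*}
Next I would insert the bound $\bigprob{\Xbar_n\geqs\lambda}\leqs\frac1\lambda\bigexpec{X^+_n\indexfct{\Xbar_n\geqs\lambda}}$, which follows from Theorem~\ref{thm_Doob} (its first inequality, together with $X_n\leqs X^+_n$), and interchange the $\lambda$-integral with the expectation by Fubini--Tonelli, all integrands being non-negative:
\begin{equation*}
\bigexpec{(\Xbar_n\wedge K)^p}
\leqs \int_0^K p\lambda^{p-2}\,\bigexpec{X^+_n\indexfct{\Xbar_n\geqs\lambda}}\,\6\lambda
= p\,\biggexpec{X^+_n\int_0^{\Xbar_n\wedge K}\lambda^{p-2}\,\6\lambda}
= \frac{p}{p-1}\,\bigexpec{X^+_n(\Xbar_n\wedge K)^{p-1}}\;,
\end{equation*}
where the inner integral equals $(\Xbar_n\wedge K)^{p-1}/(p-1)$ precisely because $p>1$. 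It is essential here to keep the indicator inside: replacing $\bigexpec{X^+_n\indexfct{\Xbar_n\geqs\lambda}}$ by $\bigexpec{X^+_n}$ would give a $K$-dependent bound that blows up.

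Then I would apply H\"older's inequality with conjugate exponents $p$ and $p/(p-1)$ to the last expectation:
\begin{equation*}
\bigexpec{X^+_n(\Xbar_n\wedge K)^{p-1}}
\leqs \bigexpec{(X^+_n)^p}^{1/p}\,\bigexpec{(\Xbar_n\wedge K)^p}^{(p-1)/p}\;.
\end{equation*}
Since $\bigexpec{(\Xbar_n\wedge K)^p}\leqs K^p<\infty$, I may divide the resulting inequality by $\bigexpec{(\Xbar_n\wedge K)^p}^{(p-1)/p}$ — the case where this quantity vanishes being trivial — to obtain $\bigexpec{(\Xbar_n\wedge K)^p}^{1/p}\leqs\frac{p}{p-1}\bigexpec{(X^+_n)^p}^{1/p}$, that is,
\begin{equation*}
\bigexpec{(\Xbar_n\wedge K)^p} \leqs \biggpar{\frac{p}{p-1}}^p\bigexpec{(X^+_n)^p}\;.
\end{equation*}
Finally, letting $K\to\infty$ and using monotone convergence, since $(\Xbar_n\wedge K)^p\nearrow\Xbar_n^p$, yields the claim (which holds trivially anyway when $\bigexpec{(X^+_n)^p}=\infty$). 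The only genuine subtlety is the finiteness issue motivating the truncation; the remaining ingredients — the layer-cake identity, the Fubini interchange, and the elementary integral $\int_0^a\lambda^{p-2}\,\6\lambda=a^{p-1}/(p-1)$ valid for $p>1$ — are routine.
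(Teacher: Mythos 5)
Your proof is correct and follows essentially the same route as the paper: truncation of $\Xbar_n$, the layer-cake formula combined with the first inequality of Theorem~\ref{thm_Doob}, Fubini--Tonelli, H\"older, division by the (finite) truncated moment, then passage to the limit. The only cosmetic difference is that you conclude by monotone convergence where the paper invokes dominated convergence --- your choice is in fact the more natural one, since $(\Xbar_n\wedge K)^p\nearrow\Xbar_n^p$.
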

\begin{proof}
Soit $M>0$ une constante, que nous allons faire tendre vers l'infini \`a la fin
de la preuve. Alors, par int\'egration par parties,  
\begin{align}
\nonumber
\bigexpec{\brak{\Xbar_n\wedge M}^p}
&= \int_0^\infty p\lambda^{p-1} \bigprob{\Xbar_n\wedge M \geqs \lambda}
\,\6\lambda \\
\nonumber
&\leqs \int_0^\infty p\lambda^{p-1} \frac1\lambda 
\bigexpec{X_n^+\indexfct{\Xbar_n\wedge
M\geqs\lambda}}
\,\6\lambda \\
\nonumber
&= \int_0^\infty p\lambda^{p-2} 
\int X_n^+ \indexfct{\Xbar_n\wedge
M\geqs\lambda}\,\6\fP
\,\6\lambda \\
\nonumber
&= \int X_n^+ \int_0^{\Xbar_n\wedge M} p\lambda^{p-2}\,\6\lambda\,\6\fP \\
\nonumber
&= \frac{p}{p-1}\int X_n^+ \brak{\Xbar_n\wedge M}^{p-1} \,\6\fP \\
\nonumber
&= \frac{p}{p-1}\bigexpec{ X_n^+ \brak{\Xbar_n\wedge M}^{p-1} } \\
&\leqs \frac{p}{p-1} \bigbrak{\expec{\abs{X_n^+}^p}}^{1/p} 
\bigbrak{\expec{\abs{\Xbar_n\wedge M}^p}}^{(p-1)/p}\;. 
\label{indoob6:1} 
\end{align}
La derni\`ere in\'egalit\'e provient de l'in\'egalit\'e de H\"older. Divisant
les deux c\^ot\'es de l'in\'egalit\'e par $\brak{\expec{\abs{\Xbar_n\wedge
M}^p}}^{(p-1)/p}$ et \'elevant \`a la puissance $p$, on trouve 
\begin{equation}
 \label{indoob6:2}
\bigexpec{\abs{\Xbar_n\wedge M}^p} \leqs \biggpar{\frac{p}{p-1}}^p
\bigexpec{\brak{X^+_n}^p}\;,
\end{equation} 
et le r\'esultat suit du th\'eor\`eme de la convergence domin\'ee, en faisant
tendre $M$ vers l'infini.
\end{proof}

\begin{cor}
\label{cor_indoob2}
Si $Y_n$ est une martingale, alors 
\begin{equation}
 \label{indoob7}
\biggexpec{\Bigbrak{\max_{0\leqs m\leqs n}\abs{Y_m}}^p}
\leqs \biggpar{\frac{p}{p-1}}^p \bigexpec{\abs{Y_n}^p}\;.
\end{equation}  
\end{cor}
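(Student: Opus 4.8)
The statement is an immediate consequence of the $L^p$ maximal inequality (Theorem~\ref{thm_Lpmax}) once we produce a suitable submartingale. The plan is to apply that theorem to the process $X_n \defby \abs{Y_n}$ and identify the two sides of the resulting inequality with the two sides of~\eqref{indoob7}.

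First I would note that $\varphi(x)=\abs{x}$ is convex, so by Proposition~\ref{prop_mart2}(i) — or equivalently by Corollary~\ref{cor_mart}(i) with $p=1$ — the process $X_n=\abs{Y_n}$ is a submartingale with respect to the same filtration (it is certainly integrable, being dominated by the trivial case if $\expec{\abs{Y_n}^p}=\infty$, in which situation~\eqref{indoob7} holds trivially; so we may assume $\expec{\abs{Y_n}^p}<\infty$). Since $X_n\geqs0$, we have $X_n^+=X_n=\abs{Y_n}$, and therefore
\begin{equation*}
\Xbar_n = \max_{0\leqs m\leqs n} X_m^+ = \max_{0\leqs m\leqs n}\abs{Y_m}\;.
\end{equation*}

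Then I would simply invoke Theorem~\ref{thm_Lpmax} applied to the submartingale $X_n$: for every $p>1$,
\begin{equation*}
\biggexpec{\Bigbrak{\max_{0\leqs m\leqs n}\abs{Y_m}}^p}
= \bigexpec{\Xbar_n^p}
\leqs \biggpar{\frac{p}{p-1}}^p \bigexpec{\brak{X_n^+}^p}
= \biggpar{\frac{p}{p-1}}^p \bigexpec{\abs{Y_n}^p}\;,
\end{equation*}
which is exactly~\eqref{indoob7}.

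There is essentially no obstacle here: the only points requiring a word of care are the verification that $\abs{Y_n}$ is indeed a submartingale (a direct application of Jensen's inequality for conditional expectations, already packaged in Proposition~\ref{prop_mart2}) and the bookkeeping identification $\Xbar_n=\max_{0\leqs m\leqs n}\abs{Y_m}$, which uses that $\abs{Y_m}$ is nonnegative. The substantive content — the integration-by-parts and Hölder argument — has already been carried out in the proof of Theorem~\ref{thm_Lpmax}.
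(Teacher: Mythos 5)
Your proof is correct and is exactly the intended argument: the paper states the corollary without proof precisely because it follows by applying Theorem~\ref{thm_Lpmax} to the submartingale $\abs{Y_n}$ (a consequence of Corollary~\ref{cor_mart} with $p=1$), with the identifications $X_m^+=\abs{Y_m}$ and $\Xbar_n=\max_{0\leqs m\leqs n}\abs{Y_m}$ that you spell out. Nothing is missing.
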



\section{Exercices}
\label{sec_exo_doob}

\begin{exercice}
\label{exo_doob1} 
D\'emontrer la Proposition~\ref{prop_tarret1}~:
\begin{enum}
\item	Si $N$ et $M$ sont des temps d'arr\^et, alors $N\wedge M$ et $N\vee M$
sont des temps d'arr\^et. 
\item	Si $N_k$, $k\in\N$, est une suite de temps d'arr\^et telle que
$N_k\nearrow N$, alors $N$ est un temps d'arr\^et.
\end{enum}
\end{exercice}

\begin{exercice}
\label{exo_doob2}
Soient $\xi_1, \xi_2, \dots$ des variables al\'eatoires i.i.d.\ telles que
$\expec{\xi_m}=0$, et soit la martingale $X_n=\sum_{m=1}^n\xi_m$. On se donne
$\lambda>0$, et soit 
\[
 P_n(\lambda) = \Bigprob{\max_{1\leqs m\leqs n}\abs{X_m}\geqs\lambda}\;.
\]
\begin{enum}
\item	On suppose les $\xi_m$ de variance finie.
Donner une majoration de $P_n(\lambda)$ en appliquant
l'in\'egalit\'e de Doob \`a $X_n^2$. 
\item	Am\'eliorer la borne pr\'ec\'edente en appliquant l'in\'egalit\'e de
Doob \`a $(X_n+c)^2$ et en optimisant sur $c$.  
\item	On suppose que les $\xi_m$ suivent une loi normale centr\'ee r\'eduite. 
Majorer $P_n(\lambda)$ en appliquant l'in\'egalit\'e de Doob \`a
$\e^{cX_n^2}$ et en optimisant sur $c$. 
\item	Pour des $\xi_m$ normales centr\'ees r\'eduites, majorer 
$\prob{\max_{1\leqs m\leqs n}X_m\geqs\lambda}$ en appliquant l'in\'egalit\'e de
Doob \`a $\e^{cX_n}$ et en optimisant sur $c$.
\end{enum}
\end{exercice}


\chapter{Th\'eor\`emes de convergence}
\label{chap_conv}

Une propri\'et\'e importante des suites non-d\'ecroissantes de r\'eels est que
si elles sont born\'ees, alors elles convergent. Les sous-martingales sont un
analogue stochastique de telles suites~: si elles sont born\'ees, alors elles
convergent dans un sens appropri\'e. Ceci donne une m\'ethode tr\`es simple
pour \'etudier le comportement asymptotique de certains processus
stochastiques. 

Il faut cependant faire attention au
fait qu'il existe diff\'erentes notions de convergence de suites de variables
al\'eatoires. Dans cette section, nous allons d'abord examiner
la convergence de sous-martingales au sens presque s\^ur, qui est le plus
fort. Ensuite nous verrons ce qui se passe dans le cas plus faible de la
convergence $L^p$. Il s'av\`ere que le cas $p>1$ est relativement simple \`a
contr\^oler, alors que le cas $p=1$ est plus difficile.


\section{Rappel: Notions de convergence}
\label{sec_rapco}

Commen\c cons par rappeler trois des notions de convergence d'une suite de
variables al\'ea\-toires (une quatri\`eme notion, celle de convergence en loi,
ne jouera pas de r\^ole important ici).

\begin{definition}[Convergence d'une suite de variables al\'eatoires]
\label{def_conv} 
Soient\/ $X$ et $\set{X_n}_{n\geqs0}$ des variables al\'eatoires
r\'eelles, d\'efinies sur un m\^eme espace probabilis\'e\/ $(\Omega,\cF,\fP)$. 
\begin{enum}
\item	On dit que\/ $X_n$ \defwd{converge presque s\^urement}\/ vers\/ $X$ si 
\[
\fP\Bigpar{\bigsetsuch{\omega\in\Omega}{\lim_{n\to\infty}X_n(\omega)=X(\omega)}}
= 1\;. 
\]
\item	Si\/ $p>0$, on d\'enote par\/ $L^p(\Omega,\cF,\fP)$ l'ensemble des
variables al\'eatoires\/ $X$ telles que\/ $\expec{\abs{X}^p}<\infty$.
Si\/ $X_n, X\in L^p$, on
dit que\/ $X_n$ \defwd{converge dans $L^p$}\/ vers $X$ si
\[
\lim_{n\to\infty} \bigexpec{\abs{X_n-X}^p} = 0\;.
\]
\item	On dit que\/ $X_n$ \defwd{converge en probabilit\'e}\/ vers\/ $X$ si 
\[
\lim_{n\to\infty} \bigprob{\abs{X_n-X}\geqs\eps} = 0
\]
pour tout $\eps>0$.
\end{enum}
\end{definition}

Les principaux liens entre ces trois notions de convergence sont r\'esum\'ees
dans la proposition suivante.

\begin{prop}\hfill
\label{prop_rapco}
\begin{enum}
\item	La convergence presque s\^ure implique la convergence en probabilit\'e.
\item	La convergence dans\/ $L^p$ implique la convergence en probabilit\'e.
\item	La convergence dans\/ $L^p$ implique la convergence dans $L^q$ pour
tout $q<p$.
\item	Si\/ $X_n\to X$ presque s\^urement et\/ $\abs{X_n}\leqs Y$ $\forall n$
avec\/ $Y\in L^p$, alors\/ $X_n\to X$ dans\/ $L^p$.
\end{enum}
\end{prop}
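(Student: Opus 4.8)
The plan is to prove the four implications one at a time, in each case reducing to classical tools (Markov's inequality, the dominated convergence theorem, Jensen's inequality). None of these steps is deep; the whole proposition is a compilation of standard measure-theoretic facts, and the only mild subtlety is organizing the fourth part cleanly.

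For part~1, I would fix $\eps>0$ and observe that if $X_n(\omega)\to X(\omega)$ then for all $n$ large enough $\abs{X_n(\omega)-X(\omega)}<\eps$; hence the event $\set{X_n\to X}$ is contained in $\liminf_n\set{\abs{X_n-X}<\eps}$, i.e. in $\bigcup_{N}\bigcap_{n\geqs N}\set{\abs{X_n-X}<\eps}$. Since the left-hand set has probability $1$, the increasing sequence of sets $A_N=\bigcap_{n\geqs N}\set{\abs{X_n-X}<\eps}$ satisfies $\fP(A_N)\to1$, and $A_N\subset\set{\abs{X_n-X}<\eps}$ for every $n\geqs N$, so $\bigprob{\abs{X_n-X}\geqs\eps}\to0$.

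For part~2, this is just Markov's inequality applied to $\abs{X_n-X}^p$: for $\eps>0$,
\begin{equation*}
\bigprob{\abs{X_n-X}\geqs\eps}
= \bigprob{\abs{X_n-X}^p\geqs\eps^p}
\leqs \frac{1}{\eps^p}\,\bigexpec{\abs{X_n-X}^p}\longrightarrow 0\;.
\end{equation*}
For part~3, with $q<p$ I would apply Jensen's inequality to the concave function $t\mapsto t^{q/p}$ (equivalently, Hölder with exponents $p/q$ and its conjugate), obtaining $\bigexpec{\abs{X_n-X}^q}\leqs\bigexpec{\abs{X_n-X}^p}^{q/p}\to0$; one should note $X_n-X\in L^p\subset L^q$ on a probability space, so the quantities are finite.

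For part~4, the natural route is to combine parts~1 and~2 with a dominated-convergence argument, and this is the one place a little care is needed. From $X_n\to X$ a.s.\ and $\abs{X_n}\leqs Y$ we get $\abs{X}\leqs Y$ a.s., hence $\abs{X_n-X}^p\leqs(2Y)^p\in L^1$, and $\abs{X_n-X}^p\to0$ a.s.; the dominated convergence theorem then gives $\bigexpec{\abs{X_n-X}^p}\to0$. The only genuine obstacle, such as it is, is justifying that $\abs{X}\leqs Y$ almost surely and that $(\abs{X_n}+\abs{X})^p$ (or $(2Y)^p$) is an integrable dominating function --- both follow immediately from the hypothesis $Y\in L^p$ and the a.s.\ limit, so the argument is routine once stated in this order. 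I expect no real difficulty; the "hard part" is merely presenting the four items compactly without repeating the Markov-inequality computation.
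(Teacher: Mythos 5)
Your proof is correct, and for parts 2, 3 and 4 it is essentially the paper's own argument: Markov's inequality for 2, H\"older/Jensen with exponent $q/p$ for 3, and for 4 the observation that $\abs{X}\leqs Y$ a.s.\ so that $\abs{X_n-X}^p\leqs(2Y)^p\in L^1$ and dominated convergence applies. The only divergence is in part 1: the paper applies the dominated convergence theorem to the indicators $\indexfct{\abs{X_n-X}\geqs\eps}$ (which are bounded by $1$ and tend to $0$ a.s.), whereas you argue directly with the increasing sets $A_N=\bigcap_{n\geqs N}\set{\abs{X_n-X}<\eps}$ and continuity from below of $\fP$. Both are valid; your version is slightly more elementary in that it avoids invoking an integration theorem for what is really a statement about measures of events, while the paper's version is a compact one-liner once dominated convergence is available. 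No gaps.
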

\begin{proof}\hfill
\begin{enum}
\item	Soit $Y_n=\indexfct{\abs{X_n-X}\geqs\eps}$. Si $X_n\to X$ presque
s\^urement, alors $X_n\to X$ presque partout, donc $Y_n\to 0$ presque
s\^urement. Par le th\'eor\`eme de la convergence domin\'ee (qui s'applique car
$Y_n\leqs 1$), on a $\prob{\abs{X_n-X}\geqs\eps} = \expec{Y_n} \to 0$.

\item	Par l'in\'egalit\'e de Markov, 
$\prob{\abs{X_n-X}\geqs\eps} 
\leqs\eps^{-p}\,\expec{\abs{X_n-X}^p}\to0$.

\item	Si $q<p$, l'in\'egalit\'e de H\"older implique 
$\expec{\abs{X_n-X}^q}\leqs\expec{\abs{X_n-X}^p}^{q/p}\to0$. 

\item	On a $\abs{X_n-X}^p \leqs (\abs{X_n}+\abs{X})^p \leqs (2Y)^p \leqs
2^pY^p\in L^1$. Comme $\abs{X_n-X}^p\to 0$ presque s\^urement, le th\'eor\`eme
de la convergence domin\'ee permet d'\'ecrire 
\begin{equation}
\label{rconv0}
\lim_{n\to\infty} \bigexpec{\abs{X_n-X}^p} 
= \Bigexpec{\lim_{n\to\infty} \abs{X_n-X}^p} = 0\;.
\end{equation} 
\qed
\end{enum}
\renewcommand{\qed}{}
\end{proof}

Voici deux contre-exemples classiques \`a l'\'equivalence entre ces
diff\'erentes notions de convergence~:

\begin{example}
\label{ex_rapco} 
Soit $(\Omega,\cF,\fP)=([0,1],\cB_{[0,1]},\lambda)$, o\`u $\cB_{[0,1]}$ est la
tribu des bor\'eliens et $\lambda$ la mesure de Lebesgue. 
\begin{enum}
\item	Soit $X_n=n^{1/p}\indicator{[0,1/n]}$. Alors $X_n(\omega)\to0$ pour
tout $\omega>0$, donc $X_n\to0$ presque s\^urement. De plus, pour tout $\eps>0$
on a $\prob{\abs{X_n}>\eps}\leqs 1/n$, donc $X_n\to0$ en probabilit\'e. Par
contre, on a $\expec{\abs{X_n}^p} = \int_0^{1/n}(n^{1/p})^p\6x = 1$, donc $X_n$
ne tend pas vers $0$ dans $L^p$.

\item	D\'ecomposons tout entier $n$ comme $n=2^m+k$ avec $m\in\N$ et $0\leqs
k<2^m$, et soit $X_n=\indicator{[k2^{-m},(k+1)2^{-m}]}$. Alors
$\prob{\abs{X_n}>\eps}\leqs 2^{-m}$ pour tout $\eps>0$, donc $X_n\to0$ en
probabilit\'e. De plus, pour tout $p>0$, $\expec{\abs{X_n}^p}\leqs 2^{-m}$, donc
$X_n\to0$ dans $L^p$. Par contre, $X_n(\omega)$ ne converge pour aucun $\omega$
(on peut trouver des sous-suites de $n$ telles que $X_n(\omega)=1$ pour tout
$n$, et aussi telles que $X_n(\omega)=0$ pour tout $n$), donc $X_n$ ne converge
pas presque s\^urement.
\end{enum}
\end{example}


\section{Convergence presque s\^ure}
\label{sec_cops}

Consid\'erons un investisseur suivant la strat\'egie suivante. Il se fixe deux
seuils $a<b$. Il ach\`ete un nombre fix\'e de parts d'action lorsque leur prix
est inf\'erieur \`a $a$. Puis il attend que le prix de l'action atteigne ou
d\'epasse la valeur $b$ pour revendre ses parts. Ensuite, il attend que le prix
retombe au-dessous de $a$ pour racheter des parts, et ainsi de suite. Si $X_n$
d\'esigne le prix de l'action au temps $n$, la strat\'egie peut \^etre d\'ecrite
formellement comme suit. On pose $N_0=-1$ et, pour tout $k\geqs1$, on introduit
les temps d'arr\^et 
\begin{align}
\nonumber
N_{2k-1} &= \inf\setsuch{m>N_{2k-2}}{X_m\leqs a}\;,\\
N_{2k} &= \inf\setsuch{m>N_{2k-1}}{X_m\geqs b}\;.
\label{cops1} 
\end{align}
La strat\'egie correspond \`a la suite pr\'evisible 
\begin{equation}
 \label{cops2}
H_m = 
\begin{cases}
1 & \text{s'il existe $k$ tel que $N_{2k-1}<m\leqs N_{2k}$\;,} \\
0 & \text{sinon\;.}
\end{cases} 
\end{equation} 
Jusqu'au temps $n$, l'investisseur aura revendu ses actions un nombre $U_n$ de
fois, donn\'e par 
\begin{equation}
 \label{cops3}
U_n = \sup\bigsetsuch{k}{N_{2k}\leqs n}\;. 
\end{equation} 
Au temps $n$, le gain de l'investisseur sera donc sup\'erieur ou \'egal \`a
$(b-a)U_n$. Le r\'esultat suivant donne une borne sup\'erieure sur le
gain moyen dans le cas o\`u le processus est favorable ou \'equitable. 

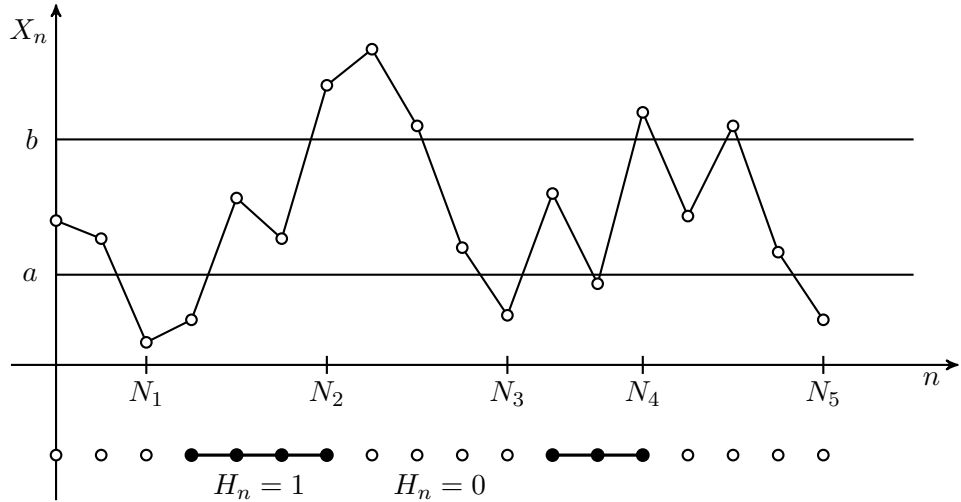
\begin{figure}
\begin{center}
\begin{tikzpicture}[-,auto,scale=0.6,node distance=1.0cm, thick,
main node/.style={draw,circle,fill=white,minimum size=4pt,inner sep=0pt},
full node/.style={draw,circle,fill=black,minimum size=4pt,inner sep=0pt}]

  \path[->,>=stealth'] 
     (-1,0) edge (20,0)
     (0,-3) edge (0,8)
  ;

  \path[-] 
     (0,2) edge (19,2)
     (0,5) edge (19,5)
  ;
  \node at (19.4,-0.3) {$n$};
  \node at (-0.6,7.4) {$X_n$};

  \draw 
  (0,2) node[left=0.1cm] {$a$} 
  -- (0,5) node[left=0.1cm] {$b$} 
   ;
   
  \draw 
  (2,-0.2) -- (2,0.2) node[below=0.2cm] {$N_1$} 
  (6,-0.2) -- (6,0.2) node[below=0.2cm] {$N_2$} 
  (10,-0.2) -- (10,0.2) node[below=0.2cm] {$N_3$} 
  (13,-0.2) -- (13,0.2) node[below=0.2cm] {$N_4$} 
  (17,-0.2) -- (17,0.2) node[below=0.2cm] {$N_5$} 
   ;

   \draw (0,3.2) node[main node] {} 
   -- (1,2.8) node[main node] {}
   -- (2,0.5) node[main node] {}
   -- (3,1) node[main node] {}
   -- (4,3.7) node[main node] {}
   -- (5,2.8) node[main node] {}
   -- (6,6.2) node[main node] {}
   -- (7,7) node[main node] {}
   -- (8,5.3) node[main node] {}
   -- (9,2.6) node[main node] {}
   -- (10,1.1) node[main node] {}
   -- (11,3.8) node[main node] {}
   -- (12,1.8) node[main node] {}
   -- (13,5.6) node[main node] {}
   -- (14,3.3) node[main node] {}
   -- (15,5.3) node[main node] {}
   -- (16,2.5) node[main node] {}
   -- (17,1) node[main node] {}
     ;
     
   \node[main node] at (0,-2) {};
   \node[main node] at (1,-2) {};
   \node[main node] at (2,-2) {};
   \node[main node] at (7,-2) {};
   \node[main node] at (8,-2) {};
   \node[main node] at (9,-2) {};
   \node[main node] at (10,-2) {};
   \node[main node] at (14,-2) {};
   \node[main node] at (15,-2) {};
   \node[main node] at (16,-2) {};
   \node[main node] at (17,-2) {};
   
   \draw[very thick,-] 
    (3,-2) node[full node] {}
    -- (4,-2) node[full node] {}
    -- (5,-2) node[full node] {}
    -- (6,-2) node[full node] {}
  ;

   \draw[very thick,-] 
    (11,-2) node[full node] {}
    -- (12,-2) node[full node] {}
    -- (13,-2) node[full node] {}
  ;
  
  \node at (4.5,-2.7) {$H_n=1$};
  \node at (8.5,-2.7) {$H_n=0$};

\end{tikzpicture}
\end{center}
\vspace{-4mm}
 \caption[]{Prix de l'action et strat\'egie de l'investisseur.}
 \label{fig_amost_sure}
\end{figure}

\begin{prop}
\label{prop_cops1}
Si $X_n$ est une sous-martingale, alors 
\begin{equation}
 \label{cops4}
(b-a) \expec{U_n} \leqs \bigexpec{(X_n-a)^+} - \bigexpec{(X_0-a)^+}\;.
\end{equation}  
\end{prop}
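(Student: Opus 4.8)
The plan is to run the gambler's ``buy low, sell high'' strategy described above against the process and to compare the gain it produces with the terminal value, using the fact (Proposition~\ref{prop_mpc1}, applied to $-X_n$) that a previsible, bounded, nonnegative bet against a sub- or supermartingale is again a sub- or supermartingale. First I would reduce to the nonnegative submartingale $Y_n=(X_n-a)^+$, which is a submartingale by Corollaire~\ref{cor_mart}. This reduction is the crucial simplification: at each ``buy'' time $N_{2k-1}$ one has $X_{N_{2k-1}}\leqs a$, hence $Y_{N_{2k-1}}=0$; at each ``sell'' time $N_{2k}$ one has $X_{N_{2k}}\geqs b$, hence $Y_{N_{2k}}\geqs b-a$; and $Y_n\geqs 0$ always. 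Moreover the number $U_n$ of completed upcrossings of $[a,b]$ by $X$ up to time $n$ is unchanged when we pass to $Y$.

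Next I would check that $H_m$ from \eqref{cops2} is previsible. The event $\set{N_{2k-1}<m\leqs N_{2k}}$ equals $\set{N_{2k-1}\leqs m-1}\cap\set{N_{2k}\leqs m-1}^c$, and since $N_{2k-1}$ and $N_{2k}$ are stopping times both of these sets lie in $\cF_{m-1}$; hence $H_m\measurable\cF_{m-1}$. Since $H_m\in\set{0,1}$ it is also bounded and nonnegative, so $(H\cdot Y)_n=\sum_{m=1}^n H_m(Y_m-Y_{m-1})$ makes sense and the hypotheses of Proposition~\ref{prop_mpc1} are met (for the relevant sign).

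The heart of the proof is the pathwise inequality $(H\cdot Y)_n\geqs (b-a)U_n$. I would obtain it by splitting the sum defining $(H\cdot Y)_n$ along the intervals $(N_{2k-1},N_{2k}]$ on which $H_m=1$. Exactly $U_n$ of these intervals are completed by time $n$, and each contributes the telescoping amount $Y_{N_{2k}}-Y_{N_{2k-1}}\geqs (b-a)-0=b-a$; a possible last, incomplete interval $(N_{2U_n+1},n]$ contributes $Y_n-Y_{N_{2U_n+1}}=Y_n\geqs 0$ (here the choice $Y=(X-a)^+$ is what makes this term nonnegative). Summing the pieces gives the claimed bound. Then to control $\bigexpec{(H\cdot Y)_n}$ from above I would pass to the complementary strategy $K_m=1-H_m$, which is again previsible, bounded and nonnegative: since $-Y_n$ is a supermartingale, Proposition~\ref{prop_mpc1} shows $(K\cdot Y)_n$ is a submartingale, whence $\bigexpec{(K\cdot Y)_n}\geqs\bigexpec{(K\cdot Y)_0}=0$. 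Because $(H\cdot Y)_n+(K\cdot Y)_n=Y_n-Y_0$, this yields $\bigexpec{(H\cdot Y)_n}\leqs\bigexpec{Y_n}-\bigexpec{Y_0}$, and combining with the pathwise bound gives $(b-a)\expec{U_n}\leqs\bigexpec{(X_n-a)^+}-\bigexpec{(X_0-a)^+}$.

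The main obstacle, I expect, is the bookkeeping in the pathwise step — in particular recognizing that the incomplete final upcrossing is handled cleanly precisely by working with $Y=(X-a)^+$ rather than with $X$ itself — together with the standard-but-not-obvious device of using $K=1-H$, since $(H\cdot Y)_n$ being a submartingale only gives the useless lower bound $\bigexpec{(H\cdot Y)_n}\geqs 0$, whereas what we need is an upper bound.
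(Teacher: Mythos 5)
Your proof is correct and follows essentially the same route as the paper: reduce to the submartingale $(X_n-a)^+$ (the paper uses $X_n\vee a$, which differs only by the constant $a$), bound $(b-a)U_n$ pathwise by $(H\cdot Y)_n$, and control its expectation via the complementary strategy $K=1-H$ and Proposition~\ref{prop_mpc1}. The only difference is that you spell out the previsibility of $H$ and the telescoping upcrossing estimate, which the paper asserts without detail.
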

\begin{proof}
Soit $Y_n=a+(X_n-a)^+=X_n\vee a$. Par le corollaire~\ref{cor_mart}, $Y_n$ est
aussi une sous-martingale. De plus, $Y_n$ et $X_n$ effectuent le m\^eme nombre
de transitions de $a$ vers $b$. On a 
\begin{equation}
 \label{cops4:1}
(b-a) U_n \leqs (H\cdot Y)_n = \sum_{m=1}^n H_m (Y_m-Y_{m-1})\;. 
\end{equation} 
En effet, le membre de droite est sup\'erieur ou \'egal au profit fait au temps
$n$. Soit alors $K_m=1-H_m$. Comme $Y_n-Y_0=(H\cdot Y)_n + (K\cdot Y)_n$, et
que $(K\cdot Y)_n$ est une sous-martingale en vertu de la
Proposition~\ref{prop_mpc1}, on a 
\begin{equation}
 \label{cops4:2}
\bigexpec{(K\cdot Y)_n} \geqs \bigexpec{(K\cdot Y)_0}=0\;,
\end{equation} 
et donc $\expec{(H\cdot Y)_n}\leqs \expec{Y_n-Y_0}$, d'o\`u le r\'esultat.
\end{proof}

Cette majoration nous permet de d\'emontrer un premier r\'esultat de
convergence. 

\begin{theorem}[Convergence presque s\^ure]
\label{thm_conv_ps}
 Soit $X_n$ une sous-martingale telle que $\sup_n\expec{X_n^+}<\infty$.
Alors $X_n$ converge presque s\^urement, lorsque $n\to\infty$, vers une
variable al\'eatoire $X$ satisfaisant $\expec{\abs{X}}<\infty$.
\end{theorem}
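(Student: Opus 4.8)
The plan is to use the upcrossing inequality from Proposition~\ref{prop_cops1} to control the oscillations of the trajectory, exactly in the spirit of Doob's martingale convergence theorem. The key observation is that a sequence of reals converges in $\overline{\R}$ if and only if, for every pair of rationals $a<b$, it does not upcross the interval $[a,b]$ infinitely often. So I would introduce, for fixed rationals $a<b$, the random variable $U_\infty^{[a,b]} = \lim_{n\to\infty} U_n^{[a,b]}$, where $U_n^{[a,b]}$ is the number of completed upcrossings of $[a,b]$ by $X_0,\dots,X_n$ as defined in~\eqref{cops3}. This limit exists (possibly $+\infty$) since $n\mapsto U_n^{[a,b]}$ is nondecreasing, and by monotone convergence $\expec{U_\infty^{[a,b]}} = \lim_n \expec{U_n^{[a,b]}}$.

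\textbf{Bounding the upcrossings.} Applying Proposition~\ref{prop_cops1} gives
\begin{equation*}
(b-a)\expec{U_n^{[a,b]}} \leqs \bigexpec{(X_n-a)^+} - \bigexpec{(X_0-a)^+} \leqs \bigexpec{(X_n-a)^+}\;.
\end{equation*}
Now I would use the elementary bound $(x-a)^+ \leqs x^+ + a^-$ (or simply $(x-a)^+ \leqs x^+ + |a|$) to get $\expec{(X_n-a)^+} \leqs \expec{X_n^+} + |a| \leqs \sup_n \expec{X_n^+} + |a| =: C_a < \infty$ by hypothesis. Hence $\expec{U_\infty^{[a,b]}} \leqs C_a/(b-a) < \infty$, so $U_\infty^{[a,b]} < \infty$ almost surely. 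Taking the countable union over all pairs of rationals $a<b$, the event $\bigcup_{a<b,\ a,b\in\Q} \set{U_\infty^{[a,b]}=\infty}$ has probability zero; on its complement, $X_n(\omega)$ has no interval of rationals that it upcrosses infinitely often, which means $\liminf_n X_n(\omega) = \limsup_n X_n(\omega)$. Therefore $X := \lim_{n\to\infty} X_n$ exists in $\overline{\R}$ almost surely.

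\textbf{Integrability of the limit.} It remains to show $X$ is almost surely finite and integrable. For this I would apply Fatou's lemma. First, since $X_n^+ \to X^+$ a.s. (as $x\mapsto x^+$ is continuous on $\overline\R$), Fatou gives $\expec{X^+} \leqs \liminf_n \expec{X_n^+} \leqs \sup_n \expec{X_n^+} < \infty$. For the negative part I would use the sub-martingale property: $\expec{X_n^-} = \expec{X_n^+} - \expec{X_n} \leqs \expec{X_n^+} - \expec{X_0}$, where I used $\expec{X_n}\geqs\expec{X_0}$ for a sub-martingale (Proposition~\ref{prop_mart1} with the expectation, or just iterating the defining inequality). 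Hence $\sup_n \expec{X_n^-} \leqs \sup_n\expec{X_n^+} - \expec{X_0} < \infty$, and Fatou again yields $\expec{X^-} \leqs \liminf_n\expec{X_n^-} < \infty$. Combining, $\expec{\abs{X}} = \expec{X^+} + \expec{X^-} < \infty$, which in particular forces $\abs{X}<\infty$ a.s., completing the proof.

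\textbf{Expected main obstacle.} The genuinely substantive input is the upcrossing inequality, which is already available as Proposition~\ref{prop_cops1}, so the remaining work is mostly bookkeeping. The one point that requires a little care is the logical equivalence "no finite upcrossing count for all rational intervals $\iff$ convergence in $\overline\R$": I would spell out that if $\liminf X_n < \limsup X_n$, one can insert rationals $a<b$ strictly between them, and then $X_n$ drops below $a$ and rises above $b$ infinitely often, forcing $U_\infty^{[a,b]}=\infty$. The passage from convergence in $\overline\R$ to genuine a.s. finiteness is then handled cleanly by the Fatou argument above, using the sub-martingale hypothesis to bound $\expec{X_n^-}$ uniformly.
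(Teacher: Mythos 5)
Votre preuve est correcte et suit essentiellement la m\^eme d\'emarche que celle du polycopi\'e~: in\'egalit\'e des mont\'ees (Proposition~\ref{prop_cops1}) combin\'ee \`a la borne $(x-a)^+\leqs x^+ + \abs{a}$, union d\'enombrable sur les couples de rationnels $a<b$ pour obtenir l'existence de la limite dans $\overline{\R}$, puis lemme de Fatou pour $X^+$ et, via $\expec{X_n^-}=\expec{X_n^+}-\expec{X_n}\leqs\expec{X_n^+}-\expec{X_0}$, pour $X^-$. Rien \`a redire.
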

\begin{proof}
Comme $(X-a)^+\leqs X^+ + \abs{a}$, la proposition pr\'ec\'edente implique 
\begin{equation}
 \label{cops5:1}
\expec{U_n} \leqs \frac{\abs{a}+\expec{X_n^+}}{b-a}\;. 
\end{equation} 
Lorsque $n\to\infty$, la suite croissante $U_n$ tend vers le nombre total $U$ de
passages de $a$ \`a $b$ de la suite des $X_n$. Si $\expec{X_n^+}$ est born\'e,
il suit que $\expec{U}<\infty$, et donc que $U$ est fini presque s\^urement.
Ceci \'etant vrai pour tout choix de rationnels $a<b$, l'\'ev\'enement 
\begin{equation}
 \label{cops5:2}
\bigcup_{a,b\in\Q} \bigsetsuch{\omega}{\liminf X_n(\omega) < a < b < \limsup
X_n(\omega)} 
\end{equation} 
a une probabilit\'e nulle. Il suit que $\liminf X_n=\limsup X_n$ presque
s\^urement, et donc que $\lim_{n\to\infty}X_n\bydef X$ existe presque
s\^urement. Le lemme de Fatou nous assure que
$\expec{X^+}\leqs\liminf\expec{X_n^+}<\infty$, et donc $X<\infty$ presque
s\^urement. Enfin, pour montrer que $X>-\infty$ presque s\^urement, on
d\'ecompose 
\begin{equation}
 \label{cops5:3}
\expec{X_n^-} = \expec{X_n^+} - \expec{X_n} \leqs \expec{X_n^+} -
\expec{X_0}\;, 
\end{equation}
et on applique \`a nouveau le lemme de Fatou pour montrer que
$\expec{X^-}<\infty$.
\end{proof}

\begin{cor}
\label{cor_convps}
Soit $X_n$ une surmartingale positive. Alors $X_n$ converge presque
s\^ure\-ment, lorsque $n\to\infty$, vers une variable al\'eatoire $X$ avec
$\expec{X}\leqs\expec{X_0}$. 
\end{cor}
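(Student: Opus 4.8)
L'idée est de se ramener au Théorème~\ref{thm_conv_ps} en passant à l'opposé du processus. Posons $Y_n = -X_n$. Comme $X_n$ est une surmartingale, $Y_n$ est une sous-martingale relativement à la même filtration, par linéarité de l'espérance conditionnelle. Le point essentiel est que la positivité de $X_n$ donne $Y_n^+ = (-X_n)^+ = X_n^- = 0$ pour tout $n$, donc $\sup_n \expec{Y_n^+} = 0 < \infty$. L'hypothèse de bornitude du théorème de convergence presque sûre est ainsi automatiquement satisfaite.

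Le Théorème~\ref{thm_conv_ps} s'applique donc à $Y_n$~: il existe une variable aléatoire $Y$, avec $\expec{\abs{Y}}<\infty$, telle que $Y_n \to Y$ presque sûrement. En posant $X = -Y$, on obtient que $X_n \to X$ presque sûrement, que $\expec{\abs{X}}<\infty$, et que $X\geqs0$ presque sûrement comme limite presque sûre de variables aléatoires positives.

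Il reste à établir la majoration $\expec{X} \leqs \expec{X_0}$. Comme $X_n$ est une surmartingale, la Proposition~\ref{prop_mart1} (ou simplement la définition suivie d'une prise d'espérance et d'une récurrence) donne $\expec{X_n} \leqs \expec{X_0}$ pour tout $n$. Puisque $X_n \geqs 0$ et $X_n \to X$ presque sûrement, le lemme de Fatou fournit
\[
\expec{X} = \bigexpec{\liminf_{n\to\infty} X_n} \leqs \liminf_{n\to\infty} \expec{X_n} \leqs \expec{X_0}\;,
\]
ce qui conclut.

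Il n'y a pas à proprement parler d'obstacle dans cette preuve~: le seul point à ne pas manquer est l'observation que la positivité de $X_n$ rend $\expec{Y_n^+}$ borné (il est même nul), de sorte que tout le travail a déjà été fait dans le Théorème~\ref{thm_conv_ps}; la fin n'est qu'une application directe du lemme de Fatou à la suite positive $\set{X_n}_n$.
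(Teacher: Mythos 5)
Votre preuve est correcte et suit essentiellement la même démarche que celle du polycopié~: on passe à $Y_n=-X_n$, sous-martingale avec $\expec{Y_n^+}=0$, on applique le Théorème~\ref{thm_conv_ps}, puis on conclut par $\expec{X_n}\leqs\expec{X_0}$ et le lemme de Fatou. Rien à redire.
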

\begin{proof}
$Y_n=-X_n$ est une sous-martingale born\'ee sup\'erieurement par $0$, avec
$\expec{Y_n^+}=0$, donc elle converge. Comme $\expec{X_0}\geqs\expec{X_n}$,
l'in\'egalit\'e suit du lemme de Fatou. 
\end{proof}

\begin{example}\hfill
\label{ex_convps}
\begin{enum}
\item	Consid\'erons l'urne de Polya introduite dans l'exemple
\ref{ex_martingale}. La proportion $X_n$ de boules rouges est une martingale
positive, qui est born\'ee sup\'erieurement par $1$. Par le th\'eor\`eme et le
corollaire, $X_n$ converge presque s\^urement vers une variable al\'eatoire $X$,
d'esp\'erance inf\'erieure ou \'egale \`a $1$. On peut montrer que selon le
nombre initial $r$ et $v$ de boules et le param\`etre $c$, la loi de $X$ est
soit uniforme, soit une loi b\^eta, de densit\'e proportionnelle \`a
$x^{r/c-1}(1-x)^{v/c-1}$.

\item	Soit $S_n$ une marche al\'eatoire sym\'etrique sur $\Z$, partant
de $1$, $N=N_0$ le
temps de premi\`ere atteinte de $0$, et $X_n=S_{n\wedge N}$ la marche
arr\^et\'ee en $0$. Par la Proposition~\ref{prop_indoob1}, $X_n$ est une
martingale non-n\'egative. Par le corollaire, elle converge vers une limite $X$
finie presque s\^urement. Cette limite doit \^etre nulle~: En effet, si
$X_n=k>0$, alors $X_{n+1}=k\pm1$, ce qui contredirait la convergence. On a donc
montr\'e que la marche al\'eatoire partant de $1$ atteint $0$ presque
s\^urement. On notera cependant que comme $X_n$ est une martingale, on a
$\expec{X_n}=\expec{X_0}=1$, donc que $X_n$ ne converge pas dans~$L^1$.
\end{enum}
\end{example}


\section{Convergence dans $L^p$, $p>1$}
\label{sec_coLp}

La convergence d'une martingale dans $L^p$ suit de mani\`ere simple 
de l'in\'egalit\'e du maximum~$L^p$~:

\begin{theorem}[Convergence d'une martingale dans $L^p$]
\label{thm_convLp}
Soit $X_n$ une martingale telle que $\sup_n\expec{\abs{X_n}^p}<\infty$ pour un
$p>1$. Alors $X_n$ converge vers une variable al\'eatoire $X$ presque
s\^urement et dans $L^p$. 
\end{theorem}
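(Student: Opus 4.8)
The statement to prove is: if $X_n$ is a martingale with $\sup_n \expec{\abs{X_n}^p} < \infty$ for some $p > 1$, then $X_n$ converges both almost surely and in $L^p$ to some random variable $X$. Let me think about how the pieces already established combine.

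For almost sure convergence: since $X_n$ is a martingale, it's in particular a submartingale. We need $\sup_n \expec{X_n^+} < \infty$ to apply Theorem~\ref{thm_conv_ps}. But $X_n^+ \leqs \abs{X_n}$, so $\expec{X_n^+} \leqs \expec{\abs{X_n}} \leqs \expec{\abs{X_n}^p}^{1/p}$ by Jensen (or Hölder), and the right side is bounded by hypothesis. Hence $\sup_n \expec{X_n^+} < \infty$, and Theorem~\ref{thm_conv_ps} gives $X_n \to X$ a.s. with $\expec{\abs{X}} < \infty$.

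For $L^p$ convergence: this is where the $L^p$ maximal inequality (Corollary~\ref{cor_indoob2}, or Theorem~\ref{thm_Lpmax}) enters. Define $Y = \sup_{m \geqs 0} \abs{X_m}$, the supremum over all time. For each $n$, Corollary~\ref{cor_indoob2} gives $\expec{(\max_{0\leqs m\leqs n}\abs{X_m})^p} \leqs (p/(p-1))^p \expec{\abs{X_n}^p} \leqs (p/(p-1))^p \sup_k \expec{\abs{X_k}^p}$. Letting $n\to\infty$, the quantities $\max_{0\leqs m\leqs n}\abs{X_m}$ increase to $Y$, so by monotone convergence $\expec{Y^p} \leqs (p/(p-1))^p \sup_k \expec{\abs{X_k}^p} < \infty$; in particular $Y \in L^p$ and is finite a.s. Now $\abs{X_n - X}^p \to 0$ a.s. (from the a.s. convergence), and $\abs{X_n - X}^p \leqs (\abs{X_n} + \abs{X})^p \leqs (2Y)^p = 2^p Y^p$, which is integrable (note $\abs{X} = \lim\abs{X_n} \leqs Y$ a.s.). So the dominated convergence theorem — exactly point 4 of Proposition~\ref{prop_rapco} applied with dominating function $2Y$ — yields $\expec{\abs{X_n - X}^p} \to 0$.

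**Main obstacle.** There is no deep obstacle here; the two ingredients (the almost sure convergence theorem for $L^1$-bounded submartingales and the $L^p$ maximal inequality) are already available. The only point requiring a little care is justifying that the maximal function $Y$ is dominated in $L^p$ uniformly in $n$ and then passing to the limit — one must invoke monotone convergence on $\max_{0\leqs m\leqs n}\abs{X_m} \nearrow Y$ before one is allowed to use $2Y$ as an integrable dominating function for the dominated convergence step. Once $Y \in L^p$ is in hand, everything is routine. I would write the proof in two short paragraphs: first deduce a.s. convergence from Theorem~\ref{thm_conv_ps}, then establish $\expec{Y^p} < \infty$ via Corollary~\ref{cor_indoob2} and monotone convergence, and finally invoke dominated convergence (Proposition~\ref{prop_rapco}(4)) to conclude $L^p$ convergence.
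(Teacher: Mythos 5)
Your proposal is correct and follows essentially the same route as the paper's proof: bound $\expec{X_n^+}$ via $\expec{\abs{X_n}^p}$ to invoke the almost sure convergence theorem for submartingales, then use the $L^p$ maximal inequality together with monotone convergence to show $\sup_n\abs{X_n}\in L^p$, and conclude by dominated convergence. The only difference is cosmetic (you state the Jensen bound with exponents $1/p$ rather than raised to the power $p$), so there is nothing to add.
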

\begin{proof}
On a $(\expec{X_n^+})^p\leqs(\expec{\abs{X_n}})^p\leqs\expec{\abs{X_n}^p}$.
Donc par le Th\'eor\`eme~\ref{thm_conv_ps}, $X_n$ converge presque s\^urement
vers une variable $X$. Par le corollaire~\eqref{cor_indoob2}, 
\begin{equation}
 \label{coLp1}
\biggexpec{\biggbrak{\sup_{0\leqs m\leqs n}\abs{X_m}}^p} 
\leqs \biggpar{\frac{p}{p-1}}^p \bigexpec{\abs{X_n}^p}\;.
\end{equation} 
Faisant tendre $n$ vers l'infini, le th\'eor\`eme de la convergence monotone
montre que $\sup_n\abs{X_n}$ est dans $L^p$. Comme $\abs{X_n-X}^p\leqs
(2\sup_n\abs{X_n})^p$, le th\'eor\`eme de la convergence domin\'ee montre que
$\expec{\abs{X_n-X}^p}\to0$. 
\end{proof}

Dans la suite, nous consid\'erons plus particuli\`erement le cas $p=2$.
Rappelons que si une martingale $X_n$ est dans $L^2$, on peut d\'efinir son
processus croissant 
\begin{equation}
 \label{coLp2}
\braket{X}_n = \sum_{m=1}^n \econd{(X_m-X_{m-1})^2}{\cF_{m-1}}\;.
\end{equation} 
La croissance implique que 
\begin{equation}
 \label{coLp3} 
\lim_{n\to\infty} \braket{X}_n \bydef \braket{X}_\infty
\end{equation} 
existe dans $\R_+\cup\set{\infty}$. Cette quantit\'e s'interpr\`ete comme la
variance totale de la trajectoire $X_n(\omega)$. 

\begin{prop}
\label{prop_l2_1}
Soit $X_n$ une martingale dans $L^2$ telle que $X_0=0$. Alors 
\begin{equation}
\label{coLp4} 
\Bigexpec{\sup_n X_n^2} \leqs 4\bigexpec{\braket{X}_\infty}\;.
\end{equation} 
\end{prop}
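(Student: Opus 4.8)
The plan is to combine the Doob decomposition of $X_n^2$ with the $L^2$ version of Doob's maximal inequality (Corollaire~\ref{cor_indoob2}), and then pass to the limit by monotone convergence. The proof is short: both ingredients are already available, so the work amounts to chaining them correctly and keeping track of the hypothesis $X_0=0$.

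First I would invoke the Doob decomposition. Since $X_n$ is a martingale in $L^2$, Corollaire~\ref{cor_mart} shows $X_n^2$ is a sous-martingale, and~\eqref{pcm7}--\eqref{pcm8} give $X_n^2 = M_n + \braket{X}_n$ with $M_n$ a martingale. Because $X_0=0$ we have $\braket{X}_0=0$ and $M_0=X_0^2=0$, hence $\expec{M_n}=\expec{M_0}=0$ for every $n$, so that
\begin{equation}
\expec{X_n^2} = \expec{M_n} + \expec{\braket{X}_n} = \expec{\braket{X}_n} \leqs \expec{\braket{X}_\infty}\;,
\end{equation}
the last step using the monotonicity of $n\mapsto\braket{X}_n$ recalled in~\eqref{coLp3}. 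Next I would apply Corollaire~\ref{cor_indoob2} to the martingale $X_n$ with $p=2$, for which $(p/(p-1))^p = 4$, obtaining
\begin{equation}
\Bigexpec{\Bigbrak{\max_{0\leqs m\leqs n}\abs{X_m}}^2} \leqs 4\bigexpec{\abs{X_n}^2} = 4\expec{X_n^2} \leqs 4\expec{\braket{X}_\infty}\;.
\end{equation}

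Finally, since squaring is non-decreasing on $[0,\infty)$ the left-hand side equals $\bigexpec{\max_{0\leqs m\leqs n}X_m^2}$, and the random variables $\max_{0\leqs m\leqs n}X_m^2$ form a non-decreasing sequence of non-negative variables increasing to $\sup_n X_n^2$. The monotone convergence theorem then yields
\begin{equation}
\Bigexpec{\sup_n X_n^2} = \lim_{n\to\infty}\Bigexpec{\max_{0\leqs m\leqs n}X_m^2} \leqs 4\expec{\braket{X}_\infty}\;,
\end{equation}
which is the desired bound. The only point deserving a word of care is this last passage to the limit: it is legitimate precisely because the approximating maxima are non-decreasing and non-negative, so monotone convergence applies regardless of whether $\expec{\braket{X}_\infty}$ is finite (if it is infinite the inequality is vacuous, and if it is finite the estimate in particular shows that $\sup_n X_n^2$ is integrable). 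Apart from bookkeeping, there is no real obstacle here — the statement is essentially a specialization of the $L^2$ maximal inequality once $\expec{X_n^2}$ has been identified with $\expec{\braket{X}_n}$.
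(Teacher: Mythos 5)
Your proof is correct and follows essentially the same route as the paper's: the Doob decomposition $X_n^2=M_n+\braket{X}_n$ with $\expec{M_n}=\expec{X_0^2}=0$ to identify $\expec{X_n^2}=\expec{\braket{X}_n}$, the $L^2$ maximal inequality with constant $4$, and monotone convergence to pass from $\max_{0\leqs m\leqs n}X_m^2$ to $\sup_n X_n^2$. Your extra remark on why the limit passage is legitimate even when $\expec{\braket{X}_\infty}=\infty$ is a fair point of bookkeeping that the paper leaves implicit.
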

\begin{proof}
L'in\'egalit\'e du maximum $L^2$ donne 
\begin{equation}
 \label{coLp4:1}
\biggexpec{\sup_{0\leqs m\leqs n}X_m^2} 
\leqs 4 \bigexpec{X_n^2} = 4\expec{\braket{X}_n}\;,
\end{equation} 
puisque $\expec{X_n^2}=\expec{M_n}+\expec{\braket{X}_n}$ et
$\expec{M_n}=\expec{M_0}=\expec{X_0^2}=0$. Le r\'esultat suit alors du
th\'eor\`eme de convergence monotone.
\end{proof}

\begin{theorem}
\label{thm_coL2}
La limite $\lim_{n\to\infty}X_n(\omega)$ existe et est finie presque s\^urement
sur l'en\-semble $\setsuch{\omega}{\braket{X}_\infty(\omega)<\infty}.$
\end{theorem}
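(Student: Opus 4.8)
Le plan est de \emph{localiser} à l'aide de temps d'arrêt qui bornent le processus croissant, afin de se ramener à des martingales $L^2$ bornées auxquelles s'applique le Théorème~\ref{thm_convLp}. Quitte à remplacer $X_n$ par $X_n-X_0$ — ce qui préserve les accroissements, donc le processus croissant $\braket{X}_n$, ainsi que la convergence et la finitude de la limite — on suppose $X_0=0$. Pour chaque $k\in\N$ on pose
\[
 N_k = \inf\setsuch{n\geqs0}{\braket{X}_{n+1}>k}\;.
\]
Le processus croissant étant prévisible (Définition~\ref{def_crochet}), $\braket{X}_{n+1}$ est $\cF_n$-mesurable, de sorte que $\set{N_k\leqs n}=\set{\braket{X}_{n+1}>k}\in\cF_n$: $N_k$ est un temps d'arrêt. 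Comme $\braket{X}$ est non décroissant, $N_k$ croît avec $k$ et $\set{N_k=\infty}=\set{\braket{X}_\infty\leqs k}$, d'où
\[
 \set{\braket{X}_\infty<\infty}=\bigcup_{k\in\N}\set{N_k=\infty}\;.
\]

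Fixons $k$. Par la Proposition~\ref{prop_indoob1} et la remarque qui la suit, le processus arrêté $X^{N_k}_n=X_{N_k\wedge n}$ est une martingale, nulle au temps $0$; comme $\abs{X^{N_k}_n}\leqs\max_{0\leqs m\leqs n}\abs{X_m}$, l'inégalité du maximum $L^2$ (Corollaire~\ref{cor_indoob2}) montre que $X^{N_k}_n\in L^2$ pour tout $n$. Ses accroissements valant $X^{N_k}_m-X^{N_k}_{m-1}=\indexfct{N_k\geqs m}(X_m-X_{m-1})$, avec $\set{N_k\geqs m}=\set{N_k\leqs m-1}^c\in\cF_{m-1}$, on obtient comme en~\eqref{indoob1}
\[
 \braket{X^{N_k}}_n=\sum_{m=1}^n\indexfct{N_k\geqs m}\econd{(X_m-X_{m-1})^2}{\cF_{m-1}}=\braket{X}_{N_k\wedge n}\leqs k\;,
\]
donc $\braket{X^{N_k}}_\infty\leqs k$. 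La Proposition~\ref{prop_l2_1} donne alors $\bigexpec{\sup_n(X^{N_k}_n)^2}\leqs4\bigexpec{\braket{X^{N_k}}_\infty}\leqs4k$, en particulier $\sup_n\expec{(X^{N_k}_n)^2}<\infty$; par le Théorème~\ref{thm_convLp} avec $p=2$, la martingale $X^{N_k}_n$ converge presque sûrement vers une variable aléatoire $X^{(k)}$, finie presque sûrement.

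Il ne reste qu'à recoller les morceaux. Sur $\set{N_k=\infty}$ on a $X^{N_k}_n=X_n$ pour tout $n$, donc $X_n(\omega)$ converge vers la limite finie $X^{(k)}(\omega)$ pour presque tout $\omega\in\set{N_k=\infty}$. Comme $\set{\braket{X}_\infty<\infty}$ est la réunion dénombrable des $\set{N_k=\infty}$, on conclut que $X_n$ converge presque sûrement vers une variable aléatoire finie sur $\set{\braket{X}_\infty<\infty}$. L'étape la plus délicate est la deuxième: il faut s'assurer que $N_k$ est bien un temps d'arrêt (l'indexation $\braket{X}_{n+1}$ est ici essentielle) et que le processus croissant de la martingale arrêtée est exactement $\braket{X}$ arrêté en $N_k$; le reste de la preuve n'est qu'un assemblage de résultats déjà établis.
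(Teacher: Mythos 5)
Votre preuve est correcte et suit essentiellement la même démarche que celle du polycopié : on arrête le processus au premier temps où $\braket{X}_{n+1}$ dépasse un niveau fixé (ce qui est bien un temps d'arrêt par prévisibilité du crochet), on applique la Proposition~\ref{prop_l2_1} puis le Théorème~\ref{thm_convLp} avec $p=2$ à la martingale arrêtée, et on conclut en faisant tendre le niveau vers l'infini. Vous explicitez simplement quelques détails que le texte laisse implicites (la réduction à $X_0=0$, le calcul $\braket{X^{N_k}}_n=\braket{X}_{N_k\wedge n}$ et le recollement sur la réunion des $\set{N_k=\infty}$), ce qui est tout à fait bienvenu.
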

\begin{proof}
Soit $a>0$. Comme $\braket{X}_{n+1}\measurable\cF_n$, 
$N=\inf\setsuch{n}{\braket{X}_{n+1}>a^2}$ est un temps d'arr\^et. Comme
$\braket{X}_{N\wedge n}<a^2$, la proposition ci-dessus appliqu\'ee \`a
$X_{N\wedge n}$ donne 
\begin{equation}
\label{colp5:1}
\biggexpec{\sup_n \abs{X_{N\wedge n}}^2} \leqs 4 a^2\;. 
\end{equation}
Par cons\'equent, le th\'eor\`eme~\ref{thm_convLp} avec $p=2$ implique que la
limite de $X_{N\wedge n}$ existe et est finie presque s\^urement. Le r\'esultat
suit alors du fait que $a$ est arbitraire. 
\end{proof}


\section{Convergence dans $L^1$}
\label{sec_coL1}

La discussion de la convergence d'une martingale dans $L^1$ n\'ecessite la
notion d'int\'egrabilit\'e uniforme. 

\begin{definition}[Int\'egrabilit\'e uniforme]
 Une collection $\set{X_i}_{i\in I}$ de variables al\'eatoires est dite\/ 
\defwd{uniform\'ement int\'egrable} si 
\begin{equation}
 \label{coL1_1}
\lim_{M\to\infty}
\biggpar{\sup_{i\in I} \bigexpec{\abs{X_i}\indexfct{\abs{X_i}>M}}} = 0\;. 
\end{equation} 
\end{definition}

On remarque qu'en prenant $M$ assez grand pour que le supremum soit inf\'erieur
\`a $1$, on obtient 
\begin{equation}
 \label{coL1_2}
\sup_{i\in I} \bigexpec{\abs{X_i}} 
\leqs \sup_{i\in I} \bigexpec{\abs{X_i}\indexfct{\abs{X_i}\leqs M}}
+ \sup_{i\in I} \bigexpec{\abs{X_i}\indexfct{\abs{X_i }>M}}
\leqs M + 1 < \infty\;. 
\end{equation} 

Nous commen\c cons par un r\'esultat g\'en\'eral montrant qu'il existe de
tr\`es vastes familles de variables uniform\'ement int\'egrables. 

\begin{prop}
\label{prop_coL1-0}
Soit $(\Omega,\cF,\fP)$ un espace probabilis\'e et $X\in L^1$. Alors la famille
\begin{equation}
 \label{col1-0} 
\setsuch{\econd{X}{\cF_i}}{\cF_i \text{ sous-tribu de } \cF}
\end{equation} 
est uniform\'ement int\'egrable.
\end{prop}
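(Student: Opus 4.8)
Le plan est de combiner la continuité absolue de l'intégrale de Lebesgue avec l'inégalité de Markov et le fait que l'espérance conditionnelle est une contraction dans $L^1$ (point~3 du Théorème~\ref{thm_mart1}).

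Premièrement, j'établirais le lemme de continuité absolue suivant: pour tout $\eps>0$ il existe $\delta>0$ tel que, pour tout $A\in\cF$, l'inégalité $\fP(A)<\delta$ entraîne $\expec{\abs{X}\indicator{A}}<\eps$. Ceci découle du théorème de convergence dominée, qui donne $\expec{\abs{X}\indicator{\set{\abs{X}>M}}}\to0$ quand $M\to\infty$ (la suite tronquée étant dominée par $\abs X\in L^1$): on choisit $M$ tel que cette quantité soit $<\eps/2$, puis $\delta=\eps/(2M)$, et on conclut par la majoration $\expec{\abs{X}\indicator{A}}\leqs\expec{\abs{X}\indicator{\set{\abs{X}>M}}}+M\fP(A)$.

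Deuxièmement, je fixerais une sous-tribu quelconque $\cF_1\subset\cF$ et poserais $Y=\econd{X}{\cF_1}$. L'inégalité de Jensen conditionnelle (ou simplement la monotonie appliquée à $-\abs X\leqs X\leqs\abs X$) donne $\abs{Y}\leqs\econd{\abs{X}}{\cF_1}$, d'où en particulier $\expec{\abs{Y}}\leqs\expec{\abs{X}}$. L'événement de troncation $A_M=\set{\abs{Y}>M}$ est alors $\cF_1$-mesurable, et l'inégalité de Markov fournit la borne $\fP(A_M)\leqs\expec{\abs{Y}}/M\leqs\expec{\abs{X}}/M$, qui est \emph{uniforme en $\cF_1$}. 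Ainsi, dès que $M$ est choisi avec $M\geqs\expec{\abs{X}}/\delta$, on a $\fP(A_M)<\delta$ quelle que soit la sous-tribu $\cF_1$.

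Enfin, comme $A_M\in\cF_1$, la propriété définissante~\eqref{ec1} de l'espérance conditionnelle appliquée à $\abs{X}$ permet d'écrire $\expec{\abs{Y}\indicator{A_M}}\leqs\expec{\econd{\abs{X}}{\cF_1}\indicator{A_M}}=\expec{\abs{X}\indicator{A_M}}<\eps$, la dernière inégalité résultant du lemme et du choix de $M$. Cette borne étant indépendante de $\cF_1$, on obtient $\sup_{\cF_1}\expec{\abs{Y}\indicator{\set{\abs{Y}>M}}}\leqs\eps$ pour $M$ assez grand, ce qui est exactement l'intégrabilité uniforme de la famille~\eqref{col1-0}. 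Le point le plus délicat est le lemme de continuité absolue de l'intégrale; une fois celui-ci en main, l'astuce essentielle est que l'ensemble où l'on tronque, $\set{\abs{Y}>M}$, appartient à $\cF_1$, ce qui autorise à faire « rentrer » l'indicatrice dans l'espérance conditionnelle et à se ramener uniformément à $\abs{X}$.
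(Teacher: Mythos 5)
Votre preuve est correcte et suit essentiellement la m\^eme d\'emarche que celle du polycopi\'e : continuit\'e absolue de $A\mapsto\expec{\abs{X}\indicator{A}}$, in\'egalit\'e de Markov donnant une borne sur $\fP$ de l'ensemble de troncature uniforme en la sous-tribu, puis in\'egalit\'e de Jensen conditionnelle et le fait que cet ensemble appartient \`a $\cF_1$ pour se ramener \`a $\expec{\abs{X}\indicator{A_M}}\leqs\eps$. La seule diff\'erence, purement cosm\'etique, est que vous tronquez sur $\set{\abs{\econd{X}{\cF_1}}>M}$ alors que le cours utilise $\set{\econd{\abs{X}}{\cF_1}>M}$, et que vous d\'etaillez le lemme de continuit\'e absolue que le cours se contente d'invoquer.
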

\begin{proof}
Pour tout $\eps>0$, le th\'eor\`eme de la convergence domin\'ee implique qu'on
peut trouver $\delta>0$ tel que si $\fP(A)\leqs\delta$, alors
$\expec{\abs{X}\indicator{A}}\leqs\eps$. Soit $M=\expec{\abs{X}}/\delta$ et 
$A_i=\set{\econd{\abs{X}}{\cF_i}>M}\in\cF_i$. 
Par l'in\'egalit\'e de Markov, 
\begin{equation*}
\fP(A_i) \leqs 
\frac1M \bigexpec{\econd{\abs{X}}{\cF_i}}
= \frac1M \bigexpec{\abs{X}} \leqs\delta\;.
\end{equation*}
Par l'in\'egalit\'e de Jensen, il suit 
\begin{equation*}
\Bigexpec{\bigabs{\econd{X}{\cF_i}}\indexfct{\abs{\econd{X}{\cF_i}}>M}}
\leqs 
\bigexpec{\econd{\abs{X}}{\cF_i}\indicator{A_i}}
=
\bigexpec{\abs{X}\indicator{A_i}}\leqs\eps\;.
\end{equation*} 
Comme $\eps$ est arbitraire (et $M$ ne d\'epend pas de $i$), le r\'esultat est
prouv\'e.
\end{proof}

Le lien entre int\'egrabilit\'e uniforme et convergence dans $L^1$ est
expliqu\'e par le r\'esultat suivant. 

\begin{theorem}
\label{thm_coL1-1}
Si $X_n\to X$ en probabilit\'e, alors les trois conditions suivantes sont
\'equivalentes~:
\begin{enum}
\item	La famille $\setsuch{X_n}{n\geqs0}$ est uniform\'ement int\'egrable;
\item	$X_n\to X$ dans $L^1$;
\item	$\expec{\abs{X_n}}\to\expec{\abs{X}}<\infty$.
\end{enum}
\end{theorem}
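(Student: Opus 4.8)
The plan is to establish the cycle of implications $1\Rightarrow 2\Rightarrow 3\Rightarrow 1$, the convergence $X_n\to X$ in probability being kept as a standing hypothesis throughout. Two tools will be used repeatedly. First, the continuous truncations $\ph_M(x)=(x\wedge M)\vee(-M)$ and $f_M(x)=\abs{x}\wedge M$, both bounded by $M$, the former satisfying $\abs{x-\ph_M(x)}=(\abs{x}-M)^+\leqs\abs{x}\indexfct{\abs{x}>M}$. Second, the elementary fact that if $Y_n\to Y$ in probability and $\abs{Y_n}\leqs M$ for all $n$, then $\expec{\abs{Y_n-Y}}\to0$: split $\expec{\abs{Y_n-Y}}$ over $\set{\abs{Y_n-Y}>\eps}$, where it is $\leqs 2M\,\bigprob{\abs{Y_n-Y}>\eps}\to0$, and its complement, where it is $\leqs\eps$. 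Composing with a continuous $\ph_M$ or $f_M$, this gives $\expec{\ph_M(X_n)}\to\expec{\ph_M(X)}$ and $\expec{f_M(X_n)}\to\expec{f_M(X)}$ for each fixed $M$.

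For $1\Rightarrow 2$: uniform integrability yields $\sup_n\expec{\abs{X_n}}<\infty$ by~\eqref{coL1_2}; extracting from $X_n\to X$ an a.s.\ convergent subsequence and applying Fatou's lemma shows $X\in L^1$. One then estimates
\[
\expec{\abs{X_n-X}}\leqs
\expec{\abs{X_n-\ph_M(X_n)}}+\expec{\abs{\ph_M(X_n)-\ph_M(X)}}+\expec{\abs{\ph_M(X)-X}}\;.
\]
The first term is $\leqs\sup_n\expec{\abs{X_n}\indexfct{\abs{X_n}>M}}$, small for $M$ large by uniform integrability; the third equals $\expec{(\abs{X}-M)^+}$, small for $M$ large by dominated convergence since $X\in L^1$; and for each fixed $M$ the middle term tends to $0$ as $n\to\infty$ by the bounded-convergence fact above. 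Letting $n\to\infty$ then $M\to\infty$ gives $X_n\to X$ in $L^1$. The implication $2\Rightarrow 3$ is immediate: $L^1$ convergence forces $X\in L^1$, and $\bigabs{\expec{\abs{X_n}}-\expec{\abs{X}}}\leqs\expec{\bigabs{\abs{X_n}-\abs{X}}}\leqs\expec{\abs{X_n-X}}\to0$.

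The substantial step is $3\Rightarrow 1$. Assuming $\expec{\abs{X_n}}\to\expec{\abs{X}}<\infty$, every $X_n$ and $X$ lies in $L^1$ and $C:=\sup_n\expec{\abs{X_n}}<\infty$. By the remarks above, $\expec{f_M(X_n)}\to\expec{f_M(X)}$ for fixed $M$, while monotone convergence gives $\expec{f_M(X)}\nearrow\expec{\abs{X}}$ as $M\to\infty$. The key elementary inequality is
\[
\abs{x}\indexfct{\abs{x}>2M}\leqs 2\bigpar{\abs{x}-f_M(x)}\qquad\text{for all }x\in\R\;,
\]
valid because for $\abs{x}>2M$ one has $f_M(x)=M$ and $\abs{x}-M>\abs{x}/2$. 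Hence $\expec{\abs{X_n}\indexfct{\abs{X_n}>2M}}\leqs 2\bigpar{\expec{\abs{X_n}}-\expec{f_M(X_n)}}$. Given $\eps>0$, I would choose $M$ with $2\bigpar{\expec{\abs{X}}-\expec{f_M(X)}}<\eps/2$, then use the two convergences to produce $N$ with $\expec{\abs{X_n}\indexfct{\abs{X_n}>2M}}<\eps$ for all $n\geqs N$; finally the finite family $\set{X_0,\dots,X_{N-1}}$ is uniformly integrable (dominated convergence applied to each member), so a single further enlargement of the threshold handles the remaining indices. This yields $\sup_n\expec{\abs{X_n}\indexfct{\abs{X_n}>L}}\leqs\eps$ for $L$ large, i.e.\ uniform integrability. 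Alternatively one could prove $3\Rightarrow 2$ directly by a Scheff\'e-type argument, applying Fatou's lemma to $\abs{X_n}+\abs{X}-\abs{X_n-X}\geqs0$ along a.s.\ convergent subsequences, and then close the loop via $2\Rightarrow 1$ using the uniform integrability of the single variable $X$ together with the absolute continuity of the integral and Markov's inequality.

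I expect $3\Rightarrow 1$ to be the main obstacle. The delicate point is that the convergences $\expec{f_M(X_n)}\to\expec{f_M(X)}$ and $\expec{\abs{X_n}}\to\expec{\abs{X}}$ are only statements about the limit $n\to\infty$, so they control the tails $\expec{\abs{X_n}\indexfct{\abs{X_n}>2M}}$ directly only for $n$ beyond some $N$; the proof is completed only by separately absorbing the finitely many small indices via the trivial uniform integrability of a finite $L^1$ family. The other ingredient that makes this direction work is the comparison of the tail above level $2M$ with the mass defect $\expec{\abs{X_n}}-\expec{f_M(X_n)}$, which is exactly what converts the hypothesis on the total masses into a tail bound.
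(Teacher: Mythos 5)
Your proof is correct and follows essentially the same route as the paper's: the same three-term truncation decomposition with $\ph_M$ for $1\Rightarrow2$, Jensen for $2\Rightarrow3$, and for $3\Rightarrow1$ the same mechanism of comparing the tail with the mass defect $\expec{\abs{X_n}}-\expec{\text{truncation}(X_n)}$, passing to the limit via bounded convergence in probability, and absorbing the finitely many initial indices by enlarging the threshold. The only (harmless) difference is your choice of truncation in $3\Rightarrow1$ --- $f_M(x)=\abs{x}\wedge M$ with the factor-$2$ inequality at level $2M$, where the paper uses a continuous cutoff $\psi_M$ vanishing above $M$, which yields the tail bound at level $M$ directly.
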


\goodbreak
\begin{proof}
\hfill
\begin{itemizz}
\item[1. $\Rightarrow$ 2.]
Soit 
\begin{equation*}
\ph_M(x) = 
\begin{cases}
M & \text{si $x\geqs M$\;,} \\
x & \text{si $\abs{x}\leqs M$\;,} \\
-M & \text{si $x\leqs-M$\;.}
\end{cases} 
\end{equation*} 
L'in\'egalit\'e triangulaire implique 
\begin{equation*}
\abs{X_n-X} \leqs 
\abs{X_n-\ph_M(X_n)} + \abs{\ph_M(X_n)-\ph_M(X)} + \abs{\ph_M(X)-X}\;.
\end{equation*}
Prenant l'esp\'erance, et utilisant le fait que
$\abs{\ph_M(Y)-Y}\leqs\abs{Y}\indexfct{\abs{Y}\geqs M}$, il vient  
\begin{equation*}
\expec{\abs{X_n-X}} \leqs 
\expec{\abs{\ph_M(X_n)-\ph_M(X)}} 
+ \expec{\abs{X_n}\indexfct{\abs{X_n}\geqs M}} 
+ \expec{\abs{X}\indexfct{\abs{X}\geqs M}}\;. 
\end{equation*} 
Le premier terme tend vers $0$ lorsque $n\to\infty$ par convergence domin\'ee et
le fait que $\ph_M(X_n)\to\ph_M(X)$ en probabilit\'e. L'int\'egrabilit\'e
uniforme implique que pour tout $\eps>0$, le second terme est plus petit que
$\eps$ pour $M$ assez grand. L'int\'egrabilit\'e uniforme implique aussi que
$\sup_n\expec{\abs{X_n}}<\infty$, donc par le lemme de Fatou que
$\expec{\abs{X}}<\infty$. Le troisi\`eme terme peut donc \'egalement \^etre
rendu plus petit que $\eps$ en prenant $M$ assez grand, ce qui prouve 2.

\item[2. $\Rightarrow$ 3.]
Par l'in\'egalit\'e de Jensen, 
\begin{equation*}
\bigabs{\expec{\abs{X_n}}-\expec{\abs{X}}} 
\leqs \bigexpec{\bigabs{\abs{X_n}-\abs{X}}}
\leqs \bigexpec{\abs{X_n-X}} \to 0\;.
\end{equation*} 

\item[3. $\Rightarrow$ 1.]
Soit 
\begin{equation*}
\psi_M(x) = 
\begin{cases}
x & \text{si $0\leqs x\leqs M-1$\;,} \\
(M-1)(M-x) & \text{si $M-1\leqs x\leqs M$\;,} \\
0 & \text{si $x\geqs M$\;.}
\end{cases} 
\end{equation*} 
Pour tout $\eps>0$, le th\'eor\`eme de la convergence domin\'ee montre que 
$\expec{\abs{X}}-\expec{\psi_M(\abs{X})}\leqs\eps/2$ pour $M$ assez grand. De
plus, $\expec{\psi_M(\abs{X_n})}\to\expec{\psi_M(\abs{X})}$, donc par 3.
\begin{equation*}
\bigexpec{\abs{X_n}\indexfct{\abs{X_n}>M}}
\leqs \expec{\abs{X_n}} - \expec{\psi_M(\abs{X_n})}
\leqs \expec{\abs{X}} - \expec{\psi_M(\abs{X})} + \frac{\eps}2 < \eps
\end{equation*} 
pour les $n$ plus grands qu'un $n_0(\eps)$. En augmentant encore $M$, on peut
rendre $\expec{\abs{X_n}\indexfct{\abs{X_n}>M}}$ inf\'erieur \`a $\eps$ pour
les $n\leqs n_0(\eps)$ \'egalement, ce qui montre l'int\'egrabilit\'e uniforme.
\qed
\end{itemizz}
\renewcommand{\qed}{}
\end{proof}

Il est maintenant ais\'e d'appliquer ce r\'esultat au cas des sous-martingales. 

\begin{theorem}[Convergence d'une sous-martingale dans $L^1$]
\label{thm_coL1-2} 
Si $X_n$ est une sous-martingale, alors les trois conditions suivantes sont
\'equivalentes~:
\begin{enum}
\item	$\set{X_n}_{n\geqs0}$ est uniform\'ement int\'egrable;
\item	$X_n$ converge presque s\^urement et dans $L^1$;
\item	$X_n$ converge dans $L^1$.
\end{enum}
\end{theorem}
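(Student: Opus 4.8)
Il s'agit de démontrer l'équivalence de trois conditions pour une sous-martingale : (1) intégrabilité uniforme, (2) convergence presque sûre et dans $L^1$, (3) convergence dans $L^1$. Le schéma naturel est le cycle $3 \Rightarrow 1 \Rightarrow 2 \Rightarrow 3$, ou plus directement $1 \Rightarrow 2$, $2 \Rightarrow 3$ (trivial), et $3 \Rightarrow 1$ (via le Théorème~\ref{thm_coL1-1}, qui fait déjà tout le travail une fois qu'on sait que $X_n$ converge en probabilité).

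**Étapes dans l'ordre.** D'abord, l'implication $2 \Rightarrow 3$ est immédiate : c'est une conjonction dont on ne retient qu'un des deux termes. Ensuite, pour $3 \Rightarrow 1$ : si $X_n$ converge dans $L^1$, elle converge en probabilité (Proposition~\ref{prop_rapco}), donc le Théorème~\ref{thm_coL1-1} s'applique et donne directement l'intégrabilité uniforme. Le cœur de la preuve est donc $1 \Rightarrow 2$. Supposons $\set{X_n}$ uniformément intégrable. Par la remarque~\eqref{coL1_2}, on a $\sup_n \expec{\abs{X_n}} < \infty$, donc en particulier $\sup_n \expec{X_n^+} < \infty$. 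Le Théorème~\ref{thm_conv_ps} (convergence presque sûre des sous-martingales bornées dans $L^1$) s'applique : $X_n$ converge presque sûrement vers une variable $X$ avec $\expec{\abs{X}} < \infty$. Comme la convergence presque sûre entraîne la convergence en probabilité (Proposition~\ref{prop_rapco}), on peut de nouveau invoquer le Théorème~\ref{thm_coL1-1} : l'intégrabilité uniforme (hypothèse~1) combinée à la convergence en probabilité vers $X$ donne $X_n \to X$ dans $L^1$. On obtient ainsi la convergence presque sûre \emph{et} dans $L^1$, c'est-à-dire~2.

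**Obstacle principal.** Il n'y en a essentiellement pas : une fois qu'on dispose du Théorème~\ref{thm_coL1-1} et du Théorème~\ref{thm_conv_ps}, la preuve est un assemblage. Le seul point qui demande une seconde de réflexion est de vérifier que l'intégrabilité uniforme fournit bien la borne $L^1$ nécessaire pour appliquer le théorème de convergence presque sûre — mais c'est exactement le contenu de~\eqref{coL1_2}, déjà établi. On pourrait alternativement prouver $2 \Rightarrow 3 \Rightarrow 1 \Rightarrow 2$ pour insister sur la structure cyclique ; je choisirais plutôt la présentation ci-dessus car $3 \Rightarrow 1$ et $1 \Rightarrow 2$ reposent tous deux sur le même outil (Théorème~\ref{thm_coL1-1}), ce qui rend l'exposé économique. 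Il faut juste prendre garde, dans $1 \Rightarrow 2$, à bien énoncer que le théorème de convergence presque sûre s'applique aux \emph{sous}-martingales (et non seulement aux martingales), ce qui est bien le cas ici.
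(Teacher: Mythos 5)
Votre démonstration est correcte et suit essentiellement la même démarche que celle du texte : $1\Rightarrow2$ en combinant la borne $\sup_n\expec{\abs{X_n}}<\infty$ issue de l'intégrabilité uniforme avec le Théorème~\ref{thm_conv_ps} puis le Théorème~\ref{thm_coL1-1}, $2\Rightarrow3$ trivial, et $3\Rightarrow1$ via la convergence en probabilité et à nouveau le Théorème~\ref{thm_coL1-1}. Rien à redire.
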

\begin{proof}
\hfill
\begin{itemizz}
\item[1. $\Rightarrow$ 2.]
On a $\sup_n\expec{X_n}<\infty$, donc $X_n\to X$ presque s\^urement par le
th\'eor\`eme~\ref{thm_conv_ps}, et le th\'eor\`eme~\ref{thm_coL1-1} implique
que $X_n\to X$ dans $L^1$. 

\item[2. $\Rightarrow$ 3.]
Trivial.

\item[3. $\Rightarrow$ 1.]
La convergence dans $L^1$ implique la convergence en probabilit\'e, et donc le
th\'eo\-r\`eme~\ref{thm_coL1-1} permet de conclure. 
\qed
\end{itemizz}
\renewcommand{\qed}{}
\end{proof}

Dans le cas particulier des martingales, on a 

\begin{theorem}[Convergence d'une martingale dans $L^1$]
\label{thm_coL1-3} 
Si $X_n$ est une martingale, alors les quatre conditions suivantes sont
\'equivalentes~:
\begin{enum}
\item	$\set{X_n}_{n\geqs0}$ est uniform\'ement int\'egrable;
\item	$X_n$ converge presque s\^urement et dans $L^1$;
\item	$X_n$ converge dans $L^1$;
\item	il existe une variable al\'eatoire int\'egrable $X$ telle que
$X_n=\econd{X}{\cF_n}$. 
\end{enum}
Dans ce cas $X$ est la limite de $X_n$ dans $L^1$.
\end{theorem}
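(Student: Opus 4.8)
Le plan est de ne presque rien avoir à faire de neuf. Comme toute martingale $\set{X_n}_n$ est \emph{a fortiori} une sous-martingale, le théorème~\ref{thm_coL1-2} fournit déjà l'équivalence de 1, 2 et 3. Il suffit donc d'insérer la condition 4 dans cette chaîne, ce que je ferais au moyen des deux implications $4\Rightarrow1$ et $2\Rightarrow4$, puis de vérifier la dernière assertion de l'énoncé.

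Pour $4\Rightarrow1$, j'invoquerais directement la Proposition~\ref{prop_coL1-0}~: si $X_n=\econd{X}{\cF_n}$ pour une variable $X\in L^1$, alors la famille $\set{X_n}_n$ est contenue dans $\setsuch{\econd{X}{\cF_i}}{\cF_i\text{ sous-tribu de }\cF}$, laquelle est uniformément intégrable~; donc $\set{X_n}_n$ l'est aussi. Il n'y a rien de plus à dire.

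Pour $2\Rightarrow4$, je partirais de la convergence $X_n\to X_\infty$ dans $L^1$ (fournie par 2) et je montrerais que, pour chaque $n$ fixé, $X_n$ est une version de $\econd{X_\infty}{\cF_n}$. La variable $X_n$ étant $\cF_n$-mesurable, il reste à établir que $\int_A X_n\,\6\fP=\int_A X_\infty\,\6\fP$ pour tout $A\in\cF_n$. Or, pour $m\geqs n$, la Proposition~\ref{prop_mart1} donne $\econd{X_m}{\cF_n}=X_n$, donc $\int_A X_n\,\6\fP=\int_A X_m\,\6\fP$~; et puisque $\bigabs{\int_A X_m\,\6\fP-\int_A X_\infty\,\6\fP}\leqs\expec{\abs{X_m-X_\infty}}\to 0$ quand $m\to\infty$, on conclut. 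Le même argument, appliqué avec le choix $X=X_\infty$, montre la dernière affirmation~: la limite $L^1$ de $X_n$ est effectivement l'une des variables $X$ convenant dans la condition 4.

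L'obstacle principal, s'il en est un, est le passage à la limite sous le signe intégral dans $2\Rightarrow4$~: il est essentiel d'y utiliser la convergence dans $L^1$ (et non la seule convergence presque sûre), car c'est elle qui permet de majorer $\bigabs{\int_A X_m\,\6\fP-\int_A X_\infty\,\6\fP}$ uniformément par rapport à $A\in\cF_n$. Tout le reste n'est qu'une réorganisation du théorème~\ref{thm_coL1-2} et des propositions déjà démontrées.
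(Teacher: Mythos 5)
Votre démonstration est correcte et suit essentiellement la même voie que celle du polycopié~: l'implication $4\Rightarrow1$ par la Proposition~\ref{prop_coL1-0}, et l'identification $X_n=\econd{X_\infty}{\cF_n}$ en combinant $\int_A X_n\,\6\fP=\int_A X_m\,\6\fP$ (propriété de martingale) avec le passage à la limite contrôlé par $\expec{\abs{X_m-X_\infty}}\to0$. La seule différence est cosmétique~: le polycopié boucle le cycle $1\Rightarrow2\Rightarrow3\Rightarrow4\Rightarrow1$ en prouvant $3\Rightarrow4$, tandis que vous réutilisez le Théorème~\ref{thm_coL1-2} pour $1\Leftrightarrow2\Leftrightarrow3$ et montrez $2\Rightarrow4$ par le même argument.
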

\begin{proof}
\hfill
\begin{itemizz}
\item[1. $\Rightarrow$ 2.]
Suit du th\'eor\`eme pr\'ec\'edent.

\item[2. $\Rightarrow$ 3.]
Trivial.

\item[3. $\Rightarrow$ 4.]
Si $X_n\to X$ dans $L^1$, alors pour tout $A\in\cF$, 
$\expec{X_n\indicator{A}}\to\expec{X\indicator{A}}$ puisque
\begin{equation*}
\abs{\expec{X_m\indicator{A}}-\expec{X\indicator{A}}}
\leqs\expec{\abs{X_m\indicator{A}-X\indicator{A}}}
\leqs\expec{\abs{X_m-X}}\to0\;.
\end{equation*} 
En particulier, si $A\in\cF_n$, alors pour tout $m>n$,
\begin{equation*}
\expec{X_n\indicator{A}}=\bigexpec{\econd{X_m}{\cF_n}\indicator{A}}
= \expec{X_m\indicator{A}}\;.
\end{equation*}  
Comme $\expec{X_m\indicator{A}}\to\expec{X\indicator{A}}$, on a en fait
$\expec{X_n\indicator{A}}=\expec{X\indicator{A}}$ pour tout $A\in\cF_n$. Mais
ceci est la d\'efinition de $X_n=\econd{X}{\cF_n}$. 

\item[4. $\Rightarrow$ 1.]
Suit de la proposition~\ref{prop_coL1-0}.
\qed
\end{itemizz}
\renewcommand{\qed}{}
\end{proof}


\section{Loi $0-1$ de L\'evy}
\label{levy01} 

Les lois $0-1$ jouent un r\^ole important pour les suites de variables
al\'eatoires, et ont parfois des cons\'equences surprenantes. La plus connue
est la loi $0-1$ de Kolmogorov. Soit $X_n$ une suite de variables al\'eatoires
ind\'ependantes, $\cF'_n=\sigma(X_n,X_{n+1},\dots)$, et $\cT=\bigcap_n\cF'_n$ la
tribu terminale. Intuitivement, les \'ev\'enements de $\cT$ sont ceux dont
l’occurrence n'est pas affect\'ee par la modification d'un nombre fini de $X_n$,
comme par exemple l'\'ev\'enement $\set{\sum_{i=1}^\infty X_i \text{ existe}}$.
La loi $0-1$ de Kolmogorov affirme alors que si $A\in\cT$, alors $\fP(A)=0$ ou
$1$. Ceci permet notamment de prouver la loi forte des grands nombres.

Nous \'ecrivons $\cF_n\nearrow\cF_\infty$ si $\cF_n$ est une filtration telle
que $\cF_\infty=\sigma(\bigcup_n\cF_n)$. 

\begin{theorem}
\label{thm_levy1}
Si $\cF_n\nearrow\cF_\infty$, alors 
\begin{equation}
 \label{levy1}
\econd{X}{\cF_n} \to \econd{X}{\cF_\infty} 
\end{equation} 
presque s\^urement et dans $L^1$ lorsque $n\to\infty$.
\end{theorem}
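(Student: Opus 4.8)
The strategy is to set $Y_n := \econd{X}{\cF_n}$ and $Y_\infty := \econd{X}{\cF_\infty}$, and to recognize that $\set{Y_n}_{n\geqs0}$ is a martingale with respect to the filtration $\set{\cF_n}$: indeed, by Proposition~\ref{prop_econd1}, since $\cF_n\subset\cF_{n+1}$, we have $\econd{Y_{n+1}}{\cF_n}=\econd{\econd{X}{\cF_{n+1}}}{\cF_n}=\econd{X}{\cF_n}=Y_n$, and $\expec{\abs{Y_n}}\leqs\expec{\abs{X}}<\infty$ by Theorem~\ref{thm_mart1}. Moreover, by Proposition~\ref{prop_coL1-0}, the family $\set{Y_n}$ is uniformly integrable. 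Hence by Theorem~\ref{thm_coL1-3}, $Y_n$ converges almost surely and in $L^1$ to some integrable limit $Y_\infty'$; it remains to identify $Y_\infty'$ with $Y_\infty=\econd{X}{\cF_\infty}$.

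**Identifying the limit.** The limit $Y_\infty'$ is $\cF_\infty$-measurable, being the almost sure limit of the $\cF_n$-measurable (hence $\cF_\infty$-measurable) variables $Y_n$. So it suffices to verify the defining property of conditional expectation, namely that $\int_A Y_\infty'\,\6\fP=\int_A X\,\6\fP$ for all $A\in\cF_\infty$. First I would check this for $A\in\cF_n$ for some fixed $n$: for such $A$ and any $m\geqs n$, the martingale property gives $\int_A Y_m\,\6\fP=\int_A Y_n\,\6\fP=\int_A X\,\6\fP$ (the last equality by definition of $Y_n=\econd{X}{\cF_n}$), while the $L^1$ convergence gives $\int_A Y_m\,\6\fP\to\int_A Y_\infty'\,\6\fP$ since $\abs{\int_A Y_m\,\6\fP-\int_A Y_\infty'\,\6\fP}\leqs\expec{\abs{Y_m-Y_\infty'}}\to0$. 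Hence $\int_A Y_\infty'\,\6\fP=\int_A X\,\6\fP$ for every $A\in\bigcup_n\cF_n$.

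**From the algebra to the $\sigma$-algebra.** The remaining point is to upgrade this identity from the algebra $\bigcup_n\cF_n$ to the generated $\sigma$-algebra $\cF_\infty$. This is where a monotone class (or $\pi$-$\lambda$) argument is needed: the collection $\cD$ of sets $A\in\cF_\infty$ for which $\int_A Y_\infty'\,\6\fP=\int_A X\,\6\fP$ holds is a $\lambda$-system (it contains $\Omega$, is closed under proper differences and under increasing countable unions, the latter by dominated convergence applied with the integrable dominating functions $\abs{X}$ and $\abs{Y_\infty'}$), and it contains the $\pi$-system $\bigcup_n\cF_n$ (which is even an algebra, since the $\cF_n$ are increasing). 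By Dynkin's lemma, $\cD\supseteq\sigma(\bigcup_n\cF_n)=\cF_\infty$. Therefore $Y_\infty'$ satisfies both defining conditions of $\econd{X}{\cF_\infty}$, so $Y_\infty'=Y_\infty$ almost surely by the uniqueness part of Theorem~\ref{thm_mart1}, and the convergence $\econd{X}{\cF_n}\to\econd{X}{\cF_\infty}$ holds almost surely and in $L^1$.

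**Main obstacle.** The substantive work is packaged in the two imported results: uniform integrability of the conditional-expectation family (Proposition~\ref{prop_coL1-0}) and the martingale $L^1$-convergence theorem (Theorem~\ref{thm_coL1-3}); granting those, the only genuine step is the monotone-class extension identifying the limit, which is routine but is the place where one must be careful that $\bigcup_n\cF_n$ is an algebra generating $\cF_\infty$ and invoke Dynkin's lemma correctly rather than hand-wave past it.
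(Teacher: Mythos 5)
Your proof is correct and follows essentially the same route as the paper: view $\econd{X}{\cF_n}$ as a uniformly integrable martingale (Proposition~\ref{prop_coL1-0}), apply the $L^1$-convergence theorem for martingales (Théorème~\ref{thm_coL1-3}), and identify the limit with $\econd{X}{\cF_\infty}$ by checking the defining integral identity on $\bigcup_n\cF_n$ and extending it to $\cF_\infty$ via the $\pi$-$\lambda$ theorem. The only difference is that you spell out the Dynkin-lemma step that the paper compresses into a single sentence.
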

\begin{proof}
Comme pour $m>n$, $\econd{\econd{X}{\cF_m}}{\cF_n}=\econd{X}{\cF_n}$, 
$Y_n=\econd{X}{\cF_n}$ est une martingale. La Proposition~\ref{prop_coL1-0}
montre que $Y_n$ est uniform\'ement int\'egrable, donc par le
Th\'eor\`eme~\ref{thm_coL1-3} elle converge presque s\^urement et dans $L^1$
vers une variable al\'eatoire $Y_\infty$, telle que
$Y_n=\econd{Y_\infty}{\cF_n}$. Par d\'efinition de l'esp\'erance
conditionnelle, 
\[
\int_A X\6\fP = \int_A Y_\infty\6\fP
\]
pour tout $A\in\cF_n$. En prenant la limite $n\to\infty$, justifi\'ee par le
th\'eor\`eme $\pi$-$\lambda$ de th\'eorie de l'int\'egration, on obtient
$\econd{X}{\cF_\infty}=Y_\infty$ puisque $Y_\infty\measurable\cF_\infty$. 
\end{proof}

\begin{cor}[Loi $0-1$ de L\'evy]
\label{cor_Levy} 
Si $\cF_n\nearrow\cF_\infty$ et $A\in\cF_\infty$ alors
$\econd{\indicator{A}}{\cF_n}\to\indicator{A}$ presque s\^urement.
\end{cor}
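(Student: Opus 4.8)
The plan is to obtain the corollary as an immediate specialization of Theorem~\ref{thm_levy1}. First I would observe that if $A\in\cF_\infty$, then the indicator $\indicator{A}$ is an $\cF_\infty$-measurable random variable with $\expec{\abs{\indicator{A}}}=\fP(A)\leqs 1<\infty$, so it is legitimate to apply the theorem with $X=\indicator{A}$.

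Applying Theorem~\ref{thm_levy1} then yields $\econd{\indicator{A}}{\cF_n}\to\econd{\indicator{A}}{\cF_\infty}$ almost surement (and dans $L^1$) when $n\to\infty$. It remains only to identify the limit: since $\indicator{A}\measurable\cF_\infty$, the first item of Example~\ref{ex_ec1} gives $\econd{\indicator{A}}{\cF_\infty}=\indicator{A}$ presque s\^urement. Combining the two statements gives $\econd{\indicator{A}}{\cF_n}\to\indicator{A}$ presque s\^urement, which is exactly the claim.

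There is essentially no obstacle here: the entire content is already packaged in Theorem~\ref{thm_levy1} (which itself rests on the $L^1$ martingale convergence theorem, Theorem~\ref{thm_coL1-3}, and on Proposition~\ref{prop_coL1-0} furnishing the uniform integrability of the family $\set{\econd{X}{\cF_n}}$). If a self-contained argument were wanted, one could instead note directly that $Y_n=\econd{\indicator{A}}{\cF_n}$ is a bounded — hence uniform\'ement int\'egrable — martingale, invoke Theorem~\ref{thm_coL1-3} to get a limit $Y_\infty$ with $Y_n=\econd{Y_\infty}{\cF_n}$, and then check $Y_\infty=\indicator{A}$ by verifying $\int_B Y_\infty\,\6\fP=\fP(A\cap B)$ for all $B$ in the generating algebra $\bigcup_n\cF_n$ and extending by a $\pi$-$\lambda$ argument; but this merely re-proves Theorem~\ref{thm_levy1} in a particular case, so the one-line deduction above is to be preferred.
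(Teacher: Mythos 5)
Your deduction is correct and is exactly the intended argument: the paper treats the corollary as an immediate consequence of Theorem~\ref{thm_levy1} applied to $X=\indicator{A}$, with the limit identified as $\econd{\indicator{A}}{\cF_\infty}=\indicator{A}$ because $A\in\cF_\infty$. Nothing is missing; the checks of integrability and $\cF_\infty$-measurability that you spell out are the only (trivial) hypotheses to verify.
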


\begin{example}
\label{ex_levy}
Supposons que $X_n$ est une suite de variables al\'eatoires ind\'ependantes, et
soit $A\in\cT$ un \'ev\'enement de la tribu terminale. Pour tout $n$, $A$ est
ind\'ependant de $\cF_n$, donc
$\econd{\indicator{A}}{\cF_n}=\expec{\indicator{A}}=\fP(A)$. Faisant tendre $n$
vers l'infini, on obtient $\indicator{A}=\fP(A)$, d'o\`u $\fP(A)=0$ ou $1$,
ce qui n'est autre que la loi $0-1$ de Kolmogorov. 
\end{example}


\section{Exercices}
\label{sec_exo_conv}

\begin{exercice}[Le processus de Galton--Watson]
\label{exo_conv1} 
On consid\`ere le processus de Galton--Watson $Z_n$ introduit dans
l'exercice~\ref{exo_mart5}.

\begin{enum}
\item	Montrer que $X_n=Z_n/\mu^n$ converge dans $L^2$
vers une variable al\'eatoire $X$ d'esp\'erance \'egale \`a $1$. 

\item	On suppose $p_0>0$. 
A l'aide de la loi $0-1$ de L\'evy, montrer que $Z_n$ converge presque
s\^urement vers une variable al\'eatoire $Z_\infty:\Omega\to\set{0,+\infty}$. 

{\bf Indications~:} Soit $A=\set{\lim_{n\to\infty} Z_n<\infty}$ et 
$B=\setsuch{\exists n}{Z_n=0}$. Montrer que dans $A$, on a
$\lim_{n\to\infty}\econd{\indicator{B}}{\cF_n}=\indicator{B}$, et en d\'eduire
que $A\subset B$. Conclure en \'etablissant que 
que $Z_\infty=\infty$ dans $A^c$ et $Z_\infty=0$ dans $B$. 
\end{enum}
\end{exercice}

\goodbreak

\begin{exercice} 
\label{exo_conv2} 
Un joueur dispose initialement de la somme $X_0=1$. Il joue \`a un jeu de
hasard, dans lequel il mise \`a chaque tour une proportion $\lambda$ de son
capital, avec $0<\lambda\leqs1$. Il a une chance sur deux de gagner le double
de sa mise, sinon il perd sa mise. 

L'\'evolution du capital $X_n$ en fonction du temps $n$ est d\'ecrite par 
\[
X_{n+1}=(1-\lambda)X_n + \lambda X_n \xi_n
\qquad (n\geqs0)
\]
o\`u les $\xi_n$ sont i.i.d., avec $\prob{\xi_n=2}=\prob{\xi_n=0}=1/2$. 

\bigskip

\begin{enum}
\item	Montrer que $X_n$ est une martingale. 
\item	Calculer $\expec{X_n}$. 
\item	Discuter la convergence presque s\^ure de $X_n$ lorsque $n\to\infty$.
\item	Calculer $\expec{X_n^2}$ par r\'ecurrence sur $n$.  
\item	Que peut-on en d\'eduire sur la convergence dans $L^2$ de $X_n$? 
\item	D\'eterminer le processus croissant $\braket{X}_n$. 
\item	On suppose que le joueur mise \`a chaque tour la totalit\'e de son
capital, c'est-\`a-dire $\lambda=1$. 
\begin{enum}
\item	Calculer explicitement la loi de $X_n$. 
\item	D\'eterminer la limite presque s\^ure de $X_n$. 
\item	Discuter la convergence de $X_n$ dans $L^1$.

Les $X_n$ sont-ils uniform\'ement int\'e\-grables?
\item	Commenter ces r\'esultats - est-ce que vous joueriez \`a ce jeu?
\end{enum}
\end{enum}
\end{exercice}

\goodbreak

\begin{exercice}[{\it La} Martingale]
\label{exo_conv3} 
Un joueur mise sur les r\'esultats des jets ind\'ependants d'une pi\`ece
\'equili\-br\'ee. A chaque tour, il mise une somme $S\geqs0$. Si la pi\`ece
tombe sur Pile, son capital augmente de $S$, si elle tombe sur Face, le joueur
perd sa mise et donc son capital diminue de $S$. 

Une strat\'egie populaire en France au XVIIIe si\`ecle est appel\'ee
\emph{La}\/ Martingale. Elle est d\'efinie comme suit:
\begin{itemiz}
\item	le joueur s'arr\^ete de jouer d\`es qu'il a gagn\'e la premi\`ere fois
(d\`es le premier Pile);
\item	il double sa mise \`a chaque tour, c'est-\`a-dire qu'il mise la somme
$S_n=2^n$ au $n$i\`eme tour, tant qu'il n'a pas gagn\'e. 
\end{itemiz}

Soit $Y_n$ le capital du joueur au temps $n$ (apr\`es $n$ jets de la pi\`ece). 
On admettra que le capital initial est nul, et que le joueur a le droit de
s'endetter d'une somme illimit\'ee, c'est-\`a-dire que $Y_n$ peut devenir
arbitrairement n\'egatif. Soit $X_n$ le capital au temps $n$ d'un joueur misant
un Euro \`a chaque tour. 
\begin{enum}
\item	Montrer que la strat\'egie est pr\'evisible, et \'ecrire $Y_n$ sous la
forme $Y_n=(H\cdot X)_n$ en fonction du processus $\set{X_n}_{n\geqs0}$.
\item	Montrer que $\set{Y_n}_{n\geqs0}$ est une martingale.
\item	D\'eterminer le processus croissant $\braket{Y}_n$. Calculer
$\expec{\braket{Y}_n}$ et discuter la convergence de $Y_n$ dans $L^2$. 
\item	D\'eterminer l'image de $Y_n$, sa loi, et discuter la convergence
presque s\^ure de $Y_n$. Quelle est sa limite?  
\item	$Y_n$ converge-t-elle dans $L^1$? 
\end{enum}
On suppose maintenant que la banque n'admet pas que le joueur s'endette de plus
qu'une valeur limite $L$ (on pourra supposer que $L=2^k$ pour un $k\geqs1$). Par
cons\'equent, le joueur est oblig\'e de s'arr\^eter d\`es que son capital au
temps $n$ est inf\'erieur \`a $-L+2^{n+1}$. Notons $Z_n$ ce capital. 
\begin{enum}
\setcounter{enumi}{5}
\item	Soit $N$ la dur\'ee du jeu (le nombre de fois que le joueur mise une
somme non nulle). Montrer que $N$ est un temps d'arr\^et et donner sa loi.
\item	Le processus $Z_n$ est-il une martingale?  
\item	Discuter la convergence presque s\^ure et dans $L^1$ de
$Z_n$ et commenter les r\'esultats. 
\end{enum}
\end{exercice}

\goodbreak

\begin{exercice} 
\label{exo_conv4} 
Soit $X_0=1$. On d\'efinit une suite $\set{X_n}_{n\in\N}$ r\'ecursivement en
posant que pour tout $n\geqs1$, $X_n$ suit la loi uniforme sur $]0,2X_{n-1}]$
(c-\`a-d $X_n=2U_nX_{n-1}$ o\`u les $U_n$ sont i.i.d. de loi uniforme sur
$]0,1]$). 

\begin{enum}
\item	Montrer que $X_n$ est une martingale. 
\item	Calculer le processus croissant $\braket{X}_n$ et discuter la
convergence de $X_n$ dans $L^2$. 
\item	Discuter la convergence presque s\^ure de $X_n$.
\item	D\'eterminer la limite presque
s\^ure de $X_n$. 

{\it Indication~:} Consid\'erer $Y_n=\log(X_n)$ ainsi que   
$Z_n = Y_n - \expec{Y_n}$.
\item	Discuter la convergence de $X_n$ dans $L^1$. 
\end{enum}
\end{exercice}

\goodbreak

\begin{exercice} 
\label{exo_conv5} 
Soit $U$ une variable al\'eatoire uniforme sur $[0,1]$. Soit $f:[0,1]\to\R$ une
fonction uniform\'ement lipschitzienne. Soit $I_{k,n}=[k2^{-n},(k+1)2^{-n}[$
et soit $\cF_n$ la tribu sur $\Omega=[0,1]$ engendr\'ee par les $I_{k,n}$ pour
$k=0,\dots, 2^n-1$.
On pose  
\[
X_n = \sum_{k=0}^{2^n-1} \frac{f((k+1)2^{-n})-f(k2^{-n})}{2^{-n}}
\indexfct{U\in I_{k,n}}\;.
\]

\begin{enum}
\item	Montrer que $X_n$ est une martingale par rapport \`a $\cF_n$.
\item	Montrer que $X_n$ converge presque s\^urement. On note la limite
$X_\infty$. 
\item	Discuter la convergence de $X_n$ dans $L^1$. 
\item	Montrer que pour tout $0\leqs a<b\leqs 1$, 
\[
\bigexpec{X_\infty\indexfct{U\in[a,b]}} = f(b) - f(a)\;.
\]
\item	On suppose $f$ de classe $C^1$. A l'aide de 4., 
expliciter $X_\infty(\omega)$ (si l'on
note $U(\omega)=\omega$ pour tout $\omega\in\Omega$). 
\end{enum}
\end{exercice}

\goodbreak

\begin{exercice}
\label{exo_conv7} 

Soit $\set{X_n}_{n\geqs1}$ une sous-martingale, et soit 
\[
\Xbar_n = \max_{1\leqs i\leqs n} X_i\;.
\]
On note $x^+ = 0\vee x$ et $\log^+(x) = 0\vee\log (x)$.

\begin{enum}
\item 	Soit $Y$ une variable al\'eatoire \`a valeurs dans $\R_+$, et soit
$f:\R_+\to\R$ une  fonction croissante, diff\'erentiable par morceaux. Montrer
que pour tout $c\geqs0$,
\[
\bigexpec{f(Y)} \leqs f(c) + 
\int_c^\infty f'(\lambda) \bigprob{Y>\lambda} \6\lambda\;.
\]

\item	
Montrer que pour tout $M>1$, 
\[
\expec{\Xbar_n^+\wedge M} \leqs 1 + \expec{X_n^+ \log^+(\Xbar_n^+\wedge M)}
\]
et en d\'eduire que
\[
\expec{\Xbar_n^+} \leqs 
\frac{1+\expec{X_n^+\log^+(X_n^+)}}{1-\e^{-1}}\;.
\]
On pourra utiliser le fait que $a\log b \leqs a\log a + b/\e$ pour tout
$a,b>0$. 

\item	Montrer que si $\sup_n \abs{X_n} \leqs Y$ pour une variable al\'eatoire
$Y$ telle que $\expec{Y}<\infty$ alors 
$\set{X_n}_{n\geqs1}$ est uniform\'ement int\'egrable. 

\item	En d\'eduire que si $\set{X_n}_{n\geqs1}$ est une martingale telle que 
$\sup_n\expec{\abs{X_n}\log^+\abs{X_n}} < \infty$,
alors $\set{X_n}_{n\geqs1}$ converge dans $L^1$. 

\end{enum}
\end{exercice}

\goodbreak

\begin{exercice}[La loi du logarithme it\'er\'e]
\label{exo_conv6} 

Soit $\set{X_n}_{n\geqs1}$ une suite de variables al\'eatoires r\'eelles
i.i.d., centr\'ees, de variance $1$. On suppose que la fonction g\'en\'eratrice 
\[
\psi(\lambda) = \bigexpec{\e^{\lambda X_1}}
\]
est finie pour tout $\lambda\in\R$. Soit 
\[
 S_n = \sum_{i=1}^n X_i\;.
\]
Le but de ce probl\`eme est de montrer que presque s\^urement 
\begin{equation}
\label{loglogn} 
\limsup_{n\to\infty} \frac{S_n}{\sqrt{2n \log(\log n)}} \leqs 1\;.
\end{equation}
Dans la suite, on pose $h(t)=\sqrt{2t\log(\log t)}$ pour $t>1$. 

\begin{enum}
\item	Montrer que
$\psi(\lambda)=1+\frac12\lambda^2+\order{\lambda^2}$.

\item	Montrer que $Y_n = \e^{\lambda S_n}/\psi(\lambda)^n$ est une martingale.

\item	Montrer que pour tout $N\geqs 1$ et tout $a>0$, 
\[
\biggprob{\exists n\leqs N \colon S_n > a + n \frac{\log\psi(\lambda)}{\lambda}}
\leqs \e^{-\lambda a}\;.
\]

\item	On fixe $t, \alpha > 1$. Pour tout $k\geqs 1$, on pose 
\[
a_k = \frac{\alpha}{2} h(t^k)\;, \qquad
\lambda_k = \frac{h(t^k)}{t^k}\;, \qquad
c_k = \frac{\alpha}{2} + t \frac{\log\psi(\lambda_k)}{\lambda_k^2}\;.
\]
Montrer que 
\[
\biggprob{\exists n\in]t^k,t^{k+1}] \colon S_n > h(n)c_k}
\leqs (k\log t)^{-\alpha}\;.
\]

\item	Montrer que presque s\^urement, 
\[
\biggprob{\frac{S_n}{h(n)}\leqs c_k \; \forall n\in]t^k,t^{k+1}], 
k\to\infty } = 1\;.
\]

\item	En utilisant le point 1.,
montrer que 
\[
c_k = \frac{\alpha+t}{2} + r(k)
\]
avec $\lim_{k\to\infty} r(k)=0$. 

\item	D\'emontrer~\eqref{loglogn}. 
\end{enum}
\end{exercice}


\part{Processus en temps continu}
\label{part_cont}


\chapter{Le mouvement Brownien}
\label{chap_mb}

Le mouvement Brownien joue un r\^ole fondamental dans la th\'eorie des processus
stochastiques en temps continu. Bien qu'il soit relativement simple \`a
d\'efinir, il jouit de nombreuses propri\'et\'es remarquables, et parfois
surprenantes. Le mouvement Brownien a \'et\'e \'etudi\'e de mani\`ere tr\`es
approfondie, et bien qu'il existe encore des questions ouvertes, c'est l'un des
processus les mieux compris. Il joue un r\^ole important dans la construction
de processus stochastiques plus g\'en\'eraux. 

Dans ce chapitre, nous pr\'esenterons une construction du mouvement Brownien,
et une petite s\'election de ses nombreuses propri\'et\'es, plus
particuli\`erement celles en rapport avec les martingales. 


\section{Limite d'\'echelle d'une marche al\'eatoire}
\label{sec_mbma}

Nous commen\c cons par introduire le mouvement Brownien de mani\`ere
heuristique. Consid\'erons pour cela la marche al\'eatoire sym\'etrique $X_n$
sur $\Z$. Elle peut \^etre repr\'esent\'ee sous la forme 
\begin{equation}
 \label{mbma1}
X_n = \sum_{i=1}^n \xi_i\;, 
\end{equation} 
o\`u les $\xi_i$ sont des variables al\'eatoires i.i.d., prenant valeurs $1$ et
$-1$ avec probabilit\'e $1/2$. On en d\'eduit ais\'ement les propri\'et\'es
suivantes:
\begin{enum}
\item	$\expec{X_n}=0$ pour tout $n$;
\item	$\variance(X_n)=n$;
\item	$X_n$ prend ses valeurs dans $\set{-n,-n+2,\dots,n-2,n}$ avec 
\begin{equation}
 \label{mbma2}
\prob{X_n=k} = \frac{1}{2^n}
\frac{n!}{\bigpar{\frac{n+k}{2}}!\bigpar{\frac{n-k}{2}}!} \;.
\end{equation} 
\item	{\it Propri\'et\'es des incr\'ements ind\'ependants:}\/ pour tout
$n>m\geqs0$, $X_n-X_m$ est ind\'epen\-dant de $X_1,\dots,X_m$;
\item	{\it Propri\'et\'es des incr\'ements stationnaires:}\/ pour tout
$n>m\geqs0$, $X_n-X_m$ a la m\^eme loi que $X_{n-m}$.
\end{enum}

Consid\'erons alors la suite de processus 
\begin{equation}
 \label{mbma3}
B_t^{(n)} = \frac{1}{\sqrt{n}} X_{\intpart{nt}}\;, 
\qquad
t\in\R_+\;, \quad
n\in\N^*\;.
\end{equation} 
Cela signifie que l'on acc\'el\`ere le temps d'un facteur $n$, tout en
comprimant l'espace d'un facteur $\sqrt{n}$, de sorte que $B_t^{(n)}$ effectue
des pas de $\pm1/\sqrt{n}$ sur des intervalles de temps de longueur $1/n$
(\figref{fig_bm}). 

\begin{figure}
 \centerline{
 \includegraphics*[clip=true,width=73mm]{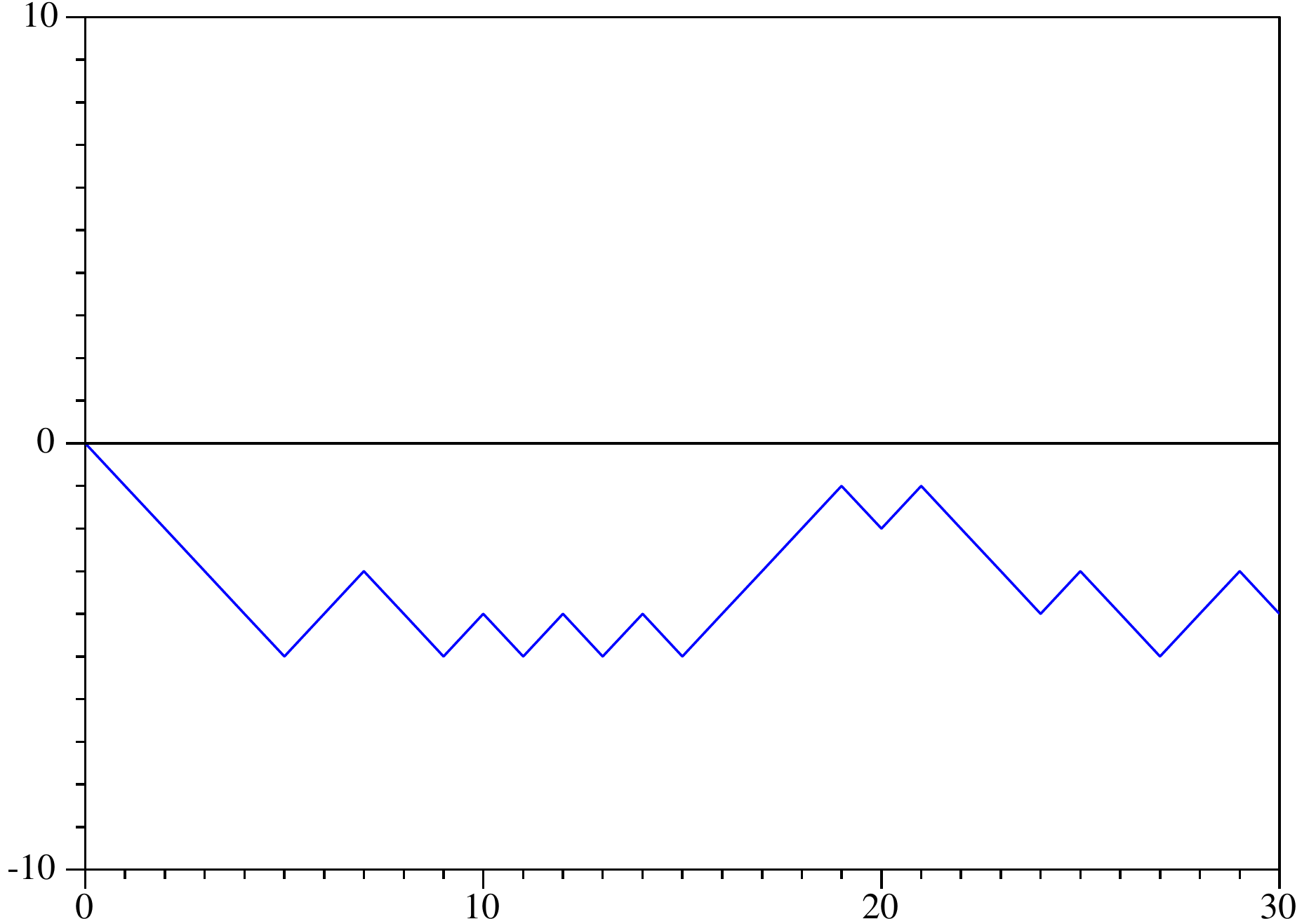}
 \includegraphics*[clip=true,width=73mm]{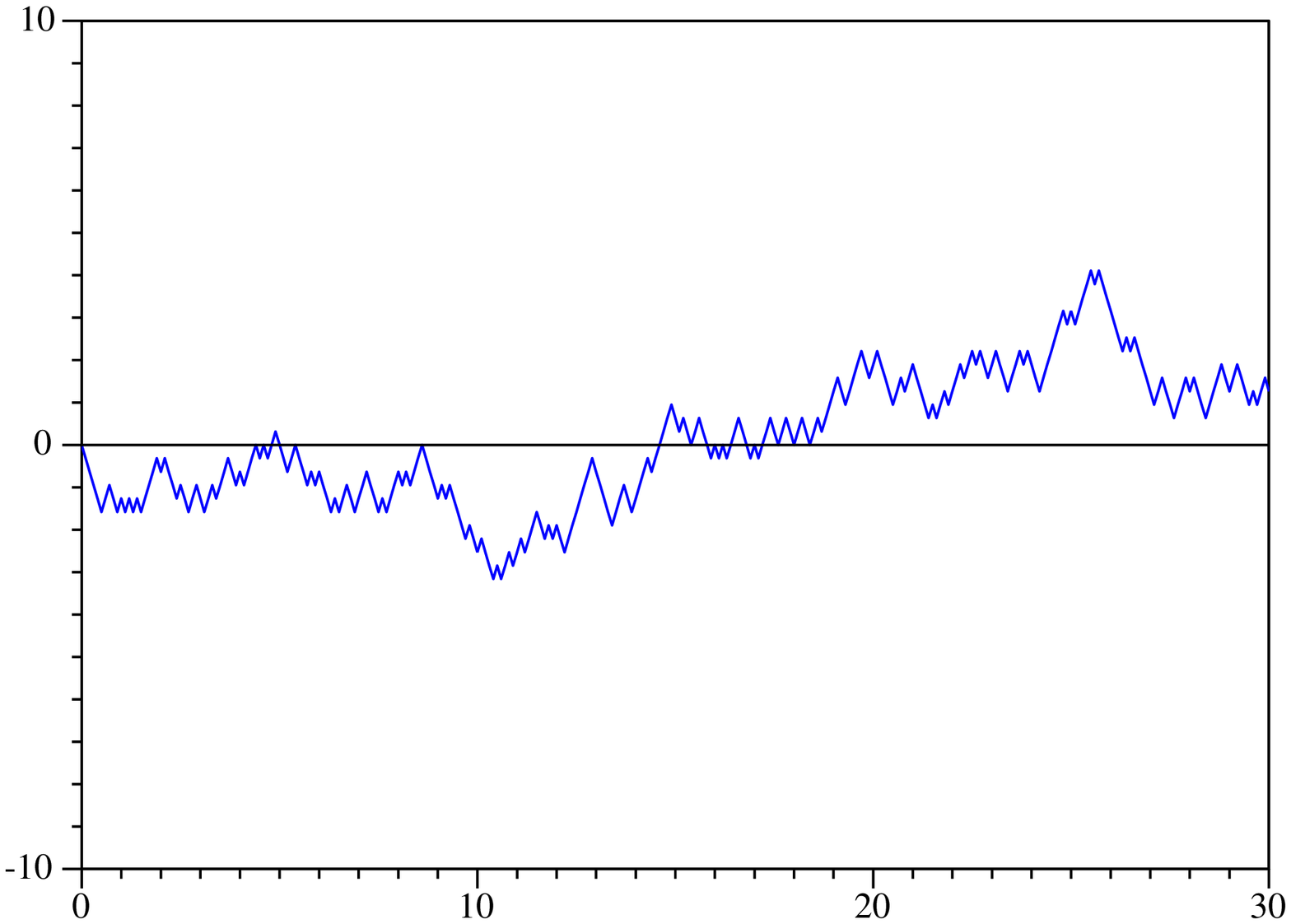}
 }
 \vspace{1mm}
 \centerline{
 \includegraphics*[clip=true,width=73mm]{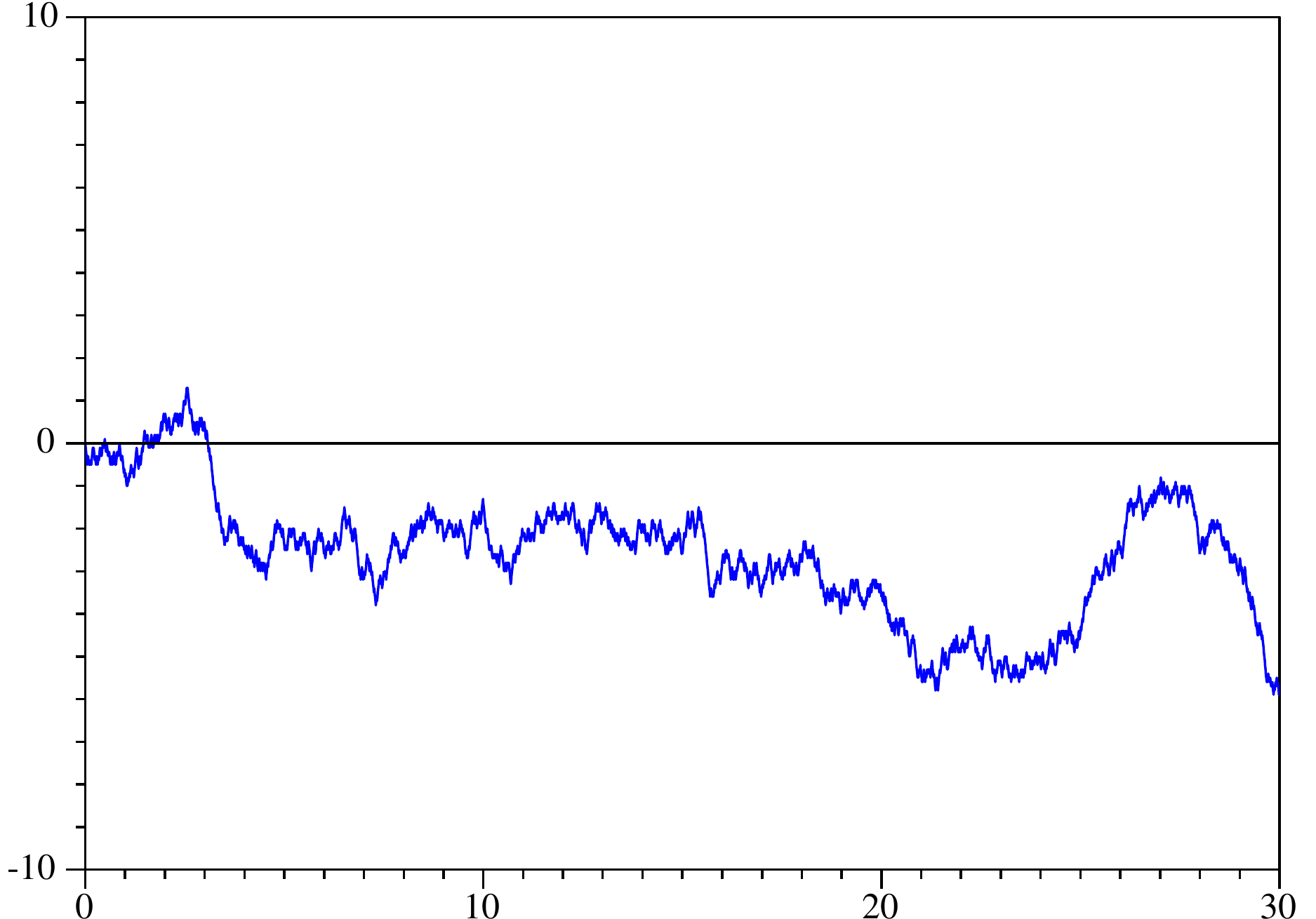}
 \includegraphics*[clip=true,width=73mm]{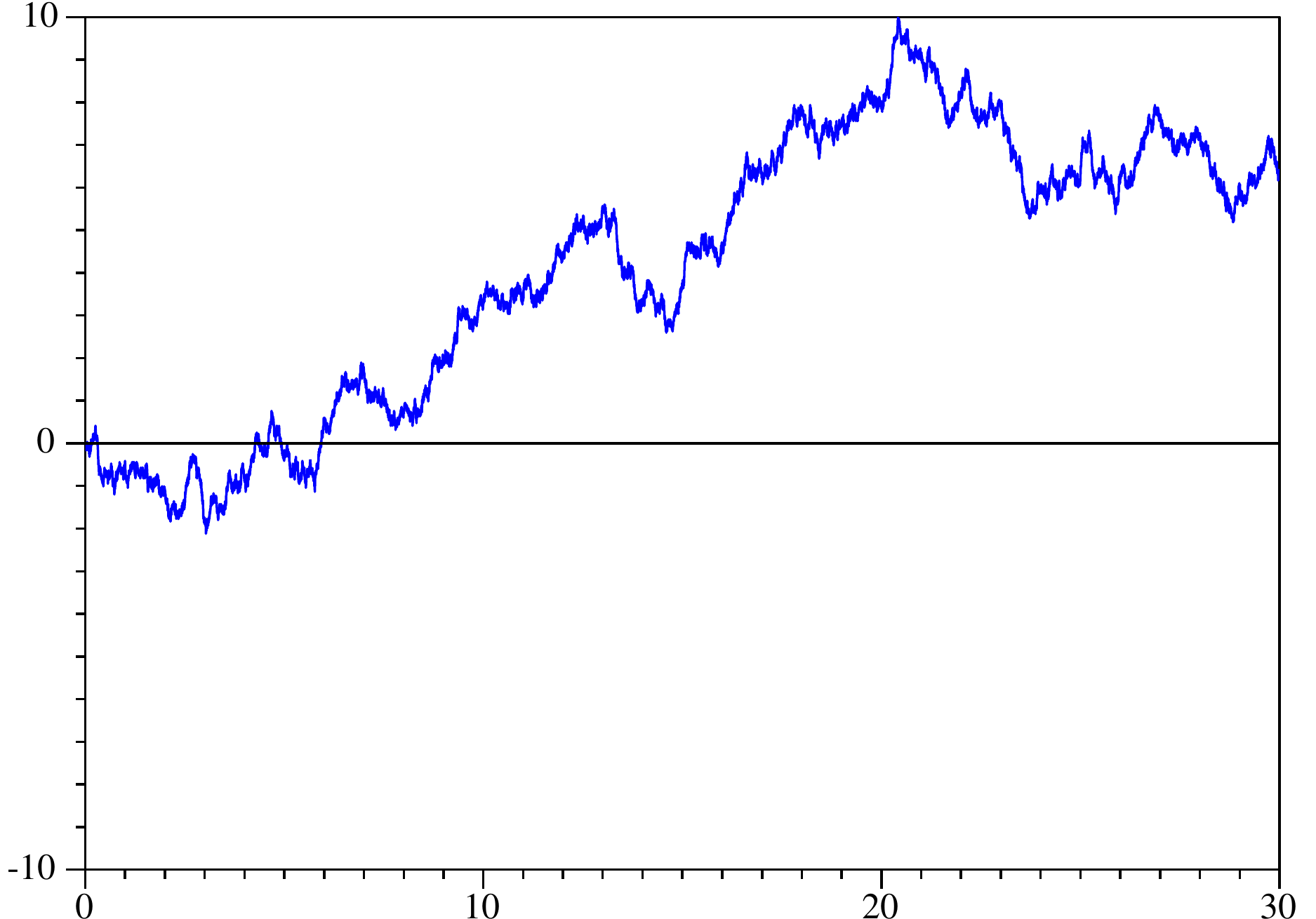}
 }
 \figtext{
 }
 \caption[]{Les versions r\'e\'echelonn\'ees $B_t^{(1)}$,
$B_t^{(10)}$, $B_t^{(100)}$ et $B_t^{(1000)}$ d'une m\^eme r\'ealisation
$\omega$ d'une marche al\'eatoire.}
 \label{fig_bm}
\end{figure}

Soit $B_t$ le processus obtenu en prenant la limite de $B_t^{(n)}$ lorsque
$n\to\infty$, au sens des distributions finies. Autrement dit, $B_t$ est
d\'efini par le fait que pour toute partition $0\leqs t_1 < t_2 < \dots < t_k=t$
de $[0,t]$ et tout $(x_1,x_2,\dots,x_k)\in\R^k$, 
\begin{equation}
\label{mbma4}
\bigprob{B_{t_1}\leqs x_1, \dots, B_{t_k}\leqs x_k}
= \lim_{n\to\infty}
\bigprob{B^{(n)}_{t_1}\leqs x_1, \dots, B^{(n)}_{t_k}\leqs x_k}\;. 
\end{equation}
Supposons pour l'instant que cette limite existe. Le processus $B_t$ aura 
les propri\'et\'es suivantes: 
\begin{enum}
\item	$\expec{B_t}=0$ pour tout $t\geqs0$;
\item	La variance de $B_t$ satisfait
\begin{equation}
 \label{mbma5}
\variance(B_t) = \lim_{n\to\infty} \biggpar{\frac{1}{\sqrt{n}}}^2\intpart{nt} =
t\;. 
\end{equation} 
\item	Par le th\'eor\`eme de la limite centrale (ou la formule de
de Moivre--Laplace appliqu\'ee \`a \eqref{mbma2}),
$X_{\intpart{nt}}/\sqrt{\intpart{nt}}$ converge en loi vers une variable normale
centr\'ee r\'eduite. Par cons\'equent, $B_t$ suit une loi normale $\cN(0,t)$.
\item	{\it Propri\'et\'es des incr\'ements ind\'ependants:}\/ pour tout
$t>s\geqs0$, $B_t-B_s$ est ind\'epen\-dant de $\set{B_u}_{0\leqs u\leqs s}$;
\item	{\it Propri\'et\'es des incr\'ements stationnaires:}\/ pour tout
$t>s\geqs0$, $B_t-B_s$ a la m\^eme loi que $B_{t-s}$.
\end{enum}


\section{Construction du mouvement Brownien}
\label{sec_mbc}

La construction heuristique ci-dessus motive la d\'efinition suivante. 

\begin{definition}
\label{def_mB}
Le {\em mouvement Brownien standard} ou {\em processus de Wiener standard}
est le processus stochastique $\set{B_t}_{t\geqs0}$
satisfaisant:
\begin{enum}
\item	$B_0 = 0$;
\item 	Incr\'ements ind\'ependants: pour tout $t>s\geqs 0$, $B_t-B_s$ est
ind\'ependant de $\set{B_u}_{u\leqs s}$;
\item	Incr\'ements gaussiens: pour tout $t>s\geqs0$, $B_t-B_s$ suit une 
loi normale $\cN(0,t-s)$. 
\end{enum}
\end{definition}

Nous allons maintenant donner une d\'emonstration de l'existence de ce
processus. De plus, nous montrons qu'il admet une version
continue, c'est-\`a-dire que ses trajectoires sont presque s\^urement
continues, et qu'on peut donc admettre qu'elles sont toutes continues quitte
\`a modifier le processus sur un ensemble de mesure nulle. 

\begin{figure}
\begin{center}
\begin{tikzpicture}[-,auto,node distance=1.0cm, thick,
main node/.style={draw,circle,fill=white,minimum size=4pt,inner sep=0pt},
full node/.style={draw,circle,fill=black,minimum size=4pt,inner sep=0pt}]

  \path[->,>=stealth'] 
     (-1,0) edge (9,0)
     (0,-2) edge (0,4)
  ;
  
  \path[color=yellow!50!orange]
     (0,0) edge (1,-1.5)
     node[main node] at(1,-1.5) {}
     edge (2,-1.2)
     (2,-1.2) edge (3,2.6)
     node[main node] at(3,2.6) {}
     edge (4,3.2)
     (4,3.2) edge (5,0.3)
     node[main node] at(5,0.3) {}
     edge[below right] 
     node[label={[label distance=-0.2cm]-45:$B_t^{(3)}$}] {} (6,1.2)
     (6,1.2) edge (7,3)
     node[main node] at(7,3) {}
     edge (8,2.4)
  ;

  \path[color=red] 
     (0,0) edge (2,-1.2) 
     node[main node] at(2,-1.2) {}
     edge (4,3.2)
     (4,3.2) edge 
     node[label={[label distance=-0.2cm]45:$B_t^{(2)}$}] {} (6,1.2)
     node[main node] at(6,1.2) {}
     edge (8,2.4) 
  ;

  \path[color=green!50!black] 
     (0,0) edge (4,3.2) 
     node[main node] at(4,3.2) {}
     edge[above]
     node[label={[label distance=0.0cm]90:$B_t^{(1)}$}] {} (8,2.4) 
  ;
  \path[color=blue] 
     (0,0) edge
     node[label={[label distance=-0.9cm]120:$B_t^{(0)}$}] {} (8,2.4) 
     node[main node] at(8,2.4) {}
  ;

  \draw (0,3.2) node[main node] {} node[left=0.1cm] {$X_{1/2}$} 
   -- (0,2.4) node[main node] {} node[left=0.1cm] {$X_{1}$} 
   -- (0,1.2) node[main node] {} node[left=0.1cm] {$X_{3/4}$} 
   -- (0,-1.2) node[main node] {} node[left=0.1cm] {$X_{1/4}$} 
   ;
     
  \draw (0,0) node[main node] {}
   -- (2,0) node[main node] {} node[below=0.1cm] {$\frac14$} 
   -- (4,0) node[main node] {} node[below=0.1cm] {$\frac12$} 
   -- (6,0) node[main node] {} node[below=0.1cm] {$\frac34$} 
   -- (8,0) node[main node] {} node[below=0.1cm] {$1$} 
   ;
\end{tikzpicture}
\end{center}
\vspace{-4mm}
 \caption[]{Construction du mouvement Brownien par interpolation.}
 \label{fig_bm_cons1}
\end{figure}
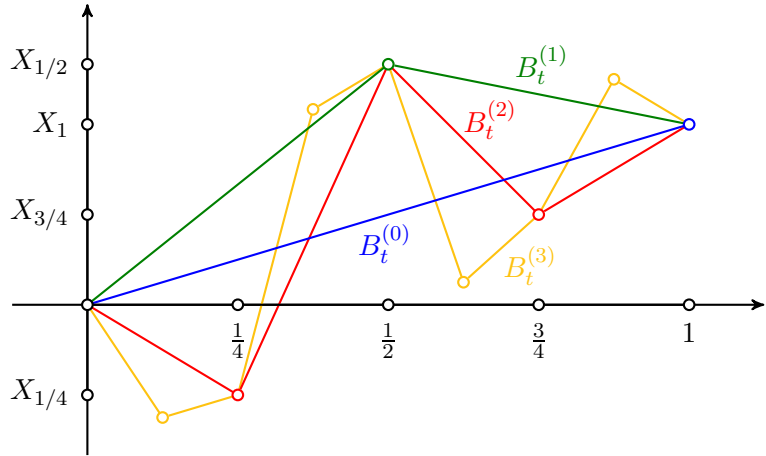

\begin{theorem}
\label{thm_cW}
Il existe un processus stochastique $\set{B_t}_{t\geqs0}$
satisfaisant la d\'efinition \ref{def_mB}, et dont les trajectoires
$t\mapsto B_t(\omega)$ sont continues. 
\end{theorem}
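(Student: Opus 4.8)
Le plan est de construire le mouvement Brownien par la méthode de Lévy--Ciesielski, c'est-à-dire via le développement en base de Haar (ou de Schauder) sur l'intervalle $[0,1]$, puis d'étendre à $\R_+$. L'idée heuristique illustrée par la \figref{fig_bm_cons1} est que l'on peut construire $B$ en interpolant successivement sa valeur aux points dyadiques : on fixe $B_0=0$, on choisit $B_1$ selon une loi $\cN(0,1)$, puis connaissant $B_{k2^{-n}}$ et $B_{(k+1)2^{-n}}$ on choisit $B_{(2k+1)2^{-(n+1)}}$ selon sa loi conditionnelle (qui est gaussienne, de moyenne la moyenne des deux valeurs encadrantes et de variance $2^{-(n+2)}$), indépendamment pour chaque nouveau point. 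Formellement, on se donne une famille $\set{\xi_{n,k}}$ de variables i.i.d.\ normales centrées réduites (leur existence comme mesure produit est assurée par le théorème de Ionescu--Tulcea, cf.\ l'Exemple~\ref{ex_gps2}), et l'on pose
\begin{equation}
\label{plan1}
B_t = \sum_{n=0}^\infty \sum_{k} \xi_{n,k}\, \Delta_{n,k}(t)\;,
\end{equation}
où les $\Delta_{n,k}$ sont les fonctions de Schauder (primitives des fonctions de Haar, donc des ``triangles'' affines par morceaux) convenablement normalisées.

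Les étapes, dans l'ordre, seraient les suivantes. \emph{Première étape :} vérifier que la série~\eqref{plan1} converge uniformément en $t\in[0,1]$ pour presque tout $\omega$. C'est ici le point technique central. On majore $\sup_t \bigabs{\sum_k \xi_{n,k}\Delta_{n,k}(t)}$ par $c_n \max_k \abs{\xi_{n,k}}$, où $c_n\sim 2^{-n/2}$ est la hauteur des triangles du niveau $n$ ; comme il y a $2^n$ variables au niveau $n$, la queue gaussienne $\prob{\abs{\xi}\geqs x}\leqs \e^{-x^2/2}$ donne $\prob{\max_k\abs{\xi_{n,k}} > n} \leqs 2^n \e^{-n^2/2}$, terme sommable, donc par Borel--Cantelli $\max_k\abs{\xi_{n,k}}\leqs n$ pour $n$ assez grand, presque sûrement, et $\sum_n c_n\, n < \infty$. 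La somme d'une série uniformément convergente de fonctions continues étant continue, $t\mapsto B_t(\omega)$ est continue p.s. \emph{Deuxième étape :} vérifier que le processus ainsi construit satisfait la Définition~\ref{def_mB}. On calcule la covariance : en utilisant l'indépendance et la variance unité des $\xi_{n,k}$, on obtient $\expec{B_s B_t} = \sum_{n,k}\Delta_{n,k}(s)\Delta_{n,k}(t)$, et une identité classique (l'identité de Parseval pour le système de Haar appliquée aux indicatrices $\indicator{[0,s]}$ et $\indicator{[0,t]}$) donne que cette somme vaut $s\wedge t$. Comme $B$ est une limite (p.s., donc en loi) de combinaisons linéaires gaussiennes, c'est un processus gaussien centré de covariance $s\wedge t$ ; on en déduit que les incréments $B_t-B_s$ sont gaussiens $\cN(0,t-s)$ et indépendants du passé (l'indépendance suit du fait que la covariance entre $B_t-B_s$ et $B_u$ pour $u\leqs s$ est nulle, et pour des vecteurs gaussiens décorrélation équivaut à indépendance). \emph{Troisième étape :} passer de $[0,1]$ à $\R_+$ en recollant une suite i.i.d.\ de copies $\set{B^{(j)}_t}_{t\in[0,1]}$ construites comme ci-dessus sur des espaces produit, en posant $B_t = B^{(0)}_{1}+\dots+B^{(\intpart{t}-1)}_1 + B^{(\intpart{t})}_{t-\intpart{t}}$ ; les propriétés d'incréments indépendants et stationnaires et la continuité se recollent sans difficulté.

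\textbf{Principal obstacle.} La partie délicate est la première étape : il faut à la fois identifier la bonne estimation de la norme uniforme des blocs de niveau $n$ (le fait que les supports des $\Delta_{n,k}$ pour $k$ fixé sont disjoints est crucial pour remplacer une somme de $2^n$ termes par un maximum), et contrôler le maximum de $2^n$ gaussiennes indépendantes via l'inégalité de queue et Borel--Cantelli. Le calcul de covariance de la deuxième étape (l'identité $\sum_{n,k}\Delta_{n,k}(s)\Delta_{n,k}(t)=s\wedge t$) est plus routinier mais mérite d'être fait proprement, soit en invoquant la complétude du système de Haar dans $L^2([0,1])$, soit par un calcul direct par récurrence sur les niveaux dyadiques. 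Tout le reste (gaussianité de la limite, indépendance via décorrélation, recollement sur $\R_+$) est standard une fois ces deux points acquis.
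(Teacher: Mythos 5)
Votre démonstration est correcte et suit essentiellement la même approche que le cours : les sommes partielles de votre série de Schauder coïncident avec les interpolations linéaires par morceaux $B^{(n)}_t$ de la preuve du polycopié, et l'argument central (hauteur $\sim 2^{-n/2}$ des fonctions de niveau $n$ à supports disjoints, maximum de $2^n$ gaussiennes contrôlé par les queues gaussiennes et Borel--Cantelli, convergence uniforme presque sûre, puis recollement de copies indépendantes pour passer de $[0,1]$ à $\R_+$) est le même. La seule différence, mineure, est la vérification de la loi : vous l'obtenez via la covariance $\expec{B_sB_t}=s\wedge t$ (Parseval pour le système de Haar) et le caractère gaussien du processus limite, alors que le cours construit directement, par~\eqref{cW9}, des accroissements indépendants et gaussiens aux points dyadiques.
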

\begin{proof}\hfill
\begin{enum}
\item	Nous allons d'abord construire $\set{B_t}_{0\leqs t\leqs1}$ \`a
partir d'une collection de variables al\'ea\-toi\-res gaussiennes
ind\'ependantes $V_1, V_{1/2}, V_{1/4}, V_{3/4}, V_{1/8}, \dots$, toutes
centr\'ees, et avec $V_1$ et $V_{1/2}$ de variance $1$ et
$V_{k2^{-n}}$ de variance $2^{-(n-1)}$ ($k<2^n$ impair). 

Montrons d'abord que si $X_s$ et $X_t$ sont deux variables al\'eatoires
telles que $X_t-X_s$ soit gaussienne centr\'ee de variance $t-s$, alors il
existe une variable al\'eatoire $X_{(t+s)/2}$ telle que les variables 
$X_t - X_{(t+s)/2}$ et $X_{(t+s)/2} - X_s$
soient i.i.d.\ de loi $\cN(0,(t-s)/2)$. Si $U=X_t-X_s$ et $V$ est
ind\'ependante de $U$, de m\^eme distribution, il suffit de d\'efinir 
$X_{(t+s)/2}$ par 
\begin{align}
\nonumber
X_t - X_{(t+s)/2} &= \frac{U+V}2 \\
X_{(t+s)/2} - X_s &= \frac{U-V}2.
\label{cW9}
\end{align}
En effet, il est ais\'e de v\'erifier que ces variables ont la distribution
souhait\'ee, et qu'elles sont ind\'ependantes, puisque $\expec{(U+V)(U-V)} =
\expec{U^2}-\expec{V^2} = 0$, et que des variables al\'eatoires normales sont
ind\'ependantes si et seulement si elles sont non corr\'el\'ees.

Posons alors $X_0=0$, $X_1=V_1$, et construisons $X_{1/2}$ \`a l'aide de la
proc\'edure ci-dessus, avec $V=V_{1/2}$. Puis nous construisons $X_{1/4}$
\`a l'aide de $X_0$, $X_{1/2}$ et $V_{1/4}$, et ainsi de suite, pour obtenir
une familles de variables $\set{X_t}_{t=k2^{-n},n\geqs1,k<2^n}$ telles que
pour $t>s$, $X_t-X_s$ soit ind\'ependante de $X_s$ et de loi $\cN(0,t-s)$. 

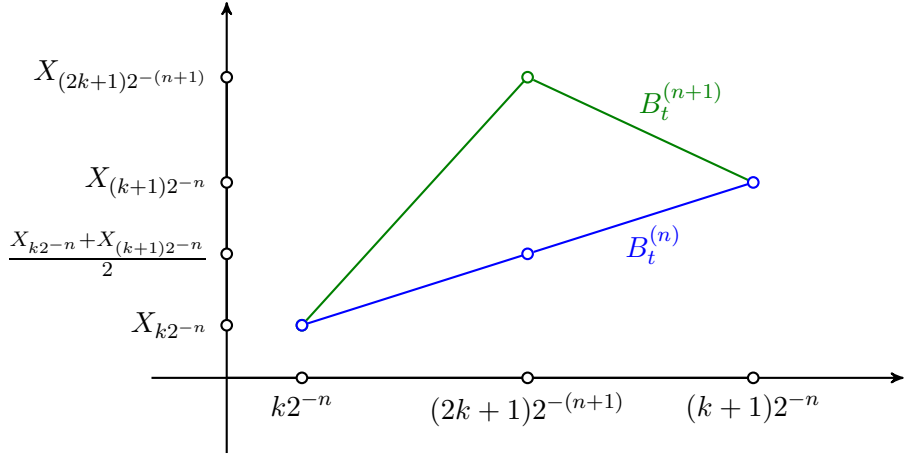
\begin{figure}
\begin{center}
\begin{tikzpicture}[-,auto,node distance=1.0cm, thick,
main node/.style={draw,circle,fill=white,minimum size=4pt,inner sep=0pt},
full node/.style={draw,circle,fill=black,minimum size=4pt,inner sep=0pt}]

  \path[->,>=stealth'] 
     (-1,0) edge (9,0)
     (0,-1) edge (0,5)
  ;
 
  \path[color=green!50!black] 
     node[main node] at(1,0.7) {} edge (4,4) 
     node[main node] at(4,4) {} edge 
     node[label={[label distance=-0.2cm]0:$B_t^{(n+1)}$}] {} (7,2.6) 
  ;
  \path[color=blue] 
     node[main node] at(1,0.7) {} edge (4,1.65) 
     node[main node] at(4,1.65) {} edge 
     node[label={[label distance=-1cm]130:$B_t^{(n)}$}] {} (7,2.6) 
     node[main node] at(7,2.6) {}
  ;

  \draw (0,4.0) node[main node] {} node[left=0.1cm] {$X_{(2k+1)2^{-(n+1)}}$} 
   -- (0,2.6) node[main node] {} node[left=0.1cm] {$X_{(k+1)2^{-n}}$} 
   -- (0,1.65) node[main node] {} node[left=0.1cm]
      {$\frac{X_{k2^{-n}}+X_{(k+1)2^{-n}}}{2}$} 
   -- (0,0.7) node[main node] {} node[left=0.1cm] {$X_{k2^{-n}}$} 
   ;
     
  \draw (0,0) 
   -- (1,0) node[main node] {} node[below=0.05cm] {$k2^{-n}$} 
   -- (4,0) node[main node] {} node[below=0.05cm] {$(2k+1)2^{-(n+1)}$} 
   -- (7,0) node[main node] {} node[below=0.05cm] {$(k+1)2^{-n}$} 
   ;
\end{tikzpicture}
\end{center}
\vspace{-4mm}
 \caption[]{Calcul de $\Delta^{(n)}$.}
 \label{fig_bm_cons2}
\end{figure}

\item	Pour $n\geqs0$, soit $\set{B^{(n)}_t}_{0\leqs t\leqs1}$ le processus
stochastique \`a trajectoires lin\'eaires par morceaux sur les intervalles
$[k2^{-n},(k+1)2^{-n}]$, $k<2^n$, et tel que
$\smash{B^{(n)}}_{k2^{-n}}=X_{k2^{-n}}$ (\figref{fig_bm_cons1}).
Nous voulons montrer que la suite des $\smash{B^{(n)}}(\omega)$ converge
uniform\'ement sur $[0,1]$ pour toute r\'ealisation $\omega$ des $V_i$.  
Il nous faut donc estimer
\begin{align}
\nonumber
\Delta^{(n)}(\omega) &= 
\sup_{0\leqs t\leqs 1} \bigabs{B^{(n+1)}_t(\omega)-B^{(n)}_t(\omega)} \\
\nonumber &=
\max_{0\leqs k\leqs 2^{n-1}}\max_{k2^{-n}\leqs t\leqs(k+1)2^{-n}}
\bigabs{B^{(n+1)}_t(\omega)-B^{(n)}_t(\omega)} \\
&=
\max_{0\leqs k\leqs 2^{n-1}}
\Bigabs{X_{(2k+1)2^{-(n+1)}}(\omega)
- \frac12\bigpar{X_{k2^{-n}}(\omega) 
+ X_{(k+1)2^{-n}}(\omega)}
}
\label{cW10}
\end{align}
(voir \figref{fig_bm_cons2}). 
Le terme en valeur absolue vaut $\frac12 V_{(2k+1)2^{-(n+1)}}$ par construction,
c.f.~\eqref{cW9}, qui est gaussienne de variance $2^{-n}$. Il suit que 
\begin{align}
\nonumber
\bigprob{\Delta^{(n)}>\sqrt{n2^{-n}}} 
&= \Bigprob{\max_{0\leqs k\leqs 2^{n-1}} \bigabs{V_{(2k+1)2^{-(n+1)}}} \geqs
2\sqrt{n2^{-n}}} \\
\nonumber
&\leqs 2\cdot2^n \int_{2\sqrt{n2^{-n}}}^\infty \e^{-x^2/2\cdot2^{-n}} 
\frac{\6x}{\sqrt{2\pi2^{-n}}} \\
&= 2\cdot2^n \int_{2\sqrt{n}}^\infty \e^{-y^2/2} 
\frac{\6x}{\sqrt{2\pi}} \leqs \const 2^n \e^{-2n}, 
\label{cW11}
\end{align}
et donc 
\begin{equation}
\label{cW12}
\sum_{n\geqs0} \bigprob{\Delta^{(n)}>\sqrt{n2^{-n}}} 
\leqs \const \sum_{n\geqs0} (2\e^{-2})^n < \infty. 
\end{equation}
Le lemme de Borel--Cantelli nous permet de conclure qu'avec probabilit\'e
$1$, il n'existe qu'un nombre fini de $n$ pour lesquels
$\Delta^{(n)}>\sqrt{n2^{-n}}$. Par cons\'equent, 
\begin{equation}
\label{cW13}
\Bigprob{\sum_{n\geqs0}\Delta^{(n)} < \infty} = 1. 
\end{equation}
La suite des $\set{B^{(n)}_t}_{0\leqs t\leqs1}$ est donc une suite de Cauchy
pour la norme sup avec probabilit\'e $1$, et alors elle converge
uniform\'ement. Nous posons pour $t\in[0,1]$
\begin{equation}
\label{cW14}
B^0_t = 
\begin{cases}
\lim_{n\to\infty} B^{(n)}_t
& \text{si la suite converge uniform\'ement} \\
0 & \text{sinon (avec probabilit\'e $0$).}
\end{cases}
\end{equation}
Il est facile de v\'erifier que $B^0$ satisfait les trois propri\'et\'es de
la d\'efinition. 

\item	Pour \'etendre le processus \`a des temps quelconques, nous
fabriquons des copies ind\'e\-pen\-dantes $\set{B^i}_{i\geqs0}$ et posons 
\begin{equation}
\label{cW15}
B_t = 
\begin{cases}
B^0_t & 0\leqs t < 1 \\ 
B^0_1 + B^1_{t-1} & 1\leqs t < 2 \\ 
B^0_1 + B^1_1 + B^2_{t-2} & 2\leqs t < 3 \\ 
\dots &
\end{cases}
\end{equation}
Ceci conclut la d\'emonstration. \qed
\end{enum}
\renewcommand{\qed}{}
\end{proof}


\section{Propri\'et\'es de base}
\label{sec_mbp}

Les propri\'et\'es suivantes sont des cons\'equences directes de la
d\'efinition~\ref{def_mB} et nous les donnons sans d\'emonstration
d\'etaill\'ee. 

\begin{enum}
\item	{\it Propri\'et\'e de Markov:}\/ Pour tout borelien $A\in\R$,
\begin{equation}
\label{pW1}
\bigpcond{B_{t+s}\in A}{B_t=x} = \int_A p(y,t+s|x,t)\6y,
\end{equation}
ind\'ependamment de $\set{B_u}_{u<t}$, avec des probabilit\'es de
transition gaussiennes
\begin{equation}
\label{pW2}
p(y,t+s|x,t) = \frac{\e^{-(y-x)^2/2s}}{\sqrt{2\pi s}}. 
\end{equation}
La preuve suit directement du fait que l'on peut d\'ecomposer 
$B_{t+s}=B_t + (B_{t+s}-B_t)$, le deuxi\`eme terme \'etant ind\'ependant du
premier et de loi $\cN(0,s)$. On v\'erifiera en particulier l'\'equation de
Chapman--Kolmogorov: Pour $t>u>s$, 
\begin{equation}
\label{pW3}
p(y,t|x,s) = \int_\R p(y,t|z,u)p(z,u|x,s)\6u.
\end{equation}

\item	{\it Propri\'et\'e diff\'erentielle:}\/ Pour tout $t\geqs0$,
$\set{B_{t+s}-B_t}_{s\geqs0}$ est un mouvement Brownien standard,
ind\'ependant de $\set{B_u}_{u<t}$. 

\item	{\it Propri\'et\'e d'\'echelle:}\/ Pour tout $c>0$,
$\set{cB_{t/c^2}}_{s\geqs0}$ est un mouvement Brownien standard. 

\item	{\it Sym\'etrie:}\/ $\set{-B_t}_{t\geqs0}$ est un mouvement Brownien
standard. 

\item	{\it Processus Gaussien:}\/ Le processus de Wiener est Gaussien de
moyenne nulle (c'est-\`a-dire que ses distributions jointes finies sont
normales centr\'ees), et il est caract\'eris\'e par sa covariance 
\begin{equation}
\label{pW4}
\cov\set{B_t,B_s} \equiv \expec{B_tB_s} = s \wedge t 
\end{equation}
($s \wedge t$ d\'esigne le minimum de $s$ et $t$). 
\begin{proof}
Pour $s<t$, nous avons 
\begin{equation}
\label{pW5}
\expec{B_tB_s} = \expec{B_s(B_s+B_t-B_s)} = \expec{B_s^2} +
\expec{B_s(B_t-B_s)} = s,
\end{equation}
puisque le deuxi\`eme terme est nul par la propri\'et\'e d'incr\'ements
ind\'ependants. 
\end{proof}
En fait, un processus Gaussien centr\'e, dont la covariance
satisfait~\eqref{pW4} est un processus de Wiener standard. 

\item	{\it Renversement du temps:}\/ Le processus $\set{X_t}_{t\geqs0}$
d\'efini par 
\begin{equation}
\label{pW6}
X_t = 
\begin{cases}
0 & \text{si $t=0$} \\
tB_{1/t} & \text{si $t>0$}
\end{cases}
\end{equation}
est un processus de Wiener standard.
\begin{proof}
$X_t$ est un processus Gaussien de moyenne nulle, et un calcul simple montre
que sa covariance est bien $\cov\set{X_t,X_s} = s \wedge t$. Le seul point
non trivial est de montrer la continuit\'e en z\'ero de $X_t$. Celle-ci est
\'equivalente \`a la loi forte des grands nombres. 
\end{proof}
\end{enum}

Voici enfin une propri\'et\'e importante de non-r\'egularit\'e des trajectoires
du mouvement Brownien.

\begin{theorem}
\label{thm_mB2}
Les trajectoires $t\mapsto B_t(\omega)$ sont presque s\^urement nulle part
lipschitziennes, donc aussi nulle part diff\'eren\-tiables.  
\end{theorem}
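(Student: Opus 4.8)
The plan is to run the classical Paley--Wiener--Zygmund argument: if $t\mapsto B_t(\omega)$ were Lipschitz at some point $s$, then three consecutive dyadic increments of $B$ straddling $s$ would all be forced to be unusually small, and at a fixed dyadic scale these increments are independent Gaussians, so the corresponding event has vanishing probability.

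First I would reduce everything to the unit interval. It suffices to show that for each $m\in\N$, almost surely the path has no point of Lipschitz continuity in $[m/2,(m+1)/2]$; a countable union then covers $[0,\infty)$. On $[m/2,m/2+1]$ the process $t\mapsto B_{m/2+t}-B_{m/2}$ is again a standard Brownian motion, and it is Lipschitz at $m/2+s$ exactly when this translate is Lipschitz at $s$; so the problem reduces to showing that a standard Brownian motion on $[0,1]$ almost surely has no $s\in[0,1/2]$ at which it is Lipschitz. The restriction $s\le 1/2$ is there only to guarantee room for a few further dyadic points to the right of $s$.

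Fix $K\in\N$ and let $\Omega_K$ be the event that there exist $s\in[0,1/2]$ and $\delta>0$ with $\abs{B_t-B_s}\le K\abs{t-s}$ for all $t\in[0,1]$, $\abs{t-s}\le\delta$. Put $D_{k,n}=B_{k2^{-n}}-B_{(k-1)2^{-n}}$; by the independent-increments property of Definition~\ref{def_mB} these are, for fixed $n$, independent $\cN(0,2^{-n})$ variables. On $\Omega_K$, whenever $n$ is large enough that $3\cdot 2^{-n}<\delta$, taking $k=\intpart{2^ns}\vee1$ (which satisfies $k\le 2^{n-1}$), the four dyadic points $(k-1)2^{-n},\dots,(k+2)2^{-n}$ all lie within $3\cdot 2^{-n}$ of $s$, so the triangle inequality gives $\abs{D_{k,n}},\abs{D_{k+1,n}},\abs{D_{k+2,n}}\le 6K2^{-n}$. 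Hence $\Omega_K\subseteq\liminf_{n}A_n$, where
\begin{equation*}
A_n=\bigcup_{1\le k\le 2^{n-1}}\bigset{\,\abs{D_{k,n}}\vee\abs{D_{k+1,n}}\vee\abs{D_{k+2,n}}\le 6K2^{-n}\,}.
\end{equation*}
Using the crude bound $\fP(\abs{D_{k,n}}\le c)\le 2c\,(2\pi2^{-n})^{-1/2}$ with $c=6K2^{-n}$, together with the independence of $D_{k,n},D_{k+1,n},D_{k+2,n}$, one finds $\fP(A_n)\le 2^{n-1}\bigpar{12K/\sqrt{2\pi}}^3 2^{-3n/2}=\const\cdot2^{-n/2}\to0$. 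By the (set) Fatou lemma, $\fP(\Omega_K)\le\liminf_n\fP(A_n)=0$, and $\bigcup_{K\in\N}\Omega_K$ is still a null set; this proves the nowhere-Lipschitz statement.

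Finally, if the path is differentiable at $s$ then its difference quotients are bounded in a neighbourhood of $s$, i.e.\ it is Lipschitz at $s$; so $\setsuch{\omega}{t\mapsto B_t(\omega)\text{ is differentiable at some }s}$ is contained in the null set just built, which yields nowhere differentiability. The one delicate point — which I would flag as the main obstacle — is that the Lipschitz radius $\delta$ depends on $\omega$, so no single dyadic level works uniformly; this is exactly why the inclusion must be into $\liminf_n A_n$ rather than into one $A_n$, and why one needs at least three consecutive (hence independent) increments, so that the union bound $2^{n}\cdot\bigpar{2^{-n/2}}^3$ still tends to $0$. Keeping the index $k$ bounded by $2^{n-1}$ (so that $k+2\le 2^n$) is precisely what the reduction to $s\in[0,1/2]$ buys us.
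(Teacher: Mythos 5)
Your proof is correct and is essentially the paper's argument: a pointwise Lipschitz point forces three consecutive increments at a fine mesh to be of order $K$ times the mesh, and the Gaussian small-ball estimate combined with a union bound (number of blocks times the cube of a probability of order the square root of the mesh) makes this event negligible. The remaining differences are bookkeeping: you work at dyadic scales, restrict to $s\leqs 1/2$, and conclude via Fatou applied to $\liminf_n A_n$, whereas the paper uses mesh $1/n$, indices $k\leqs n-2$, and the monotonicity $A_n\subset A_{n+1}$.
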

\begin{proof}
Fixons un $C<\infty$ et introduisons, pour $n\geqs1$, 
\begin{equation*}
A_n = \Bigsetsuch{\omega}{\exists s\in[0,1] \text{ t.q. }
\abs{B_t(\omega)-B_s(\omega)}\leqs C\abs{t-s} \text{ si } \abs{t-s}\leqs
\frac3n}\;.
\end{equation*}
Il s'agit de montrer que $\fP(A_n)=0$ pour tout $n$. 
Observons que si $n$ augmente, la condition s'affaiblit, donc $A_n\subset
A_{n+1}$. Soit encore, pour $n\geqs3$ et $1\leqs k\leqs n-2$, 
\begin{align*}
Y_{k,n}(\omega) &= \max_{j=0,1,2}\Bigset{\Bigabs{B_{(k+j)/n}(\omega) -
B_{(k+j-1)/n}(\omega)}}\;,\\
B_n &= \bigcup_{k=1}^{n-2} 
\Bigsetsuch{\omega}{Y_{k,n}(\omega)\leqs
\frac{5C}{n}}\;.
\end{align*}
L'in\'egalit\'e triangulaire implique $A_n\subset B_n$. En effet, 
soit $\omega\in A_n$. Si par exemple $s=1$, alors pour $k=n-2$
\begin{equation*}
\bigabs{B_{(n-3)/n}(\omega) - B_{(n-2)/n}(\omega)}
\leqs \bigabs{B_{(n-3)/n}(\omega) - B_1(\omega)} 
+ \bigabs{B_1(\omega) - B_{(n-2)/n}(\omega)}
\leqs C\biggpar{\frac 3n + \frac 2n}
\end{equation*} 
donc $\omega\in B_n$.
Il suit des propri\'et\'es des incr\'ements ind\'ependants et d'\'echelle
que 
\begin{equation*}
\fP(A_n) \leqs \fP(B_n) \leqs n \fP\biggpar{\abs{B_{1/n}}\leqs \frac{5C}n}^3
= n \fP\biggpar{\abs{B_1}\leqs \frac{5C}{\sqrt{n}}}^3
\leqs n \biggpar{\frac{10C}{\sqrt{2\pi n}}}^3\;.
\end{equation*} 
Il suit que $\fP(A_n)\to0$ pour $n\to\infty$. Mais comme
$\fP(A_n)\leqs\fP(A_{n+1})$ pour tout $n$, ceci implique $\fP(A_n)=0$ pour tout
$n$. 
\end{proof}


\section{Temps d'arr\^et}
\label{sec_mbta}

Soit $\set{B_t}_{t\geqs0}$ un mouvement Brownien standard. Nous allons laisser
de c\^ot\'e les d\'etails techniques de la construction de l'espace
probabilis\'e $(\Omega,\cF,\fP)$ sur lequel $B_t$ est d\'efini (la
construction utilise le th\'eor\`eme d'extension de Kolmogorov). Pr\'ecisons
simplement que l'univers $\Omega$ peut \^etre identifi\'e \`a l'ensemble des
fonctions continues $f:\R_+\to\R$ telles que $f(0)=0$. Introduisons pour tout
$t\geqs0$ la tribu 
\begin{equation}
\label{mbta1}
\cF_t = \sigma\bigpar{\set{B_s}_{0\leqs s\leqs t}}\;, 
\end{equation} 
contenant les \'ev\'enements ne d\'ependant que du mouvement Brownien jusqu'au
temps $t$. La suite $\set{\cF_t}_{t\geqs0}$ est une suite croissante de
sous-tribus de $\cF$, c'est la \defwd{filtration canonique}\/ engendr\'ee par le
mouvement Brownien.\footnote{Pour \^etre pr\'ecis, on pr\'ef\`ere consid\'erer
$\cF'_t=\bigcap_{s>t}\cF_s$, qui est continue \`a droite~:
$\cF'_t=\bigcap_{s>t}\cF'_s$. Pour des raisons techniques, on compl\`ete en
g\'en\'eral cette tribu par les ensembles de mesure nulle
(c.f.~\cite[Section~7.2]{Durrett}).}

\begin{definition}[Temps d'arr\^et]
\label{def_stopping_Bt}
On appelle\/ \defwd{temps d'arr\^et}\/ une variable al\'eatoire
$\tau:\Omega\to[0,\infty]$ telle que $\set{\tau<t}\in\cF_t$ pour tout
$t\geqs0$.
\end{definition}

Remarquons que si $\set{\tau\leqs t}\in\cF_t$, alors 
\begin{equation}
\label{mbta2} 
\bigset{\tau<t} = \bigcup_{n\geqs1}\biggset{\tau\leqs t-\frac1n} \in\cF_t\;.
\end{equation} 
La continuit\'e \`a droite de la filtration montre en plus que si
$\set{\tau<t}\in\cF_t$, alors 
\begin{equation}
\label{mbta3} 
\bigset{\tau\leqs t} = \bigcap_{n\geqs1}\biggset{\tau< t+\frac1n}
\in\cF_t\;.
\end{equation} 
On peut donc remplacer sans autres $\tau<t$ par $\tau\leqs t$ dans la
d\'efinition des temps d'arr\^et (ce qui n'est pas le cas pour les processus en
temps discret).

Commen\c cons par \'enoncer quelques propri\'et\'es de base permettant de
construire des exemples de temps d'arr\^et. 

\begin{prop}\hfill
\label{prop_mbta1}
\begin{enum}
\item	Soit $A$ un ouvert. Alors le temps de premier passage dans $A$,
$\tau_A=\inf\setsuch{t\geqs0}{B_t\in A}$ est un temps d'arr\^et.
\item	Si $\tau_n$ est une suite d\'ecroissante de temps d'arr\^et telle que
$\tau_n\to\tau$, alors $\tau$ est un temps d'arr\^et.
\item	Si $\tau_n$ est une suite croissante de temps d'arr\^et telle que
$\tau_n\to\tau$, alors $\tau$ est un temps d'arr\^et.
\item	Soit $K$ un ferm\'e. Alors le temps de premier passage dans $K$
est un temps d'arr\^et.
\end{enum}
\end{prop}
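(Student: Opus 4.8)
Le plan est de traiter les quatre points dans l'ordre, en exploitant la continuité des trajectoires du mouvement Brownien (Théorème~\ref{thm_cW}) ainsi que la continuité à droite de la filtration $\cF_t$ signalée en note de bas de page.

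Pour le point~1, avec $A$ ouvert, l'idée clé est d'écrire l'événement $\set{\tau_A<t}$ comme une réunion dénombrable en utilisant la densité de $\Q$ dans $\R_+$ et la continuité des trajectoires~: si la trajectoire touche $A$ avant le temps $t$, alors par continuité et parce que $A$ est ouvert, elle est déjà dans $A$ à un instant rationnel $q<t$. On obtient ainsi
\begin{equation*}
\set{\tau_A<t} = \bigcup_{q\in\Q\cap[0,t)} \set{B_q\in A} \in \cF_t\;,
\end{equation*}
ce qui donne $\tau_A\leqs t \Rightarrow \tau_A<t$ au niveau des tribus, et donc (par~\eqref{mbta2}) que $\tau_A$ est un temps d'arrêt. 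Pour le point~2 (suite décroissante), on écrit simplement $\set{\tau<t}=\bigcup_n\set{\tau_n<t}\in\cF_t$, la réunion croissante d'événements de $\cF_t$ restant dans $\cF_t$. Pour le point~3 (suite croissante), on travaille plutôt avec $\set{\tau\leqs t}=\bigcap_n\set{\tau_n\leqs t}\in\cF_t$, chaque $\set{\tau_n\leqs t}$ étant dans $\cF_t$ par~\eqref{mbta3}~; on conclut à nouveau via l'équivalence entre $\tau\leqs t$ et $\tau<t$ pour la filtration complétée et continue à droite.

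Le point~4 est le plus délicat et constituera l'obstacle principal~: pour $K$ fermé, l'argument du point~1 ne fonctionne plus directement, car le processus peut toucher $K$ sans entrer dans aucun ouvert épointé contenant $K$ à un temps rationnel antérieur. La stratégie est d'approcher $K$ par les ouverts $K_n=\setsuch{x}{d(x,K)<1/n}$, qui sont ouverts et décroissent vers $K$. Par le point~1, chaque $\tau_{K_n}$ est un temps d'arrêt, et la suite $(\tau_{K_n})_n$ est croissante~; soit $\sigma=\lim_n\tau_{K_n}$, qui est un temps d'arrêt par le point~3. Il reste à identifier $\sigma$ avec $\tau_K$~: l'inégalité $\tau_{K_n}\leqs\tau_K$ donne $\sigma\leqs\tau_K$, et pour l'inégalité inverse on utilise la continuité de $t\mapsto B_t(\omega)$ et le fait que $K$ est fermé — si $\tau_{K_n}(\omega)\to\sigma(\omega)<\infty$, alors $B_{\tau_{K_n}}(\omega)\in\overline{K_n}$ converge vers $B_\sigma(\omega)$, et comme $d(B_{\tau_{K_n}}(\omega),K)\to0$, la limite appartient à $K$, d'où $\tau_K(\omega)\leqs\sigma(\omega)$. (Le cas $\sigma(\omega)=\infty$ est trivial.) On prendra soin au passage du point subtil où $\tau_{K_n}$ pourrait être atteint sur le bord $\partial K_n$ plutôt que dans l'intérieur, ce qui est sans conséquence puisque $\overline{K_n}\subset K_{n-1}$ reste à distance contrôlée de $K$.
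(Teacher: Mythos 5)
Votre démonstration est correcte et suit essentiellement la même démarche que celle du polycopié : pour les points 1 à 3, les mêmes décompositions $\set{\tau_A<t}=\bigcup_{q\in\Q,\,q<t}\set{B_q\in A}$, $\set{\tau<t}=\bigcup_n\set{\tau_n<t}$ et $\set{\tau\leqs t}=\bigcap_n\set{\tau_n\leqs t}$, et pour le point 4 la même approximation du fermé $K$ par les voisinages ouverts $K_n$, suivie de l'identification $\lim_n\tau_{K_n}=\tau_K$ par continuité des trajectoires et fermeture de $K$, puis de l'appel au point 3. Rien à signaler.
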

\begin{proof}\hfill
\begin{enum}
\item	Par continuit\'e de $B_t$,
$\set{\tau_A<t}=\bigcup_{q\in\Q,q<t}\set{B_q\in A}\in\cF_t$.
\item	On a $\set{\tau<t}=\bigcup_{n\geqs1}\set{\tau_n<t}\in\cF_t$.
\item	On a $\set{\tau\leqs t}=\bigcap_{n\geqs1}\set{\tau_n\leqs t}\in\cF_t$.
\item	Soit $A_n=\bigcup_{x\in K} (x-1/n,x+1/n)$ et $\tau_n$ le temps de
premier passage dans $A_n$. Comme $A_n$ est ouvert, $\tau_n$ est un temps
d'arr\^et par 1. Les $A_n$ \'etant d\'ecroissants, les $\tau_n$ sont croissants.
Soit $\sigma=\lim_{n\to\infty}\tau_n\in[0,\infty]$. Comme $\tau_K\geqs\tau_n$
pour tout $n$, on a $\sigma\leqs\tau_K$. Si $\sigma=\infty$, alors
$\tau_K=\infty=\sigma$. Supposons que $\sigma<\infty$. Comme
$B_{\tau_n}\in\overbar{A}_n$ et $B_{\tau_n}\to B_\sigma$, il suit que
$B_\sigma\in K$ et donc $\tau_K\leqs\sigma$. Ainsi $\tau_K=\sigma$ et le
r\'esultat suit de 3.
\qed
\end{enum}
\renewcommand{\qed}{}
\end{proof}

Comme dans le cas \`a temps discret, nous introduisons 

\begin{definition}[Tribu de \'ev\'enements ant\'erieurs]
\label{def_pretau_mb} 
Soit $\tau$ un temps d'arr\^et. Alors la tribu 
\begin{equation}
\label{mbta4}
\cF_\tau = \bigsetsuch{A\in\cF}{A\cap\set{\tau\leqs t}\in\cF_t\;\forall
t\geqs0} 
\end{equation} 
est appel\'ee la\/ \defwd{tribu des \'ev\'enements ant\'erieurs \`a $\tau$}.
\end{definition}

\begin{prop}\hfill
\label{prop_mbta2}
\begin{enum}
\item	Si $\sigma$ et $\tau$ sont des temps d'arr\^et tels que
$\sigma\leqs\tau$ alors $\cF_\sigma\subset\cF_\tau$. 
\item	Si $\tau_n\to\tau$ est une suite d\'ecroissante de temps d'arr\^et,
alors $\cF_\tau=\bigcap_n\cF_{\tau_n}$.
\end{enum}
\end{prop}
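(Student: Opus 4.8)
The plan is to derive both parts directly from the definition of $\cF_\tau$ (Definition~\ref{def_pretau_mb}), using two elementary facts about stopping times: (i)~if $\sigma\leqs\tau$ then $\set{\tau\leqs t}\subseteq\set{\sigma\leqs t}$; and (ii)~if $\tau_n$ decreases to $\tau$ then $\set{\tau<t}=\bigcup_n\set{\tau_n<t}$ for every $t\geqs0$ — the inclusion $\subseteq$ holds because $\tau_n(\omega)\to\tau(\omega)<t$ forces $\tau_n(\omega)<t$ for $n$ large, and $\supseteq$ because $\tau_n\geqs\tau$ pointwise.

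First I would prove part~1. Let $A\in\cF_\sigma$, so that $A\cap\set{\sigma\leqs t}\in\cF_t$ for all $t$. By~(i),
\[
A\cap\set{\tau\leqs t}=\bigpar{A\cap\set{\sigma\leqs t}}\cap\set{\tau\leqs t}\;,
\]
which lies in $\cF_t$ since the first factor does (as $A\in\cF_\sigma$) and the second does (as $\set{\tau\leqs t}\in\cF_t$ by~\eqref{mbta3}). Hence $A\in\cF_\tau$, and therefore $\cF_\sigma\subset\cF_\tau$.

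Then part~2. The inclusion $\cF_\tau\subset\bigcap_n\cF_{\tau_n}$ is immediate from part~1, since $\tau\leqs\tau_n$ for every $n$. For the reverse inclusion, fix $A\in\bigcap_n\cF_{\tau_n}$ and $t\geqs0$. For each $n$, from $\set{\tau_n<s}=\bigcup_{m\geqs1}\set{\tau_n\leqs s-1/m}$, the hypothesis $A\in\cF_{\tau_n}$ together with monotonicity of the filtration yields $A\cap\set{\tau_n<s}\in\cF_s$ for every $s$; in particular $A\cap\set{\tau_n<t}\in\cF_t$. Applying~(ii),
\[
A\cap\set{\tau<t}=\bigcup_n\bigpar{A\cap\set{\tau_n<t}}\in\cF_t\;.
\]
To finish I would pass from $\set{\tau<t}$ to $\set{\tau\leqs t}$ as in the discussion around~\eqref{mbta3}: writing $A\cap\set{\tau\leqs t}=\bigcap_{k\geqs1}\bigpar{A\cap\set{\tau<t+1/k}}$ with each member in $\cF_{t+1/k}$, right-continuity of the canonical filtration gives $A\cap\set{\tau\leqs t}\in\bigcap_k\cF_{t+1/k}=\cF_t$, so $A\in\cF_\tau$.

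The set identities~(i) and~(ii) and the $\sigma$-algebra bookkeeping are routine; the one delicate point is this last step, where $\set{\tau<t}$ and $\set{\tau\leqs t}$ must be kept apart and right-continuity of $\set{\cF_t}_{t\geqs0}$ is genuinely used (this is precisely why one works with the refinement $\cF'_t=\bigcap_{s>t}\cF_s$; see the footnote in Section~\ref{sec_mbta}). Without it, the reverse inclusion in part~2 would hold only for that refined filtration.
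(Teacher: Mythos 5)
Your proof is correct and follows essentially the same route as the paper's: part 1 is the continuous-time transcription of Proposition~\ref{prop_tarret2} via the identity $A\cap\set{\tau\leqs t}=A\cap\set{\sigma\leqs t}\cap\set{\tau\leqs t}$, and part 2 uses part 1 for one inclusion and the sets $\set{\tau_n<t}$ with the decreasing limit for the other. The only difference is that you spell out the passage from $\set{\tau<t}$ to $\set{\tau\leqs t}$ via right-continuity of the filtration (and the check that $A\in\cF_{\tau_n}$ gives $A\cap\set{\tau_n<t}\in\cF_t$), steps the paper leaves implicit after~\eqref{mbta2}--\eqref{mbta3}.
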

\begin{proof}\hfill
\begin{enum}
\item	Voir la preuve de la Proposition~\ref{prop_tarret2}.
\item	D'une part par 1., on a $\cF_{\tau_n}\supset\cF_\tau$ pour tout $n$.
D'autre part, soit $A\in\bigcap_n\cF_{\tau_n}$. Comme
$A\cap\set{\tau_n<t}\in\cF_t$ et $\tau_n$ d\'ecro\^\i t vers $\tau$, on a 
$A\cap\set{\tau<t}\in\cF_t$, donc $\bigcap_n\cF_{\tau_n}\subset\cF_\tau$.
\qed
\end{enum}
\renewcommand{\qed}{}
\end{proof}

L'une des propri\'et\'es les plus importantes du mouvement Brownien, li\'ee aux
temps d'arr\^et, est la propri\'et\'e de Markov forte. 

\begin{theorem}[Propri\'et\'e de Markov forte du mouvement Brownien]
\label{thm_markov_fort}
Si $\tau$ est un temps d'arr\^et tel que $\prob{\tau<\infty}=1$, alors
$\set{B_{\tau+t}-B_\tau}_{t\geqs0}$ est un mouvement Brownien standard,
ind\'ependant de $\cF_\tau$.  
\end{theorem}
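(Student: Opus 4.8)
Le plan est d'approcher le temps d'arrêt $\tau$ \emph{par au-dessus} par une suite de temps d'arrêt à valeurs dans un ensemble dénombrable, pour lesquels la propriété de Markov forte se déduit directement de la propriété de Markov simple (la propriété différentielle de la Section~\ref{sec_mbp}), puis de passer à la limite grâce à la continuité presque sûre des trajectoires. Concrètement, pour $n\geqs1$ on posera
\[
\tau_n = \frac{\intpartplus{2^n\tau}}{2^n}\;.
\]
On vérifiera que $\tau_n$ est un temps d'arrêt : en notant $m=\intpart{2^nt}$, on a $\set{\tau_n\leqs t}=\set{\tau\leqs m2^{-n}}\in\cF_{m2^{-n}}\subseteq\cF_t$. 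De plus $\tau_n$ est à valeurs dans l'ensemble dénombrable $\set{k2^{-n}:k\in\N}$, on a $\tau\leqs\tau_n\leqs\tau+2^{-n}$, donc $\tau_n\searrow\tau$ et $\prob{\tau_n<\infty}=1$, et comme $\tau\leqs\tau_n$, la Proposition~\ref{prop_mbta2} donne $\cF_\tau\subseteq\cF_{\tau_n}$ pour tout $n$.

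Ensuite on traitera la propriété de Markov forte pour un temps d'arrêt $\sigma$ à valeurs dans un ensemble dénombrable $\set{s_1,s_2,\dots}$ avec $\prob{\sigma<\infty}=1$. Pour $A\in\cF_\sigma$, $0\leqs t_1<\dots<t_k$ et $\lambda_1,\dots,\lambda_k\in\R$, en décomposant sur les événements $\set{\sigma=s_m}\in\cF_{s_m}$, sur lesquels $\set{B_{\sigma+t}-B_\sigma}_{t\geqs0}$ coïncide avec $\set{B_{s_m+t}-B_{s_m}}_{t\geqs0}$ — un mouvement Brownien standard indépendant de $\cF_{s_m}$ par la propriété différentielle — et en remarquant que $A\cap\set{\sigma=s_m}\in\cF_{s_m}$, on obtient
\begin{align*}
\Bigexpec{\e^{\icx\sum_j\lambda_j(B_{\sigma+t_j}-B_\sigma)}\indicator{A}}
&= \sum_m \Bigexpec{\e^{\icx\sum_j\lambda_j(B_{s_m+t_j}-B_{s_m})}\indicator{A\cap\set{\sigma=s_m}}} \\
&= \sum_m \varphi(\lambda)\,\fP\bigpar{A\cap\set{\sigma=s_m}}
= \varphi(\lambda)\,\fP(A)\;,
\end{align*}
où $\varphi(\lambda)=\exp\bigpar{-\tfrac12\sum_{i,j}\lambda_i\lambda_j(t_i\wedge t_j)}$ est la fonction caractéristique du vecteur gaussien $(W_{t_1},\dots,W_{t_k})$ associé à un mouvement Brownien standard $W$. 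En prenant $A=\Omega$, on voit que les marginales de dimension finie de $\set{B_{\sigma+t}-B_\sigma}_{t\geqs0}$ sont celles d'un mouvement Brownien standard ; comme ses trajectoires sont continues et nulles en $0$, c'en est un. L'identité pour $A$ général fournit alors, par un argument de classe monotone, l'indépendance vis-à-vis de $\cF_\sigma$.

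On appliquera ceci à $\sigma=\tau_n$ : le processus $\set{B_{\tau_n+t}-B_{\tau_n}}_{t\geqs0}$ est un mouvement Brownien standard indépendant de $\cF_{\tau_n}$, donc \emph{a fortiori} de $\cF_\tau$. Par continuité presque sûre de $s\mapsto B_s$ et puisque $\tau_n\searrow\tau$, on a $B_{\tau_n+t}\to B_{\tau+t}$ et $B_{\tau_n}\to B_\tau$ presque sûrement pour tout $t\geqs0$. La loi du vecteur $\bigpar{B_{\tau_n+t_1}-B_{\tau_n},\dots,B_{\tau_n+t_k}-B_{\tau_n}}$ ne dépend pas de $n$ (c'est la gaussienne ci-dessus), donc c'est aussi la loi du vecteur limite $\bigpar{B_{\tau+t_1}-B_\tau,\dots,B_{\tau+t_k}-B_\tau}$ ; joint à la continuité des trajectoires, cela montre que $\set{B_{\tau+t}-B_\tau}_{t\geqs0}$ est un mouvement Brownien standard. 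Enfin, pour $A\in\cF_\tau$, le théorème de convergence dominée (l'intégrande étant de module $1$) donne
\begin{align*}
\Bigexpec{\e^{\icx\sum_j\lambda_j(B_{\tau+t_j}-B_\tau)}\indicator{A}}
&= \lim_{n\to\infty}\Bigexpec{\e^{\icx\sum_j\lambda_j(B_{\tau_n+t_j}-B_{\tau_n})}\indicator{A}} \\
&= \lim_{n\to\infty}\varphi(\lambda)\,\fP(A)
= \varphi(\lambda)\,\fP(A)\;,
\end{align*}
et l'indépendance de $\set{B_{\tau+t}-B_\tau}_{t\geqs0}$ et de $\cF_\tau$ s'en déduit comme plus haut.

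Le point délicat sera le passage à la limite : il faut d'une part que $\cF_\tau$ reste contenu dans la tribu antérieure à chaque approximation, ce qui impose d'approcher $\tau$ par au-dessus (et motive le choix des $\tau_n$), et d'autre part justifier que l'indépendance survit à la limite presque sûre — c'est précisément ce que règle l'argument par fonctions caractéristiques ci-dessus. Le reste (mesurabilité des $\tau_n$, la propriété de Markov simple, l'argument de classe monotone pour passer des marginales de dimension finie à l'indépendance du processus entier) est de nature routinière.
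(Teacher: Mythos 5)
Le polycopié ne démontre pas ce théorème~: il renvoie directement à Durrett (Section~7.3) et à McKean, et votre preuve est correcte et suit précisément l'argument classique de ces références — approximation dyadique du temps d'arrêt \emph{par au-dessus}, propriété de Markov simple appliquée sur les événements $\set{\tau_n=k2^{-n}}$, puis passage à la limite grâce à la continuité des trajectoires, aux fonctions caractéristiques et à l'inclusion $\cF_\tau\subseteq\cF_{\tau_n}$. Les points que vous laissez en exercice (le fait que $A\cap\set{\sigma=s_m}\in\cF_{s_m}$, l'argument de classe monotone, l'usage de $\set{\tau\leqs t}\in\cF_t$ via~\eqref{mbta3}) sont effectivement de routine, donc il n'y a rien à redire.
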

\begin{proof}
Voir \cite[Section~7.3]{Durrett} ou \cite{McKean}, p.~10. 
\end{proof}

\begin{cor}[Principe de r\'eflexion]
\label{cor_apr1}
Pour $L>0$ et $\tau=\inf\setsuch{t\geqs0}{B_t\geqs L}$, le processus 
\begin{equation}
\label{apr3}
B^\star_t = 
\begin{cases}
B_t & \text{si $t\leqs\tau$} \\
2L - B_t & \text{si $t>\tau$}
\end{cases}
\end{equation}
est un mouvement Brownien standard. 
\end{cor}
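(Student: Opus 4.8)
\textit{Plan de démonstration.} Le plan est de déduire l'énoncé d'une égalité en loi $B^\star\equaldistr B$, obtenue en combinant la propriété de Markov forte (Théorème~\ref{thm_markov_fort}) et la propriété de symétrie du mouvement Brownien. Commençons par les préliminaires. D'une part, $\tau$ est un temps d'arrêt~: c'est le temps de premier passage dans le fermé $[L,\infty)$, donc la Proposition~\ref{prop_mbta1} s'applique. D'autre part, $\prob{\tau<\infty}=1$~: la suite $(B_n)_{n\in\N}$ est une marche aléatoire dont les accroissements sont i.i.d.\ de loi $\cN(0,1)$, donc récurrente (cf.\ Section~\ref{sec_mc}), ce qui force $\limsup_{n\to\infty}B_n=+\infty$, d'où $\tau<\infty$ presque sûrement. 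Enfin, par continuité des trajectoires et définition de $\tau$, on a $B_\tau=L$ sur $\set{\tau<\infty}$.

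Posons $W_t=B_{\tau+t}-B_\tau=B_{\tau+t}-L$. Par le Théorème~\ref{thm_markov_fort}, $\set{W_t}_{t\geqs0}$ est un mouvement Brownien standard indépendant de $\cF_\tau$. Introduisons le processus arrêté $B^\tau_t=B_{\tau\wedge t}$, qui est $\cF_\tau$-mesurable pour chaque $t$ (vérification directe à partir de la Définition~\ref{def_pretau_mb}). L'observation centrale est que $B$ et $B^\star$ se reconstruisent par le \emph{même} procédé déterministe à partir du couple $(B^\tau,\pm W)$. Plus précisément, soit $C(\R_+,\R)$ l'espace des fonctions continues muni de la tribu cylindrique, et définissons l'application de recollement $G:C(\R_+,\R)^2\to C(\R_+,\R)$ par
\begin{equation*}
G(f,g)(t)=
\begin{cases}
f(t) & \text{si $t\leqs\sigma(f)$\;,}\\
L+g\bigpar{t-\sigma(f)} & \text{si $t>\sigma(f)$\;,}
\end{cases}
\end{equation*}
où $\sigma(f)=\inf\setsuch{t\geqs0}{f(t)\geqs L}$. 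En utilisant $B_\tau=L$ et $B_{\tau+t}=L+W_t$, on vérifie que $\sigma(B^\tau)=\tau$ et que, presque sûrement, $B=G(B^\tau,W)$ et $B^\star=G(B^\tau,-W)$~: pour $t\leqs\tau$ les trois processus coïncident avec $B_t=B^\tau_t$, et pour $t>\tau$ on a $B^\star_t=2L-B_t=2L-(L+W_{t-\tau})=L+(-W)(t-\tau)$.

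Il suffit alors de montrer que $(B^\tau,W)\equaldistr(B^\tau,-W)$ comme variables aléatoires à valeurs dans $C(\R_+,\R)^2$. Comme $B^\tau$ est $\cF_\tau$-mesurable et que, par le Théorème~\ref{thm_markov_fort} joint à la propriété de symétrie, $W$ et $-W$ sont tous deux des mouvements Browniens standards indépendants de $\cF_\tau$, chacun des deux couples a pour loi le produit de la loi de $B^\tau$ par la mesure de Wiener sur $C(\R_+,\R)$~; ils ont donc même loi jointe. En appliquant l'application mesurable $G$, il vient $B^\star=G(B^\tau,-W)\equaldistr G(B^\tau,W)=B$. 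Puisque $B$ est un mouvement Brownien standard et que $G$ préserve la continuité (les deux morceaux se raccordent en $t=\sigma(f)$), $B^\star$ en est un aussi.

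Le point délicat sera la justification rigoureuse de deux faits de nature mesurable~: la mesurabilité (pour les tribus cylindriques, donc pour les boréliens de la topologie de la convergence uniforme sur les compacts) de la fonctionnelle $f\mapsto\sigma(f)$ et de l'application $G$, ainsi que le caractère $\cF_\tau$-mesurable du processus arrêté $\set{B^\tau_t}_{t\geqs0}$. Une fois ces vérifications acquises, tout le reste n'est qu'une application directe de la propriété de Markov forte et de la symétrie du mouvement Brownien.
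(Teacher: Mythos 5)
Votre démonstration est correcte et suit exactement la voie que le polycopié sous-entend : le corollaire y est énoncé sans preuve, juste après la propriété de Markov forte (Théorème~\ref{thm_markov_fort}), et c'est bien cette propriété, combinée à la symétrie $-W\equaldistr W$ et au recollement mesurable des deux morceaux de trajectoire en $B_\tau=L$, qui donne l'égalité en loi $B^\star\equaldistr B$. Seule remarque mineure : pour justifier $\prob{\tau<\infty}=1$, la référence à la récurrence des marches sur $\Z^d$ est un peu lâche pour une marche à accroissements gaussiens ; il est plus direct d'invoquer la loi du logarithme itéré~\eqref{iid6} (ou la loi $0$--$1$ de Hewitt--Savage) pour obtenir $\limsup_{n\to\infty}B_n=+\infty$ presque sûrement.
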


Voici une application classique du principe de
r\'eflexion. 

\begin{cor}
\label{cor_apr2}
Pour tout $L\geqs0$, 
\begin{equation}
\label{apr4}
\Bigprob{\sup_{0\leqs s\leqs t}B_s\geqs L} = 2 \prob{B_t\geqs L}. 
\end{equation}
\end{cor}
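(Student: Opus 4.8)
The plan is to reduce the statement to the reflection principle of Corollary~\ref{cor_apr1}. Introduce the first passage time $\tau=\inf\setsuch{s\geqs0}{B_s\geqs L}$ at level $L$, which is a temps d'arr\^et by Proposition~\ref{prop_mbta1}. Since the trajectories of $B$ are continuous, the event $\set{\sup_{0\leqs s\leqs t}B_s\geqs L}$ coincides with $\set{\tau\leqs t}$. I would then split this event according to the position of $B_t$ relative to $L$, writing $\prob{\tau\leqs t}=\prob{\tau\leqs t,\,B_t>L}+\prob{\tau\leqs t,\,B_t<L}+\prob{\tau\leqs t,\,B_t=L}$, and compute each piece.

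First I would dispose of the degenerate contributions. For $t>0$ the law of $B_t$ is $\cN(0,t)$, hence $\prob{B_t=L}=0$, which kills the last term; moreover, assuming $L>0$ (the case $L=0$ being trivial, as then $\sup_{0\leqs s\leqs t}B_s\geqs0$ almost surely and $2\prob{B_t\geqs0}=1$), continuity together with $B_0=0<L$ forces $B_\tau=L$ on $\set{\tau<\infty}$, so $\set{\tau=t}\subseteq\set{B_t=L}$ has probability zero and $\prob{\tau\leqs t}=\prob{\tau<t}$. Next, $\set{B_t>L}\subseteq\set{\tau<t}$: if $B_t>L$ then by continuity $B_s>L$ on a left neighbourhood of $t$, whence $\tau<t$. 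Therefore $\prob{\tau<t,\,B_t>L}=\prob{B_t>L}=\prob{B_t\geqs L}$.

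It remains to identify $\prob{\tau<t,\,B_t<L}$, and this is where Corollary~\ref{cor_apr1} enters. Let $B^\star$ be the reflected process of that corollary, which is again a standard Brownian motion. Since $B^\star$ agrees with $B$ on $[0,\tau]$, its first passage time at $L$ is still $\tau$, and $B^\star_t=2L-B_t$ on $\set{\tau<t}$. Consequently $\set{\tau<t,\,B_t<L}=\set{\tau<t,\,B^\star_t>L}=\set{B^\star_t>L}$, the last equality because the same left-neighbourhood argument applied to $B^\star$ gives $\set{B^\star_t>L}\subseteq\set{\tau<t}$. As $B^\star$ is a standard Brownian motion, $\prob{B^\star_t>L}=\prob{B_t>L}=\prob{B_t\geqs L}$. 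Adding the two contributions yields $\prob{\sup_{0\leqs s\leqs t}B_s\geqs L}=\prob{\tau<t}=2\prob{B_t\geqs L}$, which is the claim.

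I expect the only real delicacy to be the bookkeeping with the stopping time and the null events: carefully distinguishing $\set{\tau<t}$ from $\set{\tau\leqs t}$, checking the inclusions $\set{B_t>L}\subseteq\set{\tau<t}$ and $\set{B^\star_t>L}\subseteq\set{\tau<t}$, and verifying that $B^\star$ indeed has $\tau$ as its first passage time at $L$ --- all of which rest on the path-continuity of Brownian motion. The substantive input, namely that $B^\star$ is itself a standard Brownian motion, is already supplied by Corollary~\ref{cor_apr1} (and ultimately by the strong Markov property, Theorem~\ref{thm_markov_fort}), so no further heavy machinery is needed.
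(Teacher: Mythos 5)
Your argument is correct and follows essentially the same route as the paper: the first-passage time $\tau$, the identification $\set{\sup_{0\leqs s\leqs t}B_s\geqs L}=\set{\tau\leqs t}$, and the decomposition of this event according to the position of $B_t$ relative to $L$, with the reflected process $B^\star$ of Corollary~\ref{cor_apr1} turning the piece where $B_t<L$ into one of probability $\prob{B_t\geqs L}$. The only difference is bookkeeping: you absorb $\set{\tau<t}$ into $\set{B^\star_t>L}$ so that only the marginal law of $B^\star_t$ is needed, whereas the paper manipulates the joint event $\set{B^\star_t\leqs L,\,\tau\leqs t}$ --- both rest on exactly the same input.
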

\begin{proof}
Si $\tau=\inf\setsuch{t\geqs0}{B_t\geqs L}$, alors $\set{\sup_{0\leqs s\leqs
t}B_s\geqs L}=\set{\tau\leqs t}$. Les trajectoires du mouvement Brownien
\'etant continues, $B_t\geqs L$ implique $\tau\leqs t$. Ainsi, 
\begin{align}
\nonumber
\prob{B_t\geqs L} &= \prob{B_t\geqs L, \tau\leqs t} \\
\nonumber
&= \prob{2L - B_t \leqs L, \tau\leqs t} \\
\nonumber
&= \prob{B^\star_t \leqs L, \tau\leqs t} \\
&= \prob{B_t \leqs L, \tau\leqs t}.
\label{apr5}
\end{align}
Le r\'esultat suit de la d\'ecomposition $\prob{\tau\leqs t} = \prob{B_t \leqs
L, \tau\leqs t} + \prob{B_t > L, \tau\leqs t}$. 
\end{proof}

On remarquera que lorsque $t\to\infty$, l'expression~\eqref{apr4} tend vers
$1$, donc le mouvement Brownien atteint presque s\^urement tout niveau $L$.


\section{Mouvement Brownien et martingales}
\label{sec_mbm}

Le mouvement Brownien, ainsi que toute une s\'erie de processus d\'eriv\'es,
sont des martingales. Les martingales, sur- et sous-martingales sont d\'efinies
comme dans le cas discret, sauf qu'on consid\`ere tous les temps $t>s$, pour
lesquels $\cF_t\supset\cF_s$. 
Commen\c cons par consid\'erer le mouvement Brownien.

\begin{theorem}[Propri\'et\'e de martingale du mouvement Brownien]
\label{thm_bm_mart}
Le mouvement Brownien est une martingale par rapport \`a la filtration
canonique $\set{\cF_t}_{t\geqs0}$.  
\end{theorem}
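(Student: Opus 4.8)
The strategy is simply to check the three defining conditions of a (continuous-time) martingale, in exact parallel with the argument already given for the symmetric random walk in Example~\ref{ex_martingale}. The filtration throughout is the canonical one $\cF_t = \sigma(\set{B_s}_{0\leqs s\leqs t})$ from~\eqref{mbta1}, and the three ingredients we exploit are the three clauses of Definition~\ref{def_mB}: $B_0=0$, independence of increments, and Gaussian increments.

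First I would settle integrability. Since $B_t = B_t - B_0$, clause~3 of Definition~\ref{def_mB} tells us $B_t$ follows a law $\cN(0,t)$, so $\expec{\abs{B_t}} = \sqrt{2t/\pi} < \infty$ for every $t\geqs0$. Adaptedness is then immediate: by the very definition of $\cF_t$, each $B_t$ is $\cF_t$-measurable, i.e.\ $B_t\measurable\cF_t$.

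The only thing left is the martingale identity $\econd{B_t}{\cF_s} = B_s$ for $t>s\geqs0$. Here I would write $B_t = B_s + (B_t - B_s)$ and apply linearity of conditional expectation:
\begin{equation*}
\econd{B_t}{\cF_s} = \econd{B_s}{\cF_s} + \econd{B_t - B_s}{\cF_s}\;.
\end{equation*}
The first term equals $B_s$ because $B_s\measurable\cF_s$ (first item of Example~\ref{ex_ec1}). For the second term, clause~2 of Definition~\ref{def_mB} says $B_t - B_s$ is independent of $\set{B_u}_{u\leqs s}$, hence of $\cF_s$, so by the independence case of Example~\ref{ex_ec1} we get $\econd{B_t - B_s}{\cF_s} = \expec{B_t - B_s}$; and this expectation vanishes because the increment has law $\cN(0,t-s)$. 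Adding the two contributions gives $\econd{B_t}{\cF_s} = B_s$.

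There is no real obstacle: the argument is a verbatim transcription of the random-walk computation~\eqref{mart1}, with the discrete increment $X_{n+1}-X_n$ replaced by the Brownian increment $B_t-B_s$. The only point that deserves a word is that, in contrast to the discrete setting, one should note that the independence hypothesis is stated relative to $\set{B_u}_{u\leqs s}$, which is exactly a generating family of $\cF_s$, so independence from $\cF_s$ itself follows; everything else is routine.
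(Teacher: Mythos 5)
Your proof is correct and follows essentially the same route as the paper: decompose $B_t = B_s + (B_t - B_s)$, use measurability of $B_s$ for the first term and independence plus zero mean of the Gaussian increment for the second. Your explicit checks of integrability and adaptedness are fine additions but do not change the argument.
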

\begin{proof}
Pour tout $t>s\geqs0$, on a 
\begin{equation*}
\econd{B_t}{\cF_s} = \econd{B_t-B_s}{\cF_s} + \econd{B_s}{\cF_s} 
= \expec{B_{t-s}} + B_s = B_s\;,
\end{equation*} 
en vertu de la propri\'et\'e diff\'erentielle et de la propri\'et\'e des
incr\'ements ind\'ependants. 
\end{proof}

L'in\'egalit\'e de Jensen implique que $B_t^2$ et $\e^{\gamma B_t}$ pour
$\gamma>0$ sont des sous-martingales. Le r\'esultat suivant montre qu'en les
modifiant de mani\`ere d\'eterministe, on obtient des martingales~:

\begin{prop}\hfill
\label{prop_mD}
\begin{enum}
\item	$B_t^2-t$ est une martingale.
\item	Pour tout $\gamma\in\R$, $\exp(\gamma B_t - \gamma^2t/2)$ est une
martingale. 
\end{enum}
\end{prop}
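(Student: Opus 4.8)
The strategy in both cases is the same three-step recipe that is standard for continuous-time martingale verifications: check integrability, check adaptedness, then compute the conditional expectation using the differential property (Theorem-level fact: $\set{B_{t+s}-B_t}_{s\geqs0}$ is a standard Brownian motion independent of $\cF_t$) together with the independent-increments property. Integrability is immediate since $B_t\sim\cN(0,t)$ has moments of all orders, so $B_t^2-t\in L^1$ and $\E(\e^{\gamma B_t})=\e^{\gamma^2t/2}<\infty$; adaptedness is immediate since both processes are deterministic functions of $B_t$, hence $\cF_t$-measurable. The real content is the conditional-expectation computation, and in both cases the trick is to write $B_t=B_s+(B_t-B_s)$ and expand, using that $B_t-B_s$ is independent of $\cF_s$ with law $\cN(0,t-s)$.

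\emph{Part 1.} For $t>s\geqs0$ I would write
\begin{equation*}
\econd{B_t^2}{\cF_s} = \econd{(B_s+(B_t-B_s))^2}{\cF_s}
= B_s^2 + 2B_s\econd{B_t-B_s}{\cF_s} + \econd{(B_t-B_s)^2}{\cF_s}\;.
\end{equation*}
Since $B_s\measurable\cF_s$ it factors out of the middle term (Theorem~\ref{thm_ec2}), and the independence of $B_t-B_s$ from $\cF_s$ lets me replace the two conditional expectations by plain expectations: $\expec{B_t-B_s}=0$ and $\expec{(B_t-B_s)^2}=t-s$. Hence $\econd{B_t^2}{\cF_s}=B_s^2+(t-s)$, which rearranges to $\econd{B_t^2-t}{\cF_s}=B_s^2-s$.

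\emph{Part 2.} Again write $\gamma B_t=\gamma B_s+\gamma(B_t-B_s)$, so
\begin{equation*}
\Bigecond{\e^{\gamma B_t-\gamma^2t/2}}{\cF_s}
= \e^{\gamma B_s-\gamma^2t/2}\,\Bigecond{\e^{\gamma(B_t-B_s)}}{\cF_s}
= \e^{\gamma B_s-\gamma^2t/2}\,\bigexpec{\e^{\gamma(B_t-B_s)}}\;,
\end{equation*}
where $\e^{\gamma B_s-\gamma^2 t/2}$ is $\cF_s$-measurable and pulls out (Theorem~\ref{thm_ec2}) and the remaining factor is a genuine expectation by independence. The Laplace transform of a $\cN(0,t-s)$ variable gives $\expec{\e^{\gamma(B_t-B_s)}}=\e^{\gamma^2(t-s)/2}$, so the right-hand side equals $\e^{\gamma B_s-\gamma^2 t/2+\gamma^2(t-s)/2}=\e^{\gamma B_s-\gamma^2 s/2}$, as required.

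\emph{Main obstacle.} There is no serious obstacle here; the only point requiring a little care is the justification that one may pass from $\econd{\,\cdot\,}{\cF_s}$ to $\expec{\,\cdot\,}$ for the increment terms, which rests squarely on the independence of $B_t-B_s$ from $\cF_s$ (the differential property) — exactly as in the proof of Theorem~\ref{thm_bm_mart} — and on the fact that a deterministic function pulled out of a conditional expectation behaves like a constant (Theorem~\ref{thm_ec2}). One should also note in passing that the identity $\expec{\e^{\gamma(B_t-B_s)}}=\e^{\gamma^2(t-s)/2}$ is just the moment generating function of a centred Gaussian, valid for every $\gamma\in\R$, which is why the statement holds for all real $\gamma$ and not merely $\gamma>0$.
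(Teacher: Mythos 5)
Votre preuve est correcte et suit essentiellement la même démarche que celle du polycopié : décomposition $B_t=B_s+(B_t-B_s)$, utilisation de l'indépendance de l'accroissement par rapport à $\cF_s$, puis calcul de $\expec{(B_t-B_s)^2}=t-s$ pour le point 1 et de la transformée de Laplace gaussienne $\expec{\e^{\gamma(B_t-B_s)}}=\e^{\gamma^2(t-s)/2}$ (complétion du carré) pour le point 2. Les vérifications d'intégrabilité et d'adaptation que vous ajoutez explicitement sont un complément bienvenu mais ne changent rien à l'argument.
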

\begin{proof}\hfill
\begin{enum}
\item	$\econd{B_t^2}{\cF_s} = \econd{B_s^2+2B_s(B_t-B_s)+(B_t-B_s)^2}{\cF_s}
= B_s^2+0+(t-s)$.
\item	$\econd{\e^{\gamma B_t}}{\cF_s} 
= \e^{\gamma B_s}\econd{\e^{\gamma (B_t-B_s)}}{\cF_s}
= \e^{\gamma B_s}\expec{\e^{\gamma B_{t-s}}}$, et 
\[
\bigexpec{\e^{\gamma B_{t-s}}} = \int_{-\infty}^\infty \e^{\gamma x}
\frac{\e^{-x^2/2(t-s)}}{\sqrt{2\pi(t-s)}} \6x
= \e^{\gamma^2(t-s)/2}
\]
par compl\'etion du carr\'e. 
\qed
\end{enum}
\renewcommand{\qed}{}
\end{proof}

On peut donc \'ecrire 
\begin{equation}
 \label{mbm0}
B_t^2 = t + M_t\;,
\qquad
\text{$M_t$ martingale\;,} 
\end{equation} 
ce qui est l'analogue de la d\'ecomposition de Doob du cas en temps discret.
Le processus croissant associ\'e au mouvement Brownien est donc donn\'e par 
\begin{equation}
 \label{mbm2}
\braket{B}_t = t\;. 
\end{equation} 
Ceci peut \^etre vu intuitivement comme cons\'equence du fait que 
\begin{equation}
 \label{mbm1}
\bigecond{(B_t-B_s)^2}{\cF_s} = \bigexpec{(B_t-B_s)^2} = \bigexpec{B_{t-s}^2} =
t-s\;,
\end{equation} 
et donc que l’accroissement infinit\'esimal de $B_t^2$ est en moyenne 
$\econd{(B_{s+\6s}-B_s)^2}{\cF_s}=\6s$.

Nous donnons deux types d'applications de la propri\'et\'e de martingale.
Un premier type d'applications suit du fait qu'une martingale arr\^et\'ee est
encore une martingale (c.f.~Section~\ref{sec_indoob}). 

\begin{prop}
\label{prop_mbm1}
Soit $X_t$ une martingale continue \`a droite, adapt\'ee \`a une filtration
continue \`a droite. Si $\tau$ est un temps d'arr\^et born\'e, alors
$\expec{X_\tau}=\expec{X_0}$. 
\end{prop}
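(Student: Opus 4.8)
Le plan est de se ramener au temps discret en approchant $\tau$ par au-dessus par des temps d'arr\^et dyadiques, puis de passer \`a la limite en combinant la continuit\'e \`a droite des trajectoires de $X$ avec un argument d'int\'egrabilit\'e uniforme. Comme $\tau$ est born\'e, fixons un entier $T$ tel que $\tau\leqs T$ presque s\^urement et posons, pour $n\geqs1$,
\[
\tau_n = \frac{\intpart{2^n\tau}+1}{2^n}\;.
\]
Chaque $\tau_n$ est un temps d'arr\^et: il ne prend que des valeurs de la forme $j2^{-n}$, et $\set{\tau_n\leqs j2^{-n}}=\set{\tau<j2^{-n}}\in\cF_{j2^{-n}}$. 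De plus $\tau_n\leqs T+1$ pour tout $n$ (et $T+1$ est dyadique \`a tout rang), $\tau_n$ ne prend qu'un nombre fini de valeurs, et la suite $\set{\tau_n}_n$ d\'ecro\^\i t vers $\tau$.

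L'\'etape cl\'e est d'\'etablir que $X_{\tau_n}=\econd{X_{T+1}}{\cF_{\tau_n}}$ pour tout $n$. Notons $t_0<\dots<t_r$ les valeurs prises par $\tau_n$ (toutes $\leqs T+1$): pour $A\in\cF_{\tau_n}$ on a $A\cap\set{\tau_n=t_j}\in\cF_{t_j}$, et comme $X$ est une martingale (au sens du temps continu, cf.\ Section~\ref{sec_mbm}) on a $\econd{X_{T+1}}{\cF_{t_j}}=X_{t_j}$, d'o\`u, en sommant sur $j$,
\[
\int_A X_{T+1}\,\6\fP = \sum_{j=0}^{r}\int_{A\cap\set{\tau_n=t_j}} X_{t_j}\,\6\fP = \int_A X_{\tau_n}\,\6\fP\;.
\]
Ceci donne l'identit\'e voulue, et en particulier $\expec{X_{\tau_n}}=\expec{X_{T+1}}=\expec{X_0}$. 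Par la Proposition~\ref{prop_coL1-0}, la famille $\set{X_{\tau_n}}_{n\geqs1}$ est alors uniform\'ement int\'egrable.

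Il reste \`a faire tendre $n$ vers l'infini. La continuit\'e \`a droite des trajectoires de $X$ et le fait que $\tau_n\searrow\tau$ entra\^\i nent $X_{\tau_n}\to X_\tau$ presque s\^urement, donc en probabilit\'e. Le Th\'eor\`eme~\ref{thm_coL1-1} montre alors que la convergence a aussi lieu dans $L^1$ (en particulier $\expec{\abs{X_\tau}}<\infty$), de sorte que
\[
\expec{X_\tau} = \lim_{n\to\infty}\expec{X_{\tau_n}} = \expec{X_0}\;.
\]
L'obstacle principal est ce dernier passage \`a la limite: il faut disposer de l'int\'egrabilit\'e uniforme de $\set{X_{\tau_n}}_n$, que l'on obtient ici en reconnaissant chaque $X_{\tau_n}$ comme esp\'erance conditionnelle de la \emph{m\^eme} variable int\'egrable $X_{T+1}$; le reste (v\'erifier que les $\tau_n$ sont des temps d'arr\^et \`a valeurs finies d\'ecroissant vers $\tau$) est de la comptabilit\'e de routine.
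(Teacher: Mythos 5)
Votre preuve est correcte et suit essentiellement la m\^eme d\'emarche que celle du polycopi\'e~: approximation dyadique de $\tau$ par au-dessus, identification de $X_{\tau_n}$ comme esp\'erance conditionnelle $\econd{X_{T+1}}{\cF_{\tau_n}}$ via l'arr\^et optionnel en temps discret, puis passage \`a la limite par continuit\'e \`a droite et int\'egrabilit\'e uniforme (Proposition~\ref{prop_coL1-0}). Vous explicitez simplement davantage l'\'etape d'int\'egrabilit\'e uniforme l\`a o\`u le cours conclut via $\cF_{\tau_m}\searrow\cF_\tau$, ce qui revient au m\^eme.
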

\begin{proof}
Soit $n$ un entier tel que $\prob{\tau\leqs n-1}=1$, et
$\tau_m=(\intpart{2^m\tau}+1)/2^m$. Le processus discr\'etis\'e 
$\set{Y^m_k}_{k\geqs1}=X_{k2^{-m}}$ est une martingale pour la filtration
$\cF^m_k=\cF_{k2^{-m}}$, et $\sigma_m=2^m\tau_m$ est un temps d'arr\^et pour
cette filtration. Il suit que 
\begin{equation}
\label{mbm3}
X_{\tau_m} = Y^m_{\sigma_m} = \econd{Y^m_{n2^m}}{\cF^m_{\sigma_m}} 
= \econd{X_n}{\cF_{\tau_m}}\;.
\end{equation} 
Lorsque $m\to\infty$, $X_{\tau_m}$ tend vers $X_\tau$ par continuit\'e \`a
droite, et $\cF_{\tau_m}\searrow\cF_\tau$ par la Proposition~\ref{prop_mbta2}. 
Ceci implique que $X_\tau=\econd{X_n}{\cF_\tau}$. Par la propri\'et\'e de
martingale, il suit que $\expec{X_\tau}=\expec{X_n}=\expec{X_0}$.
\end{proof}

Voici quelques applications de ce r\'esultat. 

\begin{cor}\hfill
\label{cor_mbm1} 
\begin{enum}
\item	Soit $\tau_a = \inf\setsuch{t\geqs0}{x+B_t=a}$. Alors pour $a<x<b$ on a 
\begin{equation}
 \label{mbm4}
\prob{\tau_a < \tau_b} = \frac{b-x}{b-a}\;. 
\end{equation} 
\item	Soit $a<0<b$, et $\tau=\inf\setsuch{t\geqs0}{B_t\not\in(a,b)}$. Alors 
\begin{equation}
 \label{mbm5}
\expec{\tau} = \abs{ab}\;. 
\end{equation}
\item	Soit $a>0$ et $\tau=\inf\setsuch{t\geqs0}{B_t\not\in(-a,a)}$. Alors
pour $\lambda>0$, 
\begin{equation}
 \label{mbm6}
\expec{\e^{-\lambda\tau}} = \frac{1}{\cosh(a\sqrt{2\lambda})}\;. 
\end{equation}  
\end{enum}
\end{cor}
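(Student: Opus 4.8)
Corollaire~\ref{cor_mbm1} — proof plan

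The plan is to treat the three statements in parallel, each time exhibiting a suitable martingale, applying Proposition~\ref{prop_mbm1} to a \emph{truncated} stopping time $\tau\wedge n$ (which is bounded), and then letting $n\to\infty$ with a justification of the limit interchange. The three martingales are already available from Proposition~\ref{prop_mD} and Theorem~\ref{thm_bm_mart}: for (1) the Brownian motion $x+B_t$ itself, for (2) the martingale $B_t^2-t$, and for (3) the exponential martingale $\exp(\gamma B_t-\gamma^2 t/2)$ with a well-chosen $\gamma$. In each case the key preliminary fact is that $\tau<\infty$ almost surely, which follows from the remark at the end of Section~\ref{sec_mbta} (Brownian motion almost surely reaches every level, hence exits every bounded interval); one also needs $\expec{\tau}<\infty$, which will itself drop out of step (2).

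For (1), I would apply Proposition~\ref{prop_mbm1} to the martingale $x+B_{t}$ and the bounded stopping time $\tau\wedge n$ where $\tau=\tau_a\wedge\tau_b$, giving $\expec{x+B_{\tau\wedge n}}=x$. Since $a\leqs x+B_{\tau\wedge n}\leqs b$, dominated convergence passes to the limit: $\expec{x+B_\tau}=x$. But $x+B_\tau$ takes only the values $a$ (with probability $p:=\prob{\tau_a<\tau_b}$) and $b$ (with probability $1-p$), so $ap+b(1-p)=x$, which solves to $p=(b-x)/(b-a)$. For (2), with $a<0<b$ and $\tau$ the exit time of $(a,b)$, apply the stopping result to $B_t^2-t$: $\expec{B_{\tau\wedge n}^2}=\expec{\tau\wedge n}$. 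The left side is bounded by $\max(a^2,b^2)$, so by monotone convergence $\expec{\tau}<\infty$ and $\expec{B_\tau^2}=\expec{\tau}$; meanwhile part (1) with $x=0$ gives $\prob{B_\tau=a}=b/(b-a)$, $\prob{B_\tau=b}=-a/(b-a)$, hence $\expec{B_\tau^2}=a^2\cdot\frac{b}{b-a}+b^2\cdot\frac{-a}{b-a}=-ab=\abs{ab}$.

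For (3), fix $a>0$ and the exit time $\tau$ of $(-a,a)$, and choose $\gamma=\sqrt{2\lambda}$ so that $\gamma^2/2=\lambda$; the martingale is $\e^{\gamma B_t-\lambda t}$. Stopping at $\tau\wedge n$ gives $\expec{\e^{\gamma B_{\tau\wedge n}-\lambda(\tau\wedge n)}}=1$. Since $\abs{B_{\tau\wedge n}}\leqs a$, the integrand is bounded by $\e^{\gamma a}$, so dominated convergence yields $\expec{\e^{\gamma B_\tau-\lambda\tau}}=1$. Running the same argument with $-\gamma$ (legitimate since the martingale statement holds for all $\gamma\in\R$) gives $\expec{\e^{-\gamma B_\tau-\lambda\tau}}=1$. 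Adding the two and using that $B_\tau\in\set{-a,a}$, so $\e^{\gamma B_\tau}+\e^{-\gamma B_\tau}=\e^{\gamma a}+\e^{-\gamma a}=2\chyp(a\sqrt{2\lambda})$ regardless of the sign of $B_\tau$, we get $2\chyp(a\sqrt{2\lambda})\,\expec{\e^{-\lambda\tau}}=2$, i.e.\ $\expec{\e^{-\lambda\tau}}=1/\chyp(a\sqrt{2\lambda})$.

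The main obstacle is the passage to the limit $n\to\infty$: Proposition~\ref{prop_mbm1} applies only to bounded stopping times, so one genuinely must argue that $\tau<\infty$ a.s.\ and then control $\expec{X_{\tau\wedge n}}\to\expec{X_\tau}$. In (1) and (3) this is immediate because the stopped process stays in a compact set, so dominated convergence applies with a constant bound; the only subtlety is (2), where boundedness of $\expec{B_{\tau\wedge n}^2}$ is what simultaneously delivers $\expec{\tau}<\infty$ (via monotone convergence on the right-hand side) and the desired identity — so the order of the three parts matters, with (1) feeding the distribution of $B_\tau$ into (2) and (3).
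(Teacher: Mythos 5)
Your proof is correct, and for parts (1) and (2) it follows essentially the same route as the paper: stop the martingales $x+B_t$ and $B_t^2-t$ at the bounded time $\tau\wedge n$, use almost sure finiteness of $\tau$ (which the paper gets from Corollaire~\ref{cor_apr2}, i.e.\ the same remark you invoke), pass to the limit by dominated convergence on the bounded stopped process and monotone convergence on $\expec{\tau\wedge n}$, and feed the exit law from (1) into (2). The only genuine divergence is in part (3). The paper applies dominated convergence to $\e^{\gamma B_{\tau\wedge t}-\gamma^2(\tau\wedge t)/2}$ with the single value $\gamma=\sqrt{2\lambda}$ and then factors $\expec{\e^{\gamma B_\tau-\lambda\tau}}=\cosh(\gamma a)\expec{\e^{-\lambda\tau}}$ by asserting that, by symmetry, $\prob{B_\tau=-a}=\prob{B_\tau=a}=1/2$ \emph{and} that $B_\tau$ is independent of $\tau$ --- a true but not entirely free claim, which really rests on the reflection invariance of the Brownian path. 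Your version runs the same argument for both $\gamma$ and $-\gamma$ and adds the two identities, using that $\e^{\gamma B_\tau}+\e^{-\gamma B_\tau}=2\cosh(\gamma a)$ pointwise since $B_\tau\in\set{-a,a}$; this sidesteps the independence claim altogether and is equivalent to working with the martingale $\e^{-\gamma^2 t/2}\cosh(\gamma B_t)$, which is exactly the alternative proof the paper proposes in l'exercice~\ref{exo_mb1}. What you lose is the by-product of the paper's argument (the independence of $B_\tau$ and $\tau$, useful elsewhere); what you gain is a more self-contained deduction from the stopped-martingale theorem alone.
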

\begin{proof}\hfill
\begin{enum}
\item	Soit $\tau=\tau_a\wedge\tau_b$. Alors $\expec{x+B_{\tau\wedge t}}=x$. 
Le Corollaire~\ref{cor_apr2} montre que $\tau_a$ et $\tau_b$ sont finis presque
s\^urement, donc aussi $\tau$.  
Faisant tendre $t$ vers l'infini, on obtient 
\begin{equation*}
x = \expec{x+B_\tau} = a \prob{\tau_a<\tau_b} + b \prob{\tau_b<\tau_a}\;,
\end{equation*} 
et le r\'esultat suit en r\'esolvant par rapport \`a $\prob{\tau_a<\tau_b}$. 

\item	Comme $B_t^2-t$ est une martingale, on a $\expec{B^2_{\tau\wedge t}} =
\expec{\tau\wedge t}$. Le th\'eor\`eme de convergence domin\'ee
et~\eqref{mbm4} permettent d'\'ecrire 
\[
\expec{\tau} 
= \lim_{t\to\infty} \expec{\tau\wedge t} 
= \lim_{t\to\infty} \expec{B^2_{\tau\wedge t}} 
= \expec{B^2_\tau} 
= a^2 \frac{b}{b-a} + b^2 \biggpar{1-\frac{b}{b-a}} 
= -ab\;.
\]

\item	Comme $\e^{\gamma B_t - \gamma t^2/2}$ est une martingale, on a 
$\expec{\e^{\gamma B_{\tau\wedge t}-\gamma^2 (\tau\wedge t)/2}}=1$. Faisant
tendre $t$ vers l'infini et invoquant le th\'eor\`eme de convergence domin\'ee,
on obtient 
\[
\expec{\e^{\gamma B_\tau - \gamma^2\tau/2}} = 1\;.
\]
Par sym\'etrie, on a $\prob{B_\tau=-a}=\prob{B_\tau=a}=1/2$ et $B_\tau$ est
ind\'ependant de $\tau$. Par cons\'equent, 
\[
\expec{\e^{\gamma B_\tau - \gamma^2\tau/2}} = 
\cosh(\gamma a) \expec{e^{-\gamma^2\tau/2}}\;,
\]
et le r\'esultat suit en prenant $\gamma=\sqrt{2\lambda}$. 
\qed
\end{enum}
\renewcommand{\qed}{}
\end{proof}

Remarquons que $\expec{\e^{-\lambda\tau}}$ est la transform\'ee de Laplace de
la densit\'e de $\tau$, donc l'expression~\eqref{mbm6} permet de trouver cette
densit\'e par transform\'ee de Laplace inverse. 

\goodbreak

Le second type d'applications que nous consid\'erons suit de l'in\'egalit\'e
de Doob. 

\begin{theorem}[In\'egalit\'e de Doob]
\label{thm_Doob_cont} 
Soit $X_t$ une sous-martingale \`a trajectoires continues. Alors 
pour tout $\lambda>0$, 
\begin{equation}
 \label{mbm7}
\biggprob{\sup_{0\leqs s\leqs t} X_s \geqs \lambda} 
\leqs \frac{1}{\lambda} \expec{X_t^+}\;.
\end{equation} 
\end{theorem}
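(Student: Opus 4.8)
The strategy is to reduce the continuous-time Doob inequality to the discrete-time version (Theorem~\ref{thm_Doob}) by a dyadic-time approximation, using the continuity of trajectories to pass to the limit. The point is that for a fixed $t$, the supremum $\sup_{0\leqs s\leqs t} X_s$ can be recovered as an increasing limit of maxima over finite dyadic grids, and the discrete Doob inequality applies on each such grid.

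First I would fix $\lambda>0$ and, since the conclusion is unaffected by replacing $\lambda$ by something slightly smaller, it suffices to prove $\bigprob{\sup_{0\leqs s\leqs t}X_s > \lambda} \leqs \frac1\lambda\expec{X_t^+}$, or alternatively to prove the $\geqs$ version with $\lambda$ replaced by $\lambda-\eps$ and let $\eps\to0$. For each $n\geqs1$ consider the finite set of times $D_n = \set{k t 2^{-n} : 0\leqs k\leqs 2^n}$, and set $Y^{(n)}_k = X_{kt2^{-n}}$ for $k=0,\dots,2^n$. Since $X$ is a sous-martingale (in continuous time) and $\cF_{kt2^{-n}}\subset\cF_{(k+1)t2^{-n}}$, the finite sequence $\set{Y^{(n)}_k}_{0\leqs k\leqs 2^n}$ is a discrete-time sous-martingale with respect to the filtration $\set{\cF_{kt2^{-n}}}_k$. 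Applying Theorem~\ref{thm_Doob} to this sequence, with $\Xbar^{(n)} = \max_{0\leqs k\leqs 2^n} (Y^{(n)}_k)^+$, gives
\begin{equation*}
\bigprob{\Xbar^{(n)}\geqs\lambda} \leqs \frac1\lambda\bigexpec{(Y^{(n)}_{2^n})^+} = \frac1\lambda\bigexpec{X_t^+}\;,
\end{equation*}
the last equality because $Y^{(n)}_{2^n}=X_t$ regardless of $n$.

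Next I would let $n\to\infty$. The dyadic grids are nested, $D_n\subset D_{n+1}$, so the maxima $\max_{s\in D_n} X_s^+$ form a nondecreasing sequence in $n$; denote its limit by $M$. By continuity of the trajectories $s\mapsto X_s(\omega)$ on the compact interval $[0,t]$, and since $\bigcup_n D_n$ is dense in $[0,t]$, one has $M = \sup_{0\leqs s\leqs t} X_s^+ = \Xbar_t^{\,+}$ pointwise in $\omega$ — where I write $\Xbar_t^{\,+}$ for $\max_{0\leqs s\leqs t} X_s\vee 0$, mirroring \eqref{indoob3}. Therefore the events $\set{\max_{s\in D_n}X_s^+ > \lambda}$ increase to $\set{\Xbar_t^{\,+}>\lambda}$, and since $\set{\Xbar^{(n)}\geqs\lambda}\supset\set{\Xbar^{(n)}>\lambda}$, monotone convergence of probabilities yields $\bigprob{\sup_{0\leqs s\leqs t}X_s > \lambda} = \lim_n \bigprob{\Xbar^{(n)}>\lambda} \leqs \frac1\lambda\bigexpec{X_t^+}$. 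Finally, applying this with $\lambda$ replaced by $\lambda-\eps$ for $\eps>0$ and taking $\bigcap_{\eps>0}$ — or simply noting that $\bigprob{\sup_s X_s\geqs\lambda}=\lim_{\eps\downarrow0}\bigprob{\sup_s X_s>\lambda-\eps}$ — upgrades the strict inequality to $\bigprob{\sup_{0\leqs s\leqs t}X_s\geqs\lambda}\leqs\frac1\lambda\bigexpec{X_t^+}$, which is \eqref{mbm7}.

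The only genuinely delicate point is the passage from the dense dyadic supremum to the true supremum over $[0,t]$, and this is precisely where path-continuity is indispensable: without it the supremum over a dense set can be strictly smaller than the essential supremum of the process, and the stated inequality can fail. Everything else is bookkeeping — checking that the dyadic restrictions are sous-martingales (immediate from the definition and nesting of the $\sigma$-algebras) and handling the $\geqs$ versus $>$ discrepancy at the threshold $\lambda$ by a harmless $\eps$-argument. I would also remark in passing that the same dyadic-approximation scheme, combined with Theorem~\ref{thm_Lpmax}, gives the continuous-time $L^p$ maximal inequality $\bigexpec{(\Xbar_t^{\,+})^p}\leqs (p/(p-1))^p\bigexpec{(X_t^+)^p}$ for $p>1$.
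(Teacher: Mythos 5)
Your proof is correct, and it takes a slightly different route from the one the text intends. The paper's \lq\lq proof\rq\rq\ of Th\'eor\`eme~\ref{thm_Doob_cont} is just the pointer \lq\lq voir la preuve du Th\'eor\`eme~\ref{thm_Doob}\rq\rq, i.e.\ the intended argument is to rerun the first-hitting-time proof directly in continuous time: set $\tau=\inf\setsuch{s\geqs0}{X_s\geqs\lambda}\wedge t$, use a continuous-time optional sampling statement (in this text, Proposition~\ref{prop_mbm1}, itself proved by time discretization) to get $\lambda\fP(A)\leqs\expec{X_\tau\indicator{A}}\leqs\expec{X_t\indicator{A}}\leqs\expec{X_t^+}$, path continuity being what guarantees $X_\tau\geqs\lambda$ on $A$. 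You instead use the discrete Th\'eor\`eme~\ref{thm_Doob} as a black box on the nested dyadic grids $D_n$, and recover the full supremum by continuity plus monotone convergence of the events $\set{\max_{s\in D_n}X_s>\lambda}$, finishing with the $\eps$-shift to pass from $>\lambda-\eps$ to $\geqs\lambda$. Both arguments hinge on continuity of trajectories, but at different places: yours uses it only to identify $\sup_{s\in\bigcup_nD_n}X_s$ with $\sup_{0\leqs s\leqs t}X_s$, and it avoids invoking any continuous-time stopping machinery, which makes it arguably more self-contained given the discrete-time toolkit of the first part of the course; the direct adaptation, on the other hand, yields the slightly stronger intermediate bound $\lambda\fP(A)\leqs\expec{X_t\indicator{A}}$ with $A=\set{\sup_{0\leqs s\leqs t}X_s\geqs\lambda}$, which is the form needed for the $L^p$ maximal inequality. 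Your closing remark is consistent with this: to get the continuous-time analogue of Th\'eor\`eme~\ref{thm_Lpmax} by your scheme, you would apply the discrete $L^p$ inequality on each grid $D_n$ and let $n\to\infty$ by monotone convergence, which works verbatim.
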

\begin{proof}
Voir la preuve du Th\'eor\`eme~\ref{thm_Doob}.
\end{proof}

\begin{cor}
\label{cor_mbm2}
Pour tout $\gamma, \lambda>0$, 
\begin{equation}
 \label{mbm8}
\biggprob{\sup_{0\leqs s\leqs t} \Bigbrak{B_s-\gamma\frac s2}>\lambda} \leqs
\e^{-\gamma\lambda}\;. 
\end{equation}  
\end{cor}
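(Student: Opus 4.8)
Corollaire \ref{cor_mbm2} bounds the running maximum of the process $B_s - \gamma s/2$ on $[0,t]$.

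The plan is to apply the exponential martingale from Proposition~\ref{prop_mD}(2) together with the continuous-time Doob inequality (Théorème~\ref{thm_Doob_cont}). First I would recall that for any $\gamma\in\R$ the process $Y_s = \exp(\gamma B_s - \gamma^2 s/2)$ is a martingale, hence in particular a nonnegative submartingale, so the Doob inequality applies to it directly. The key observation is that the event $\{\sup_{0\leqs s\leqs t}(B_s - \gamma s/2) > \lambda\}$ should be rewritten in terms of $Y_s$: since $B_s - \gamma s/2 = \frac1\gamma(\gamma B_s - \gamma^2 s/2) = \frac1\gamma \log Y_s$ (for $\gamma>0$), we have $B_s - \gamma s/2 > \lambda$ if and only if $Y_s > \e^{\gamma\lambda}$. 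Because $s\mapsto B_s$ is continuous, $s\mapsto Y_s$ is continuous as well, so the suprema over $[0,t]$ match up event by event: $\{\sup_{0\leqs s\leqs t}(B_s-\gamma s/2)>\lambda\} = \{\sup_{0\leqs s\leqs t} Y_s > \e^{\gamma\lambda}\}$.

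Then I would apply Théorème~\ref{thm_Doob_cont} to the submartingale $Y_s$ at level $\Lambda = \e^{\gamma\lambda}$:
\begin{equation*}
\biggprob{\sup_{0\leqs s\leqs t} Y_s \geqs \e^{\gamma\lambda}}
\leqs \e^{-\gamma\lambda}\, \expec{Y_t^+}
= \e^{-\gamma\lambda}\, \expec{Y_t}\;,
\end{equation*}
using $Y_t\geqs 0$ so that $Y_t^+=Y_t$. Finally, since $Y_t$ is a martingale with $Y_0 = \e^{\gamma B_0 - 0} = 1$, we get $\expec{Y_t} = \expec{Y_0} = 1$, and therefore
\begin{equation*}
\biggprob{\sup_{0\leqs s\leqs t} \Bigbrak{B_s - \gamma\frac s2} > \lambda}
\leqs \e^{-\gamma\lambda}\;,
\end{equation*}
as claimed. (The passage between strict and non-strict inequalities, $\sup > \lambda$ versus $\sup \geqs \lambda$ or $Y_s > \e^{\gamma\lambda}$ versus $\geqs$, is harmless here and can be absorbed by monotonicity in $\lambda$, or by noting $\{\sup > \lambda\} \subseteq \{\sup \geqs \lambda\}$ for $Y$.)

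There is no serious obstacle; the only point requiring a word of care is the identification of the two events, which relies on the strict positivity and continuity of $Y_s$ and on $\gamma>0$ (so that $x\mapsto \frac1\gamma\log x$ is increasing). Everything else is a direct substitution into results already established in the excerpt.
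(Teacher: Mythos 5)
Votre preuve est correcte et suit exactement la d\'emarche voulue par le texte : le corollaire est \'enonc\'e juste apr\`es l'in\'egalit\'e de Doob continue (Th\'eor\`eme~\ref{thm_Doob_cont}) pr\'ecis\'ement pour \^etre appliqu\'e \`a la martingale exponentielle $\exp(\gamma B_s-\gamma^2 s/2)$ de la Proposition~\ref{prop_mD}, dont l'esp\'erance vaut $1$. C'est d'ailleurs le m\^eme argument que le cours r\'eutilise dans la preuve du Lemme~\ref{lem_Ito2}; rien \`a redire.
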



\section{Exercices}
\label{sec_exo_mb}

\begin{exercice}
\label{exo_mb1} 
Montrer que pour tout $\gamma\in\R$, le processus
$X_t=\e^{-\gamma^2t/2}\cosh(\gamma B_t)$ est une martingale. 

\noindent
En d\'eduire une autre preuve du fait que
$\tau=\inf\setsuch{t\geqs0}{B_t\not\in(-a,a)}$ satisfait 
\[
\expec{\e^{-\lambda\tau}} = \frac{1}{\cosh(a\sqrt{2\lambda})}\;.
\]
\end{exercice}

\goodbreak

\begin{exercice}
\label{exo_mb2} 
\hfill 
\begin{enum}
\item	Montrer que si $X_t=f(B_t,t,\gamma)$ est une martingale, alors
(sous des conditions de r\'egularit\'e qu'on pr\'ecisera)
$\dtot{}{\gamma}f(B_t,t,\gamma)$ est \'egalement une martingale.
\item	Soit $f(x,t,\gamma)=\e^{\gamma x-\gamma^2 t/2}$. Calculer le
d\'eveloppement limit\'e de $f$ en $\gamma=0$ jusqu'\`a l'ordre $\gamma^4$. En
d\'eduire deux nouvelles martingales d\'eriv\'ees du mouvement Brownien. 
\item	Soit $\tau=\inf\setsuch{t\geqs0}{B_t\not\in(-a,a)}$. Calculer
$\expec{\tau^2}$ \`a l'aide du r\'esultat pr\'ec\'edent. 
\end{enum}
\end{exercice}

\goodbreak

\begin{exercice}
\label{exo_mb3} 
Pour $a,b>0$, on pose $X_t=B_t-bt$ et $\tau=\inf\setsuch{t\geqs0}{X_t=a}$. 

\begin{enum}
\item	En utilisant la martingale $\e^{\gamma B_t-\gamma^2 t^2/2}$, o\`u
$\gamma$ est la solution positive de $\gamma^2-2b\gamma-2\lambda=0$, calculer
\[
\expec{\e^{-\lambda\tau}\indexfct{\tau<\infty}}\;. 
\]
\item	Particulariser au cas $b=0$.
\item	Soit $b>0$. En choisissant une valeur convenable de $\lambda$,
d\'eterminer $\prob{\tau<\infty}$.
\end{enum}
\end{exercice}

\goodbreak

\begin{exercice}
\label{exo_mb4} 

Le but de ce probl\`eme est de montrer, de trois mani\`eres diff\'erentes, que
la premi\`ere intersection d'un mouvement Brownien bidimensionnel avec une 
droite suit une loi de Cauchy.

Soient $\set{B^{(1)}_t}_{t\geqs0}$ et $\set{B^{(2)}_t}_{t\geqs0}$ deux
mouvements Browniens standard ind\'ependants. On d\'enote par $\cF_t$ la
filtration engendr\'ee par $(B^{(1)}_t,B^{(2)}_t)$. Soit $\set{X_t}_{t\geqs0}$
le processus \`a valeurs dans $\C$ donn\'e par 
\[
X_t = \icx + B^{(1)}_t + \icx B^{(2)}_t\;.
\]
Soit 
\[
\tau = \inf\setsuch{t>0}{X_t\in\R} 
= \inf\setsuch{t>0}{1 + B^{(2)}_t = 0} \;.
\]

\begin{enumerate}
\item 	Approche martingale. 
\begin{enum}
\item 	Montrer que $Y_t=\e^{\icx\lambda X_t}$ est une martingale pour tout
$\lambda\geqs0$. 

\item 	Calculer $\expec{\e^{\icx
\lambda X_\tau}}$ pour tout $\lambda\geqs0$, puis pour tout $\lambda\in\R$. 

\item 	En d\'eduire la loi de $X_\tau$.
\end{enum}

\item	Approche principe de r\'eflexion.
\begin{enum}
\item 	D\'eterminer la densit\'e de $\smash{B^{(1)}_t}$. 
\item	En utilisant le principe de r\'eflexion, calculer 
$\prob{\tau < t}$ et en d\'eduire la densit\'e de $\tau$.
La propri\'et\'e d'\'echelle du Brownien permet de simplifier les calculs.
\item 	En d\'eduire la loi de $X_\tau = B^{(1)}_\tau$. 
\end{enum}

\item 	Approche invariance conforme. 

On rappelle qu'une application $f:\C\to\C$ est conforme si elle pr\'eserve les
angles. On admettra le r\'esultat suivant: le mouvement brownien bidimensionnel
est invariant conforme, c'est-\`a-dire que son image sous une application
conforme est encore un mouvement brownien bidimensionnel (\`a un changement de
temps pr\`es).  

\begin{enum}
\item	Soit l'application conforme $f:\C\to\C$ d\'efinie par 
\[
f(z) = \frac{z-\icx}{z+\icx}\;.
\]
V\'erifier que c'est une bijection du demi-plan sup\'erieur 
$\fH=\setsuch{z\in\C}{\im z >0}$ dans le disque unit\'e 
$\fD=\setsuch{z\in\C}{\abs{z}<1}$ qui envoie $\icx$ sur $0$. 

\item 	Par un argument de sym\'etrie, 
donner la loi du lieu de sortie de $\fD$ du mouvement Brownien issu de
$0$. En d\'eduire la loi de $X_\tau$. 
\end{enum}
\end{enumerate}
\end{exercice}


\chapter{L'int\'egrale d'It\^o}
\label{chap_is}

Le but de l'int\'egrale d'It\^o est de donner un sens \`a des
\'equations de la forme 
\begin{equation}
\label{is1}
\dtot Xt = f(X) + g(X) \dtot{B_t}{t}\;.
\end{equation}
Par exemple, si $f\equiv0$ et $g\equiv1$, on devrait retrouver
$X_t=X_0+B_t$, d\'ecrivant le mouvement suramorti d'une particule
Brownienne. 

Le probl\`eme est que, comme nous l'avons mentionn\'e, les trajectoires du
processus de Wiener ne sont pas diff\'erentiables, ni m\^eme \`a variations
born\'ees. 

Comme dans le cas des \'equations diff\'erentielles ordinaires, on
interpr\`ete une solution de l'\'equation diff\'erentielle~\eqref{is1} comme
une solution de l'\'equation int\'egrale 
\begin{equation}
\label{is2}
X_t = X_0 + \int_0^t f(X_s)\6s + \int_0^t g(X_s) \6B_s\;. 
\end{equation}
C'est \`a la deuxi\`eme int\'egrale qu'il s'agit de donner un sens
math\'ematique. Si $s\mapsto g(X_s)$ \'etait diff\'erentiable, on pourrait
le faire \`a l'aide d'une int\'egration par parties, mais ce n'est en
g\'en\'eral pas le cas. It\^o a donn\'e une autre d\'efinition de
l'int\'egrale stochastique, qui s'applique \`a une classe beaucoup plus
vaste d'int\'egrants (et donne le m\^eme r\'esultat que l'int\'egration par
parties dans le cas diff\'erentiable). 


\section{D\'efinition}
\label{sec_ito}

Notre but est de d\'efinir l'int\'egrale stochastique 
\begin{equation}
\label{ito1}
\int_0^t X_s \6B_s
\end{equation}
simultan\'ement pour tous les $t\in[0,T]$, o\`u $X_t$ est lui-m\^eme un
processus stochastique. Plus pr\'ecis\'ement, nous supposerons que $X_t$ est
une {\em fonctionnelle Brownienne non-anticipative}, c'est-\`a-dire
($\set{\cF_t}_{t\geqs0}$ d\'esignant la filtration canonique engendr\'ee par
$\set{B_t}_{t\geqs0}$)
\begin{enum}
\item	$X$ est mesurable par rapport \`a $\cF$;
\item	$X_t$ est adapt\'e \`a $\cF_t$, c'est-\`a-dire mesurable par rapport \`a
$\cF_t$ pour tout $t\in[0,T]$. 
\end{enum}
Ceci revient \`a exiger que $X_t$ ne d\'epende que de l'histoire du
processus de Wiener jusqu'au temps $t$, ce qui est raisonnable au vu
de~\eqref{is2}. En outre, nous allons supposer que 
\begin{equation}
\label{ito2}
\biggprob{\int_0^T X_t^2 \6t < \infty} = 1\;. 
\end{equation}

\begin{remark}
\label{rem_ito1}
On peut admettre que $X_t$ d\'epende de variables al\'eatoires
suppl\'emen\-taires, ind\'ependantes de $B_t$; par exemple, la condition
initiale peut \^etre al\'eatoire. Il convient alors d'\'etendre les
tribus $\cF$ et $\cF_t$ dans la d\'efinition ci-dessus \`a des
tribus plus grandes $\cA$ et $\cA_t$, o\`u $\cA_t$ ne doit pas
d\'ependre de la tribu engendr\'ee par $\set{B_{t+s}-B_t}_{s\geqs0}$. 
\end{remark}

Dans un premier temps, nous allons d\'efinir l'int\'egrale stochastique pour
un int\'egrant {\em simple}. 

\begin{definition}
\label{def_ito1}
Une fonctionnelle Brownienne non-anticipative $\set{e_t}_{t\in[0,T]}$ est
dite {\em simple} ou {\em \'el\'ementaire} s'il existe une partition
$0=t_0<t_1<\dots<t_N=T$ de $[0,T]$ telle que 
\begin{equation}
\label{ito3}
e_t = \sum_{k=1}^N e_{t_{k-1}} \indicator{[t_{k-1},t_k)}(t)\;. 
\end{equation}
\end{definition}

Pour une telle fonctionnelle, nous d\'efinissons l'int\'egrale stochastique
par 
\begin{equation}
\label{ito4}
\int_0^t e_s \6B_s = \sum_{k=1}^m e_{t_{k-1}} \bigbrak{B_{t_k}-B_{t_{k-1}}}
+ e_{t_m} \bigbrak{B_{t}-B_{t_{m}}}\;, 
\end{equation}
o\`u $m$ est tel que $t\in[t_m,t_{m+1})$. 

Il est ais\'e de v\'erifier les propri\'et\'es suivantes:

\begin{enum}
\item	Pour deux fonctionnelles simples $e^{(1)}$ et $e^{(2)}$, 
\begin{equation}
\label{ito5}
\int_0^t \bigpar{e^{(1)}_s + e^{(2)}_s} \6B_s = 
\int_0^t e^{(1)}_s \6B_s + \int_0^t e^{(2)}_s \6B_s\;.
\end{equation}

\item	Pour toute constante $c$, 
\begin{equation}
\label{ito6}
\int_0^t \bigpar{ce_s} \6B_s = c\int_0^t e_s \6B_s\;.
\end{equation}

\item	L'int\'egrale~\eqref{ito4} est une fonction continue de $t$.

\item	Si $\int_0^t \expec{\abs{e_s}} \6s<\infty$, alors 
\begin{equation}
\label{ito7A}
\biggexpec{\int_0^t e_s \6B_s} = 0\;.
\end{equation}
\begin{proof}
Posons $t_{m+1}=t$. On a 
\begin{align}
\nonumber
\biggexpec{\int_0^t e_s \6B_s} 
&= \biggexpec{\sum_{k=1}^{m+1} e_{t_{k-1}}
\bigpar{B_{t_k}-B_{t_{k-1}}}} \\
\nonumber
&= \sum_{k=1}^{m+1} \bigexpec{e_{t_{k-1}}} 
\underbrace{\bigexpec{B_{t_k}-B_{t_{k-1}}}}_{0} 
= 0\;,
\label{ito8A}
\end{align}
en vertu des propri\'et\'es des incr\'ements ind\'ependants et gaussiens. 
\end{proof}

\item	Si $\int_0^t \expec{e_s^2} \6s<\infty$, on a l'{\em isom\'etrie d'It\^o}
\begin{equation}
\label{ito7}
\biggexpec{\biggpar{\int_0^t e_s \6B_s}^2} = 
\int_0^t \expec{e_s^2} \6s\;. 
\end{equation}
\begin{proof}
Posons $t_{m+1}=t$. On a 
\begin{align}
\nonumber
\biggexpec{\biggpar{\int_0^t e_s \6B_s}^2} 
&= \biggexpec{\sum_{k,l=1}^{m+1} e_{t_{k-1}}e_{t_{l-1}}
\bigpar{B_{t_k}-B_{t_{k-1}}}\bigpar{B_{t_l}-B_{t_{l-1}}}} \\
\nonumber
&= \sum_{k=1}^{m+1} \bigexpec{e_{t_{k-1}}^2} 
\underbrace{\bigexpec{\bigpar{B_{t_k}-B_{t_{k-1}}}^2}}_{t_k-t_{k-1}} \\
&= \int_0^t \expec{e_s^2} \6s\;.
\label{ito8}
\end{align}
Nous avons utilis\'e la propri\'et\'e des incr\'ements ind\'ependants afin
d'\'eliminer les termes $k\neq l$ de la double somme, et le fait que $e_s$
est non-anticipative.  
\end{proof}
\end{enum}

L'id\'ee d'It\^o pour d\'efinir l'int\'egrale stochastique d'une
fonctionnelle non-anticipative g\'en\'erale $X$ est de trouver une suite de
fonctionnelles simples $e^{(n)}$ approchant $X$ dans $L^2(\fP)$,
c'est-\`a-dire 
\begin{equation}
\label{ito9}
\lim_{n\to\infty} \int_0^T \bigexpec{\bigpar{X_s-e^{(n)}_s}^2} \6s = 0\;.
\end{equation}
L'isom\'etrie~\eqref{ito7} nous permet alors d'affirmer que la limite
suivante existe dans $L^2(\fP)$:
\begin{equation}
\label{ito10}
\lim_{n\to\infty} \int_0^t  e^{(n)}_s \6B_s \bydef 
\int_0^t  X_s \6B_s\;. 
\end{equation}
C'est par d\'efinition l'int\'egrale d'It\^o de $X_s$. 

Nous allons maintenant prouver que cette construction est bien possible,
ind\'ependante de la suite des $e^{(n)}$, et que l'int\'e\-grale
r\'esultante est une fonction continue de $t$. Nous commen\c cons par
\'enoncer deux lemmes pr\'eparatoires. Dans ce qui suit, nous utilisons la
notation abr\'eg\'ee
\begin{equation}
 \label{ito10B}
\Bigprob{A_n,n\to\infty} = 1
\quad
\Leftrightarrow
\quad
\biggprob{\sum_{n=1}^\infty \indicator{A_n^c}<\infty}=1
\quad
\Leftrightarrow
\quad
\biggprob{\limsup_n A_n^c}=0\;. 
\end{equation} 
Autrement dit, avec probabilit\'e $1$, une infinit\'e de $A_n$ sont
r\'ealis\'es, c'est-\`a-dire que pour presque tout $\omega\in\Omega$, il existe
$n_0(\omega)<\infty$ tel que $A_n\ni\omega$ pour tout $n\geqs n_0(\omega)$. Le
lemme de Borel--Cantelli affirme que c'est le cas si la somme des
$\prob{A_n^c}$ converge. 

\begin{lemma}
\label{lem_Ito1}
Pour toute fonctionnelle non-anticipative $X$ satisfaisant~\eqref{ito2}, il
existe une suite $\set{e^{(n)}}_{n\geqs1}$ de fonctionnelles simples telles
que 
\begin{equation}
\label{ito11}
\biggprob{\int_0^T \bigpar{X_t-e^{(n)}_t}^2 \6t \leqs 2^{-n}, n\to\infty} =
1\;. 
\end{equation}
\end{lemma}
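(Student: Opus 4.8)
Le plan est de construire la suite d'approximations en trois étapes, selon la stratégie classique de réduction progressive : d'abord les fonctionnelles bornées et continues, puis les fonctionnelles bornées, puis le cas général. La difficulté essentielle sera de passer de la convergence dans $L^2(\fP)$ ordinaire — qui est ce que donnent directement ces approximations sous hypothèse d'intégrabilité — à l'énoncé presque sûr~\eqref{ito11}, pour une fonctionnelle $X$ qui ne satisfait que la condition faible~\eqref{ito2}.

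Premièrement, je réduirais au cas d'une fonctionnelle bornée. Pour $X$ satisfaisant seulement~\eqref{ito2}, je pose $X^{(M)}_t = X_t \indexfct{\abs{X_t}\leqs M}$ ; comme $\int_0^T X_t^2\6t<\infty$ presque sûrement, le théorème de convergence dominée appliqué trajectoire par trajectoire donne $\int_0^T (X_t-X^{(M)}_t)^2\6t\to0$ presque sûrement quand $M\to\infty$. Deuxièmement, pour $X$ bornée par $M$, je régularise en temps : je pose
\begin{equation}
\label{itoreg}
X^{(M,h)}_t = \frac1h \int_{(t-h)\vee0}^t X_s\6s\;,
\end{equation}
qui est non-anticipative, bornée par $M$, et à trajectoires continues ; le théorème de différentiation de Lebesgue (pour presque tout $t$, trajectoire par trajectoire) joint à la convergence dominée sur $[0,T]$ donne $\int_0^T(X_t-X^{(M,h)}_t)^2\6t\to0$ presque sûrement quand $h\to0$. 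Troisièmement, une fonctionnelle non-anticipative bornée à trajectoires continues $Y$ s'approche par les sommes de Riemann simples $e^{(n)}_t = \sum_{k} Y_{t^{(n)}_{k-1}}\indicator{[t^{(n)}_{k-1},t^{(n)}_k)}(t)$ avec pas $T/n$ ; la continuité uniforme de chaque trajectoire sur le compact $[0,T]$ et la borne uniforme $2M$ donnent encore $\int_0^T(Y_t-e^{(n)}_t)^2\6t\to0$ presque sûrement.

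En combinant ces trois étapes par un argument diagonal, j'obtiens une suite de fonctionnelles simples $\set{f^{(m)}}_{m\geqs1}$ telle que $Z_m \defby \int_0^T (X_t-f^{(m)}_t)^2\6t \to 0$ presque sûrement. Il reste à extraire une sous-suite réalisant la borne quantitative $2^{-n}$ presque sûrement, et c'est le point le plus délicat : la convergence $Z_m\to0$ p.s. ne fournit pas d'emblée un rang universel. L'astuce est que $Z_m\to0$ p.s. entraîne $Z_m\to0$ en probabilité, donc pour chaque $n$ on peut choisir $m_n$ (croissant) tel que $\prob{Z_{m_n}>2^{-n}}\leqs 2^{-n}$ ; alors $\sum_n \prob{Z_{m_n}>2^{-n}}<\infty$, et le lemme de Borel--Cantelli — dans le sens rappelé en~\eqref{ito10B} — assure que presque sûrement $Z_{m_n}\leqs 2^{-n}$ pour tout $n$ assez grand. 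Quitte à renuméroter (et à insérer éventuellement des répétitions pour les petits indices), la suite $e^{(n)} = f^{(m_n)}$ satisfait~\eqref{ito11}. Le seul soin technique supplémentaire concerne l'étape de régularisation~\eqref{itoreg} : il faut vérifier que $X^{(M,h)}$ reste bien non-anticipative (ce qui est clair puisque l'intégrale ne porte que sur des temps $\leqs t$) et mesurable conjointement en $(\omega,t)$, ce qui découle de la mesurabilité de $X$ par rapport à $\cF$ et d'un argument de Fubini.
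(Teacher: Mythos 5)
Votre d\'emonstration est correcte et suit pour l'essentiel la m\^eme strat\'egie que le polycopi\'e : approcher $X$ par des moyennes temporelles adapt\'ees \'echantillonn\'ees sur une partition (le cours condense vos \'etapes de troncature, r\'egularisation et \'echantillonnage en une seule famille \`a deux param\`etres $e^{(m,k)}_t=2^k\int_{(2^{-m}\intpart{2^mt}-2^{-k})\vee0}^{2^{-m}\intpart{2^mt}}X_s\,\6s$, sans troncature, la convergence trajectorielle dans $L^2[0,T]$ ne n\'ecessitant que $\int_0^T X_t^2\,\6t<\infty$), puis choisir une sous-suite telle que $\prob{\cdot>2^{-n}}\leqs 2^{-n}$ et conclure par Borel--Cantelli. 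Votre extraction finale via la convergence en probabilit\'e est exactement celle du texte ; notez seulement que l'argument diagonal se formule plus proprement en probabilit\'e (m\'etrisable) qu'au sens presque s\^ur, ce qui suffit pour la suite de votre preuve.
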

\begin{proof}
Consid\'erons les fonctionnelles simples 
\begin{equation}
\label{ito12}
e^{(m,k)}_t =
2^k\int_{(2^{-m}\intpart{2^mt}-2^{-k})\vee0}^{2^{-m}\intpart{2^mt}} X_s\6s\;. 
\end{equation}
Faisant tendre d'abord $m$, puis $k$ vers l'infini, on s'aper\c coit que
$\int_0^T (X_t-e^{(m,k)}_t)^2\6t \to 0$. On peut donc trouver des suites
$m_n$ et $k_n$ telles que $e^{(n)}=e^{(m_n,k_n)}$ satisfasse
\begin{equation}
\label{into13}
\biggprob{\int_0^T (X_t-e^{(n)}_t)^2 \6t > 2^{-n}} \leqs 2^{-n}\;. 
\end{equation}
La relation \eqref{ito11} suit alors du lemme de Borel--Cantelli. 
\end{proof}

\begin{lemma}
\label{lem_Ito2}
Pour une suite $\set{f^{(n)}}_{n\geqs1}$ de fonctionnelles non-anticipatives
simples satisfaisant $\prob{\int_0^T (f^{(n)}_t)^2 \6t \leqs 2^{-n},
n\to\infty} = 1$ et tout $\theta>1$, 
\begin{equation}
\label{ito14}
\biggprob{\sup_{0\leqs t\leqs T} \biggabs{\int_0^t f^{(n)}_s \6B_s} < \theta
\sqrt{\frac{\log n}{2^{n-1}}}, n\to\infty} = 1\;. 
\end{equation}
\end{lemma}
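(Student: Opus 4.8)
The plan is to apply Doob's maximal inequality (Théorème \ref{thm_Doob_cont}) to the continuous martingale $t\mapsto M^{(n)}_t := \int_0^t f^{(n)}_s\,\6B_s$, and then invoke Borel--Cantelli. First I would observe that since each $f^{(n)}$ is a simple non-anticipative functional, the stochastic integral $M^{(n)}_t$ is well-defined, has continuous trajectories, is a martingale, and satisfies the Itô isometry $\expec{(M^{(n)}_t)^2} = \int_0^t \expec{(f^{(n)}_s)^2}\6s$ — properties 3 and 5 established above for simple integrands. In particular $M^{(n)}_t$ is a continuous $L^2$-martingale, so by Jensen (Corollaire \ref{cor_mart}) $(M^{(n)}_t)^2$ is a continuous submartingale, and Doob's inequality in the continuous form gives, for any $\lambda>0$,
\begin{equation*}
\biggprob{\sup_{0\leqs t\leqs T} \bigabs{M^{(n)}_t} \geqs \lambda}
= \biggprob{\sup_{0\leqs t\leqs T} (M^{(n)}_t)^2 \geqs \lambda^2}
\leqs \frac{1}{\lambda^2}\bigexpec{(M^{(n)}_T)^2}
= \frac{1}{\lambda^2}\int_0^T \expec{(f^{(n)}_s)^2}\6s\;.
\end{equation*}

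The delicate point is that the hypothesis on $f^{(n)}$ is only an almost-sure statement, $\prob{\int_0^T (f^{(n)}_t)^2\6t \leqs 2^{-n},\ n\to\infty}=1$, not a bound on $\expec{\int_0^T (f^{(n)}_t)^2\6t}$, so I cannot directly control the right-hand side above. The standard device to get around this is a truncation/stopping argument: for each $n$ let $\tau_n = \inf\setsuch{t\geqs0}{\int_0^t (f^{(n)}_s)^2\6s > 2^{-n}} \wedge T$, a stopping time, and consider the stopped integrand $\tilde f^{(n)}_s = f^{(n)}_s \indexfct{s\leqs \tau_n}$, which is still simple, non-anticipative, and now satisfies $\int_0^T (\tilde f^{(n)}_s)^2\6s \leqs 2^{-n}$ surely, hence $\expec{\int_0^T (\tilde f^{(n)}_s)^2\6s}\leqs 2^{-n}$. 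The stopped stochastic integral $\tilde M^{(n)}_t = \int_0^t \tilde f^{(n)}_s\6B_s$ coincides with $M^{(n)}_t$ on $\set{\tau_n = T}$, and the event $\set{\int_0^T (f^{(n)}_t)^2\6t \leqs 2^{-n}}$ is contained in $\set{\tau_n = T}$. Applying Doob's inequality to $\tilde M^{(n)}$ with $\lambda_n = \theta\sqrt{\log n / 2^{n-1}}$ yields
\begin{equation*}
\biggprob{\sup_{0\leqs t\leqs T}\bigabs{\tilde M^{(n)}_t} \geqs \lambda_n}
\leqs \frac{1}{\lambda_n^2}\cdot 2^{-n}
= \frac{2^{n-1}}{\theta^2 \log n}\cdot\frac{2^{-n}}{1}
= \frac{1}{2\theta^2\log n}\;.
\end{equation*}

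This last bound is $\Order{1/\log n}$, which is \emph{not} summable, so a plain Borel--Cantelli along all $n$ fails — and this is the real obstacle. The fix is to exploit the geometric factor $2^{-n}$ more carefully: choose the threshold $\lambda_n$ so that $\lambda_n^2/2^{-n}$ grows fast enough. With $\lambda_n = \theta\sqrt{\log n/2^{n-1}}$ one gets $\lambda_n^2 \cdot 2^{n}/2 = \theta^2\log n$, giving probability $\leqs (2\theta^2\log n)^{-1}$; to make this summable I should instead observe that the correct reading pairs the $2^{-n}$ from the integrand bound with a threshold of order $\sqrt{n\,2^{-n}}$ rather than $\sqrt{(\log n)2^{-n}}$ — but the statement as written has $\log n$, so the resummation must come from noticing that $\frac{2^{-n}}{\lambda_n^2} = \frac{1}{2\theta^2 \log n}$ and that although $\sum 1/\log n$ diverges, one has instead the sharper Doob bound $\prob{\cdot\geqs\lambda}\leqs \lambda^{-2}\expec{(\tilde M^{(n)}_T)^2}$ where actually $\expec{(\tilde M^{(n)}_T)^2} = \expec{\int_0^T(\tilde f^{(n)})^2} \leqs \prob{\tau_n<T}\cdot(\text{bound}) + 2^{-n}\prob{\tau_n=T}$; combined with $\prob{\tau_n<T}\to 0$ geometrically (from the hypothesis via Borel--Cantelli on the complementary events $\set{\int_0^T(f^{(n)})^2 > 2^{-n}}$, whose probabilities sum), one obtains $\expec{(\tilde M^{(n)}_T)^2} = \order{2^{-n}}$ or at least a summable series $\sum_n 2^{-n}/\lambda_n^2 \cdot (\text{correction}) < \infty$. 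The cleanest route, which I would adopt, is: the hypothesis $\prob{A_n, n\to\infty}=1$ with $A_n = \set{\int_0^T(f^{(n)})^2\6t\leqs 2^{-n}}$ means $\sum_n \prob{A_n^c}<\infty$ is \emph{not} assumed directly, but $\prob{\limsup A_n^c}=0$; so for a.e.\ $\omega$ there is $n_0(\omega)$ with $\int_0^T(f^{(n)}_t(\omega))^2\6t\leqs 2^{-n}$ for all $n\geqs n_0$. Then decompose $\prob{\sup_t|M^{(n)}_t|\geqs\lambda_n} \leqs \prob{A_n^c} + \prob{\set{\sup_t|M^{(n)}_t|\geqs\lambda_n}\cap A_n}$ and bound the second term using Doob on the stopped martingale by $2^{-n}/\lambda_n^2 = (2\theta^2\log n)^{-1}$; since this still fails to be summable, the honest conclusion is that one must apply Borel--Cantelli not term-by-term in $n$ but after the sharpening $\theta>1$ permits: replace $\log n$ by noting $2^{-n}/\lambda_n^2 \to 0$ and that along a subsequence, or more precisely use that $\prob{\sup_t|M^{(n)}_t| \geqs \theta\sqrt{(\log n)2^{-n+1}}}$ must be compared to $\sum_n n^{-\theta'}$ for some $\theta'>1$ by a more refined second-moment-free estimate (e.g.\ an exponential Doob bound on $\e^{c M^{(n)}_t}$ as in Corollaire \ref{cor_mbm2}), which does give a summable series for $\theta>1$. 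I would therefore, as the main step, apply the \emph{exponential} maximal inequality of Corollaire \ref{cor_mbm2} (or its analogue) to the stopped martingale, whose quadratic variation is bounded by $2^{-n}$, obtaining $\prob{\sup_{t}\tilde M^{(n)}_t > \lambda_n} \leqs \exp(-\lambda_n^2/(2\cdot2^{-n})) = \exp(-\theta^2\log n) = n^{-\theta^2}$, and similarly for $-\tilde M^{(n)}$; since $\theta>1$ implies $\theta^2>1$, the series $\sum_n 2 n^{-\theta^2}$ converges, and Borel--Cantelli (in the form \eqref{ito10B}, together with $\prob{A_n^c}$ controlled by the hypothesis) yields \eqref{ito14}. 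The hard part is thus recognizing that the polynomial Doob bound is too weak and that one needs the Gaussian-type exponential bound, valid here precisely because the integrand's pathwise quadratic variation is deterministically controlled by $2^{-n}$ after stopping.
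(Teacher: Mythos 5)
Your final argument is correct and is essentially the paper's own proof: the paper likewise relies on the exponential martingale $\exp\bigset{\gamma\int_0^t f^{(n)}_s\,\6B_s-\tfrac{\gamma^2}{2}\int_0^t (f^{(n)}_s)^2\,\6s}$ (its martingale property for simple integrands being checked by recursion from Proposition~\ref{prop_mD}), applies Doob's inequality to it, chooses $\gamma=\sqrt{2^{n+1}\log n}$, absorbs the compensator pathwise on the almost-sure event $\int_0^T(f^{(n)}_t)^2\,\6t\leqs 2^{-n}$, and concludes by Borel--Cantelli from the summable bound $n^{-\theta}$. Your only deviations are cosmetic --- stopping at the time the quadratic variation reaches $2^{-n}$ so as to apply a Bernstein-type bound to the stopped martingale (note the stopped integrand is no longer \emph{simple} in the sense of Definition~\ref{def_ito1}, but the stopped-martingale argument goes through), which yields $n^{-\theta^2}$ in place of $n^{-\theta}$, together with the explicit decomposition over $A_n$ and $A_n^c$ --- while the long detour through the $L^2$ Doob bound is, as you yourself recognized, a dead end.
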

\begin{proof}
Pour tout $\gamma\in\R$, le processus stochastique 
\begin{equation}
\label{ito15}
M^{(n)}_t = \exp\biggset{\gamma\int_0^t f^{(n)}_s \6B_s - \frac{\gamma^2}2
\int_0^t
(f^{(n)}_s)^2 \6s}
\end{equation}
est une martingale par rapport \`a $\set{B_t}_{t\geqs0}$. En effet, nous
avons d\'emontr\'e cette propri\'et\'e pour $f^{(n)}\equiv1$ dans la 
proposition~\ref{prop_mD}. La m\^eme d\'emonstration montre que si 
$f^{(n)}\equiv c$ o\`u $c$ est une variable al\'eatoire mesurable par rapport
\`a $\cF_s$, alors $\econd{M_t}{\cF_s} = M_s$. Le cas d'un $f^{(n)}$ simple
arbitraire est alors trait\'e par r\'ecurrence. 

L'in\'egalit\'e de Doob implique
\begin{equation}
\label{into16}
\biggprob{\sup_{0\leqs t\leqs T} \biggpar{\int_0^t f^{(n)}_s \6B_s -
\frac\gamma2 \int_0^t (f^{(n)}_s)^2 \6s} > L} \leqs \e^{-\gamma L}\;.  
\end{equation}
Posons alors $\gamma=\sqrt{2^{n+1}\log n}$ et $L =
\theta\sqrt{2^{-(n+1)}\log n}$. Utilisant l'hypoth\`ese sur les $f^{(n)}$,
nous obtenons 
\begin{equation}
\label{ito17}
\biggprob{\sup_{0\leqs t\leqs T} \biggpar{\int_0^t f^{(n)}_s \6B_s} >
(1+\theta) \sqrt{2^{-(n+1)}\log n}} \leqs \e^{-\theta\log n} = n^{-\theta}\;. 
\end{equation}
Le lemme de Borel--Cantelli nous permet alors de conclure. 
\end{proof}

\begin{theorem}
\label{thm_ito}
La limite~\eqref{ito10} existe, est ind\'ependante de la suite des
$\set{e^{(n)}}_{n\geqs1}$ convergeant vers $X$, et est une fonction continue
de $t$. 
\end{theorem}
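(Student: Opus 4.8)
The plan is to derive everything from the two preparatory lemmas, the only genuine work being a telescoping argument in the Banach space $C([0,T])$ of continuous functions on $[0,T]$ equipped with the supremum norm. First I would fix once and for all a sequence $\set{e^{(n)}}_{n\geqs1}$ of simple functionals supplied by Lemma~\ref{lem_Ito1}, so that, almost surely, $\int_0^T (X_t-e^{(n)}_t)^2\6t\leqs 2^{-n}$ for all $n$ large enough. Setting $f^{(n)}=e^{(n+1)}-e^{(n)}$, the inequality $(a-b)^2\leqs 2a^2+2b^2$ gives $\int_0^T (f^{(n)}_t)^2\6t\leqs 3\cdot2^{-n}$ eventually, and replacing $f^{(n)}$ by $f^{(n+2)}$ absorbs the constant and restores the exact bound $2^{-n}$ demanded by Lemma~\ref{lem_Ito2}. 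Fixing any $\theta>1$, that lemma then yields, almost surely and for all large $n$,
\begin{equation*}
\sup_{0\leqs t\leqs T}\biggabs{\int_0^t e^{(n+1)}_s\6B_s-\int_0^t e^{(n)}_s\6B_s}
= \sup_{0\leqs t\leqs T}\biggabs{\int_0^t f^{(n)}_s\6B_s}
< \theta\sqrt{\frac{\log n}{2^{n-1}}}\;.
\end{equation*}

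Since $\sum_n\sqrt{(\log n)/2^{n-1}}<\infty$, the series $\sum_n\bigpar{\int_0^\cdot e^{(n+1)}_s\6B_s-\int_0^\cdot e^{(n)}_s\6B_s}$ converges absolutely and uniformly on $[0,T]$ with probability $1$; equivalently, the sequence of functions $t\mapsto\int_0^t e^{(n)}_s\6B_s$, each of which is continuous in $t$ by the elementary properties of the simple integral established above, is almost surely a Cauchy sequence for the supremum norm. As $C([0,T])$ is complete, the uniform limit exists and is itself continuous; it is by definition $\int_0^t X_s\6B_s$. This settles at once the existence of the limit~\eqref{ito10} and the continuity of $t\mapsto\int_0^t X_s\6B_s$.

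For the independence of the construction from the approximating sequence, suppose $\set{e^{(n)}}$ and $\set{\tilde e^{(n)}}$ are two sequences of simple functionals, each approximating $X$ in the sense of~\eqref{ito9} (from which, passing to a subsequence if necessary, one recovers the a.s.\ decay of Lemma~\ref{lem_Ito1}). I would interleave them into a single sequence $h^{(1)}=e^{(1)},\,h^{(2)}=\tilde e^{(1)},\,h^{(3)}=e^{(2)},\,h^{(4)}=\tilde e^{(2)},\dots$; this interleaved sequence still satisfies $\int_0^T(X_t-h^{(m)}_t)^2\6t\to0$, hence, up to a further subsequence, falls under the argument of the previous paragraph, so $\int_0^t h^{(m)}_s\6B_s$ converges uniformly a.s. Its even and odd subsequences are $\int_0^t\tilde e^{(n)}_s\6B_s$ and $\int_0^t e^{(n)}_s\6B_s$, which must therefore share the common limit; the limit does not depend on the chosen approximating sequence.

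The substantive estimates — the Borel--Cantelli bounds controlling $\int_0^T(X_t-e^{(n)}_t)^2\6t$ and $\sup_t\bigabs{\int_0^t f^{(n)}_s\6B_s}$ — are already packaged into Lemmas~\ref{lem_Ito1} and~\ref{lem_Ito2}, so there is no real obstacle left; the only points requiring care are bookkeeping: matching the ``for $n$ large enough'' bounds to the precise $2^{-n}$ decay of Lemma~\ref{lem_Ito2}, and checking that interleaving two admissible sequences yields another admissible one. Finally, if one additionally assumes $\int_0^T\expec{X_s^2}\6s<\infty$, the It\^o isometry~\eqref{ito7} applied to the simple functional $e^{(n)}-e^{(m)}$ shows the same sequence is Cauchy in $L^2(\fP)$, and the $L^2$-limit coincides with the almost sure one by uniqueness of limits in probability.
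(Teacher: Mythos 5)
Your argument is essentially the paper's own proof: pass to a subsequence furnished by Lemma~\ref{lem_Ito1}, control the telescoped differences via Lemma~\ref{lem_Ito2}, sum the bounds $\theta\sqrt{(\log n)/2^{n-1}}$ to obtain an almost surely Cauchy sequence for the supremum norm on $[0,T]$, and conclude uniform convergence and continuity of $t\mapsto\int_0^t X_s\,\6B_s$; your index shift to absorb the factor coming from $(a-b)^2\leqs 2a^2+2b^2$ is exactly the bookkeeping the paper leaves implicit in its unspecified constant. The one point to watch is the uniqueness step (which the paper instead disposes of via the It\^o isometry remark preceding the theorem): when you pass to a further subsequence of the interleaved sequence you must retain infinitely many terms of each parity, since otherwise the a.s.\ uniform convergence of that subsequence says nothing about one of the two original sequences of integrals.
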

\begin{proof}
Quitte \`a passer \`a une sous-suite, nous pouvons choisir les $e^{(n)}$ de
mani\`ere \`a satisfaire~\eqref{ito11}. Mais ceci implique aussi que 
\begin{equation}
\label{ito18}
\biggprob{\int_0^T \bigpar{e^{(n)}_t-e^{(n-1)}_t}^2 \6t \leqs \const
2^{-n}, n\to\infty} = 1\;.
\end{equation}
Le lemme~\ref{lem_Ito2} montre alors que  
\begin{equation}
\label{ito19}
\biggprob{\sup_{0\leqs t\leqs T} \biggabs{\int_0^t
\bigpar{e^{(n)}_s-e^{(n-1)}_s} \6B_s} < \const \theta \sqrt{\frac{\log
n}{2^{n-1}}}, n\to\infty} = 1\;.
\end{equation}
Ainsi la suite des $\int_0^t e^{(n)}_s \6B_s$ est de Cauchy presque
s\^urement, et dans ce cas elle converge uniform\'ement. 
\end{proof}


\section{Propri\'et\'es \'el\'ementaires}
\label{sec_elito}

Les propri\'et\'es suivantes sont prouv\'ees ais\'ement pour des processus
$X$ et $Y$ non-anticipatifs satisfaisant la condition
d'int\'egrabilit\'e~\eqref{ito2}.  

\begin{enum}
\item	{\bf Lin\'earit\'e:}
\begin{equation}
\label{elito1}
\int_0^t (X_s+Y_s)\6B_s = \int_0^t X_s \6B_s + \int_0^t Y_s \6B_s
\end{equation}
et
\begin{equation}
\label{elito2}
\int_0^t (cX_s)\6B_s = c\int_0^t X_s \6B_s\;.
\end{equation}

\item	{\bf Additivit\'e:} Pour $0\leqs s<u<t\leqs T$, 
\begin{equation}
\label{elito3a}
\int_s^t X_v\6B_v = \int_s^u X_v \6B_v + \int_u^t X_v \6B_v\;.
\end{equation}

\item	Pour un temps d'arr\^et $\tau$, 
\begin{equation}
\label{elito3}
\int_0^{\tau\wedge T} X_t \6B_t = \int_0^T \indexfct{t\leqs\tau} X_t \6B_t\;.
\end{equation}

\item	Si $\int_0^T\expec{X_t^2}\6t < \infty$, alors pour tout $t\leqs T$, 
\begin{equation}
\label{elito4}
\biggexpec{\int_0^t X_s\6B_s} = 0
\end{equation}
et
\begin{equation}
\label{elito5}
\biggexpec{\biggpar{\int_0^t X_s\6B_s}^2} = 
\int_0^t\expec{X_s^2}\6s\;.
\end{equation}
De plus, le processus $\bigset{\int_0^t X_s\6B_s}_{t\geqs0}$ est une
martingale. 

\item	Le processus 
\begin{equation}
\label{elito6}
M_t = \exp\biggset{\int_0^t X_s\6B_s - \frac12 \int_0^t X_s^2 \6s}
\end{equation}
est une surmartingale. 
\end{enum}


\section{Un exemple}
\label{sec_exito}

Nous donnons ici un exemple de calcul explicite d'une int\'egrale
stochastique par la m\'ethode d'It\^o:
\begin{equation}
\label{exito1}
\int_0^t B_s \6B_s = \frac12 B_t^2 - \frac t2\;.
\end{equation}
Le r\'esultat, quelque peu surprenant au premier abord, prendra tout son
sens lorsque nous aurons vu la formule d'It\^o. 

Consid\'erons la suite de fonctionnelles simples d\'efinies par
$e^{(n)}_t=B_{2^{-n}\intpart{2^nt}}$. Il suffit alors de v\'erifier que 
\begin{equation}
\label{exito2}
\lim_{n\to\infty} \int_0^t e^{(n)}_s \6B_s = \frac12 B_t^2 - \frac t2\;. 
\end{equation}
Notons $t_k = k2^{-n}$ pour $k\leqs m=\intpart{2^nt}$ et $t_{m+1}=t$. Il
suit de la d\'efinition~\eqref{ito4} que 
\begin{align}
\nonumber
2\int_0^t e^{(n)}_s \6B_s &= 2\sum_{k=1}^{m+1}
B_{t_{k-1}}\bigpar{B_{t_k}-B_{t_{k-1}}} \\
\nonumber
&= \sum_{k=1}^{m+1}\Bigbrak{B_{t_k}^2-B_{t_{k-1}}^2
-\bigpar{B_{t_k}-B_{t_{k-1}}}^2} \\
&= B_t^2 - \sum_{k=1}^{m+1} \bigpar{B_{t_k}-B_{t_{k-1}}}^2\;.
\label{exito3}
\end{align}
Consid\'erons la martingale 
\begin{equation}
\label{exito4}
M^{(n)}_t = \sum_{k=1}^{m+1} \bigpar{B_{t_k}-B_{t_{k-1}}}^2 - t
= \sum_{k=1}^{m+1} \bigbrak{\bigpar{B_{t_k}-B_{t_{k-1}}}^2 -
\bigpar{t_k-t_{k-1}}}\;. 
\end{equation}
Les termes de la somme sont ind\'ependants et d'esp\'erance nulle. Nous
avons donc 
\begin{align}
\nonumber
\bigexpec{\bigpar{M^{(n)}_t}^2} 
&= \sum_{k=1}^{m+1} \bigexpec{\bigbrak{\bigpar{B_{t_k}-B_{t_{k-1}}}^2 -
\bigpar{t_k-t_{k-1}}}^2} \\
\nonumber
&\leqs (m+1)
\bigexpec{\bigbrak{\bigpar{B_{t_1}-B_{t_0}}^2 - \bigpar{t_1-t_0}}^2} \\
\nonumber
&\leqs \const2^n\bigexpec{\bigbrak{\bigpar{B_{2^{-n}}}^2 - 2^{-n}}^2} \\
\nonumber
&= \const2^{-n}\bigexpec{\bigbrak{\bigpar{B_1}^2 - 1}^2} \\
&\leqs \const 2^{-n}\;,
\label{exito5}
\end{align}
\`a cause de la propri\'et\'e d'\'echelle. $\bigpar{M^{(n)}_t}^2$ \'etant
une sous-martingale, l'in\'egalit\'e de Doob nous donne 
\begin{equation}
\label{exito6}
\biggprob{\sup_{0\leqs s\leqs t} \bigpar{M^{(n)}_s}^2 > n^2 2^{-n}} 
\leqs 2^n n^{-2} \bigexpec{\bigpar{M^{(n)}_t}^2} 
\leqs \const n^{-2}\;.
\end{equation}
Le lemme de Borel--Cantelli nous permet alors de conclure que 
\begin{equation}
\label{exito7}
\biggprob{\sup_{0\leqs s\leqs t} \bigabs{M^{(n)}_s} < n2^{-n/2},
n\to\infty}=1\;,
\end{equation}
ce qui prouve~\eqref{exito2}. 


\section{La formule d'It\^o}
\label{sec_fito}

Consid\'erons une int\'egrale stochastique de la forme 
\begin{equation}
\label{fito1}
X_t = X_0 + \int_0^t f_s \6s + \int_0^t g_s \6B_s\;,
\qquad t\in[0,T]
\end{equation}
o\`u $X_0$ est ind\'ependante du mouvement Brownien, et $f_s$ et $g_s$ sont
des fonctionnelles non-anticipatives satisfaisant 
\begin{align}
\nonumber
\biggprob{\int_0^T \abs{f_s}\6s < \infty} &= 1 \\
\biggprob{\int_0^T g_s^2 \6s < \infty} &= 1\;.
\label{fito2}
\end{align}
Le processus~\eqref{fito1}  s'\'ecrit \'egalement sous forme diff\'erentielle
\begin{equation}
\label{fito3}
\6X_t = f_t \6t + g_t \6B_t\;.
\end{equation}
Par exemple, la relation~\eqref{exito1} est \'equivalente \`a
\begin{equation}
\label{fito4}
\6\,(B_t^2) = \6t + 2 B_t \6B_t\;.
\end{equation}
La formule d'It\^o permet de d\'eterminer de mani\`ere g\'en\'erale l'effet
d'un changement de variables sur une diff\'erentielle stochastique. 

\begin{lemma}[Formule d'It\^o]
\label{lem_Ito}
Soit $u : [0,\infty)\times\R \to \R, (t,x)\mapsto u(t,x)$ une fonction
contin\^ument diff\'erentiable par rapport \`a $t$ et deux fois 
contin\^ument diff\'erentiable par rapport \`a $x$. Alors le processus
stochastique $Y_t = u(t,X_t)$ satisfait l'\'equation 
\begin{align}
\nonumber
Y_t ={}& Y_0 + \int_0^t \dpar ut(s,X_s) \6s 
+ \int_0^t \dpar ux(s,X_s) f_s \6s 
+ \int_0^t \dpar ux(s,X_s) g_s \6B_s \\ 
&{}+ \frac12 \int_0^t \dpar{^2u}{x^2}(s,X_s) g_s^2 \6s\;. 
\label{fito5A}
\end{align}
\end{lemma}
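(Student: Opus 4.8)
The plan is to reduce to the case where $f$, $g$ and the relevant derivatives of $u$ are bounded, prove the formula for that case by a Taylor expansion along a sequence of partitions whose mesh tends to zero, and then remove the boundedness assumption by a localization (stopping-time) argument using property~\eqref{elito3}. First I would fix $T>0$ and, by stopping at the times $\tau_n=\inf\setsuch{t}{\abs{X_t}\vee\abs{f_t}\vee\abs{g_t}\geqs n}$ and replacing $u$ by a compactly supported modification that agrees with it on the relevant range, assume $f$, $g$, $X$ and $u_t,u_x,u_{xx}$ are all bounded, and that $\int_0^T\expec{g_s^2}\6s<\infty$ so that the Itô isometry~\eqref{elito5} is available. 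Under these assumptions all the integrals in~\eqref{fito5A} are well-defined, and it suffices to show the two sides agree.

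The heart of the argument is the discretization. For a partition $0=t_0<t_1<\dots<t_N=t$ of mesh $\delta$, write the telescoping identity
\begin{equation}
\label{pfito1}
u(t,X_t) - u(0,X_0) = \sum_{k=1}^N \bigbrak{u(t_k,X_{t_k})-u(t_{k-1},X_{t_{k-1}})}\;,
\end{equation}
and expand each term by Taylor's formula in both variables up to first order in $t$ and second order in $x$, with $\Delta t_k = t_k-t_{k-1}$ and $\Delta X_k = X_{t_k}-X_{t_{k-1}}$:
\begin{equation}
\label{pfito2}
u(t_k,X_{t_k})-u(t_{k-1},X_{t_{k-1}})
= \dpar ut(t_{k-1},X_{t_{k-1}})\Delta t_k
+ \dpar ux(t_{k-1},X_{t_{k-1}})\Delta X_k
+ \frac12 \dpar{^2u}{x^2}(t_{k-1},X_{t_{k-1}})(\Delta X_k)^2
+ R_k\;,
\end{equation}
where $R_k$ collects the higher-order remainders. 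The first sum converges (pathwise, by continuity) to $\int_0^t u_t(s,X_s)\6s$. For the second, I would substitute $\Delta X_k = \int_{t_{k-1}}^{t_k} f_s\6s + \int_{t_{k-1}}^{t_k} g_s\6B_s$ and recognize the limit as $\int_0^t u_x(s,X_s)f_s\6s + \int_0^t u_x(s,X_s)g_s\6B_s$, the stochastic-integral part following from the fact that the Riemann–Stieltjes-type sum $\sum u_x(t_{k-1},X_{t_{k-1}})\int_{t_{k-1}}^{t_k}g_s\6B_s$ converges in $L^2(\fP)$ to the Itô integral of the simple approximant $e^{(n)}_s = u_x(t_{k-1},X_{t_{k-1}})g_s$ of $u_x(s,X_s)g_s$, using~\eqref{ito9}–\eqref{ito10} and the isometry. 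The crucial term is $\sum \frac12 u_{xx}(t_{k-1},X_{t_{k-1}})(\Delta X_k)^2$: here I would show that $(\Delta X_k)^2$ can be replaced, in the limit, by $g_{t_{k-1}}^2\,\Delta t_k$. Concretely, expand $(\Delta X_k)^2$ into the three pieces coming from the $f$-integral squared, the cross term, and the $g$-integral squared; the first two are $\Order{(\Delta t_k)^2}$ in $L^1$ by boundedness, while for the third one shows $\expec{\bigpar{\int_{t_{k-1}}^{t_k} g_s\6B_s}^2 - g_{t_{k-1}}^2\Delta t_k}$ and the associated martingale-difference sum vanish as $\delta\to0$, which is exactly the computation already carried out for $g\equiv 1$ in~\eqref{exito4}–\eqref{exito7} of the worked example, adapted to a general bounded non-anticipative $g$ by the independence of increments. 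Consequently this sum converges to $\frac12\int_0^t u_{xx}(s,X_s)g_s^2\6s$.

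The remainder terms $\sum R_k$ must be shown to vanish: they involve $(\Delta t_k)^2$, $\Delta t_k\,\Delta X_k$, and $\abs{\Delta X_k}^3$, together with moduli of continuity of the second derivatives of $u$. Using $\expec{\abs{\Delta X_k}^3}=\Order{(\Delta t_k)^{3/2}}$ (again from the Gaussian bound and boundedness of $g$), uniform continuity of $u_{xx}$ on compacts, and pathwise continuity of $s\mapsto X_s$, these sums go to zero in $L^1$ and along a subsequence almost surely. I expect the main obstacle to be precisely the control of the quadratic-variation term: making rigorous that the random sums $\sum u_{xx}(t_{k-1},X_{t_{k-1}})\bigbrak{(\Delta X_k)^2 - g_{t_{k-1}}^2\Delta t_k}$ converge to zero in probability requires care about the dependence between $u_{xx}(t_{k-1},X_{t_{k-1}})$ and the increment, which is handled by the non-anticipative (adapted) property so that conditioning on $\cF_{t_{k-1}}$ kills the martingale-difference structure — this is the step that genuinely uses the Brownian-specific identity $\econd{(\Delta B_k)^2}{\cF_{t_{k-1}}}=\Delta t_k$. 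Finally, having proved~\eqref{fito5A} in the bounded case, I would let $n\to\infty$ in the localization: on $\set{\tau_n>T}$ both sides of the stopped identity coincide with the unstopped ones, and since $\prob{\tau_n>T}\to1$ by~\eqref{fito2}, the formula holds as stated.
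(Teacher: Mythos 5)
Your plan is sound and amounts to the standard textbook proof, but it is organized differently from the paper's. The paper first reduces to \emph{simple} (piecewise constant) functionals $f$ and $g$, then by additivity of the integrals to constant $f_0,g_0$, so that $X_t=f_0t+g_0B_t$ and $Y_t$ is a function of $(t,B_t)$; the whole proof then boils down to the case $X_t=B_t$, where the Taylor expansion along a partition produces the quadratic-variation sum $\sum u_{xx}(t_{k-1},B_{t_{k-1}})\bigbrak{(B_{t_k}-B_{t_{k-1}})^2-(t_k-t_{k-1})}$, which is exactly the martingale $M^{(n)}_t$ already controlled in Section~\ref{sec_exito} (Doob plus Borel--Cantelli). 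You instead keep general adapted $f_s,g_s$, tame them by localization with stopping times and property~\eqref{elito3}, and do the Taylor expansion directly on $X$. What your route buys is generality and a cleaner statement of where each hypothesis enters (adaptedness for the martingale-difference structure, $\econd{(\Delta B_k)^2}{\cF_{t_{k-1}}}=\Delta t_k$ for the quadratic variation); what it costs is precisely the step you identify as delicate: replacing $(\Delta X_k)^2$ by $g_{t_{k-1}}^2\,\Delta t_k$ for a general adapted $g$ requires approximating $g_s$ on $[t_{k-1},t_k]$ by its left-endpoint value, i.e.\ an $L^2$ approximation of $g$ by simple functionals in the spirit of Lemma~\ref{lem_Ito1} --- which is the very reduction the paper performs at the outset, making its quadratic-variation step an exact reuse of~\eqref{exito4}--\eqref{exito7} rather than an adaptation. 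If you make that approximation step explicit (or simply perform the reduction to simple $f,g$ first, as the paper does, in which case your localization becomes unnecessary), your argument is complete.
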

\begin{proof}
Il suffit de prouver le r\'esultat pour des fonctionnelles simples, et par
l'additivit\'e des int\'egrales, on peut se ramener au cas de fonctionnelles
constantes. Mais alors $X_t = f_0 t+g_0 B_t$ et $Y_t=u(t,f_0 t+g_0 B_t)$
peut s'exprimer comme une fonction de $(t,B_t)$. Il suffit en d\'efinitive
de consid\'erer le cas $X_t=B_t$. Or pour une partition
$0=t_0<t_1<\dots<t_n=t$, on a 
\begin{align}
\nonumber
u(t,B_t) - u(0,0) 
&= \sum_{k=1}^n \bigbrak{u(t_k,B_{t_k}) - u(t_{k-1},B_{t_k})} + 
\bigbrak{u(t_{k-1},B_{t_k}) - u(t_{k-1},B_{t_{k-1}})} \\
\nonumber
&=\sum_{k=1}^n \dpar ut(t_{k-1},B_{t_k}) (t_k-t_{k-1}) 
+ \dpar ux(t_{k-1},B_{t_{k-1}})(B_{t_k}-B_{t_{k-1}}) \\
\nonumber
&\phantom{=} {}+ \frac12 \dpar
{^2u}{x^2}(t_{k-1},B_{t_{k-1}})(B_{t_k}-B_{t_{k-1}})^2 + 
\order{t_k-t_{k-1}} + \order{(B_{t_k}-B_{t_{k-1}})^2} \\
\nonumber
&= \int_0^t \dpar ut(s,B_s)\6s + \int_0^t\dpar ux(s,B_s)\6B_s + 
\frac12 \int_0^t\dpar{^2u}{x^2}(s,B_s) \6s \\
&\phantom{=} {}+ \sum_{k=1}^n \frac12 
\dpar{^2u}{x^2}(t_{k-1},B_{t_{k-1}})
\bigbrak{(B_{t_k}-B_{t_{k-1}})^2-(t_k-t_{k-1})} +\order{1}\;.
\label{fito10}
\end{align}
La somme se traite comme $M^{(n)}_t$ dans la section~\ref{sec_exito} lorsque
$t_k-t_{k-1}\to0$, c.f.~\eqref{exito4}.   
\end{proof}

\begin{remark}\hfill
\label{rem_fIto}
\begin{enum}
\item	La formule d'It\^o s'\'ecrit aussi sous forme diff\'erentielle,  
\begin{equation}
\label{fito5}
\6Y_t = \dpar ut(t,X_t) \6t + \dpar ux(t,X_t) \bigbrak{f_t \6t + g_t \6B_t} 
+ \frac12 \dpar{^2u}{x^2}(t,X_t) g_t^2 \6t\;.
\end{equation}
\item	Un moyen mn\'emotechnique pour retrouver la formule est de
l'\'ecrire sous la forme 
\begin{equation}
\label{fito6}
\6Y_t = \dpar ut \6t + \dpar ux \6X_t + \frac12 \dpar{^2u}{x^2}\6X_t^2\;,
\end{equation}
o\`u $\6X_t^2$ se calcule en utilisant les r\`egles
\begin{equation}
\label{fito7}
\6t^2 = \6t\6B_t=0, \qquad
\6B_t^2 = \6t\;.
\end{equation}

\item	La formule se g\'en\'eralise \`a des fonctions
$u(t,X^{(1)}_t,\dots,X^{(n)}_t)$, d\'ependant de $n$ processus d\'efinis
par $\6X^{(i)}_t=f^{(i)}_t\6t+g^{(i)}_t\6B_t$, en 
\begin{equation}
\label{fito8}
\6Y_t = \dpar ut \6t + \sum_i\dpar u{x_i} \6X^{(i)}_t + \frac12
\sum_{i,j}\dpar{^2u}{x_i\partial x_j} \6X^{(i)}_t\6X^{(j)}_t\;,
\end{equation}
o\`u $\6X^{(i)}_t\6X^{(j)}_t = g^{(i)}_tg^{(j)}_t\6t$\;.
\end{enum}
\end{remark}

\goodbreak
\begin{example}\hfill
\label{ex_ito1}
\begin{enum}
\item	Si $X_t = B_t$ et $u(x)=x^2$, on retrouve la relation~\eqref{fito4}.
\item	Si $\6X_t = g_t\6B_t - \frac12 g_t^2\6t$ et $u(x)=\e^x$, on obtient
\begin{equation}
\label{fito9}
\6\,(\e^{X_t}) = g_t e^{X_t}\6B_t\;.
\end{equation}
Ainsi la martingale $M_t = \exp\bigset{\gamma B_t - \gamma^2 \frac t2}$ de
la proposition~\ref{prop_mD} est la solution de l'\'equation $\6M_t=\gamma
M_t\6B_t$. 
\end{enum}
\end{example}


\section{Exercices}
\label{sec_exo_ito}

\begin{exercice}
\label{exo_ito1} 
On consid\`ere les deux processus stochastiques 
\[
X_t = \int_0^t \e^s \6B_s\;,
\qquad
Y_t = \e^{-t} X_t\;.
\]

\begin{enum}
\item	D\'eterminer $\expec{X_t}$, $\Variance(X_t)$, $\expec{Y_t}$ et
$\Variance(Y_t)$.
\item	Sp\'ecifier la loi de $X_t$ et de $Y_t$. 
\item	Montrer que $Y_t$ converge en loi vers une variable $Y_\infty$ lorsque
$t\to\infty$ et sp\'ecifier sa loi. 
\item	Exprimer $\6Y_t$ en fonction de $Y_t$ et de $B_t$.
\end{enum}
\end{exercice}

\goodbreak

\begin{exercice}
\label{exo_ito2} 
Soit 
\[
X_t = \int_0^t s \6B_s\;.
\]

\begin{enum}
\item	Calculer $\expec{X_t}$ et $\Variance(X_t)$. 
\item	Quelle est la loi de $X_t$?
\item	Calculer $\6\,(tB_t)$ \`a l'aide de la formule d'It\^o.
\item	En d\'eduire une relation entre $X_t$ et 
\[
Y_t = \int_0^t B_s \6s\;.
\]
\item	Calculer la variance de $Y_t$, 
\begin{enum}
\item	directement \`a partir de sa d\'efinition;
\item	en calculant d'abord la covariance de $B_t$ et $X_t$, \`a l'aide d'une
partition de $[0,t]$. 
\end{enum}
En d\'eduire la loi de $Y_t$. 
\end{enum}
\end{exercice}

\goodbreak

\begin{exercice}[In\'egalit\'e de Bernstein]
\label{exo_ito3}
\hfill 
\begin{enum}
\item	Soit $Y$ une variable al\'eatoire normale centr\'ee, de variance
$\sigma^2$. Montrer que 
\[
\expec{\e^Y}=\e^{\sigma^2/2}
\]

\item	Soit $B_t$ un mouvement Brownien standard, et 
$\varphi:[0,T]\to\R$ une fonction ind\'epen\-dante de $B_t$. Pour $t\in[0,T]$ on
pose
\[
X_t = \int_0^t \varphi(s)\6B_s
\]
Calculer $\expec{X_t}$ et $\Variance{X_t}$. On pr\'ecisera les hypoth\`eses
faites sur la fonction $\varphi$.

\item	Montrer que 
\[
M_t = \exp\biggset{X_t - \frac12 \int_0^t \varphi(s)^2 \6s}
\]
est une martingale. 

\item	D\'emontrer l'{\it in\'egalit\'e de Bernstein}~: Pour tout $\lambda>0$,
\[
\Bigprob{\sup_{0\leqs s\leqs t}X_s > \lambda} \leqs 
\exp\biggset{-\frac{\lambda^2}{2\Phi(t)}}
\]
o\`u $\Phi(t) = \displaystyle\int_0^t \varphi(s)^2 \6s$. 

\end{enum}
\end{exercice}

\goodbreak

\begin{exercice}[Int\'egrale de Stratonovich]
\label{exo_ito4}
Soit $\set{B_t}_{t\in[0,T]}$ un mouvement Brownien standard. 
Soit $0=t_0<t_1<\dots<t_N=T$ une partition de $[0,T]$, et soit 
\[
e_t = \sum_{k=1}^N e_{t_{k-1}} \indicator{[t_{k-1},t_k)}(t)
\]
une fonction simple, adapt\'ee \`a la filtration canonique du mouvement
Brownien. 

L'int\'egrale de Stratonovich de $e_t$ est d\'efinie par 
\[
\int_0^T e_t \circ \6B_t = \sum_{k=1}^N \frac{e_{t_k}+e_{t_{k-1}}}{2} \Delta B_k
\qquad
\text{o\`u } \Delta B_k = B_{t_k} - B_{t_{k-1}}\;.
\]
L'int\'egrale de Stratonovich $\int_0^T X_t\circ \6B_t$ d'un processus adapt\'e
$X_t$ est d\'efinie comme la limite de la suite $\int_0^T e^{(n)}_t\circ \6B_t$,
o\`u $e^{(n)}$ est une suite de fonctions simples convergeant vers $X_t$ dans
$L^2$. On admettra que cette limite existe et est ind\'ependante de la suite
$e^{(n)}$. 

\begin{enum}
\item 	Calculer
\[
\int_0^T B_t \circ \6B_t\;. 
\]

\item	Soit $g:\R\to\R$ une fonction de classe $C^2$, et soit $X_t$ un
processus adapt\'e satisfaisant 
\[
X_t = \int_0^t g(X_s) \circ \6B_s 
\qquad
\forall t\in[0,T]\;.
\]
Soit $Y_t$ l'int\'egrale d'It\^o 
\[
Y_t = \int_0^t g(X_s) \6B_s\;.
\]
Montrer que 
\[
X_t - Y_t = \frac12 \int_0^t g'(X_s)g(X_s) \6s
\qquad
\forall t\in[0,T]\;.
\]
\end{enum}
\end{exercice}


\chapter{Equations diff\'erentielles stochastiques}
\label{chap_sde}

Nous avons maintenant tous les \'el\'ements en main pour d\'efinir la notion
de solution d'une \'equation diff\'erentielle stochastique (EDS), de la forme
\begin{equation}
\label{edsf1}
\6X_t = f(X_t,t)\6t + g(X_t,t)\6B_t, 
\end{equation}
o\`u $f, g:\R\times[0,T]\to\R$ sont des fonctions d\'eterministes mesurables. 
La fonction $f$ est commun\'ement appel\'ee \defwd{coefficient de d\'erive}\/,
alors que $g$ est appel\'ee \defwd{coefficient de diffusion}\/.

Dans tout ce chapitre, nous supposons que 
\begin{itemiz}
\item	Soit $X_0\in\R$ est une constante, et alors $\set{\cF_t}_{t\in[0,T]}$
d\'esigne la filtration engendr\'ee par le mouvement Brownien.
\item	Soit $X_0:\Omega\to\R$ est une variable al\'eatoire, de carr\'e
int\'egrable, et ind\'ependante du mouvement Brownien. Dans ce cas, 
$\set{\cF_t}_{t\in[0,T]}$ d\'esignera la filtration engendr\'ee par le
mouvement Brownien et par $X_0$. 
\end{itemiz}


\section{Solutions fortes}
\label{sec_sdef}

\begin{definition}
\label{def_edsf}
Un processus stochastique $\set{X_t}_{t\in[0,T]}$ est appel\'e une {\em solution
forte\/} de l'EDS~\eqref{edsf1} avec condition initiale $X_0$ si 
\begin{itemiz}
\item	$X_t$ est $\cF_t$-mesurable pour tout $t\in[0,T]$; 
\item	on a les conditions de r\'egularit\'e 
\begin{equation}
\label{edsf2}
\biggprob{\int_0^T \abs{f(X_s,s)}\6s < \infty} =
\biggprob{\int_0^T g(X_s,s)^2\6s < \infty} = 1;
\end{equation}
\item	pour tout $t\in[0,T]$, on a 
\begin{equation}
\label{edsf3}
X_t = X_0 + \int_0^t f(X_s,s)\6s + \int_0^t g(X_s,s)\6B_s 
\end{equation}
avec probabilit\'e $1$. 
\end{itemiz}
\end{definition}

\begin{remark}
\label{rem_edsf}
On d\'efinit de mani\`ere similaire la solution forte d'une EDS
multidimensionnelle, c'est-\`a-dire qu'on peut supposer que $X\in\R^n$,
$f(x,t)$ prend des valeurs dans $R^n$, et, si $B_t$ est un mouvement
Brownien de dimension $k$, $g(x,t)$ prend des valeurs dans les matrices
$n\times k$. 
\end{remark}



Avant de montrer l'existence de solutions fortes dans un cadre g\'en\'eral,
nous donnons quelques exemples d'\'equations pour lesquelles une solution peut
\^etre donn\'ee explicitement sous forme d'int\'egrales. Il n'est pas
surprenant que de tels cas solubles sont extr\^emement rares. 

\begin{example}
\label{ex_edslin}
Consid\'erons l'EDS lin\'eaire avec \lq\lq bruit additif\rq\rq\  
\begin{equation}
\label{edsf8}
\6X_t = a(t) X_t\6t + \sigma(t)\6B_t\;,
\end{equation}
o\`u $a$ et $\sigma$ sont des fonctions d\'eterministes. Dans le cas
particulier $\sigma\equiv0$, la solution peut s'\'ecrire simplement 
\begin{equation}
 \label{edsf8A}
X_t = \e^{\alpha(t)} X_0\;, \qquad
\alpha(t) = \int_0^t a(s)\,\6s\;. 
\end{equation} 
Ceci sugg\`ere d'appliquer la m\'ethode de la variation de la constante,
c'est-\`a-dire de chercher une solution de la forme 
$X_t = \e^{\alpha(t)} Y_t$. La formule d'It\^o appliqu\'ee \`a
$Y_t=u(X_t,t)=\e^{-\alpha(t)}X_t$ nous donne 
\begin{equation}
 \label{edsf8B}
\6Y_t = -a(t) \e^{-\alpha(t)}X_t \,\6t 
+ \e^{-\alpha(t)}\,\6X_t
= \e^{-\alpha(t)} \sigma(t) \,\6B_t\;,
\end{equation} 
d'o\`u en int\'egrant et en tenant compte du fait que $Y_0=X_0$, 
\begin{equation}
 \label{edsf8C}
Y_t = X_0 + \int_0^t \e^{-\alpha(s)} \sigma(s) \,\6B_s\;. 
\end{equation} 
Ceci donne finalement la solution forte de l'\'equation~\eqref{edsf8} 
\begin{equation}
\label{edsf8D}
X_t = X_0 \e^{\alpha(t)} + 
\int_0^t \e^{\alpha(t)-\alpha(s)} \sigma(s) \,\6B_s\;. 
\end{equation}
On v\'erifie effectivement~\eqref{edsf3} en appliquant encore une fois la
formule d'It\^o. On notera que si la condition initiale $X_0$ est
d\'eterministe, alors $X_t$ suit une loi normale, d'esp\'erance
$\expec{X_t} = X_0\e^{\alpha(t)}$ et de variance 
\begin{equation}
 \label{edsf8E}
\Variance(X_t) = 
\int_0^t  \e^{2(\alpha(t)-\alpha(s))} \sigma(s)^2 \,\6s\;, 
\end{equation}  
en vertu de l'isom\'etrie d'It\^o. 
\end{example}

\begin{example}
\label{ex_edslinm}
Soit l'EDS lin\'eaire avec \lq\lq bruit multiplicatif\rq\rq\  
\begin{equation}
\label{edsf9}
\6X_t = a(t) X_t\6t + \sigma(t)X_t\6B_t\;,
\end{equation}
avec \`a nouveau $a$ et $\sigma$ des fonctions d\'eterministes. Nous pouvons
alors \'ecrire 
\begin{equation}
 \label{edsf9A}
\frac{\6X_t}{X_t} =  a(t) \6t + \sigma(t)\6B_t\;.
\end{equation} 
En int\'egrant le membre de gauche, on devrait trouver $\log(X_t)$, mais ceci
est-il compatible avec le calcul d'It\^o? Pour s'en assurer, posons
$Y_t=u(X_t)=\log(X_t)$. Alors la formule d'It\^o donne 
\begin{align}
\nonumber
\6Y_t &= \frac{1}{X_t}\,\6X_t - \frac{1}{2X_t^2}\,\6X_t^2 \\ 
&= a(t) \6t + \sigma(t)\6B_t - \frac12 \sigma(t)^2 \6t\;.
 \label{edsf9B}
\end{align} 
En int\'egrant et en reprenant l'exponentielle, on obtient donc la solution
forte  
\begin{equation}
 \label{edsf9C} 
X_t = X_0 \exp\biggset{\int_0^t \bigbrak{a(s)-\frac12 \sigma(s)^2}\6s 
+ \int_0^t \sigma(s)\6B_s}\;.
\end{equation} 
En particulier, si $a\equiv0$ et $\sigma\equiv\gamma$, on retrouve la
martingale $X_t=X_0\exp\set{\gamma B_t-\gamma^2 t/2}$, appel\'ee
\defwd{mouvement Brownien exponentiel}.
\end{example}


\section{Existence et unicit\'e de solutions}
\label{sec_sdeu}

Nous donnons d'abord un r\'esultat d'existence et d'unicit\'e d'une solution
forte sous des conditions un peu restrictives sur les coefficients $f$ et $g$. 

\begin{theorem}
\label{thm_edsf}
Supposons que les fonctions $f$ et $g$ satisfont les deux conditions
suivantes: 
\begin{enum}
\item	{\em Condition de Lipschitz globale:} Il existe une constante $K$
telle que 
\begin{equation}
\label{edsf5}
\abs{f(x,t) - f(y,t)} + \abs{g(x,t)-g(y,t)} \leqs K\abs{x-y}
\end{equation}
pour tous les $x, y\in\R$ et $t\in[0,T]$. 

\item	{\em Condition de croissance:} Il existe une constante $L$ telle que
\begin{equation}
\label{edsf6}
\abs{f(x,t)} + \abs{g(x,t)} \leqs L(1+\abs{x})
\end{equation}
pour tous les $x\in\R$ et $t\in[0,T]$. 
\end{enum}
Alors l'EDS~\eqref{edsf1} admet, pour toute condition initiale $X_0$ de carr\'e
int\'egrable, une solution
forte $\set{X_t}_{t\in[0,T]}$, presque s\^urement continue. Cette solution
est unique dans le sens que si $\set{X_t}_{t\in[0,T]}$ et
$\set{Y_t}_{t\in[0,T]}$ sont deux solutions presque s\^urement continues,
alors 
\begin{equation}
\label{edsf7}
\biggprob{\sup_{0\leqs t\leqs T}\abs{X_t-Y_t}>0} = 0. 
\end{equation}
\end{theorem}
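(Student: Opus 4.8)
Le plan est de suivre le sch\'ema classique de Picard : \'etablir l'unicit\'e par une estimation de type Gronwall, puis l'existence en construisant la solution comme limite d'une suite d'approximations successives, la convergence \'etant contr\^ol\'ee \`a l'aide de l'isom\'etrie d'It\^o et de l'in\'egalit\'e de Doob.

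Pour l'unicit\'e, je partirais de deux solutions presque s\^urement continues $\set{X_t}$ et $\set{Y_t}$, et je poserais $D_t=X_t-Y_t$, qui v\'erifie d'apr\`es~\eqref{edsf3}
\[
D_t = \int_0^t \bigbrak{f(X_s,s)-f(Y_s,s)}\6s
+ \int_0^t \bigbrak{g(X_s,s)-g(Y_s,s)}\6B_s\;.
\]
Apr\`es localisation par les temps d'arr\^et $\tau_N=\inf\setsuch{t}{\abs{X_t}\vee\abs{Y_t}>N}$, destin\'ee \`a assurer l'int\'egrabilit\'e des quantit\'es en jeu, j'\'el\`everais au carr\'e, utiliserais $(a+b)^2\leqs2a^2+2b^2$, l'in\'egalit\'e de Cauchy--Schwarz sur le terme de d\'erive, l'isom\'etrie d'It\^o~\eqref{elito5} sur le terme de diffusion, puis la condition de Lipschitz~\eqref{edsf5}, pour aboutir \`a une in\'egalit\'e de la forme $u_N(t)\leqs C\int_0^t u_N(s)\6s$, o\`u $u_N(t)=\bigexpec{D_{t\wedge\tau_N}^2}$ et $C$ ne d\'epend que de $K$ et $T$. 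Le lemme de Gronwall donnerait $u_N\equiv0$, puis $\expec{D_t^2}=0$ en faisant $N\to\infty$, et la continuit\'e des trajectoires fournirait~\eqref{edsf7}.

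Pour l'existence, je d\'efinirais la suite de Picard par $X^{(0)}_t=X_0$ et $X^{(n+1)}_t = X_0 + \int_0^t f(X^{(n)}_s,s)\6s + \int_0^t g(X^{(n)}_s,s)\6B_s$. La premi\`ere \'etape serait une r\'ecurrence utilisant la condition de croissance~\eqref{edsf6}, l'isom\'etrie d'It\^o et l'in\'egalit\'e du maximum $L^2$ pour les martingales (d\'eduite du Th\'eor\`eme~\ref{thm_Doob_cont} comme dans le cas discret, cf.\ Th\'eor\`eme~\ref{thm_Lpmax}), qui montrerait que $\sup_n\bigexpec{\sup_{t\leqs T}(X^{(n)}_t)^2}<\infty$. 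La deuxi\`eme \'etape donnerait, par les m\^emes ingr\'edients, l'estimation r\'ecursive $\Delta^{(n)}(t)\leqs C\int_0^t\Delta^{(n-1)}(s)\6s$ pour $\Delta^{(n)}(t)=\bigexpec{\sup_{s\leqs t}\abs{X^{(n+1)}_s-X^{(n)}_s}^2}$, d'o\`u par it\'eration $\Delta^{(n)}(T)\leqs\const (CT)^n/n!$, terme g\'en\'eral d'une s\'erie convergente. La troisi\`eme \'etape reprendrait l'argument de la construction de l'int\'egrale d'It\^o (Lemmes~\ref{lem_Ito1}--\ref{lem_Ito2} et Th\'eor\`eme~\ref{thm_ito}) : la sommabilit\'e de $\sum_n\bigpar{\Delta^{(n)}(T)}^{1/2}$ jointe \`a l'in\'egalit\'e de Doob entra\^inerait $\prob{\sup_{t\leqs T}\abs{X^{(n+1)}_t-X^{(n)}_t}>2^{-n},\,n\to\infty}=1$, donc la convergence presque s\^ure et uniforme sur $[0,T]$ des $X^{(n)}$ vers un processus $\set{X_t}$ \`a trajectoires continues et adapt\'e \`a $\set{\cF_t}$. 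Je passerais enfin \`a la limite dans la relation de r\'ecurrence : la convergence dans $L^2$, jointe \`a l'isom\'etrie d'It\^o et \`a la condition de Lipschitz, montrerait que les deux int\'egrales convergent vers $\int_0^t f(X_s,s)\6s$ et $\int_0^t g(X_s,s)\6B_s$, de sorte que $X_t$ satisfait~\eqref{edsf3}; les conditions de r\'egularit\'e~\eqref{edsf2} r\'esulteraient de~\eqref{edsf6} et de la continuit\'e des trajectoires.

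La principale difficult\'e sera le contr\^ole du terme stochastique : c'est la combinaison de l'isom\'etrie d'It\^o et de l'in\'egalit\'e du maximum $L^2$ de Doob qui permet de convertir des bornes ponctuelles en $t$ en bornes uniformes en $t$, lesquelles sont indispensables \`a la fois pour fermer l'it\'eration de Picard et pour en d\'eduire la convergence presque s\^ure uniforme. Le second point demandant de l'attention est la gestion des localisations par temps d'arr\^et tant que les bornes a priori $L^2$ ne sont pas encore \'etablies.
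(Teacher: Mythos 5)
Votre sch\'ema est correct et repose sur la m\^eme ossature que la preuve du texte (it\'eration de Picard, isom\'etrie d'It\^o, Cauchy--Schwarz, Gronwall, bornes en $C^n t^n/n!$), mais il s'en \'ecarte sur deux points qui m\'eritent d'\^etre compar\'es. Pour l'unicit\'e, vous localisez par les temps d'arr\^et $\tau_N$ avant d'appliquer Gronwall; le texte applique directement Cauchy--Schwarz et l'isom\'etrie \`a $v(t)=\bigexpec{\abs{X_t-Y_t}^2}$, ce qui suppose implicitement cette quantit\'e finie --- votre localisation est donc un peu plus soigneuse, au prix d'un passage \`a la limite $N\to\infty$ suppl\'ementaire. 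Pour l'existence, le texte se contente d'estimations ponctuelles en $t$, montre que les it\'er\'es forment une suite de Cauchy dans $L^2(\fP\times\lambda)$, passe \`a la limite dans la relation de r\'ecurrence, et obtient la continuit\'e des trajectoires en invoquant le Th\'eor\`eme~\ref{thm_ito} (continuit\'e de l'int\'egrale d'It\^o); vous contr\^olez au contraire $\bigexpec{\sup_{s\leqs t}\abs{X^{(n+1)}_s-X^{(n)}_s}^2}$ via l'in\'egalit\'e du maximum $L^2$ de Doob, puis concluez par Borel--Cantelli \`a la convergence presque s\^ure uniforme, ce qui donne d'un seul coup la continuit\'e et l'adaptation de la limite, sans recourir au Th\'eor\`eme~\ref{thm_ito}. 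Les deux routes aboutissent; la v\^otre demande un ingr\'edient de plus (Doob maximal) mais \'evite l'argument de continuit\'e s\'epar\'e. Un d\'etail \`a corriger : avec la convention de notation du cours (cf.~\eqref{ito10B} et le Lemme~\ref{lem_Ito1}), l'\'ev\'enement dont vous affirmez qu'il a probabilit\'e $1$ doit \^etre $\bigset{\sup_{t\leqs T}\abs{X^{(n+1)}_t-X^{(n)}_t}\leqs 2^{-n}}$ et non $\bigset{\cdots>2^{-n}}$; c'est bien la sommabilit\'e de $\prob{\sup_{t\leqs T}\abs{X^{(n+1)}_t-X^{(n)}_t}>2^{-n}}\leqs 2^{2n}\Delta^{(n)}(T)$ qui permet d'appliquer Borel--Cantelli, comme votre conclusion l'indique d'ailleurs clairement.
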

\begin{proof}
Commen\c cons par d\'emontrer l'unicit\'e. Soient $X_t$ et $Y_t$ deux solutions
fortes de condition initiale $X_0$. Posons $\phi(t)=f(X_t,t)-f(Y_t,t)$ et
$\gamma(t)=g(X_t,t)-g(Y_t,t)$. Alors en utilisant l'in\'egalit\'e
de Cauchy--Schwartz et l'isom\'etrie d'It\^o on trouve 
\begin{align}
\nonumber
\bigexpec{\abs{X_t-Y_t}^2}
&= \E\biggbrak{\biggpar{\int_0^t \phi(s)\,\6s + \int_0^t \gamma(s)\,\6B_s}^2} \\
\nonumber
&\leqs 2\E\biggbrak{\biggpar{\int_0^t \phi(s)\,\6s}^2}
+ 2\E\biggbrak{\biggpar{\int_0^t \gamma(s)\,\6B_s}^2} \\
\nonumber
&\leqs 2\E\biggbrak{t\int_0^t \phi(s)^2\,\6s}
+ 2\E\biggbrak{\int_0^t \gamma(s)^2\,\6s} \\
\nonumber
&\leqs 2(1+t)K^2\int_0^t \bigexpec{\abs{X_s-Y_s}^2}\,\6s\;.
\label{edsf8:1} 
\end{align}
Ainsi la fonction $v(t)=\expec{\abs{X_t-Y_t}^2}$ satisfait l'in\'egalit\'e 
\begin{equation}
 \label{edsf8:2}
v(t) \leqs 2(1+T)K^2 \int_0^t v(s)\,\6s\;, 
\end{equation} 
donc par l'in\'egalit\'e de Gronwall, 
\begin{equation}
 \label{edsf8:3}
v(t) \leqs v(0) \e^{2(1+T)K^2 t} = 0\;, 
\end{equation} 
puisque $v(0)=0$. Il suit que 
\begin{equation}
 \label{edsf8:4}
\bigprob{\abs{X_t-Y_t}=0 \;\forall t\in\Q\cap[0,T]}= 1\;, 
\end{equation} 
ce qui implique~\eqref{edsf7} en vertu de la continuit\'e des trajectoires. 

Afin de prouver l'existence, nous posons $X^{(0)}_t=X_0$ et d\'efinissons une
suite $\set{X^{(k)}_t}_{k\in\N}$ r\'ecursivement par 
\begin{equation}
 \label{edsf8:5}
 X^{(k+1)}_t = X_0 + \int_0^t f(X^{(k)}_s,s) \,\6s
+ \int_0^t g(X^{(k)}_s,s) \,\6B_s\;.
\end{equation} 
Un calcul similaire au calcul ci-dessus montre que 
\begin{equation}
 \label{edsf8:6}
\bigexpec{\bigabs{X^{(k+1)}_t - X^{(k)}_t}^2}
\leqs 2(1+T)K^2 \int_0^t  \bigexpec{\abs{X^{(k)}_s - X^{(k-1)}_s}^2}\,\6s
\end{equation} 
pour tout $k\geqs1$ et $t\in[0,T]$, alors que 
\begin{equation}
 \label{edsf8:7}
 \bigexpec{\bigabs{X^{(1)}_t - X^{(0)}_t}^2}
\leqs A_1 t
\end{equation} 
pour une constante $A_1=A_1(L,T,X_0)$. Par r\'ecurrence, nous avons donc 
\begin{equation}
 \label{edsf8:8}
\bigexpec{\bigabs{X^{(k+1)}_t - X^{(k)}_t}^2}
\leqs \frac{A_2^{k+1}t^{k+1}}{(k+1)!}
\end{equation} 
pour tout $k\geqs1$ et $t\in[0,T]$ et une constante $A_2=A_2(K,L,T,X_0)$. Il
est alors facile de v\'erifier que la suite des $X^{(k)}$ est une suite de
Cauchy dans $L^2(\fP\times\lambda)$, c'est-\`a-dire par rapport au produit
scalaire d\'efini par la mesure $\fP$ sur $\Omega$ et la mesure de Lebesgue
$\lambda$ sur $[0,T]$. Il existe donc un processus limite 
\begin{equation}
 \label{edsf8:9}
X_t = \lim_{n\to\infty} X^{(n)}_t \in L^2(\fP\times\lambda)\;. 
\end{equation} 

Il reste \`a montrer que c'est la solution forte cherch\'ee. 
Faisons tendre $k$ vers l'infini dans~\eqref{edsf8:5}. Le membre de gauche tend
vers $X_t$. La premi\`ere int\'egrale dans le membre de droite tend dans 
$L^2(\fP\times\lambda)$ vers l'int\'egrale de $f(X_s,s)\6s$ par l'in\'egalit\'e
de H\"older. La seconde tend dans  $L^2(\fP\times\lambda)$ vers l'int\'egrale de
$g(X_s,s)\6Bs$ par l'isom\'etrie d'It\^o. Ceci montre que $X_t$
satisfait~\eqref{edsf3}. Les relations~\eqref{edsf2} sont satisfaites en
cons\'equence de la condition de croissance et du fait que $X_t$ est de carr\'e
int\'egrable. 
Finalement, le Th\'eor\`eme~\ref{thm_ito} implique la continuit\'e presque
s\^ure de $X_t$. 
\end{proof}

Les conditions~\eqref{edsf5} et~\eqref{edsf6} sont en fait trop restrictives,
et peuvent \^etre remplac\'ees par les conditions plus faibles suivantes~:
\begin{enum}
\item	{\em Condition de Lipschitz locale:} Pour tout compact
$\cK\in\R$, il existe une constante $K=K(\cK)$ telle que 
\begin{equation}
\label{edsf5B}
\abs{f(x,t) - f(y,t)} + \abs{g(x,t)-g(y,t)} \leqs K\abs{x-y}
\end{equation}
pour $x, y\in\cK$ et $t\in[0,T]$. 

\item	{\em Condition de croissance:} Il existe une constante $L$ telle que
\begin{equation}
\label{edsf6B}
xf(x,t) + g(x,t)^2 \leqs L^2(1+x^2)
\end{equation}
pour tous $x,t$. 
\end{enum}
Par exemple, le coefficient de d\'erive $f(x,t)=-x^2$ satisfait les
conditions~\eqref{edsf5B} et \eqref{edsf6B}, alors qu'il ne satisfait pas les  
conditions~\eqref{edsf5} et~\eqref{edsf6}. Par contre, la condition de
Lipschitz locale est n\'ecessaire pour garantir l'unicit\'e, comme le montre le
contre-exemple classique $f(x,t)=3x^{2/3}$ (toute fonction valant $0$ jusqu'\`a
un temps arbitraire $a$, puis $(t-a)^3$ pour les temps ult\'erieurs est
solution). De m\^eme, le contre-exemple $f(x,t)=x^2$, dont les solutions
divergent pour un temps fini, montre la n\'ecessit\'e de la condition de
croissance~\eqref{edsf6B}. 

Sans entrer dans les d\'etails, pour montrer l'existence d'une unique solution
forte sous les conditions~\eqref{edsf5B} et \eqref{edsf6B}, on proc\`ede en
deux \'etapes, tout \`a fait similaires \`a celles du cas d\'eterministe~:
\begin{itemiz}
\item	On montre que sous la condition de Lipschitz locale, toute
trajectoire solution $X_t(\omega)$ soit existe jusqu'au temps $T$, soit quitte
tout compact $\cK$ en un temps $\tau(\omega)<T$. Par cons\'equent, il existe un
temps d'arr\^et $\tau$, appel\'e \defwd{temps d'explosion}\/, tel que soit
$\tau(\omega)=+\infty$ et alors $X_t(\omega)$ existe jusqu'au temps $T$, soit
$\tau(\omega)\leqs T$, et alors $X_t(\omega)\to\pm\infty$ lorsque
$t\to\tau(\omega)$.
\item	On montre que sous la condition de croissance, les trajectoires
$X_t(\omega)$ ne peuvent pas exploser (car le terme de d\'erive ne cro\^\i t
pas assez vite, ou ram\`ene les trajectoires vers l'origine si $xf(x,t)$ est
n\'egatif). 
\end{itemiz}


\section{Exercices}
\label{sec_ex_sde}

\begin{exercice}
\label{exo_sde1}
On consid\`ere l'\'equation 
\[
\6X_t = a(t)X_t\6t + b(t)\6t + c(t)\6B_t\;,
\]
o\`u $a(t)$, $b(t)$ et $c(t)$ sont des processus adapt\'es.

R\'esoudre cette \'equation par la m\'ethode de la variation de la constante,
c'est-\`a-dire
\begin{enum}
\item	Soit $\alpha(t)=\int_0^t a(s)\6s$. V\'erifier que $X_0\e^{\alpha(t)}$ 
est la solution de l'\'equation homog\`ene, c-\`a-d avec $b=c=0$.
\item	Poser $Y_t=\e^{-\alpha(t)}X_t$ et calculer $\6Y_t$ \`a l'aide de la
formule d'It\^o.
\item	En d\'eduire $Y_t$ puis $X_t$ sous forme int\'egrale.
\item	R\'esoudre l'EDS 
\[
\6X_t = -\frac1{1+t} X_t \6t + \frac{1}{1+t} \6B_t\;,
\qquad X_0 = 0\;.
\]
\end{enum}
\end{exercice}

\begin{exercice}
\label{exo_sde2}
R\'esoudre l'EDS 
\[
\6X_t = -\frac12 X_t \6t + \sqrt{1-X_t^2} \6B_t\;,
\qquad 
X_0 = 0
\]
\`a l'aide du changement de variable $Y=\Arcsin(X)$.
\end{exercice}

\goodbreak

\begin{exercice}\hfill
\label{exo_sde3}
\begin{enum}
\item	En utilisant l'exercice~\ref{exo_sde1}, r\'esoudre l'EDS:
\[
\6X_t = \frac{b-X_t}{1-t} \6t + \6B_t\;,
\qquad 0\leqs t<1\;, \quad X_0 = a\;.
\]
\item	
D\'eterminer la variance de $X_t$. 
Calculer $\lim_{t\to 1_-} X_t$ dans $L^2$.

\item 	Soit $M_t=\int_0^t (1-s)^{-1}\6B_s$. Montrer que 
\[
\biggprob{\sup_{1-2^{-n}\leqs t\leqs 1-2^{-n-1}}(1-t)\abs{M_t}>\eps}
\leqs \frac{2}{\eps^2 2^n}\;.
\]
A l'aide du lemme de Borel--Cantelli, en d\'eduire $\lim_{t\to 1_-} X_t$
au sens presque s\^ur. 

\item	Le processus $X_t$ est appel\'e un pont Brownien --- expliquer pourquoi.
\end{enum}
\end{exercice}

\begin{exercice}
\label{exo_sde4}
On se donne $r, \alpha\in\R$. R\'esoudre l'\'equation diff\'erentielle
stochastique 
\[
\6Y_t = r\6t + \alpha Y_t \6B_t\;,
\qquad Y_0=1\;.
\]

\medskip
\noindent
{\it Indication~:}\/ Soit le \lq\lq facteur int\'egrant\rq\rq\ 
\[
F_t = \exp\biggset{-\alpha B_t + \frac12 \alpha^2 t}\;.
\]
Consid\'erer $X_t=F_tY_t$. 
\end{exercice}


\chapter{Diffusions}
\label{chap_diff}

On appelle \defwd{diffusion} un processus stochastique ob\'eissant \`a une
\'equation diff\'erentielle stochastique de la forme
\begin{equation}
\label{diff00}
\6X_t = f(X_t)\6t + g(X_t)\6B_t\;.
\end{equation}
Le terme $f(x)$ peut s'interpr\'eter comme la force d\'eterministe agissant sur
une particule dans un fluide au point $x$, et s'appelle donc le coefficient de
d\'erive. Le terme $g(x)$ mesure l'effet de l'agitation thermique des
mol\'ecules du fluide en $x$, et s'appelle le coefficient de diffusion. 

Dans l'\'etude des diffusions, il est particuli\`erement int\'eressant de
consid\'erer la d\'epen\-dance des solutions dans la condition initiale $X_0=x$.
La propri\'et\'e de Markov affirme que l'\'etat $X_t$ en un temps donn\'e $t$
d\'etermine univoquement le comportement \`a tous les temps futurs. Ceci permet
de d\'emontrer la propri\'et\'e de semi-groupe, qui g\'en\'eralise celle du
flot d'une \'equation diff\'erentielle ordinaire. Un semi-groupe de Markov peut
\^etre caract\'eris\'e par son g\'en\'erateur, qui s'av\`ere \^etre un
op\'erateur diff\'erentiel du second ordre dans le cas des diffusions. Il en
r\'esulte un ensemble de liens importants entre \'equations diff\'erentielles
stochastiques et \'equations aux d\'eriv\'ees partielles. 


\section{La propri\'et\'e de Markov}
\label{sec_diffMarkov}

\begin{definition}[Diffusion d'It\^o]
Une \defwd{diffusion d'It\^o homog\`ene dans le temps} est un processus
stochastique $\set{X_t(\omega)}_{t\geqs0}$ satisfaisant une \'equation
diff\'erentielle stochastique de la forme 
\begin{equation}
\label{diffM01}
\6X_t = f(X_t)\6t + g(X_t)\6B_t\;, 
\qquad t\geqs s>0\;, \quad X_s = x\;,
\end{equation}
o\`u $B_t$ est un mouvement Brownien standard de dimension $m$, et
le \defwd{coefficient de d\'erive} 
$f:\R^n\to\R^n$ et le \defwd{coefficient de diffusion} $g:\R^n\to\R^{n\times m}$
sont tels que l'EDS~\eqref{diffM01} admette une unique solution en tout temps.
\end{definition}

Nous noterons la solution de~\eqref{diffM01} $X^{s,x}_t$. L'homog\'en\'eit\'e
en temps, c'est-\`a-dire le fait que $f$ et $g$ ne d\'ependent pas du temps, a
la cons\'equence importante suivante. 

\begin{lemma}
\label{lem_law_diffusion} 
Les processus $\set{X^{s,x}_{s+h}}_{h\geqs0}$ et $\set{X^{0,x}_h}_{h\geqs0}$
ont la m\^eme loi. 
\end{lemma}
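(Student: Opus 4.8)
The statement says that for a time-homogeneous Itô diffusion, the process $\{X^{s,x}_{s+h}\}_{h\geqs0}$ has the same law as $\{X^{0,x}_h\}_{h\geqs0}$. The natural approach is to exhibit an explicit coupling: I will show that both processes solve the "same" stochastic differential equation — one driven by the Brownian motion $B$, the other by a time-shifted Brownian motion $\widetilde B$ — and then invoke the uniqueness of strong solutions (Theorem~\ref{thm_edsf}) together with the fact that $\widetilde B$ is again a standard Brownian motion (the differential property of the Brownian motion, stated in Section~\ref{sec_mbp}).

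First I would fix $s\geqs0$ and set $\widetilde B_h = B_{s+h} - B_s$ for $h\geqs0$. By the differential property of the Brownian motion, $\{\widetilde B_h\}_{h\geqs0}$ is a standard Brownian motion. Then I would define $\widetilde X_h = X^{s,x}_{s+h}$ and compute, starting from the integral form~\eqref{edsf3} of the defining equation and using the additivity of both the Lebesgue integral and the Itô integral (property~\eqref{elito3a}), that
\begin{equation}
\label{plan_diffM1}
\widetilde X_h = x + \int_s^{s+h} f(X^{s,x}_u)\6u + \int_s^{s+h} g(X^{s,x}_u)\6B_u\;.
\end{equation}
The deterministic integral transforms by the ordinary change of variables $u = s+r$ into $\int_0^h f(\widetilde X_r)\6r$. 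For the stochastic integral I would argue — at the level of simple non-anticipative functionals, then passing to the limit as in the construction of the Itô integral in Section~\ref{sec_ito} — that $\int_s^{s+h} g(X^{s,x}_u)\6B_u = \int_0^h g(\widetilde X_r)\6\widetilde B_r$, since the increments of $B$ past time $s$ are exactly the increments of $\widetilde B$. This gives that $\widetilde X$ is a strong solution of $\6\widetilde X_h = f(\widetilde X_h)\6h + g(\widetilde X_h)\6\widetilde B_h$ with $\widetilde X_0 = x$, with respect to the filtration generated by $\widetilde B$.

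Now $X^{0,x}$ is by definition the (unique) strong solution of $\6Y_h = f(Y_h)\6h + g(Y_h)\6B_h$, $Y_0=x$. Since the coefficients $f,g$ are assumed to be such that the SDE has a unique solution, the solution is a deterministic measurable functional $\Phi$ of the initial condition and the driving Brownian path, i.e. $X^{0,x} = \Phi(x,B)$ and $\widetilde X = \Phi(x,\widetilde B)$ as processes (this is the content of the uniqueness part of Theorem~\ref{thm_edsf}, applied pathwise; alternatively one invokes a Yamada–Watanabe-type statement, but in this course it is enough to read uniqueness as saying the solution map is path-deterministic). Since $\widetilde B$ has the same law as $B$ on path space, applying the measurable map $\Phi(x,\cdot)$ shows that $\widetilde X$ and $X^{0,x}$ have the same law, which is exactly the claim.

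The main obstacle is the rigorous justification of the change-of-variable identity $\int_s^{s+h} g(X^{s,x}_u)\6B_u = \int_0^h g(\widetilde X_r)\6\widetilde B_r$ for the Itô integral: one must check it first for simple integrands (where it is a finite sum that rearranges directly using $B_{s+t_k}-B_{s+t_{k-1}} = \widetilde B_{t_k}-\widetilde B_{t_{k-1}}$), then verify that the approximating sequences can be chosen compatibly so that the $L^2$-limits match. A secondary subtlety, which I would handle by a brief remark rather than a full proof, is the passage from "uniqueness of the solution" to "the solution is a fixed measurable functional of the driving path" — in the Lipschitz setting of Theorem~\ref{thm_edsf} this follows from the Picard iteration~\eqref{edsf8:5}, each step of which is an explicit measurable functional of $(X_0,B)$, so the limit is too.
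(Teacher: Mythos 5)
Your proof follows essentially the same route as the paper: rewrite $X^{s,x}_{s+h}$ via the change of variable $u=s+v$ as a solution of the same SDE driven by the shifted Brownian motion $\widetilde B_v=B_{s+v}-B_s$, then invoke the differential property and uniqueness of solutions to conclude equality in law. Your added care about the change of variable in the It\^o integral and about reading uniqueness as a path-deterministic solution map only makes explicit what the paper's terse proof leaves implicit.
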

\begin{proof}
Par d\'efinition, $X^{0,x}_h$ satisfait l'\'equation int\'egrale 
\begin{equation}
 \label{diffM02:1} 
X^{0,x}_h = x + \int_0^h f(X^{0,x}_v)\6v + \int_0^h g(X^{0,x}_v)\6B_v\;.
\end{equation} 
De m\^eme, $X^{s,x}_{s+h}$ satisfait l'\'equation
\begin{align}
\nonumber
X^{s,x}_{s+h} &= x + \int_s^{s+h} f(X^{s,x}_u)\6u + \int_s^{s+h}
g(X^{s,x}_u)\6B_u \\
&= x + \int_0^h f(X^{s,x}_{s+v})\6v + \int_0^h
g(X^{s,x}_{s+v})\6\widetilde B_v
\label{diffM02:2} 
\end{align} 
o\`u nous avons utilis\'e le changement de variable $u=s+v$, et $\widetilde B_v
= B_{s+v} - B_s$. Par la propri\'et\'e diff\'erentielle, $\widetilde B_v$ est
un mouvement Brownien standard, donc par unicit\'e des solutions de
l'EDS~\eqref{diffM01}, les int\'egrales~\eqref{diffM02:2} et~\eqref{diffM02:1}
ont la m\^eme loi. 
\end{proof}

Nous noterons $\fP^{\mskip1.5mu x}$ la mesure de probabilit\'e sur la tribu
engendr\'ee par toutes les variables al\'eatoires $X^{0,x}_t$, $t\geqs0$,
$x\in\R^n$, d\'efinie par 
\begin{equation}
 \label{diffM03}
\bigprobin{x}{X_{t_1}\in A_1,\dots,X_{t_k}\in A_k} 
= \bigprob{X^{0,x}_{t_1}\in A_1,\dots,X^{0,x}_{t_k}\in A_k} 
\end{equation} 
pour tout choix de temps $0\leqs t_1 < t_2 < \dots < t_k$ et de bor\'eliens
$A_1, \dots,A_k\subset\R^n$. Les esp\'erances par rapport \`a $\fP^{\mskip1.5mu
x}$ seront not\'ees $\E^{\mskip1.5mu x}$. 

\begin{theorem}[Propri\'et\'e de Markov pour les diffusions d'It\^o]
\label{thm_Markov_diffusion} 
Pour toute fonction mesurable born\'ee $\varphi:\R^n\to\R$, 
\begin{equation}
 \label{diffM04}
\bigecondin{x}{\varphi(X_{t+h})}{\cF_t}(\omega) 
= \bigexpecin{X_t(\omega)}{\varphi(X_h)}\;,
\end{equation} 
le membre de droite d\'esignant la fonction 
$\expecin{y}{\varphi(X_h)}$ \'evalu\'ee en $y=X_t(\omega)$. 
\end{theorem}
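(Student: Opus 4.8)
La stratégie consiste à se ramener d'abord au cas d'une fonctionnelle ne dépendant que d'une valeur finale, puis à exploiter le Lemme~\ref{lem_law_diffusion} combiné à l'unicité des solutions fortes (Théorème~\ref{thm_edsf}) et à la propriété différentielle du mouvement Brownien. D'abord je poserais $\widetilde B_v = B_{t+v} - B_t$, qui est, par la propriété différentielle, un mouvement Brownien standard indépendant de $\cF_t$. La solution $X^{0,x}_{t+h}$ satisfait, par additivité de l'intégrale d'It\^o,
\begin{equation}
\label{markovdiff1}
X^{0,x}_{t+h} = X^{0,x}_t + \int_0^h f(X^{0,x}_{t+v})\6v
+ \int_0^h g(X^{0,x}_{t+v})\6\widetilde B_v\;.
\end{equation}
Autrement dit, le processus $\set{X^{0,x}_{t+h}}_{h\geqs0}$ est solution de la m\^eme EDS que $\set{X^{0,y}_h}_{h\geqs0}$, mais avec condition initiale al\'eatoire $y=X^{0,x}_t(\omega)$ et mouvement Brownien directeur $\widetilde B$. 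C'est ici l'argument-cl\'e~: par unicit\'e de la solution forte, $X^{0,x}_{t+h}$ ne d\'epend de $\cF_t$ qu'\`a travers la valeur $X^{0,x}_t$, et ce de la m\^eme mani\`ere fonctionnelle que $X^{0,y}_h$ d\'epend de $y$.

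Ensuite je formaliserais ceci comme suit. Soit $\Phi:\R^n\times\cC([0,\infty),\R^m)\to\R$ la fonctionnelle mesurable telle que $X^{0,y}_h = \Phi(y,\widetilde B)$ pour tout $y$ (elle existe car la solution forte est une fonctionnelle mesurable de la condition initiale et du brownien, comme le montre la construction it\'erative~\eqref{edsf8:5} dans la preuve du Th\'eor\`eme~\ref{thm_edsf}). La relation~\eqref{markovdiff1} et l'unicit\'e donnent alors $X^{0,x}_{t+h}(\omega) = \Phi(X_t(\omega),\widetilde B(\omega))$ presque s\^urement. Comme $\widetilde B$ est ind\'ependant de $\cF_t$ et que $X_t$ est $\cF_t$-mesurable, on a pour toute fonction mesurable born\'ee $\varphi$
\begin{equation}
\label{markovdiff2}
\bigecondin{x}{\varphi(X_{t+h})}{\cF_t}(\omega)
= \bigecondin{x}{\varphi(\Phi(X_t,\widetilde B))}{\cF_t}(\omega)
= \Bigexpec{\varphi(\Phi(y,\widetilde B))}\Big|_{y=X_t(\omega)}\;,
\end{equation}
par la propri\'et\'e classique de l'esp\'erance conditionnelle par rapport \`a une tribu ind\'ependante (variante du troisi\`eme point de l'Exemple~\ref{ex_ec1}, appliqu\'ee \`a la paire $(X_t,\widetilde B)$). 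Mais $\expec{\varphi(\Phi(y,\widetilde B))} = \expec{\varphi(X^{0,y}_h)} = \expecin{y}{\varphi(X_h)}$ par d\'efinition de $\fP^{\mskip1.5mu y}$, ce qui est exactement~\eqref{diffM04}.

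\textbf{Point d\'elicat.} L'obstacle principal est la justification rigoureuse de l'\'etape~\eqref{markovdiff2}, c'est-\`a-dire le passage de \lq\lq $Z = \Phi(X_t,\widetilde B)$ avec $\widetilde B\perp\cF_t$ et $X_t\measurable\cF_t$\rq\rq\ \`a \lq\lq $\econd{\varphi(Z)}{\cF_t} = \psi(X_t)$ o\`u $\psi(y)=\expec{\varphi(\Phi(y,\widetilde B))}$\rq\rq. Cela rel\`eve du lemme de \lq\lq freezing\rq\rq\ (ou substitution), qui se d\'emontre d'abord pour $\varphi$ et $\Phi$ de la forme produit d'indicatrices, puis s'\'etend par classe monotone~; il faut aussi v\'erifier que $\psi$ est bien mesurable, ce qui d\'ecoule du th\'eor\`eme de Fubini appliqu\'e \`a la loi jointe. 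Un second point technique, plus mineur, est de s'assurer que la solution forte admet effectivement une telle repr\'esentation fonctionnelle $\Phi$ mesurable conjointement en $(y,\text{chemin brownien})$~: ceci est fourni par la limite dans $L^2(\fP\times\lambda)$ de la suite de Picard~\eqref{edsf8:5}, chaque it\'er\'ee \'etant manifestement une fonctionnelle mesurable de $X_0$ et de $B$, et la limite en restant une (quitte \`a extraire une sous-suite convergeant presque s\^urement).
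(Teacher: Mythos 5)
Your proposal is correct and follows essentially the same route as the paper: write the solution after time $t$, via strong uniqueness, as a measurable functional of $X_t$ and of the post-$t$ Brownian increments (the paper's $F(y,t,t+h,\omega)$ is your $\Phi(y,\widetilde B)$), apply the freezing/substitution property of conditional expectation with respect to the independent tribe (the paper proves exactly this step by first treating product functions $\phi(y)\psi(\omega)$ and extending by approximation), and identify the resulting function of $y$ with $\expecin{y}{\varphi(X_h)}$ through the time-homogeneity Lemma~\ref{lem_law_diffusion}. The only nuance is your phrase \lq\lq $X^{0,y}_h=\Phi(y,\widetilde B)$\rq\rq, which should read $X^{0,y}_h=\Phi(y,B)$ with the equality $\expec{\varphi(\Phi(y,\widetilde B))}=\expec{\varphi(\Phi(y,B))}$ holding in law because $\widetilde B$ is again a standard Brownian motion -- precisely the content of the lemma you cite.
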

\begin{proof}
Consid\'erons pour $y\in\R^n$ et $s\geqs t$ la fonction 
\begin{equation}
 \label{diffM04:1}
F(y,t,s,\omega) = X^{t,y}_s(\omega) 
= y + \int_t^s f(X_u(\omega))\6u + \int_t^s g(X_u(\omega))\6B_u(\omega)\;. 
\end{equation} 
On notera que $F$ est ind\'ependante de $\cF_t$. Par unicit\'e des solutions de
l'EDS~\eqref{diffM01}, on a 
\begin{equation}
 \label{diffM04:2}
X_s(\omega) = F(X_t(\omega),t,s,\omega)\;. 
\end{equation} 
Posons $g(y,\omega) = \varphi\circ F(y,t,t+h,\omega)$. On v\'erifie que cette
fonction est mesurable. 
La relation~\eqref{diffM04} est alors \'equivalente \`a
\begin{equation}
 \label{diffM04:3}
\bigecond{g(X_t,\omega)}{\cF_t} = \bigexpec{\varphi\circ F(y,0,h,\omega)}
\Bigevalat{y=X_t(\omega)}\;.
\end{equation} 
On a 
\begin{equation}
 \label{diffM04:4}
 \bigecond{g(X_t,\omega)}{\cF_t} = \bigecond{g(y,\omega)}{\cF_t}
\Bigevalat{y=X_t(\omega)}\;.
\end{equation} 
En effet, cette relation est vraie pour des fonctions de la forme
$g(y,\omega)=\phi(y)\psi(\omega)$, puisque 
\begin{equation}
 \label{diffM04:5}
 \bigecond{\phi(X_t)\psi(\omega)}{\cF_t}  
= \phi(X_t)\bigecond{\psi(\omega)}{\cF_t}
= \bigecond{\phi(y)\psi(\omega)}{\cF_t}
\Bigevalat{y=X_t(\omega)}\;.
\end{equation} 
Elle s'\'etend alors \`a toute fonction mesurable born\'ee en approximant
celle-ci par une suite de combinaisons lin\'eaires de fonctions comme
ci-dessus. Or il suit de l'ind\'ependance de $F$ et de $\cF_t$ que  
\begin{align}
\nonumber
\bigecond{g(y,\omega)}{\cF_t}
&= \bigexpec{g(y,\omega)} \\
\nonumber
&= \bigexpec{\varphi\circ F(y,t,t+h,\omega)} \\
&= \bigexpec{\varphi\circ F(y,0,h,\omega)}\;,
 \label{diffM04:6}
\end{align}
la derni\`ere \'egalit\'e suivant du Lemme~\ref{lem_law_diffusion}. Le
r\'esultat s'obtient alors en \'evaluant la derni\`ere \'egalit\'e en $y=X_t$. 
\end{proof}

Comme pour le mouvement Brownien, la propri\'et\'e de Markov se g\'en\'eralise
\`a des temps d'arr\^et. 

\begin{theorem}[Propri\'et\'e de Markov forte pour les diffusions d'It\^o]
\label{thm_strong_Markov_diffusion} 
Pour toute fonction mesurable born\'ee $\varphi:\R^n\to\R$ et tout temps
d'arr\^et $\tau$ fini presque s\^urement, 
\begin{equation}
 \label{diffM05}
\bigecondin{x}{\varphi(X_{\tau+h})}{\cF_\tau}(\omega) 
= \bigexpecin{X_\tau(\omega)}{\varphi(X_h)}\;.
\end{equation} 
\end{theorem}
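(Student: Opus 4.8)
The plan is to imitate, almost word for word, the proof of Theorem~\ref{thm_Markov_diffusion}, with the deterministic time $t$ replaced by the stopping time $\tau$. The single new ingredient needed to make that substitution legitimate is the strong Markov property of Brownian motion (Theorem~\ref{thm_markov_fort}): on the a.s.\ event $\set{\tau<\infty}$ the process $\widetilde B_v=B_{\tau+v}-B_\tau$, $v\geqs0$, is again a standard Brownian motion, and it is independent of $\cF_\tau$. It then suffices, exactly as in~\eqref{diffM04:3}, to prove the identity conditionally on $\cF_\tau$ after freezing the starting point.

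First I would set, for $y\in\R^n$, $F(y,\tau,\tau+h,\omega)=X^{\tau,y}_{\tau+h}(\omega)$, the value at time $\tau+h$ of the solution of~\eqref{diffM01} started from $y$ at time $\tau$; by the change of variables $u=\tau+w$ used in~\eqref{diffM02:2} this equals the value at time $h$ of the solution started from $y$ at time $0$ and \emph{driven by $\widetilde B$}, so that $\expec{\varphi\bigpar{F(y,\tau,\tau+h,\cdot)}}=\expecin{y}{\varphi(X_h)}$ by Lemma~\ref{lem_law_diffusion} and, most importantly, for each fixed $y$ the random variable $\omega\mapsto F(y,\tau,\tau+h,\omega)$ is a measurable function of $\widetilde B$ only, hence independent of $\cF_\tau$. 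Next I would invoke uniqueness of solutions to identify $X_{\tau+h}=F(X_\tau,\tau,\tau+h,\cdot)$: the process $Y_v:=X_{\tau+v}$ satisfies, after the same change of variables,
\[
Y_v = X_\tau + \int_0^v f(Y_w)\,\6w + \int_0^v g(Y_w)\,\6{\widetilde B_w}\;,
\]
so $Y$ is \emph{the} solution of the SDE started from $X_\tau$ and driven by $\widetilde B$. With $\psi(y,\omega)=\varphi\bigpar{F(y,\tau,\tau+h,\omega)}$ the claim~\eqref{diffM05} becomes $\econd{\psi(X_\tau,\omega)}{\cF_\tau}=\bigexpec{\psi(y,\omega)}\Bigevalat{y=X_\tau(\omega)}$, which I would establish first for $\psi$ of product type $\phi(y)\rho(\omega)$ — using $X_\tau\measurable\cF_\tau$ and the independence of $F(y,\tau,\tau+h,\cdot)$ from $\cF_\tau$ — and then for general bounded measurable $\psi$ by approximating with linear combinations of such products, exactly as in~\eqref{diffM04:4}--\eqref{diffM04:6}. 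Evaluating at $y=X_\tau$ and recalling the identity of laws above finishes it.

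The hard part will be the one step that is genuinely new relative to the deterministic-time proof: justifying that the It\^o integral transforms correctly under the time shift by $\tau$, i.e.\ $\int_\tau^{\tau+v}g(X_u)\,\6B_u=\int_0^v g(X_{\tau+u})\,\6{\widetilde B_u}$, which underlies both the display for $Y_v$ above and the independence of $F$ from $\cF_\tau$. I would prove it by first checking it for simple non-anticipative integrands, where it is immediate from the definition~\eqref{ito4} together with the fact that the increments of $\widetilde B$ are independent of $\cF_\tau$, and then passing to the $L^2$ limit via the It\^o isometry and~\eqref{elito3}. For a reader who prefers to sidestep this technicality I would record the alternative discretization argument: approximate $\tau$ from above by the dyadic stopping times $\tau_n=2^{-n}\bigpar{\intpart{2^n\tau}+1}\searrow\tau$; since $\cF_\tau\subset\cF_{\tau_n}$, for $A\in\cF_\tau$ one has $A\cap\set{\tau_n=k2^{-n}}\in\cF_{k2^{-n}}$, so splitting the expectation over the grid and applying Theorem~\ref{thm_Markov_diffusion} at each time $k2^{-n}$ gives $\expecin{x}{\varphi(X_{\tau_n+h})\indicator{A}}=\expecin{x}{g(X_{\tau_n})\indicator{A}}$ with $g(y)=\expecin{y}{\varphi(X_h)}$; letting $n\to\infty$ and using continuity of the trajectories (Theorem~\ref{thm_ito}) together with $\varphi$ continuous and bounded and continuity of $g$ — the Feller property, itself a Gronwall-type consequence of continuous dependence on the initial condition as in Theorem~\ref{thm_edsf} — yields~\eqref{diffM05} for $\varphi\in C_b$, whence for all bounded Borel $\varphi$ by a monotone-class argument.
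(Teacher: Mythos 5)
Your proposal is correct and follows essentially the route the paper itself indicates: the text gives no detailed argument but states that the proof is a direct adaptation of the preceding (simple) Markov property proof, referring to \O ksendal's Theorem~7.2.4, and your first argument is precisely that adaptation, with the strong Markov property of $B_t$ (Th\'eor\`eme~\ref{thm_markov_fort}) supplying the shifted Brownian motion $\widetilde B$ independent of $\cF_\tau$ and the time-shift identity for the It\^o integral correctly identified as the only genuinely new technical step. Your alternative dyadic discretization via $\tau_n\searrow\tau$, the simple Markov property at grid times, the Feller continuity of $y\mapsto\expecin{y}{\varphi(X_h)}$ and a monotone-class extension is also sound, just not the path the paper points to.
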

\begin{proof}
La preuve est une adaptation relativement directe de la preuve pr\'ec\'e\-dente.
Voir par exemple~\cite[Theorem~7.2.4]{Oksendal}.
\end{proof}


\section{Semigroupes et g\'en\'erateurs}
\label{sec_diffgen}

\begin{definition}[Semi-groupe de Markov]
\label{def_semigroup}
A toute fonction mesurable born\'ee $\varphi:\R^n\to\R$, on associe pour tout
$t\geqs0$ la fonction $T_t\varphi$ d\'efinie par 
\begin{equation}
 \label{diffsg1}
(T_t\varphi)(x) = \bigexpecin{x}{\varphi(X_t)}\;. 
\end{equation}  
L'op\'erateur lin\'eaire $T_t$ est appel\'e le \defwd{semi-groupe de Markov}
associ\'e \`a la diffusion. 
\end{definition}

Par exemple, si $\varphi(x)=\indicator{A}(x)$ est la fonction indicatrice d'un
Bor\'elien $A\subset\R^n$, on a 
\begin{equation}
 \label{diffsg2}
(T_t\indicator{A})(x) = \bigprobin{x}{X_t\in A}\;. 
\end{equation} 

Le nom de semi-groupe est justifi\'e par le r\'esultat suivant.

\begin{lemma}[Propri\'et\'e de semi-groupe]
Pour tous $t, h\geqs 0$, on a 
\begin{equation}
 \label{diffsg3}
T_h\circ T_t = T_{t+h}\;. 
\end{equation} 
\end{lemma}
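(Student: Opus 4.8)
The plan is to prove the semigroup property $T_h\circ T_t = T_{t+h}$ by combining the definition of the Markov semigroup with the Markov property for It\^o diffusions (Theorem~\ref{thm_Markov_diffusion}) and the tower property of conditional expectation. The essential idea is that $(T_{t+h}\varphi)(x) = \bigexpecin{x}{\varphi(X_{t+h})}$ can be computed by first conditioning on $\cF_t$, i.e.\ by revealing the trajectory up to time $t$, and then using that the evolution on $[t,t+h]$, started from $X_t$, has the same law as a fresh diffusion run for time $h$.

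First I would fix a bounded measurable $\varphi:\R^n\to\R$ and an $x\in\R^n$. Using the definition~\eqref{diffsg1} and the tower property $\bigexpecin{x}{Z} = \bigexpecin{x}{\bigecondin{x}{Z}{\cF_t}}$, I would write
\begin{equation}
\label{diffsg3:1}
(T_{t+h}\varphi)(x) = \bigexpecin{x}{\varphi(X_{t+h})}
= \bigexpecin{x}{\bigecondin{x}{\varphi(X_{t+h})}{\cF_t}}\;.
\end{equation}
Next I would apply the Markov property, Theorem~\ref{thm_Markov_diffusion}, which gives
\begin{equation}
\label{diffsg3:2}
\bigecondin{x}{\varphi(X_{t+h})}{\cF_t}(\omega)
= \bigexpecin{X_t(\omega)}{\varphi(X_h)} = (T_h\varphi)(X_t(\omega))\;,
\end{equation}
the last equality again being just the definition~\eqref{diffsg1} of $T_h$. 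Substituting~\eqref{diffsg3:2} into~\eqref{diffsg3:1} yields
\begin{equation}
\label{diffsg3:3}
(T_{t+h}\varphi)(x) = \bigexpecin{x}{(T_h\varphi)(X_t)} = \bigpar{T_t(T_h\varphi)}(x)\;,
\end{equation}
where in the last step I use that $T_h\varphi$ is itself a bounded measurable function (bounded because $\abs{(T_h\varphi)(y)}\leqs\norm{\varphi}_\infty$, measurable because $y\mapsto\bigexpecin{y}{\varphi(X_h)}$ is measurable, a point that in principle should be checked but is standard and implicit in the setup). Since $x$ was arbitrary, this proves $T_{t+h} = T_t\circ T_h = T_h\circ T_t$ (the two orders agree by symmetry of the argument, or simply by relabelling $t$ and $h$).

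I do not expect any genuine obstacle here: the lemma is an immediate corollary of the Markov property already proved. The only point requiring a word of care is the measurability and boundedness of $T_h\varphi$, needed so that both the conditional expectation in~\eqref{diffsg3:2} and the outer expectation in~\eqref{diffsg3:3} make sense; this is exactly the reason the semigroup is defined on \emph{bounded measurable} functions, and it follows from the measurable dependence of the solution on the initial condition together with Fubini. One might also remark that the computation shows the stronger statement that $T_t$ acts as expected on the whole algebra of bounded measurable functions, not merely on indicators, which is what~\eqref{diffsg2} records in the special case $\varphi=\indicator{A}$.
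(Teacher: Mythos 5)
Your proof is correct and is essentially the paper's own argument: both rest on the definition of $T_t$, the tower property of conditional expectation, and the Markov property (Theorem~\ref{thm_Markov_diffusion}), the only difference being that you run the chain of equalities from $(T_{t+h}\varphi)(x)$ towards $(T_t(T_h\varphi))(x)$ while the paper goes in the opposite direction starting from $(T_h\circ T_t)\varphi(x)$. Your added remark on the boundedness and measurability of $T_h\varphi$ is a legitimate point that the paper leaves implicit.
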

\begin{proof}
On a 
\begin{align}
\nonumber
(T_h\circ T_t)(\varphi)(x)
&= (T_h(T_t\varphi))(x) \\
\nonumber
&= \bigexpecin{x}{(T_t\varphi)(X_h)} \\
\nonumber
&= \bigexpecin{x}{\bigexpecin{X_h}{\varphi(X_t)}} \\
\nonumber
&= \bigexpecin{x}{\bigecondin{x}{\varphi(X_{t+h})}{\cF_t}} \\
\nonumber
&= \bigexpecin{x}{\varphi(X_{t+h})} \\
&= (T_{t+h}\varphi)(x)\;,
\label{diffsg3:1} 
\end{align}
o\`u l'on a utilis\'e la propri\'et\'e de Markov pour passer de la troisi\`eme
\`a la quatri\`eme ligne. 
\end{proof}

De plus, on v\'erifie facilement les propri\'et\'es suivantes:
\begin{enum}
\item	$T_t$ pr\'eserve les fonctions constantes: $T_t (c\indicator{\R^n}) =
c\indicator{\R^n}$;
\item	$T_t$ pr\'eserve les fonctions non-n\'egatives: $\varphi(x)\geqs0
\;\forall x \Rightarrow (T_t\varphi)(x)\geqs 0 \;\forall x$;
\item	$T_t$ est contractante par rapport \`a la norme $L^\infty$:
\begin{equation}
 \label{diffsg4}
\sup_{x\in\R^n} \bigabs{(T_t\varphi)(x)} = \sup_{x\in\R^n} 
\bigabs{\bigexpecin{x}{\varphi(X_t)}}
\leqs \sup_{y\in\R^n}\bigabs{\varphi(y)} 
\sup_{x\in\R^n} \bigexpecin{x}{1} = \sup_{y\in\R^n}\bigabs{\varphi(y)}\;.
\end{equation} 
\end{enum}
Le semi-groupe de Markov est donc un op\'erateur lin\'eaire positif, born\'e par
rapport \`a la norme~$L^\infty$. En fait, il est de norme op\'erateur $1$. 

La propri\'et\'e de semi-groupe implique que le comportement de $T_t$ sur tout
intervalle $[0,\eps]$, avec $\eps>0$ arbitrairement petit, d\'etermine son
comportement pour tout $t\geqs0$. Il est donc naturel de consid\'erer la
d\'eriv\'ee de $T_t$ en $t=0$. 

\begin{definition}[G\'en\'erateur d'une diffusion d'It\^o]
\label{def_generateur}
Le \defwd{g\'en\'erateur infinit\'esimal} $L$ d'une diffusion d'It\^o est
d\'efini par son action sur une fonction test $\varphi$ via 
\begin{equation}
 \label{diffsg5}
(L\varphi)(x) = \lim_{h\to0_+}
\frac{(T_h\varphi)(x) - \varphi(x)}{h}\;.
\end{equation}  
Le domaine de $L$ est par d\'efinition l'ensemble des fonctions $\varphi$ pour
lesquelles la limite~\eqref{diffsg5} existe pour tout $x\in\R^n$. 
\end{definition}

\begin{remark}
\label{rem_generator} 
Formellement, la relation~\eqref{diffsg5} peut s'\'ecrire
\begin{equation}
 \label{diffsg5A}
L = \dtot{T_t}{t}\Bigevalat{t=0}\;.
\end{equation} 
Par la propri\'et\'e de Markov, cette relation se g\'en\'eralise en 
\begin{equation}
 \label{diffsg5B} 
\dtot{}{t} T_t = \lim_{h\to0_+} \frac{T_{t+h}-T_t}{h}
= \lim_{h\to0_+} \frac{T_h-\id}{h}T_t
= LT_t\;,
\end{equation}
et on peut donc \'ecrire formellement 
\begin{equation}
 \label{diffsg5C}
T_t = \e^{tL}\;. 
\end{equation} 
Nous pr\'eciserons ce point dans la Section~\ref{sec_diffKolmogorov}.
\end{remark}

\begin{prop}
\label{prop_generator_Ito}
Le g\'en\'erateur de la diffusion d'It\^o~\eqref{diffM01} est l'op\'erateur
diff\'eren\-tiel 
\begin{equation}
 \label{diffsg6}
L = \sum_{i=1}^n f_i(x) \dpar{}{x_i}
+ \frac{1}{2} \sum_{i,j=1}^n (gg^T)_{ij}(x) \dpar{^2}{x_i\partial x_j}\;. 
\end{equation} 
Le domaine de $L$ contient l'ensemble des fonctions deux fois contin\^ument
diff\'erentiables \`a support compact. 
\end{prop}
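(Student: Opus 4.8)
The strategy is to apply the Itô formula (Lemma~\ref{lem_Ito}) to the test function $\varphi$ along the diffusion $X_t^{0,x}$, take expectations under $\fP^x$, and then differentiate at $t=0$. First I would fix $\varphi\in C^2$ with compact support and, for notational simplicity, present the computation in dimension one (the multidimensional case being identical up to carrying indices and using the multidimensional Itô formula~\eqref{fito8}); I would also write $f=f(X_s)$, $g=g(X_s)$ since the diffusion is time-homogeneous. Applying Itô's formula to $Y_s=\varphi(X_s)$ gives
\begin{equation}
\varphi(X_t) = \varphi(x) + \int_0^t \varphi'(X_s) f(X_s)\,\6s
+ \int_0^t \varphi'(X_s) g(X_s)\,\6B_s
+ \frac12 \int_0^t \varphi''(X_s) g(X_s)^2 \,\6s\;.
\end{equation}
The key point is that the stochastic integral term has zero expectation: since $\varphi$ has compact support and $\varphi'$ is bounded, and because of the growth condition on $g$ together with square-integrability of the solution, the integrand $\varphi'(X_s)g(X_s)$ satisfies $\int_0^t \expecin{x}{(\varphi'(X_s)g(X_s))^2}\,\6s<\infty$, so by property~\eqref{elito4} the expectation of the Itô integral vanishes.

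Taking $\expecin{x}{\cdot}$ on both sides and using $(T_t\varphi)(x)=\expecin{x}{\varphi(X_t)}$, I obtain
\begin{equation}
(T_t\varphi)(x) - \varphi(x) = \int_0^t \Bigexpecin{x}{\varphi'(X_s)f(X_s) + \tfrac12 \varphi''(X_s) g(X_s)^2}\,\6s\;.
\end{equation}
Then I would divide by $t$, let $t\to 0_+$, and invoke continuity of $s\mapsto X_s$ (almost sure, from Theorem~\ref{thm_ito}) together with dominated convergence — legitimate because $\varphi',\varphi''$ are bounded with compact support and $f,g$ are locally bounded, so the integrand is bounded on the relevant compact set uniformly for small $s$ — to conclude that the right-hand side divided by $t$ converges to the value of the integrand at $s=0$, namely $\varphi'(x)f(x)+\tfrac12\varphi''(x)g(x)^2$. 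This shows that the limit defining $(L\varphi)(x)$ exists for every $x$, that $C^2$-functions with compact support lie in the domain of $L$, and that $L$ acts as the stated second-order differential operator; in dimension $n$ the same computation with~\eqref{fito8} produces the drift term $\sum_i f_i(x)\partial_{x_i}$ and the diffusion term $\tfrac12\sum_{i,j}(gg^T)_{ij}(x)\partial_{x_i}\partial_{x_j}$.

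The main obstacle is the justification of the interchange of limit and expectation, i.e.\ verifying the dominated-convergence hypotheses rigorously. One must control $\expecin{x}{\varphi'(X_s)f(X_s)}$ and $\expecin{x}{\varphi''(X_s)g(X_s)^2}$ uniformly for $s$ near $0$; since $\varphi$ has compact support $\cK$, the integrands vanish off $\set{X_s\in\cK}$, so only the behaviour of $f,g$ on $\cK$ matters, and these are bounded there (continuity suffices, or the growth bound~\eqref{edsf6}). Continuity of $s\mapsto\varphi^{(k)}(X_s)f\text{-or-}g(X_s)$ at $s=0$ follows from path continuity of $X$ and continuity of the ingredients. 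A minor additional point worth noting is that the same argument, applied on the interval $[0,t]$ rather than letting $t\to0$, already yields Dynkin's formula $\expecin{x}{\varphi(X_t)}=\varphi(x)+\expecin{x}{\int_0^t(L\varphi)(X_s)\,\6s}$, which is the natural companion identity; but for the present statement only the $t\to0_+$ limit is needed.
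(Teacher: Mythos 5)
Your argument is correct and follows essentially the same route as the paper: apply It\^o's formula to $\varphi(X_t)$, take expectations (the It\^o integral having zero mean), divide by $t$ and let $t\to0_+$, treating the multidimensional case via the multidimensional It\^o formula. The only difference is that you spell out the integrability and dominated-convergence justifications that the paper leaves implicit, which is a welcome refinement rather than a deviation.
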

\begin{proof}
Consid\'erons le cas $n=m=1$. Soit $\varphi$ une fonction deux fois
contin\^ument diff\'erentiable \`a support compact, et soit $Y_t=\varphi(X_t)$.
Par la formule d'It\^o, 
\begin{equation}
 \label{diffsg7:1}
Y_h = \varphi(X_0) + \int_0^h \varphi'(X_s)f(X_s)\6s 
+ \int_0^h \varphi'(X_s)g(X_s)\6B_s + \frac12\int_0^h
\varphi''(X_s)g(X_s)^2 \6s\;.
\end{equation}  
En prenant l'esp\'erance, comme l'esp\'erance de l'int\'egrale d'It\^o est
nulle, on trouve 
\begin{equation}
 \label{diffsg7:2}
\bigexpecin{x}{Y_h} = \varphi(x) + \biggexpecin{x}{
\int_0^h \varphi'(X_s) f(X_s)\6s + \frac12\int_0^h \varphi''(X_s) g(X_s)^2\6s
 }\;,
\end{equation}
d'o\`u 
\begin{equation}
 \label{diffsg7:3}
\frac{\bigexpecin{x}{\varphi(X_h)} - \varphi(x)}{h} 
= \frac 1h
\int_0^h \bigexpecin{x}{\varphi'(X_s) f(X_s)}\6s 
+ \frac1{2h}\int_0^h
\bigexpecin{x}{\varphi''(X_s) g(X_s)^2}\6s\;.
\end{equation}  
En prenant la limite $h\to0_+$, on obtient 
\begin{equation}
 \label{diffsg7:4}
(L\varphi)(x) = \varphi'(x)f(x) + \frac12 \varphi''(x)g(x)^2\;. 
\end{equation} 
Les cas o\`u $n\geqs2$ ou $m\geqs2$ se traitent de mani\`ere similaire, en
utilisant la formule d'It\^o multidimensionnelle. 
\end{proof}

\begin{example}[G\'en\'erateur du mouvement Brownien]
Soit $B_t$ le mouvement Brownien de dimension $m$. C'est un cas particulier de
diffusion, avec $f=0$ et $g=\one$. Son g\'en\'erateur est donn\'e par 
\begin{equation}
 \label{diffsg8}
L = \frac{1}{2}\sum_{i=1}^m \dpar{^2}{x_i^2} = \frac12\Delta\;. 
\end{equation} 
C'est donc le Laplacien \`a un facteur $1/2$ pr\`es. 
\end{example}


\section{La formule de Dynkin}
\label{sec_diffDynkin}

La formule de Dynkin est essentiellement une g\'en\'eralisation de
l'expression~\eqref{diffsg7:2} \`a des temps d'arr\^et. Elle fournit une
premi\`ere classe de liens entre diffusions et \'equations aux d\'eriv\'ees
partielles. 

\begin{prop}[Formule de Dynkin]
Soit $\set{X_t}_{t\geqs0}$ une diffusion de g\'en\'erateur $L$,
$x\in\R^n$, $\tau$ un temps d'arr\^et tel que $\expecin{x}{\tau}<\infty$, et
$\varphi:\R^n\to\R$ une fonction deux fois contin\^ument diff\'erentiable \`a
support compact. Alors 
\begin{equation}
 \label{diffD01}
\bigexpecin{x}{\varphi(X_\tau)} 
= \varphi(x) + \biggexpecin{x}{\int_0^\tau (L\varphi)(X_s)\6s}\;. 
\end{equation}  
\end{prop}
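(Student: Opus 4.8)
The idea is to combine the Itô formula (Lemma~\ref{lem_Ito}) applied to $\varphi$ with optional stopping, taking expectations under $\fP^{\mskip1.5mu x}$. The key technical point is that the Itô integral term, once stopped and shown to be a genuine (square-integrable) martingale, has zero expectation. First I would apply the Itô formula to the diffusion $\set{X_t}_{t\geqs0}$ started at $x$ under $\fP^{\mskip1.5mu x}$ with the time-independent test function $\varphi$. Since $\6X^{(i)}_t = f_i(X_t)\6t + \sum_k g_{ik}(X_t)\6B^{(k)}_t$, the formula gives, for every $t\geqs0$,
\begin{equation}
\varphi(X_t) = \varphi(x) + \int_0^t (L\varphi)(X_s)\6s
+ \sum_{i,k} \int_0^t \dpar{\varphi}{x_i}(X_s) g_{ik}(X_s)\6B^{(k)}_s\;,
\end{equation}
where $L$ is the generator from Proposition~\ref{prop_generator_Ito} (the drift part of the Itô formula plus the second-order part together reconstitute exactly $L\varphi$). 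Denote the martingale part by $M_t$, so that $\varphi(X_t) = \varphi(x) + \int_0^t (L\varphi)(X_s)\6s + M_t$.

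Next I would replace the fixed time $t$ by the stopped time $\tau\wedge t$. Because $M_t$ is a martingale (the Itô integral of a non-anticipative integrand is a martingale by the elementary properties of Section~\ref{sec_elito}), the stopped process $M_{\tau\wedge t}$ is again a martingale, so $\expecin{x}{M_{\tau\wedge t}} = 0$ for every $t$. Taking $\fP^{\mskip1.5mu x}$-expectations in the stopped identity yields
\begin{equation}
\bigexpecin{x}{\varphi(X_{\tau\wedge t})}
= \varphi(x) + \biggexpecin{x}{\int_0^{\tau\wedge t} (L\varphi)(X_s)\6s}\;.
\end{equation}
It remains to let $t\to\infty$ and pass the limit through both expectations. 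On the left, $\varphi$ is bounded (continuous with compact support), and $X_{\tau\wedge t}\to X_\tau$ almost surely on $\set{\tau<\infty}$, which has full measure since $\expecin{x}{\tau}<\infty$ implies $\tau<\infty$ $\fP^{\mskip1.5mu x}$-a.s.; dominated convergence gives $\expecin{x}{\varphi(X_{\tau\wedge t})}\to\expecin{x}{\varphi(X_\tau)}$. On the right, $L\varphi$ is also bounded (it is a fixed linear combination of $f_i\partial_i\varphi$ and $(gg^T)_{ij}\partial^2_{ij}\varphi$ restricted to the compact support of $\varphi$, and $f,g$ are continuous hence bounded on that compact set), say $\abs{L\varphi}\leqs C$; then $\bigabs{\int_0^{\tau\wedge t}(L\varphi)(X_s)\6s} \leqs C\tau$, which is integrable under $\fP^{\mskip1.5mu x}$ by hypothesis, so another application of dominated convergence (using $\int_0^{\tau\wedge t}(L\varphi)(X_s)\6s \to \int_0^\tau (L\varphi)(X_s)\6s$ a.s.) yields the result.

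\textbf{Main obstacle.} The delicate point is justifying $\expecin{x}{M_{\tau\wedge t}}=0$ rigorously: one must check that the integrand $\sum_{i,k}\dpar{\varphi}{x_i}(X_s)g_{ik}(X_s)$ satisfies the square-integrability condition $\int_0^t \expecin{x}{(\dots)^2}\6s < \infty$ that guarantees $M_t$ is an $L^2$-martingale (not merely a local martingale), so that stopping preserves the martingale property and the expectation vanishes. Here boundedness of $\nabla\varphi$ and of $g$ on the compact support of $\varphi$ does the job: $\dpar{\varphi}{x_i}(X_s)g_{ik}(X_s)$ is bounded, hence its square is bounded, hence the time-integral of its expectation over $[0,t]$ is finite. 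The rest is routine dominated-convergence bookkeeping, controlled entirely by the hypothesis $\expecin{x}{\tau}<\infty$ and the compact support of $\varphi$.
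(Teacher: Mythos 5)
Your proof is correct, and its core coincides with the paper's: apply the It\^o formula to $\varphi(X_t)$, recognize the drift plus second-order terms as $(L\varphi)(X_s)$, and eliminate the stochastic-integral term in expectation, using the boundedness of $\varphi' g$ (compact support of $\varphi$) together with $\expecin{x}{\tau}<\infty$. The only genuine difference is how the limit is organized. You stop at $\tau\wedge t$, where the stopped It\^o integral is $\int_0^t \indexfct{s\leqs\tau}\,\varphi'(X_s)g(X_s)\,\6B_s$, a zero-mean $L^2$ integral with bounded integrand, and then let $t\to\infty$ by dominated convergence applied to $\varphi(X_{\tau\wedge t})$ (bounded by $\sup\abs{\varphi}$) and to $\int_0^{\tau\wedge t}(L\varphi)(X_s)\,\6s$ (dominated by $C\tau\in L^1$). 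The paper instead writes the identity at time $\tau$ directly and then proves $\expecin{x}{\int_0^\tau h(X_s)\,\6B_s}=0$ for bounded $h$ by truncating at $\tau\wedge N$ and invoking the It\^o isometry, which gives $\expecin{x}{\bigl(\int_{\tau\wedge N}^{\tau}h(X_s)\,\6B_s\bigr)^2}\leqs M^2\,\expecin{x}{\tau-\tau\wedge N}\to0$. Your variant avoids the isometry estimate at the price of one extra dominated-convergence step on the Lebesgue terms, while the paper's variant avoids discussing the limit of $\varphi(X_{\tau\wedge t})$; both use $\expecin{x}{\tau}<\infty$ exactly where it is needed, and your identification of the square-integrability of the stopped martingale as the delicate point is precisely the issue the paper's truncation handles.
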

\begin{proof}
Consid\'erons le cas $n=m=1$, $m$ \'etant la dimension du mouvement Brownien.
En proc\'edant comme dans la preuve de la Proposition~\ref{prop_generator_Ito},
on obtient 
\begin{equation}
 \label{diffD01:1}
\bigexpecin{x}{\varphi(X_\tau)} = \varphi(x) 
+  \biggexpecin{x}{\int_0^\tau (L\varphi)(X_s)\6s}
+  \biggexpecin{x}{\int_0^\tau g(X_s)\varphi'(X_s)\6B_s}\;.
\end{equation} 
Il suffit donc de montrer que l'esp\'erance de l'int\'egrale stochastique est
nulle. Or pour toute fonction $h$ born\'ee par $M$ et tout $N\in\N$, on a 
\begin{equation}
 \label{diffD01:2}
\biggexpecin{x}{\int_0^{\tau\wedge N} h(X_s)\6B_s}
= \biggexpecin{x}{\int_0^N \indexfct{s<\tau}h(X_s)\6B_s} = 0\;,
\end{equation} 
en vertu de la $\cF_s$-mesurabilit\'e de $\indexfct{s<\tau}$ et $h(X_s)$.
De plus, 
\begin{align}
\nonumber
\biggexpecin{x}{\biggbrak{\int_0^\tau h(X_s)\6B_s - \int_0^{\tau\wedge
N} h(X_s)\6B_s}^2}
&= \biggexpecin{x}{\int_{\tau\wedge N}^\tau h(X_s)^2\6s} \\
&\leqs M^2 \bigexpecin{x}{\tau - \tau\wedge N}\;,
 \label{diffD01:3}
\end{align}
qui tend vers $0$ lorsque $N\to\infty$, en vertu de l'hypoth\`ese
$\expecin{x}{\tau}<\infty$, par convergence domin\'ee. On peut donc \'ecrire 
\begin{equation}
 \label{diffD01:4}
0 = \lim_{N\to\infty}  \biggexpecin{x}{\int_0^{\tau\wedge N} h(X_s)\6B_s}
= \biggexpecin{x}{\int_0^{\tau} h(X_s)\6B_s}\;,
\end{equation} 
ce qui conclut la preuve, en substituant dans~\eqref{diffD01:1}. La preuve du
cas g\'en\'eral est analogue. 
\end{proof}

Consid\'erons le cas o\`u le temps d'arr\^et $\tau$ est le temps de premi\`ere
sortie d'un ouvert born\'e $D\subset\R^n$. Supposons que le probl\`eme avec
conditions au bord 
\begin{align}
\nonumber
(Lu)(x) &= \theta(x) & x&\in D \\
u(x) &= \psi(x) & x&\in\partial D
\label{diffD02}
\end{align}
admet une unique solution. C'est le cas si $D$, $\theta$ et $\psi$ 
sont suffisamment r\'eguliers. Substituant $\varphi$ par $u$ dans la formule de
Dynkin, on obtient la relation 
\begin{equation}
 \label{diffD03}
u(x) = \biggexpecin{x}{\psi(X_\tau) - \int_0^\tau \theta(X_s)\6s}\;. 
\end{equation} 
Pour $\psi=0$ et $\theta=-1$, $u(x)$ est \'egal \`a l'esp\'erance de $\tau$,
partant de $x$. Pour $\theta=0$ et $\psi$ l'indicatrice d'une partie $A$ du
bord $\partial D$, $u(x)$ est la probabilit\'e de quitter $D$ par $A$.
Ainsi, si l'on sait r\'esoudre le probl\`eme~\eqref{diffD02}, on obtient des
informations sur le temps et le lieu de sortie de $D$. Inversement, en
simulant l'expression~\eqref{diffD03} par une m\'ethode de Monte-Carlo, on
obtient une approximation num\'erique de la solution du
probl\`eme~\eqref{diffD02}. 

\begin{example}[Temps de sortie moyen du mouvement Brownien d'une boule]
\label{ex_MB_exit_ball}
Soit $K=\setsuch{x\in\R^n}{\norm{x}<R}$ la boule de rayon $R$ centr\'ee \`a
l'origine. Soit 
\begin{equation}
 \label{diffD04}
\tau_K = \inf\setsuch{t>0}{x+B_t\not\in K} 
\end{equation} 
et soit 
\begin{equation}
 \label{diffD05}
\tau(N) = \tau_K\wedge N\;. 
\end{equation} 
La fonction $\varphi(x)=\norm{x}^2\indexfct{\norm{x}\leqs R}$ est \`a
support compact et satisfait $\Delta \varphi(x)=2n$ pour $x\in K$. On peut par
ailleurs la prolonger en dehors de $K$ de mani\`ere qu'elle soit lisse et \`a
support compact. En substituant dans la formule de Dynkin, on obtient 
\begin{align}
 \nonumber
\bigexpecin{x}{\norm{x+B_{\tau(N)}}^2} &= \norm{x}^2 + 
\biggexpecin{x}{\int_0^{\tau(N)} \frac12\Delta\varphi(B_s)\6s} \\
&= \norm{x}^2 + n \bigexpecin{x}{\tau(N)}\;.
 \label{diffD06}
\end{align} 
Comme $\norm{x+B_{\tau(N)}}\leqs R$, 
faisant tendre $N$ vers l'infini, on obtient par convergence domin\'ee 
\begin{equation}
 \label{diffD07}
\bigexpecin{x}{\tau_K} = \frac{R^2 - \norm{x}^2}{n}\;. 
\end{equation} 
\end{example}

\begin{example}[R\'ecurrence/transience du mouvement Brownien]
\label{ex_MB_recurrent_transient} 
Soit \`a nouveau $K=\setsuch{x\in\R^n}{\norm{x}<R}$. Nous consid\'erons
maintenant le cas o\`u $x\not\in K$, et nous voulons d\'eterminer si le
mouvement Brownien partant de $x$ touche $K$ presque s\^urement, on dit
alors qu'il est \defwd{r\'ecurrent}, ou s'il touche $K$ avec une probabilit\'e
strictement inf\'erieure \`a $1$, on dit alors qu'il est \defwd{transient}.
Comme dans le cas des marches al\'eatoires, la r\'eponse d\'epend de la
dimension $n$ de l'espace. 

Nous d\'efinissons
\begin{equation}
 \label{diffD08}
\tau_K = \inf\setsuch{t>0}{x+B_t\in K}\;. 
\end{equation} 
Pour $N\in\N$, soit $A_N$ l'anneau 
\begin{equation}
 \label{diffD09}
A_N = \setsuch{x\in\R^n}{R<\norm{x}<2^N R}\;, 
\end{equation} 
et soit $\tau$ le temps de premi\`ere sortie de $x+B_t$ de $A_N$. On a donc 
\begin{equation}
 \label{diffD10}
\tau = \tau_K \wedge \tau'\;, 
\qquad
\tau' = \inf\setsuch{t>0}{\norm{x+B_t}=2^NR}\;. 
\end{equation} 
Soit enfin 
\begin{equation}
 \label{diffD11}
p = \bigprobin{x}{\tau_K < \tau'} 
= \bigprobin{x}{\norm{x+B_\tau} = R}
= 1 - \bigprobin{x}{\norm{x+B_\tau} = 2^NR}\;.  
\end{equation} 
Les solutions \`a sym\'etrie sph\'erique de $\Delta\varphi=0$ sont de la forme
\begin{equation}
 \label{diffD12}
\varphi(x) = 
\begin{cases}
\abs{x} & \text{si $n=1$\;,} \\
-\log\norm{x} & \text{si $n=2$\;,} \\
\norm{x}^{2-n} & \text{si $n>2$\;.}
\end{cases}
\end{equation} 
Pour un tel $\varphi$, la formule de Dynkin donne 
\begin{equation}
 \label{diffD13}
\bigexpecin{x}{\varphi(x+B_\tau)} = \varphi(x)\;. 
\end{equation} 
Par ailleurs, on a 
\begin{equation}
 \label{diffD14}
\bigexpecin{x}{\varphi(x+B_\tau)} = \varphi(R) p + \varphi(2^N R) (1-p)\;. 
\end{equation} 
En r\'esolvant par rapport \`a $p$, on obtient 
\begin{equation}
 \label{diffD15}
p = \frac{\varphi(x) - \varphi(2^N R)}{\varphi(R) - \varphi(2^N R)}\;. 
\end{equation} 
Lorsque $N\to\infty$, on a $\tau'\to\infty$, d'o\`u 
\begin{equation}
 \label{diffD16}
\bigprobin{x}{\tau_K < \infty} = \lim_{N\to\infty}
\frac{\varphi(x) - \varphi(2^N R)}{\varphi(R) - \varphi(2^N R)}\;. 
\end{equation} 
Consid\'erons alors s\'epar\'ement les cas $n=1$, $n=2$ et $n>2$. 
\begin{enum}
\item	Pour $n=1$, on a 
\begin{equation}
 \label{diffD17}
\bigprobin{x}{\tau_K < \infty} = \lim_{N\to\infty}
\frac{2^N R - \abs{x}}{2^N R - R} = 1\;, 
\end{equation} 
donc le mouvement Brownien est r\'ecurrent en dimension $1$. 
\item	Pour $n=2$, on a 
\begin{equation}
 \label{diffD18}
\bigprobin{x}{\tau_K < \infty} = \lim_{N\to\infty}
\frac{\log\norm{x} + N\log2 - \log R}{N\log2} = 1\;, 
\end{equation} 
donc le mouvement Brownien est \'egalement r\'ecurrent en dimension $2$. 
\item	Pour $n>2$, on a 
\begin{equation}
 \label{diffD19}
\bigprobin{x}{\tau_K < \infty} = \lim_{N\to\infty}
\frac{(2^N R)^{2-n} + \norm{x}^{2-n}}{(2^N R)^{2-n} + R^{2-n}} =
\biggpar{\frac{R}{\norm{x}}}^{n-2} < 1\;. 
\end{equation} 
Le mouvement Brownien est donc transient en dimension $n>2$. 
\end{enum}
\end{example}


\section{Les \'equations de Kolmogorov}
\label{sec_diffKolmogorov}

La seconde classe de liens entre \'equations diff\'erentielles stochastiques et
\'equations aux d\'eriv\'ees partielles est constitu\'ee par les \'equations
de Kolmogorov, qui sont des probl\`emes aux valeurs initiales. 

On remarque qu'en d\'erivant par rapport \`a $t$ la formule de Dynkin, dans le
cas particulier $\tau=t$, on obtient 
\begin{equation}
 \label{diffK01}
\dpar{}{t} (T_t\varphi)(x) = 
\dpar{}{t} \bigexpecin{x}{\varphi(X_t)}
= \bigexpecin{x}{(L\varphi)(X_t)} = (T_tL\varphi)(x)\;,
\end{equation} 
que l'on peut abr\'eger sous la forme 
\begin{equation}
 \label{diffK02}
\dtot{}{t} T_t = T_t L\;. 
\end{equation} 
Or nous avons vu dans le remarque~\ref{rem_generator} que l'on pouvait aussi
\'ecrire formellement $\dtot{}{t} T_t = L T_t$. Par cons\'equent, les
op\'erateurs $L$ et $T_t$ commutent, du moins formellement. Le th\'eor\`eme
suivant rend ce point rigoureux. 

\begin{theorem}[Equation de Kolmogorov r\'etrograde]
Soit $\varphi:\R^n\to\R$ une fonction deux fois contin\^ument diff\'erentiable
\`a support compact. 
\begin{enum}
\item	La fonction 
\begin{equation}
 \label{diffK03}
u(t,x) = (T_t\varphi)(x) = \bigexpecin{x}{\varphi(X_t)} 
\end{equation} 
satisfait le probl\`eme aux valeurs initiales 
\begin{align}
\nonumber
\dpar ut(t,x) &= (Lu)(t,x)\;, && t>0\;, \quad x\in\R^n\;, \\
u(0,x) &= \varphi(x)\;, && x\in\R^n\;.
\label{diffK04} 
\end{align} 
\item	Si $w(t,x)$ est une fonction born\'ee, contin\^ument diff\'erentiable
en $t$ et deux fois contin\^ument diff\'erentiable en $x$, satisfaisant le
probl\`eme~\eqref{diffK04}, alors $w(t,x)=(T_t\varphi)(x)$.
\end{enum}
\end{theorem}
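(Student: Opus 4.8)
The plan is to prove part (a) first, then deduce part (b) by a uniqueness argument. For part (a), the initial condition $u(0,x)=(T_0\varphi)(x)=\expecin{x}{\varphi(X_0)}=\varphi(x)$ is immediate since $X_0=x$ under $\fP^{\mskip1.5mu x}$. For the PDE itself, the key observation is that by the semi-group property (Lemma on $T_h\circ T_t=T_{t+h}$) and the Markov property for It\^o diffusions (Theorem~\ref{thm_Markov_diffusion}), one has for $h>0$
\begin{equation}
u(t+h,x) - u(t,x) = (T_{t+h}\varphi)(x) - (T_t\varphi)(x)
= \bigexpecin{x}{(T_t\varphi)(X_h)} - (T_t\varphi)(x)\;,
\end{equation}
so that, dividing by $h$ and letting $h\to0_+$, the right-hand side converges to $(L(T_t\varphi))(x)$ \emph{provided} $T_t\varphi$ lies in the domain of $L$. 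Thus $\dpar ut(t,x)=(L(T_t\varphi))(x)=(Lu)(t,x)$, which is exactly the equation in~\eqref{diffK04}. The smoothness of $u$ in $x$ (twice continuously differentiable) and continuity of $\dpar ut$ follow from the regularity theory for the diffusion flow $x\mapsto X^{0,x}_t$ and the dominated convergence theorem, using that $\varphi$ has compact support so all derivatives under the expectation are bounded.

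For part (b), suppose $w(t,x)$ is bounded, $C^1$ in $t$ and $C^2$ in $x$, and solves~\eqref{diffK04}. Fix $T>0$ and $x\in\R^n$, and consider the process $M_s = w(T-s, X_s)$ for $0\leqs s\leqs T$, under $\fP^{\mskip1.5mu x}$. Applying the It\^o formula (Lemma~\ref{lem_Ito}) to $w(T-s,\cdot)$ along the diffusion $X_s$ gives
\begin{equation}
\6M_s = \Bigpar{-\dpar wt(T-s,X_s) + (Lw)(T-s,X_s)}\6s
+ \bigpar{\text{It\^o integral term}}\;,
\end{equation}
and the drift term vanishes identically because $w$ solves $\dpar wt=Lw$. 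Hence $M_s$ is (at least locally) a martingale; the boundedness of $w$ together with a localisation by the exit times from large balls, followed by dominated convergence, upgrades this to a genuine martingale on $[0,T]$. Equating expectations at $s=0$ and $s=T$ yields
\begin{equation}
w(T,x) = \expecin{x}{M_0} = \expecin{x}{M_T} = \expecin{x}{w(0,X_T)}
= \expecin{x}{\varphi(X_T)} = (T_T\varphi)(x)\;,
\end{equation}
which is the claimed identity (with $T$ renamed $t$).

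The main obstacle I expect is the rigorous justification of the differentiation step in part (a): one must verify that $T_t\varphi$ belongs to the domain of the generator $L$ and that the limit defining $(L(T_t\varphi))(x)$ may be taken under the expectation uniformly enough to get a \emph{continuous} $\dpar ut$. This requires some control on the smoothness of the semi-group (e.g. that $T_t$ maps a suitable space of $C^2$ functions into itself), which in turn rests on differentiability of the stochastic flow with respect to the initial condition --- a fact we would invoke rather than prove in detail. In part (b) the analogous subtlety is the passage from a local martingale to a true martingale; here the hypothesis that $w$ is bounded is precisely what makes the localisation argument work, via dominated convergence applied to $M_{s\wedge\tau_N}$ as $\tau_N\uparrow\infty$.
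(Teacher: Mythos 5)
Your proposal is correct and follows essentially the same route as the paper: part (a) is exactly the semigroup computation $(Lu)(t,x)=\lim_{h\to0_+}h^{-1}\bigl[(T_{t+h}\varphi)(x)-(T_t\varphi)(x)\bigr]=\partial_t u(t,x)$, and part (b) is the same localisation argument, which the paper phrases as Dynkin's formula for the space--time process $Y_t=(s-t,X^{0,x}_t)$ with generator $\widetilde L=-\partial_t+L$, stopped at the exit time $\tau_R$ of a large ball and then letting $R\to\infty$ using the boundedness of $w$ (identical in substance to your It\^o-formula martingale $M_s=w(T-s,X_s)$ with localisation). The domain/regularity caveat you raise in (a) is real but is passed over silently in the paper's proof as well.
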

\begin{proof} \hfill
\begin{enum}
\item	On a $u(0,x)=(T_0\varphi)(x)=\varphi(x)$ et 
\begin{align}
\nonumber
(Lu)(t,x) 
&= \lim_{h\to0_+} \frac{(T_h\circ T_t\varphi)(x) - (T_t\varphi)(x)}{h} \\
\nonumber
&= \lim_{h\to0_+} \frac{(T_{t+h}\varphi)(x) - (T_t\varphi)(x)}{h} \\
&= \dpar{}{t} (T_t\varphi)(x) = \dpar{}{t} u(t,x)\;.
\label{diffK04:1}
\end{align}
\item	Si $w(t,x)$ satisfait le probl\`eme~\eqref{diffK04}, alors on a 
\begin{equation}
 \label{diffK04:2}
\widetilde L w = 0 
\qquad 
\text{o\`u } \quad \widetilde L w = -\dpar{w}{t} + L w\;. 
\end{equation} 
Fixons $(s,x)\in\R_+\times\R^n$. Le processus $Y_t=(s-t,X^{0,x}_t)$ admet
$\widetilde L$ comme
g\'en\'erateur. Soit 
\begin{equation}
 \label{diffK04:3}
\tau_R = \inf\setsuch{t>0}{\norm{X_t}\geqs R}\;. 
\end{equation} 
La formule de Dynkin montre que 
\begin{equation}
 \label{diffK04:4}
\bigexpecin{s,x}{w(Y_{t\wedge\tau_R})} 
= w(s,x) + \biggexpecin{s,x}{\int_0^{t\wedge\tau_R} (\widetilde L w)(Y_u)\6u}
= w(s,x)\;. 
\end{equation} 
Faisant tendre $R$ vers l'infini, on obtient 
\begin{equation}
 \label{diffK04:5}
w(s,x) = \bigexpecin{s,x}{w(Y_t)} \qquad \forall t\geqs0\;.
\end{equation} 
En particulier, prenant $t=s$, on a 
\begin{equation}
 \label{diffK04:6}
w(s,x) = \bigexpecin{s,x}{w(Y_s)} 
= \bigexpec{w(0,X^{0,x}_s)}
= \bigexpec{\varphi(X^{0,x}_s)} 
= \bigexpecin{x}{\varphi(X_s)}\;.
\end{equation} 
\end{enum}
\end{proof}

On remarquera que dans le cas du mouvement Brownien, dont le g\'en\'erateur est 
$L=\frac12\Delta$, l'\'equation de Kolmogorov r\'etrograde~\eqref{diffK04} est
l'\'equation de la chaleur. 

La lin\'earit\'e de l'\'equation de Kolmogorov r\'etrograde implique qu'il
suffit de la r\'esoudre pour une famille compl\`ete de conditions initiales
$\varphi$, pour conna\^\i tre la solution pour toute condition initiale. 

Un premier cas important est celui o\`u l'on conna\^\i t toutes les fonctions
propres et valeurs propres de $L$. Dans ce cas, la solution g\'en\'erale se
d\'ecompose sur les fonctions propres, avec des coefficients d\'ependant
exponentiellement du temps.

\begin{example}[Mouvement Brownien]
Les fonctions propres du g\'en\'erateur $L=\frac12\dtot{^2}{x^2}$ du mouvement
Brownien unidimensionnel sont de la forme $\e^{\icx kx}$. D\'ecomposer la
solution sur la base de ces fonctions propres revient \`a r\'esoudre
l'\'equation de la chaleur par transformation de Fourier. On sait que la
solution s'\'ecrit 
\begin{equation}
 \label{diffK05}
u(t,x) = \frac{1}{\sqrt{2\pi}} \int_\R \e^{-k^2t/2}\hat \varphi(k) \e^{\icx kx}
\6k\;, 
\end{equation}
o\`u $\hat \varphi(k)$ est la transform\'ee de Fourier de la condition
initiale. 
\end{example}

Un second cas important revient \`a d\'ecomposer formellement la condition
initiale sur une \lq\lq base\rq\rq\ de distributions de Dirac. En pratique,
cela revient \`a utiliser la notion de densit\'e de transition. 

\begin{definition}[Densit\'e de transition]
\label{transition_density}
On dit que la diffusion $\set{X_t}_t$ admet la \defwd{densit\'e de transition}
$p_t(x,y)$, aussi not\'ee $p(y,t|x,0)$, si 
\begin{equation}
 \label{diffK06}
\bigexpecin{x}{\varphi(X_t)} = \int_{\R^n} \varphi(y)p_t(x,y)\6y  
\end{equation} 
pour toute fonction mesurable born\'ee $\varphi:\R^n\to\R$. 
\end{definition}

Par lin\'earit\'e, la densit\'e de transition, si elle existe et est lisse,
satisfait l'\'equation de Kolmogorov r\'etrograde (le g\'en\'erateur $L$
agissant sur la variable $x$), avec la condition initiale
$p_0(x,y)=\delta(x-y)$. 

\begin{example}[Mouvement Brownien et noyau de la chaleur]
Dans le cas du mouvement Brownien unidimensionnel, nous avons vu
(c.f.~\eqref{pW2}) que la densit\'e de transition \'etait donn\'ee par 
\begin{equation}
 \label{diffK07}
p(y,t|x,0) = \frac{1}{\sqrt{2\pi t}} \e^{-(x-y)^2/2t}\;, 
\end{equation} 
qui est appel\'e le \defwd{noyau de la chaleur}. C'est \'egalement
la valeur de l'int\'egrale~\eqref{diffK05} avec $\hat\varphi(k)=\e^{-\icx k
y}/\sqrt{2\pi}$, qui est bien la transform\'ee de Fourier de
$\varphi(x)=\delta(x-y)$. 
\end{example}

L'adjoint du g\'en\'erateur $L$ est par d\'efinition l'op\'erateur lin\'eaire
$L^*$ tel que 
\begin{equation}
 \label{diffK08}
\pscal{L\phi}{\psi} = \pscal{\phi}{L^*\psi}
\end{equation} 
pour tout choix de fonctions $\phi,\psi:\R^n\to\R$ deux fois contin\^ument
diff\'erentiables, avec $\phi$ \`a support compact, o\`u $\pscal{\cdot}{\cdot}$
d\'esigne le produit scalaire usuel de $L^2$. En int\'egrant
$\pscal{L\phi}{\psi}$ deux fois par parties, on obtient 
\begin{equation}
 \label{diffK09}
(L^*\psi)(y) = \frac12\sum_{i,j=1}^n \dpar{^2}{y_i\partial y_j} 
\bigpar{(gg^T)_{ij}\psi}(y) - \sum_{i=1}^n \dpar{}{y_i} 
\bigpar{f_i\psi}(y)\;.
\end{equation} 

\begin{theorem}[Equation de Kolmogorov progressive]
Si $X_t$ poss\`ede une densit\'e de transition lisse $p_t(x,y)$, alors celle-ci
satisfait l'\'equation 
\begin{equation}
 \label{diffK10}
\dpar{}{t} p_t(x,y) = L^*_y p_t(x,y)\;, 
\end{equation} 
la notation $L^*_y$ signifiant que $L^*$ agit sur la variable $y$. 
\end{theorem}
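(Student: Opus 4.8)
The plan is to derive the forward equation (Fokker–Planck equation) from the backward equation by a duality argument, exactly mirroring the formal computation $\frac{\dd}{\dd t}T_t = T_tL = LT_t$ but now reading it against a test function. Fix a test function $\phi:\R^n\to\R$ twice continuously differentiable with compact support. First I would apply the backward equation (the theorem just proved) to $\varphi=\phi$: the function $u(t,x)=(T_t\phi)(x)=\expecin{x}{\phi(X_t)}$ satisfies $\dpar ut = Lu$. Using the transition density, we can also write $u(t,x)=\int_{\R^n}\phi(y)p_t(x,y)\6y$. The idea is to pair this with a point mass: more precisely, one fixes the starting point $x$ and integrates the backward equation against $\phi(y)$ over the $y$-variable, but it is cleaner to run the duality through the semigroup property.

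Concretely, the key steps are as follows. Step one: write $\expecin{x}{\phi(X_{t+h})}$ in two ways using the semigroup property $T_{t+h}=T_h\circ T_t$, namely
\begin{equation}
\label{fwd_proof_1}
\int_{\R^n} \phi(y) p_{t+h}(x,y)\6y
= \bigexpecin{x}{(T_h\phi)(X_t)}
= \int_{\R^n} (T_h\phi)(z) p_t(x,z)\6z\;.
\end{equation}
Step two: subtract $\int\phi(z)p_t(x,z)\6z$ from both sides, divide by $h$, and let $h\to0_+$. On the left this gives $\int \phi(y)\,\dpar{}{t}p_t(x,y)\6y$ (differentiability of $p_t$ in $t$ being assumed). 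On the right, $\frac{(T_h\phi)(z)-\phi(z)}{h}\to (L\phi)(z)$ by the definition of the generator, and under the smoothness and compact-support hypotheses this convergence can be controlled well enough (the generator formula from Proposition~\ref{prop_generator_Ito} shows $(T_h\phi-\phi)/h$ is bounded uniformly by the sup-norms of $\phi$ and its first two derivatives, via Dynkin's formula) to pass the limit inside the integral, yielding $\int_{\R^n}(L\phi)(z)p_t(x,z)\6z$. Step three: this produces the weak form of the equation,
\begin{equation}
\label{fwd_proof_2}
\int_{\R^n} \phi(y)\,\dpar{}{t}p_t(x,y)\6y
= \int_{\R^n} (L\phi)(y)\, p_t(x,y)\6y
= \int_{\R^n} \phi(y)\,(L^*_y p_t(x,\cdot))(y)\6y\;,
\end{equation}
where the last equality is the definition~\eqref{diffK08} of the adjoint $L^*$ together with the explicit integration-by-parts formula~\eqref{diffK09}, legitimate because $\phi$ has compact support so no boundary terms appear. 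Step four: since~\eqref{fwd_proof_2} holds for every such $\phi$, and both $\dpar{}{t}p_t(x,y)$ and $L^*_y p_t(x,y)$ are continuous in $y$ by the smoothness assumption, the fundamental lemma of the calculus of variations gives $\dpar{}{t}p_t(x,y) = L^*_y p_t(x,y)$ pointwise, which is~\eqref{diffK10}.

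**Main obstacle.** The delicate point is the interchange of limit and integral in Step two: one must justify $\lim_{h\to0_+}\int \frac{(T_h\phi)(z)-\phi(z)}{h}\,p_t(x,z)\6z = \int (L\phi)(z)\,p_t(x,z)\6z$. This is where the hypotheses (compact support and $C^2$ regularity of $\phi$, smoothness of $p_t$) are used: Dynkin's formula applied on $[0,h]$ gives $(T_h\phi)(z)-\phi(z) = \expecin{z}{\int_0^h (L\phi)(X_s)\6s}$, so $\bigabs{\frac{(T_h\phi)(z)-\phi(z)}{h}} \leqs \sup_w\abs{(L\phi)(w)} < \infty$, a constant independent of $h$ and $z$; since $p_t(x,\cdot)$ is a probability density this bound is integrable, and the integrand converges pointwise to $(L\phi)(z)$, so dominated convergence applies. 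Everything else — the two-sided semigroup identity, the integration by parts defining $L^*$, and the concluding variational argument — is routine. The only other point worth a remark is that a genuinely rigorous treatment would also need to know $p_t(x,\cdot)$ has enough decay for the integration by parts in~\eqref{diffK09} to have no contribution at infinity, but since $\phi$ is compactly supported this is a non-issue.
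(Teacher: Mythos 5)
Votre preuve est correcte et suit pour l'essentiel la m\^eme d\'emarche que le polycopi\'e~: \'etablir l'identit\'e faible $\int \varphi(y)\,\sdpar{p_t(x,y)}{t}\6y=\int (L\varphi)(y)\,p_t(x,y)\6y$, passer \`a l'adjoint via~\eqref{diffK08}--\eqref{diffK09}, puis conclure par l'arbitrarit\'e de la fonction test. La seule variante est que vous obtenez la d\'eriv\'ee en temps par le quotient diff\'erentiel et la propri\'et\'e de semi-groupe (avec la formule de Dynkin comme borne de domination), alors que le cours int\`egre Dynkin sur $[0,t]$ puis d\'erive~; votre version justifie d'ailleurs un peu plus soigneusement l'\'echange limite--int\'egrale.
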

\begin{proof}
La formule de Dynkin, avec $\tau=t$, implique 
\begin{align}
\nonumber
\int_{\R^n} \varphi(y) p_t(x,y)\6y
&= \bigexpecin{x}{\varphi(X_t)} \\
\nonumber
&= \varphi(x) + \int_0^t \bigexpecin{x}{(L\varphi)(X_s)} \6s \\
&= \varphi(x) + \int_0^t \int_{\R^n} (L\varphi)(y) p_s(x,y)\6y\;.
 \label{diffK10:1}
\end{align}
En d\'erivant par rapport au temps, et en utilisant~\eqref{diffK08}, il vient 
\begin{equation}
 \label{diffK10:2}
\dpar{}{t} \int_{\R^n} \varphi(y) p_t(x,y)\6y
= \int_{\R^n} (L\varphi)(y) p_t(x,y)\6y
= \int_{\R^n} \varphi(y) (L^*_yp_t)(x,y)\6y\;,
\end{equation} 
d'o\`u le r\'esultat. 
\end{proof}

Supposons que la loi $X_0$ admette une densit\'e $\rho$ par rapport \`a la
mesure de Lebesgue. Alors $X_t$ aura une densit\'e donn\'ee par 
\begin{equation}
 \label{diffK10:3}
\rho(t,y) = (S_t\rho)(y) \defby \int_{\R^n} p_t(x,y)\rho(x)\6x\;. 
\end{equation} 
En appliquant l'\'equation de Kolmogorov progressive~\eqref{diffK10}, on
obtient l'\defwd{\'equation de Fokker--Planck}
\begin{equation}
 \label{diffK10:4}
\dpar{}{t} \rho(t,y) = L^*_y \rho(t,y)\;,
\end{equation} 
que l'on peut aussi \'ecrire formellement 
\begin{equation}
 \label{diffK10:5}
\dtot{}{t} S_t = L^* S_t\;. 
\end{equation} 
Le g\'en\'erateur adjoint $L^*$ est donc le g\'en\'erateur du semi-groupe
adjoint $S_t$. 

\begin{cor}
\label{cor_stationary}
Si $\rho_0(y)$ est la densit\'e d'une mesure de probabilit\'e satisfaisant
$L^*\rho_0=0$, alors $\rho_0$ est une mesure stationnaire de la diffusion. En
d'autres termes, si la loi de $X_0$ admet la densit\'e $\rho_0$, alors $X_t$
admettra la densit\'e $\rho_0$ pour tout $t\geqs0$.  
\end{cor}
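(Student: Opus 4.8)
The plan is to deduce the statement from the Fokker--Planck equation~\eqref{diffK10:4} together with uniqueness of its solutions: one writes that equation for the density produced by the diffusion, and then plays the constant-in-time solution $\rho_0$ against it. So first I would recall, using~\eqref{diffK10:3}, that if the law of $X_0$ has density $\rho_0$ with respect to Lebesgue measure, then for every $t\geqs0$ the law of $X_t$ has the density
\begin{equation*}
\rho(t,y) = (S_t\rho_0)(y) = \int_{\R^n} p_t(x,y)\,\rho_0(x)\6x\;,
\end{equation*}
and that, by the forward Kolmogorov equation~\eqref{diffK10}, this function solves the initial value problem
\begin{equation*}
\dpar{}{t}\rho(t,y) = L^*_y\,\rho(t,y)\;, \qquad \rho(0,\cdot) = \rho_0\;.
\end{equation*}

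Next I would observe that the function constant in time $\widetilde\rho(t,y):=\rho_0(y)$ solves the \emph{same} initial value problem: indeed $\dpar{}{t}\widetilde\rho\equiv0$, while $L^*_y\widetilde\rho = L^*\rho_0 = 0$ by hypothesis and $\widetilde\rho(0,\cdot)=\rho_0$ trivially. By uniqueness of the solution of the Fokker--Planck equation one then gets $\rho(t,\cdot)=\rho_0$ for all $t\geqs0$, that is $S_t\rho_0=\rho_0$; equivalently, if $X_0$ has density $\rho_0$ then $X_t$ has density $\rho_0$ for every $t\geqs0$, which is exactly the claimed stationarity. On the purely formal level this is also transparent from the heuristic $S_t=\e^{tL^*}$ recorded in~\eqref{diffK10:5}: if $L^*\rho_0=0$, every term of $\sum_{k\geqs0}\frac{t^k}{k!}(L^*)^k\rho_0$ with $k\geqs1$ vanishes, leaving $S_t\rho_0=\rho_0$.

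The one genuine obstacle is the uniqueness invoked above: making it rigorous requires regularity and growth conditions on $f$, $g$ and on $\rho_0$ --- of the same type as those already tacit when one assumes that $X_t$ possesses a smooth transition density --- under which uniqueness for this linear parabolic equation is classical and can simply be cited. An alternative, avoiding the uniqueness of a partial differential equation altogether, is to argue by duality: for a test function $\varphi$ of class $C^2$ with compact support set $J(t)=\int_{\R^n}(T_t\varphi)(x)\,\rho_0(x)\6x = \int_{\R^n}(S_t\rho_0)(y)\,\varphi(y)\6y$ (the second equality by~\eqref{diffK06} and~\eqref{diffK10:3}); using the backward Kolmogorov equation in the form $\dtot{}{t}T_t = L\,T_t$ of Remark~\ref{rem_generator} and the adjointness relation~\eqref{diffK08},
\begin{equation*}
J'(t) = \int_{\R^n} \bigpar{L\,T_t\varphi}(x)\,\rho_0(x)\6x = \int_{\R^n} (T_t\varphi)(x)\,(L^*\rho_0)(x)\6x = 0\;,
\end{equation*}
so $J$ is constant and $\int(S_t\rho_0)\varphi = \int\rho_0\,\varphi$ for all such $\varphi$, whence $S_t\rho_0=\rho_0$. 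The technical point to be checked here is the interchange of $\dtot{}{t}$ with the integral, together with the integration by parts --- a variant of~\eqref{diffK08} in which the decay of $\rho_0$ and of its derivatives replaces the compact-support hypothesis --- applied to $u(t,\cdot)=T_t\varphi$, which is legitimate under mild decay assumptions on $\rho_0$.
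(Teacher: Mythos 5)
Your proposal is correct, and your first argument is precisely the route the text has in mind: the corollary is stated without any written proof, as an immediate consequence of the Fokker--Planck equation~\eqref{diffK10:4} and of the formal relation~\eqref{diffK10:5}, i.e.\ of the observation that the time-independent function $\widetilde\rho(t,\cdot)=\rho_0$ solves the forward equation with initial datum $\rho_0$, so that $S_t\rho_0=\rho_0$. You are right to point out that turning this into a complete argument requires a uniqueness statement for the linear parabolic problem (under regularity and growth hypotheses on $f$, $g$, $\rho_0$), a point the text passes over silently since the whole section already works under the standing assumption that a smooth transition density exists. Your second, duality-based argument is a genuinely different and arguably more robust route: by testing against $\varphi\in C^2$ with compact support, writing $J(t)=\int(T_t\varphi)\rho_0=\int(S_t\rho_0)\varphi$ via~\eqref{diffK06} and~\eqref{diffK10:3}, and using $\dtot{}{t}T_t=LT_t$ together with the adjointness relation~\eqref{diffK08}, you get $J'\equiv0$ and hence $S_t\rho_0=\rho_0$ without ever invoking uniqueness for the PDE; what it buys is that the only technical points left are the differentiation under the integral sign and the integration by parts against $\rho_0$ (for which decay of $\rho_0$ and its derivatives replaces compact support), which are milder and more readily checked than parabolic uniqueness. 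Both versions prove the statement; either would be an acceptable way to fill in the proof the text omits.
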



\section{La formule de Feynman--Kac}
\label{sec_diffFeynmanKac}

Jusqu'ici nous avons rencontr\'e des probl\`emes \`a valeurs au bord elliptiques
de la forme $Lu=\theta$, et des \'equations d'\'evolution paraboliques de la
forme $\sdpar ut=Lu$. Le formule de Feynman--Kac montre qu'on peut \'egalement
lier des propri\'et\'es d'une diffusion \`a celles d'\'equations paraboliques
o\`u le g\'en\'erateur contient un terme lin\'eaire en $u$.  

L'ajout d'un terme lin\'eaire dans le g\'en\'erateur peut
s'interpr\'eter comme le fait de \lq\lq tuer\rq\rq\ la diffusion avec un
certain taux. Le cas le plus simple est celui d'un taux constant. Soit $\zeta$
une variable al\'eatoire de loi exponentielle de param\`etre $\lambda$,
ind\'ependante de $B_t$. Posons
\begin{equation}
 \label{FK01} 
\widetilde X_t = 
\begin{cases}
X_t & \text{si $t<\zeta$\;,} \\
\Delta & \text{si $t\geqs\zeta$\;,}
\end{cases}
\end{equation} 
o\`u $\Delta$ est un \lq\lq \'etat cimeti\`ere\rq\rq\ que l'on a ajout\'e \`a
$\R^n$. On v\'erifie que gr\^ace au caract\`ere exponentiel de $\zeta$,
$\widetilde X_t$ est un processus de Markov sur $\R^n\cup\set{\Delta}$. 
Si $\varphi:\R^n\to\R$ est une fonction test mesurable born\'ee, on aura
(si l'on pose $\varphi(\Delta)=0$)
\begin{equation}
 \label{FK02}
\bigexpecin{x}{\varphi(\widetilde X_t)}
=  \bigexpecin{x}{\varphi(X_t) \indexfct{t<\zeta}} 
= \prob{\zeta>t} \bigexpecin{x}{\varphi(X_t)} 
= \e^{-\lambda t} \bigexpecin{x}{\varphi(X_t)}\;.
\end{equation} 
Il suit que 
\begin{equation}
 \label{FK03}
\lim_{h\to0} \frac{\bigexpecin{x}{\varphi(\widetilde X_h)} - \varphi(x)}{h} 
= -\lambda \varphi(x) + (L\varphi)(x)\;,
\end{equation} 
ce qui montre que le g\'en\'erateur infinit\'esimal de $\widetilde X$ est
l'op\'erateur diff\'erentiel 
\begin{equation}
 \label{FK04}
\widetilde L = L - \lambda\;. 
\end{equation} 
Plus g\'en\'eralement, si $q:\R^n\to\R$ est une fonction continue, born\'ee
inf\'erieurement, on peut construire une variable al\'eatoire $\zeta$ telle que 
\begin{equation}
 \label{FK05}
 \bigexpecin{x}{\varphi(\widetilde X_t)}
= \bigexpecin{x}{\varphi(X_t) \e^{-\int_0^t q(X_s)\6s}}\;.
\end{equation} 
Dans ce cas le g\'en\'erateur de $\widetilde X_t$ sera 
\begin{equation}
 \label{FK06}
 \widetilde L = L - q\;, 
\end{equation} 
c'est-\`a-dire $(\widetilde L\varphi)(x)=(L\varphi)(x) - q(x)\varphi(x)$. 

\begin{theorem}[Formule de Feynman--Kac]
Soit $\varphi:\R^n\to\R$ une fonction deux fois contin\^ument diff\'erentiable
\`a support compact, et soit $q:\R^n\to\R$ une fonction continue et born\'ee
inf\'erieurement. 
\begin{enum}
\item	La fonction 
\begin{equation}
 \label{FK07}
v(t,x) = \Bigexpecin{x}{\e^{-\int_0^t q(X_s)\6s}\varphi(X_t)} 
\end{equation} 
satisfait le probl\`eme aux valeurs initiales 
\begin{align}
\nonumber
\dpar vt(t,x) &= (Lv)(t,x) - q(x)v(x)\;, && t>0\;, \quad x\in\R^n\;, \\
v(0,x) &= \varphi(x)\;, && x\in\R^n\;.
\label{FK08} 
\end{align} 
\item	Si $w(t,x)$ est une fonction contin\^ument diff\'erentiable
en $t$ et deux fois contin\^ument diff\'erentiable en $x$, born\'ee pour $x$
dans un compact, satisfaisant le
probl\`eme~\eqref{FK08}, alors $w(t,x)$ est \'egale au membre de droite
de~\eqref{FK07}.
\end{enum}
\end{theorem}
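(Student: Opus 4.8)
The plan is to establish point~2 first, since it rests on a clean martingale argument, and then point~1. Throughout, $\bigexpecin{x}{\cdot}$ denotes expectation for the diffusion started at $x$ and $\set{\cF_t}$ its canonical filtration.

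\textbf{Point 2.} Fix $(t,x)\in(0,\infty)\times\R^n$ and let $w$ solve~\eqref{FK08} with the stated regularity. Put $u(s,y)=w(t-s,y)$ for $s\in[0,t]$, write $Q_s=\int_0^s q(X_r)\6r$ for the diffusion $X$ started at $x$, and consider the process
\begin{equation*}
Z_s = \e^{-Q_s}\,u(s,X_s)\;,\qquad 0\leqs s\leqs t\;.
\end{equation*}
Using that $s\mapsto Q_s$ has finite variation, so $\6Q_s=q(X_s)\6s$, the Itô formula (Remark~\ref{rem_fIto}) gives
\begin{equation*}
\6Z_s = \e^{-Q_s}\Bigbrak{\dpar us + Lu - q\,u}(s,X_s)\6s + \e^{-Q_s}\,\nabla_x u(s,X_s)\cdot g(X_s)\6B_s\;.
\end{equation*}
Since $\dpar us(s,y)=-\dpar wt(t-s,y)$, the drift bracket equals $\bigbrak{-\dpar wt + Lw - qw}(t-s,X_s)$, which is identically $0$ by~\eqref{FK08}; hence $Z$ is a local martingale. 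Introducing $\tau_R=\inf\setsuch{s\geqs0}{\norm{X_s}\geqs R}$, the stopped process $\set{Z_{s\wedge\tau_R}}_{0\leqs s\leqs t}$ is a genuine martingale, because on $[0,t\wedge\tau_R]$ the integrand $\e^{-Q_s}\nabla_x u(s,X_s)\cdot g(X_s)$ is bounded: $w$ is bounded on compacts and $\e^{-Q_s}$ is bounded on $[0,t]$ by a deterministic constant since $q$ is bounded below. Thus $\bigexpecin{x}{Z_{t\wedge\tau_R}}=Z_0=w(t,x)$, and letting $R\to\infty$ (so $\tau_R\to\infty$ almost surely, $X$ not exploding) together with dominated convergence yields $w(t,x)=\bigexpecin{x}{Z_t}$. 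As $Z_t=\e^{-Q_t}u(t,X_t)=\e^{-Q_t}\varphi(X_t)$, this says $w(t,x)=v(t,x)$, which is point~2.

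\textbf{Point 1.} The quickest route uses the construction preceding the statement: $v(t,x)=\bigexpecin{x}{\varphi(\widetilde X_t)}$ is the Markov semigroup of the killed diffusion $\widetilde X$ applied to $\varphi$, and by~\eqref{FK06} the generator of $\widetilde X$ is $\widetilde L=L-q$. Granting that the proof of the backward Kolmogorov equation (Section~\ref{sec_diffKolmogorov}) carries over to $\widetilde X$ verbatim, with $\varphi(\Delta)=0$ on the cemetery state, point~1 is precisely that equation: $\dpar vt=\widetilde L v=Lv-qv$ and $v(0,\cdot)=\varphi$. If one prefers to avoid the killed process, one argues directly. From the Markov property and time-homogeneity --- splitting $\int_0^{t+h}=\int_0^h+\int_h^{t+h}$, pulling out the $\cF_h$-measurable factor and conditioning on $\cF_h$ --- one gets the semigroup identity
\begin{equation*}
v(t+h,x)=\Bigexpecin{x}{\e^{-\int_0^h q(X_s)\6s}\,v(t,X_h)}\;.
\end{equation*}
Subtracting $v(t,x)$, dividing by $h$, and writing the increment as $\bigpar{\e^{-\int_0^h q(X_s)\6s}-1}v(t,X_h)+\bigpar{v(t,X_h)-v(t,x)}$, the first piece tends to $-q(x)v(t,x)$ (continuity of $q$ and of the paths, with the deterministic bound on $\abs{v(t,\cdot)}$ supplying the domination) and the second, after division by $h$, tends to $(Lv(t,\cdot))(x)$ by the definition~\eqref{diffsg5} of the generator. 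The initial condition is immediate: $v(0,x)=\bigexpecin{x}{\varphi(X_0)}=\varphi(x)$.

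\textbf{Expected main obstacle.} The delicate step lies entirely in point~1: the last limit needs $v(t,\cdot)$ to lie in the domain of $L$, i.e.\ to be $C^2$, equivalently that $\frac1h\bigpar{(T_hv(t,\cdot))(x)-v(t,x)}\to(Lv(t,\cdot))(x)$. This smoothness is not granted by the defining formula~\eqref{FK07}; it is supplied by parabolic regularity theory for~\eqref{FK08} with the continuous coefficient $q$ and the smooth compactly supported datum $\varphi$, and I would either quote this or adopt it as a standing regularity hypothesis on $v$, as is implicitly done for the backward Kolmogorov equation. In point~2, by contrast, the only routine care concerns the localization by $\tau_R$ and the observation that $\e^{-\int_0^s q(X_r)\6r}$ is deterministically bounded on $[0,t]$ --- precisely the role of the hypothesis that $q$ is bounded below.
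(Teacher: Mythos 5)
Your proof is correct and essentially coincides with the paper's: for point 2 the paper applies the Dynkin formula to the augmented diffusion $(s-t,\,X_t,\,z+\int_0^t q(X_u)\6u)$ and the function $\phi(s,x,z)=\e^{-z}w(s,x)$, which is precisely your It\^o/stopped-martingale argument for $\e^{-Q_s}w(t-s,X_s)$ with the same localization by $\tau_R$ and passage $R\to\infty$, and for point 1 it performs the same Markov-property splitting of the difference quotient into a $\partial_t v$ term and a $q(x)v(t,x)$ term. The regularity caveat you single out (that $v(t,\cdot)$ must lie in the domain of $L$ so that the remaining piece converges to $(Lv)(t,x)$) is left just as implicit in the paper's proof, so your proposal is at the same level of rigor as the original.
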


\begin{proof}\hfill
\begin{enum}
\item	Soit $Y_t=\varphi(X_t)$ et $Z_t=\e^{-\int_0^t q(X_s)\6s}$, et soit
$v(t,x)$ donn\'ee par~\eqref{FK07}. Alors 
\begin{align}
\nonumber
\frac{1}{h} \Bigbrak{\bigexpecin{x}{v(t,X_h)} - v(t,x)}
={}& \frac{1}{h} \Bigbrak{\Bigexpecin{x}{\bigexpecin{X_h}{Y_tZ_t}} -
\bigexpecin{x}{Y_tZ_t}} \\
\nonumber
={}& \frac{1}{h} \Bigbrak{\Bigexpecin{x}{\econdin{x}{Y_{t+h}\e^{-\int_0^t
q(X_{s+h})\6s}}{\cF_h} - Y_tZ_t}} \\
\nonumber
={}& \frac{1}{h} \Bigexpecin{x}{Y_{t+h}Z_{t+h}\e^{\int_0^h q(X_s)\6s}
-Y_tZ_t} \\
\nonumber
={}& \frac{1}{h} \Bigexpecin{x}{Y_{t+h}Z_{t+h}-Y_tZ_t} \\
{}& - \frac{1}{h} \Bigexpecin{x}{Y_{t+h}Z_{t+h}
\Bigbrak{\e^{\int_0^h q(X_s)\6s}-1}}\;.
\label{FK09:1} 
\end{align}
Lorsque $h$ tend vers $0$, le premier terme de la derni\`ere expression tend
vers $\sdpar vt(t,x)$, alors que le second tend vers $q(x)v(t,x)$. 

\item	Si $w(t,x)$ satisfait le probl\`eme~\eqref{FK08}, alors on a 
\begin{equation}
 \label{FK09:2}
\widetilde L w = 0 
\qquad 
\text{o\`u } \quad \widetilde L w = -\dpar{w}{t} + L w - qw\;. 
\end{equation} 
Fixons $(s,x,z)\in\R_+\times\R^n\times\R^n$ et posons $Z_t=z+\int_0^t
q(X_s)\6s$. Le processus $Y_t=(s-t,X^{0,x}_t,Z_t)$ est une diffusion admettant 
comme g\'en\'erateur 
\begin{equation}
 \label{FK09:3}
\widehat L = -\dpar{}{s} + L + q\dpar{}{z}\;. 
\end{equation} 
Soit $\phi(s,x,z)=\e^{-z}w(s,x)$. Alors $\widehat L\phi=0$, et la formule de
Dynkin montre que si $\tau_R$ est le temps de sortie d'une boule de rayon $R$,
on a 
\begin{equation}
 \label{FK09:4}
\bigexpecin{s,x,z}{\phi(Y_{t\wedge\tau_R})} = \phi(s,x,z)\;. 
\end{equation} 
Il suit que 
\begin{align}
\nonumber
w(s,x) = \phi(s,x,0) 
&= \bigexpecin{s,x,0}{\phi(Y_{t\wedge\tau_R})} \\
\nonumber
&=
\Bigexpecin{x}{\phi
\bigpar{s-t\wedge\tau_R,X^{0,x}_{t\wedge\tau_R},Z_{t\wedge\tau_R}}} \\
&=
\Bigexpecin{x}{\e^{-\int_0^{t\wedge\tau_R} q(X_u)\6u}w(s-t\wedge\tau_R,X^{0,x}_{
t\wedge\tau_R})}\;,
 \label{FK09:5}
\end{align}
qui tend vers $\expecin{x}{\e^{-\int_0^t q(X_u)\6u}w(s-t,X^{0,x}_t)}$
lorsque $R$ tend vers l'infini. En particulier, pour $t=s$ on trouve 
\begin{equation}
 \label{FK09:6}
w(s,x) = \Bigexpecin{x}{\e^{-\int_0^s q(X_u)\6u}w(0,X^{0,x}_s)}\;,
\end{equation} 
qui est bien \'egal \`a la fonction $v(t,x)$ d\'efinie dans~\eqref{FK07}.
\qed
\end{enum}
\renewcommand{\qed}{}
\end{proof}

En combinaison avec la formule de Dynkin, la formule de Feynman--Kac admet peut
\^etre g\'en\'eralis\'ee \`a des temps d'arr\^et. Si par exemple $D\subset\R^n$
est un domaine r\'egulier, et que $\tau$ d\'esigne le temps de premi\`ere
sortie de $D$, alors sous des conditions de r\'egularit\'e sur les fonctions
$q, \varphi, \theta:\overbar D\to\R$, la quantit\'e 
\begin{equation}
 \label{FK10}
v(t,x) = \biggexpecin{x}{\e^{-\int_0^{t\wedge\tau}q(X_s)\6s}
\varphi(X_{t\wedge\tau})
- \int_0^{t\wedge\tau} \e^{-\int_0^s q(X_u)\6u}
\theta(X_{s})\6s} 
\end{equation} 
satisfait le probl\`eme avec valeurs initiales et aux bords 
\begin{align}
\nonumber
\dpar vt(t,x) &= (Lv)(t,x) - q(x)v(t,x) - \theta(x)\;, && t>0\;, \quad x\in D\;,
\\
\nonumber
v(0,x) &= \varphi(x)\;, && x\in D\;,\\
v(t,x) &= \varphi(x)\;, && x\in\partial D\;.
\label{FK11} 
\end{align} 
En particulier, si $\tau$ est fini presque s\^urement, prenant la limite
$t\to\infty$, on obtient que 
\begin{equation}
 \label{FK12}
v(x) = \biggexpecin{x}{\e^{-\int_0^{\tau}q(X_s)\6s}
\varphi(X_{\tau})
- \int_0^{\tau} \e^{-\int_0^s q(X_u)\6u}
\theta(X_{s})\6s} 
\end{equation} 
satisfait le probl\`eme 
\begin{align}
\nonumber
 (Lv)(x) &= q(x)v(x) + \theta(x)\;, && x\in D\;,
\\
v(x) &= \varphi(x)\;, && x\in\partial D\;.
\label{FK13} 
\end{align} 
On remarquera que dans le cas $q=0$, on retrouve les relations~\eqref{diffD02},
\eqref{diffD03}.

\begin{example}
\label{ex_FeynmanKac}
Soit $D=]-a,a[$ et $X_t=x+B_t$. Alors $v(x)=\expecin{x}{\e^{-\lambda\tau}}$
satisfait  
\begin{align}
\nonumber
\frac12 v''(x) &= \lambda v(x)\;, && x\in D\;,
\\
v(-a) &= v(a) = 1\;.
\label{FK14} 
\end{align} 
La solution g\'en\'erale de la premi\`ere \'equation est de la forme 
$v(x)=c_1\e^{\sqrt{2\lambda}x} + c_2\e^{-\sqrt{2\lambda}x}$. Les constantes
d'int\'egration $c_1$ et $c_2$ sont d\'etermin\'ees par les conditions aux
bords, et on trouve 
\begin{equation}
 \label{FK15}
 \bigexpecin{x}{\e^{-\lambda\tau}} = 
\frac{\cosh(\sqrt{2\lambda}\,x)}{\cosh(\sqrt{2\lambda}\,a)}\;,
\end{equation} 
qui g\'en\'eralise~\eqref{mbm6}. En \'evaluant la d\'eriv\'ee en $\lambda=0$,
on retrouve $\expecin{x}{\tau}=a^2-x^2$, qui est un cas particulier
de~\eqref{diffD07}, mais~\eqref{FK15} d\'etermine tous les autres moments de
$\tau$ ainsi que sa densit\'e. 

En r\'esolvant l'\'equation avec les conditions aux bords $v(-a)=0$ et $v(a)=1$
on obtient
\begin{equation}
 \label{FK16}
 \bigexpecin{x}{\e^{-\lambda\tau}\indexfct{\tau_a<\tau_{-a}}} = 
\frac{\sinh(\sqrt{2\lambda}\,(x+a))}{\sinh(\sqrt{2\lambda}\cdot2a)}\;,
\end{equation} 
qui nous permet de retrouver $\probin{x}{\tau_a<\tau_{-a}}=(x+a)/(2a)$, mais
aussi
\begin{align}
\nonumber
 \bigexpecin{x}{\tau\indexfct{\tau_a<\tau_{-a}}} &=
\frac{(a^2-x^2)(3a+x)}{6a}\;, 
\\
 \bigecondin{x}{\tau}{\tau_a<\tau_{-a}} &=
\frac{(a-x)(3a+x)}{3}\;.
 \label{FK17}
\end{align}  
\end{example}


\section{Exercices}
\label{sec_exo_diff}

\begin{exercice}
\label{exo_diff1}
On consid\`ere la diffusion d\'efinie par l'\'equation 
\[
\6X_t = -X_t\6t + \6B_t
\]
(processus d'Ornstein--Uhlenbeck).

\begin{enum}
\item	Donner le g\'en\'erateur $L$ associ\'e et son adjoint $L^*$.
\item	Soit $\rho(x)=\pi^{-1/2}\e^{-x^2}$. Calculer $L^*\rho(x)$. 
Que peut-on en conclure?
\end{enum}
\end{exercice}

\goodbreak

\begin{exercice}
\label{exo_diff2}
On consid\`ere la diffusion d\'efinie par l'\'equation 
\[
\6X_t = X_t \6B_t\;.
\]

\begin{enum}
\item	Donner le g\'en\'erateur $L$ associ\'e. 
\item	Trouver la solution g\'en\'erale de l'\'equation $Lu=0$. 
\item	En d\'eduire $\probin{x}{\tau_a<\tau_b}$, o\`u $\tau_a$ d\'enote le
temps de premier passage de $X_t$ en $a$.

{\it Indication:} Il s'agit de calculer $\expecin{\!x}{\psi(X_\tau)}$, o\`u
$\tau$ est le temps de premi\`ere sortie de $[a,b]$, et $\psi(a)=1$,
$\psi(b)=0$.
\end{enum}
\end{exercice}

\goodbreak

\begin{exercice}[Mouvement Brownien g\'eom\'etrique]
\label{exo_diff3}
On consid\`ere plus g\'en\'eralement la diffusion d\'efinie par l'\'equation 
\[
\6X_t = r X_t \6t + X_t \6B_t\;, 
\qquad r\in\R
\]
(mouvement Brownien g\'eom\'etrique).

\begin{enum}
\item	Calculer son g\'en\'erateur $L$. 

\item	Montrer que si $r\neq1/2$, la solution g\'en\'erale de l'\'equation
$Lu=0$ s'\'ecrit 
\[
u(x) = c_1 x^\gamma + c_2\;,
\]
o\`u $\gamma$ est une fonction de $r$ qu'on d\'eterminera. 

\item	On suppose $r<1/2$. Calculer $\probin{x}{\tau_b<\tau_a}$ pour $0<a<x<b$,
puis $\probin{x}{\tau_b<\tau_0}$ en faisant tendre $a$ vers $0$. On remarquera
que si $X_{t_0}=0$ alors $X_t=0$ pour tout $t\geqs t_0$. Par cons\'equent si
$\tau_0<\tau_b$, alors $X_t$ n'atteindra jamais $b$. Quelle est la
probabilit\'e que cela arrive?

\item	On suppose maintenant $r>1/2$. 
\begin{enum}
\item	Calculer $\probin{x}{\tau_a<\tau_b}$ pour $0<a<x<b$, et montrer que
cette probabilit\'e tend vers~$0$ pour tout $x\in]a,b[$ lorsque $a\to0_+$. En
conclure que presque s\^urement, $X_t$ n'atteindra jamais $0$ dans cette
situation. 
\item	Trouver $\alpha$ et $\beta$ tels que $u(x)=\alpha\log x+\beta$
satisfasse le probl\`eme 
\[
\begin{cases}
(Lu)(x) = -1 & \text{si $0<x<b$\;,}\\
u(x) = 0 &\text{si $x=b$\;.}
\end{cases}
\]
\item	En d\'eduire $\expecin{\!x}{\tau_b}$. 
\end{enum}
\end{enum}
\end{exercice}

\goodbreak

\begin{exercice}
\label{exo_diff6}

On appelle processus d'Ornstein--Uhlenbeck la solution de l'EDS 
\[
\6X_t = -X_t \6t + \sigma \6B_t\;, 
\qquad 
X_0 = x\;.
\]

\begin{enum}
\item	R\'esoudre cette \'equation, c'est-\`a-dire \'ecrire $X_t$ \`a l'aide 
d'une int\'egrale stochastique d'une fonction explicite.  
\item	Donner le g\'en\'erateur infinit\'esimal $L$ de $X_t$. 
\item	On se donne $a < 0 < b$. On note $\tau_a$, respectivement $\tau_b$, le
temps de premier passage de $X_t$ en $a$, respectivement $b$. 
A l'aide de la formule de Dynkin, exprimer 
\[
h(x) = \probin{x}{\tau_a < \tau_b}
\]
pour $x\in]a,b[$ comme un rapport de deux int\'egrales. 

\item 	Etudier $h(x)$ lorsque $\sigma\to0$, sachant que  
$
\int_a^b \e^{f(y)/\sigma^2}\6y \simeq
\exp\bigset{\sup_{y\in[a,b]}f(y)/\sigma^2}$.
\end{enum}

\end{exercice}

\goodbreak

\begin{exercice}\hfill
\label{exo_diff4}
\begin{enum}
\item	On consid\`ere une diffusion d'\'equation 
\[
\6X_t = f(X_t)\6t + g(X_t)\6B_t\;. 
\]
Les fonctions $f, g:\R\to\R$ sont suppos\'ees suffisamment r\'eguli\`eres pour
assurer l'existence d'une unique solution pour tout temps $t\geqs0$. 
\begin{enum}
\item	Calculer 
\[
\dtot{}{t} \bigexpecin{x}{X_t} \Bigr|_{t=0_+} := 
\lim_{h\to0_+} \frac{\bigexpecin{x}{X_h} - x}h \;.
\]
\item	Calculer 
\[
\dtot{}{t} 
\Bigexpecin{x}{\e^{\gamma \brak{X_t - \expecin{x}{X_t}}}} \Bigr|_{t=0_+} = 
\dtot{}{t} \biggbrak{
\e^{-\gamma \expecin{x}{X_t}}
\Bigexpecin{x}{ \e^{\gamma X_t}}}
\biggr|_{t=0_+}\;.
\]
\item	En d\'eduire 
\[
\dtot{}{t} 
\Bigexpecin{x}{\bigbrak{X_t - \expecin{x}{X_t}}^k} \Bigr|_{t=0_+}
\]
pour $k=2,3,\dots$. 
\end{enum}

\item	
On se donne une suite d'ensembles d\'enombrables $\cX^{(N)}$, $N\in\N^*$. Sur
chaque $\cX^{(N)}$ on d\'efinit une cha\^ine de Markov
$\set{Y^{(N)}_n}_{n\geqs0}$, de matrice de transition $P^{(N)}$. 
On pose 
\[
v^{(N)}(y) = \bigexpecin{y}{Y^{(N)}_1-y} := \sum_{z\in\cX^{(N)}} (z-y)
P^{(N)}(y,z)
\]
et, pour $k=2,3,\dots$, 
\[
m_k^{(N)}(y) = \Bigexpecin{y}{\brak{Y^{(N)}_1-\expecin{y}{Y^{(N)}_1}}^k}\;.
\]
On d\'efinit une suite de processus $\set{X^{(N)}_t}_{t\geqs0}$, \`a
trajectoires continues, lin\'eaires par morceaux sur tout intervalle
$]k/N,(k+1)/N[$, telles que 
\[
 X^{(N)}_{n/N} = N^{-\alpha} Y^{(N)}_n\;, 
\qquad n\in\N\, 
\]
pour un $\alpha>0$. 

\begin{enum}
\item	Exprimer, en fonction de $v^{(N)}$ et $m_k^{(N)}$, 
\[
\lim_{h\to0_+} \frac{\bigexpecin{x}{ X^{(N)}_h-x}}h
\qquad
\text{et}
\qquad 
\lim_{h\to0_+} \frac{\bigexpecin{x}{ \brak{X^{(N)}_h - 
\expecin{x}{X^{(N)}_h}}^k}}h\;.
\]

\item	Donner des conditions n\'ecessaires sur les $v^{(N)}$ et $m_k^{(N)}$
pour que la suite des $X^{(N)}_t$ converge vers la diffusion $X_t$. 
\end{enum}

\item	Montrer que ces conditions sont v\'erifi\'ees, pour un $\alpha$
appropri\'e, dans le cas o\`u chaque $Y^{(N)}$ est la marche al\'eatoire simple
sur $\Z$ et $X_t=B_t$ est le mouvement Brownien.

\item	On rappelle que le mod\`ele d'Ehrenfest \`a $N$ boules est la cha\^ine
de Markov $Y^{(N)}_n$ sur $\set{0,1,\dots,N}$ de probabilit\'es de transition 
\[
P^{(N)}(y, y-1) = \frac{y}{N}\;, \qquad
P^{(N)}(y, y+1) = 1 -  \frac{y}{N}\;.
\]
En supposant que la suite de processus d\'efinis par 
\[
X^{(N)}_{n/N}=N^{-1/2}\biggpar{Y^{(N)}_n-\frac N2}\;, 
\qquad n\in\N\;,
\]
converge vers une diffusion $X_t$, 
d\'eterminer les coefficients $f(x)$ et $g(x)$ de cette diffusion.

\end{enum}
\end{exercice}

\goodbreak

\begin{exercice}[La loi de l'arcsinus]
\label{exo_diff5}

Soit $\set{B_t}_{t\geqs0}$ un mouvement Brownien standard dans $\R$. On
consid\`ere le processus 
\[
 X_t = \frac{1}{t} \int_0^t \indexfct{B_s>0}\6s\;, 
\qquad
t>0\;.
\]
Le but de ce probl\`eme est de d\'emontrer la \emph{loi de l'arcsinus} :
\begin{equation}
\label{arcsinus} 
\bigprob{X_t < u} = \frac{2}{\pi} \Arcsin\bigpar{\sqrt{u}}\;, 
\qquad
0\leqs u\leqs 1\;.
\end{equation}

\begin{enum}
\item	Que repr\'esente la variable $X_t$?
\item	Montrer que $X_t$ est \'egal en loi \`a $X_1$ pour tout $t>0$. 
\item	On fixe $\lambda>0$. 
Pour $t>0$ et $x\in\R$, on d\'efinit la fonction 
\[
v(t,x) = \Bigexpec{\e^{-\lambda\int_0^t \indexfct{x+ B_s>0}\6s}}
\]
et sa transform\'ee de Laplace 
\[
g_\rho(x) = \int_0^\infty v(t,x) \e^{-\rho t} \6t\;,
\qquad
\rho>0\;.
\]
Montrer que 
\[
g_\rho(0) = \biggexpec{\frac{1}{\rho+\lambda X_1}}\;.
\]

\item	Calculer $\dpar vt(t,x)$ \`a l'aide de la formule de Feynman--Kac. 

\item	Calculer $g_\rho''(x)$. En d\'eduire que $g_\rho(x)$ satisfait une
\'equation diff\'erentielle ordinaire lin\'eaire du second ordre \`a
coefficients constants par morceaux. Montrer que sa solution g\'en\'erale
s'\'ecrit 
\[
 g_\rho(x) = A_\pm + B_\pm \e^{\gamma_\pm x} + C_\pm \e^{-\gamma_\pm x}
\]
avec des constantes $A_\pm, B_\pm, C_\pm, \gamma_\pm$ d\'ependant du
signe de $x$. 

\item	D\'eterminer les constantes en utilisant le fait que $g_\rho$ doit
\^etre born\'ee, continue en $0$, et que $g_\rho'$ doit \^etre continue en $0$. 
En conclure que $g_\rho(0)=1/\sqrt{\rho(\lambda+\rho)}$. 

\item	D\'emontrer~\eqref{arcsinus} en utilisant l'identit\'e 
\[
\frac{1}{\sqrt{1+\lambda}} = 
\sum_{n=0}^\infty (-\lambda)^n 
\frac1\pi \int_0^1 \frac{x^n}{\sqrt{x(1-x)}}\6x\;.
\]
\end{enum}
\end{exercice}

\goodbreak



\appendix

\chapter{Corrig\'es des exercices}


\section{Exercices du Chapitre~\ref{chap_ec}}

\subsection*{Exercice~\ref{exo_ec1}}

\begin{enum}
\item	
Il suffit de prendre $\Omega=\set{1,2,3,4}^2$ avec la probabilit\'e uniforme.

\item	
Les lois de $X(\omega)=\omega_1+\omega_2$ et
$Y(\omega)=\abs{\omega_1-\omega_2}$ sont donn\'ees dans les tableaux suivants~:

\medskip\noindent
\begin{tabular}{c|cccc}
$X$ & 1 & 2 & 3 & 4 \\
\hline 
1 & 2 & 3 & 4 & 5 \\
2 & 3 & 4 & 5 & 6\\
3 & 4 & 5 & 6 & 7\\
4 & 5 & 6 & 7 & 8\\
\end{tabular}
\hspace{10mm}
\begin{tabular}{c|cccc}
$Y$ & 1 & 2 & 3 & 4 \\
\hline 
1 & 0 & 1 & 2 & 3 \\
2 & 1 & 0 & 1 & 2\\
3 & 2 & 1 & 0 & 1\\
4 & 3 & 2 & 1 & 0\\
\end{tabular}
\medskip\noindent

Par simple d\'enombrement, on obtient 
leur loi conjointe et les marginales~:

\medskip\noindent
\begin{tabular}{l|ccccccc|c}
$Y  \backslash  X$ & 2 & 3 & 4 & 5 & 6 & 7 & 8 & \\
\hline
$0$ & $1/16$ & $0$ & $1/16$ & $0$ & $1/16$ & $0$ & $1/16$ & $4/16$ \\
$1$ & $0$ & $2/16$ & $0$ & $2/16$ & $0$ & $2/16$ & $0$ &    $6/16$ \\
$2$ & $0$ & $0$ & $2/16$ & $0$ & $2/16$ & $0$ & $0$ & $4/16$ \\
$3$ & $0$ & $0$ & $0$ & $2/16$ & $0$ & $0$ & $0$ & $2/16$ \\
\hline
    & $1/16$ & $2/16$ & $3/16$ & $4/16$ & $3/16$ & $2/16$ & $1/16$ & 
\end{tabular}
\medskip\noindent

On en d\'eduit les esp\'erances
\[
\expec{X} = \sum_{x=2}^8 x \prob{X=x} = 5\;, 
\qquad
\expec{Y} = \sum_{y=0}^3 y \prob{Y=y} = \frac54\;.
\]

\item	
Notons $\econd{X}{Y=y}$ la valeur constante de $\econd{Y}{X}$ sur l'ensemble
$\set{Y=y}$. Il suit de la relation (3.2.11) du cours que 
\[
\econd{X}{Y=y} = \frac{\expec{X\indexfct{Y=y}}}{\prob{Y=y}}
= \sum_{x} x\frac{\prob{X=x,Y=y}}{\prob{Y=y}}
= \sum_{x} x\pcond{X=x}{Y=y}\;.
\]
En appliquant \`a notre cas, on obtient 
\[
\econd{X}{Y=y} = 5 \quad \forall y\in\set{0,1,2,3}\;,
\]
ce qui traduit le fait que la distribution de $X$ est sym\'etrique autour de
$5$ pour tout $Y$. 
De mani\`ere similaire, on trouve 
\begin{align*}
\econd{Y}{X=2} &= \econd{Y}{X=8} = 0\;, \\
\econd{Y}{X=3} &= \econd{Y}{X=7} = 1\;, \\
\econd{Y}{X=4} &= \econd{Y}{X=6} = \tfrac43\;, \\
\econd{Y}{X=5} &=  2\;. 
\end{align*}
Cela permet en particulier de v\'erifier que 
$\expec{\econd{Y}{X}}=\expec{Y}$. 

\end{enum}

\subsection*{Exercice~\ref{exo_ec2}}

Un choix possible d'espace probabilis\'e est
$\Omega=\set{1,\dots,6}\times\set{0,1}^6$, avec la probabilit\'e uniforme et 
\[
N(\omega) = \omega_1\;, 
\qquad
X(\omega) = \sum_{i=1}^{\omega_1} \omega_{i+1}\;.
\]
Avant de calculer $\expec{X}$, commen\c cons par calculer $\econd{X}{N}$.
Conditionnellement \`a $N=n$, $X$ suit une loi binomiale de param\`etres $n$ et
$1/2$, d'o\`u 
\[
\econd{X}{N} = \frac{N}{2}\;.
\]
Il suffit alors de prendre l'esp\'erance pour conclure~:
\[
\expec{X} = \expec{\econd{X}{N}} = \frac12 \expec{N} = \frac12 \cdot \frac72 =
\frac{7}{4}\;.
\]

\subsection*{Exercice~\ref{exo_ec3}}

La monotonie de l'esp\'erance conditionnelle nous permet d'\'ecrire 
\[
\econd{X^2}{\cF_1} \geqs \bigecond{X^2\indexfct{X^2\geqs a^2}}{\cF_1}
\geqs \bigecond{a^2\indexfct{X^2\geqs a^2}}{\cF_1}
= a^2 \bigpecond{\set{\abs{X}\geqs a}}{\cF_1}\;.
\]

\subsection*{Exercice~\ref{exo_ec4}}

Soit $X=\indicator{A}$ et $Y=\indicator{B}$. 
\begin{itemiz}
\item	Si $X$ et $Y$ sont ind\'ependantes, alors $\pprob{A\cap
B}=\expec{XY}=\expec{X}\expec{Y}=\pprob{A}\pprob{B}$. Or 
\[
\int_A Y\6\fP = \int_{A\cap B} \6\fP = \pprob{A\cap
B}=\pprob{A}\pprob{B} = \pprob{A} \expec{Y} = \int_A \expec{Y} \6\fP\;.
\]
On v\'erifie facilement des relations analogues avec $A$ rempla\c c\'e par
$A^c$, par $\emptyset$ et
par $\Omega$. Comme $\sigma(X)=\set{\emptyset,A,A^c,\Omega}$ et
$\expec{Y}\measurable\cF_0\subset\sigma(X)$, on a bien
que $\econd{Y}{X} = \expec{Y}$. 

\item	Supposons que $\econd{Y}{X} = \expec{Y}$. Comme $A\in\sigma(X)$ on a 
\[
\expec{XY} = \int_A Y\6\fP = \int_A \econd{Y}{X}\6\fP
= \int_A \expec{Y}\6\fP = \expec{Y} \pprob{A} = \expec{Y} \expec{X}\;.
\]
\end{itemiz}
Le r\'esultat s'\'etend \`a des variables al\'eatoires quelconques de la
mani\`ere habituelle, en les d\'ecomposant en partie positive et n\'egative et
en approchant chaque partie par des fonctions \'etag\'ees.

\subsection*{Exercice~\ref{exo_ec5}}

Notons la loi conjointe
\[
p_{xy} = \prob{X=x,Y=y}\;, 
\]
et ses marginales 
\[
p_{x\bullet} = \prob{X=x} = \sum_y p_{xy}\;, 
\qquad
p_{\bullet y} = \prob{Y=y} = \sum_x p_{xy}\;.
\]
Les variables $X$ et $Y$ sont ind\'ependantes si et seulement si 
\[
p_{xy} = p_{x\bullet} p_{\bullet y} \quad \forall x, y\;.
\]
Les esp\'erances conditionnelles sont donn\'ees par 
\[
\econd{Y}{X=x} = \frac{1}{p_{x\bullet}} \sum_y y p_{xy}\;,
\]
donc on aura $\econd{Y}{X}=\expec{Y}$ si et seulement si
\[
\sum_y y p_{xy} = \sum_y y p_{x\bullet} p_{\bullet y} \quad \forall x\;.
\]
Enfin la condition $\expec{XY}=\expec{X}\expec{Y}$ s'\'ecrit 
\[
\sum_{xy} xy p_{xy} = \sum_{xy} xy p_{x\bullet}p_{\bullet y}\;.
\]
Si $X$ et $Y$ prennent leurs valeurs dans $\set{-1,0,1}$ et $\expec{Y}=0$, 
on aura n\'ecessairement $p_{\bullet-}=p_{\bullet+}$. Si de plus on veut avoir
$\econd{Y}{X}=\expec{Y}$, alors il faut que $p_{x-}=p_{x+}$ pour tout $x$.

Il est alors facile de construire des contre-exemples aux implications
inverses. Le tableau suivant donne un exemple de loi conjointe pour laquelle
$\econd{Y}{X}=\expec{Y}=0$, mais $X$ et $Y$ ne sont pas ind\'ependantes~:

\bigskip
\begin{tabular}{c|ccc|c}
$Y\backslash X$ & $-1$ & $0$ & $1$ & \\
\hline 
$-1$ & $1/10$ & $0$ & $2/10$ & $3/10$ \\
$0$ & $0$ & $4/10$ & $0$ & $4/10$ \\
$1$ & $1/10$ &$ 0$ & $2/10$ & $3/10$ \\
\hline
& $2/10$ & $4/10$ & $4/10$ & 
\end{tabular}

\bigskip

\noindent
Et voici un exemple o\`u 
$\expec{XY}=\expec{X}\expec{Y}=0$, mais $\econd{Y}{X}\neq\expec{Y}=0$~:

\bigskip
\begin{tabular}{c|ccc|c}
$Y\backslash X$ & $-1$ & $0$ & $1$ & \\
\hline 
$-1$ & $1/10$ & $2/10$ & $1/10$ & $4/10$ \\
$0$ & $0$ & $2/10$ & $0$ & $2/10$ \\
$1$ & $2/10$ &$0$ & $2/10$ & $4/10$ \\
\hline
& $3/10$ & $4/10$ & $3/10$ & 
\end{tabular}

\subsection*{Exercice~\ref{exo_ec6}}

\begin{enum}
\item 
Notons $X_2=\econd{X}{\cF_2}$ et $X_1=\econd{X}{\cF_1}=\econd{X_2}{\cF_1}$. 
On a 
\[
\expec{XX_1} = \bigexpec{\econd{XX_1}{\cF_1}}
= \bigexpec{X_1\econd{X}{\cF_1}} = \expec{X_1^2}\;.
\]
Par cons\'equent, en d\'eveloppant le carr\'e on obtient 
\[
\bigexpec{\brak{X-X_1}^2} = \expec{X^2} - \expec{X_1^2}\;.
\]
De mani\`ere similaire, on montre que $\expec{XX_2}=\expec{X_2^2}$ et
$\expec{X_1X_2}=\expec{X_1^2}$, d'o\`u
\begin{align*}
\bigexpec{\brak{X-X_2}^2} &= \expec{X^2} - \expec{X_2^2}\;,\\
\bigexpec{\brak{X_2-X_1}^2} &= \expec{X_2^2} - \expec{X_1^2}\;.
\end{align*}
Ceci implique le r\'esultat (qui est \'equivalent au th\'eor\`eme de Pythagore
appliqu\'e \`a $X$, $X_1$ et $X_2$, consid\'er\'es comme des vecteurs de
$L^2(\cF)$). 

\item	
On peut proc\'eder par un calcul direct. Une autre m\'ethode est de commencer
par observer que 
\[
\varcond{X}{\cF_1} = \bigecond{\brak{X-\econd{X}{\cF_1}}^2}{\cF_1}\;.
\]
Appliquons alors l'\'egalit\'e montr\'ee en 1.\ avec $\cF_1$ rempla\c c\'e par
$\cF_0$, et $\cF_2$ rempla\c c\'e par $\cF_1$. Comme
$\econd{X}{\cF_0}=\expec{X}$, le premier terme du membre de
gauche est \'egal \`a $\expec{\varcond{X}{\cF_1}}$, le second \`a 
$\Variance(\econd{X}{\cF_1})$, alors que le membre de droite vaut
$\Variance(X)$.

\item	
Appliquons le r\'esultat pr\'ec\'edent avec $\cF_1=\sigma(N)$. 
On a $\econd{X}{N}=\mu N$ et en d\'evelop\-pant la somme on trouve 
$\econd{X^2}{N} = \sigma^2 N + \mu^2 N^2$. Il suit que 
\[
\varcond{X}{N} = \sigma^2 N\;.
\]
Comme d'autre part on a $\Variance(\econd{X}{N})=\Variance(\mu N)=\mu^2
\Variance(N)$, le r\'esultat est montr\'e.

\item	
C'est une application du r\'esultat pr\'ec\'edent, avec $\mu=1/2$,
$\sigma^2=1/4$ et $\Variance(N)=35/12$. On trouve donc $\Variance(X)=77/48$.

\end{enum}

\subsection*{Exercice~\ref{exo_ec7}}

\begin{enum}
\item	
On a $\econd{Y-X}{\cG} = X - \econd{X}{\cG} = 0$, et 
\[
\bigecond{(Y-X)^2}{\cG} = \econd{Y^2}{\cG} - \econd{X^2}{\cG}
= \econd{Y^2}{\cG} - X^2\;.
\]
Par cons\'equent, 
\[
\varcond{Y-X}{\cG} = \econd{Y^2}{\cG} - X^2\;.
\]

\item	
\begin{align*}
\Variance(Y-X) &= \bigexpec{\varcond{Y-X}{\cG}} +
\Variance\bigpar{\econd{Y-X}{\cG}} \\
&= \bigexpec{\econd{Y^2}{\cG}} - \expec{X^2} \\
&= \expec{Y^2} - \expec{X^2} \\
&= 0\;.
\end{align*}

\item	
Elles sont \'egales presque s\^urement.

\end{enum}

\subsection*{Exercice~\ref{exo_ec8}}

\begin{enum}
\item	
Les $\xi_k$ \'etant ind\'ependants, on a 
\[
 \econd{X_t}{N_t} = N_t \,\expec{\xi_0}
\]
et donc 
\[
 \expec{X_t} = \bigexpec{\econd{X_t}{N_t}} 
= \expec{N_t} \expec{\xi_0} = \lambda t \,\expec{\xi_0}\;.
\]

\item	
La variance \'etant une forme quadratique, on a 
\[
 \Variance\bigpar{\econd{X_t}{N_t}} = \Variance(N_t) \, \expec{\xi_0}^2 
= \lambda t \,\expec{\xi_0}^2\;.
\]
Afin de d\'eterminer $\varcond{X_t}{N_t}$, on commence par calculer 
\[
 \econd{X_t^2}{N_t} = \sum_{k,l=1}^{N_t} \expec{\xi_k\xi_l} 
= N_t\,\expec{\xi_0^2} + (N_t^2-N_t) \,\expec{\xi_0}^2 
= N_t \Variance(\xi_0) + N_t^2 \,\expec{\xi_0}^2\;.
\]
Par cons\'equent,
\[
 \varcond{X_t}{N_t} = \econd{X_t^2}{N_t} - \econd{X_t}{N_t}^2 
= N_t\Variance(\xi_0)\;,
\]
d'o\`u il suit 
\[
 \bigexpec{\varcond{X_t}{N_t}} = \expec{N_t}\Variance(\xi_0)
= \lambda t \Variance(\xi_0)\;.
\]
En appliquant l'Exercice 3.6, on conclut que 
\[
 \Variance(X_t) =  \Variance\bigpar{\econd{X_t}{N_t}} + 
\bigexpec{\varcond{X_t}{N_t}}
= \lambda t \,\expec{\xi_0^2}\;.
\]

\end{enum}


\section{Exercices du Chapitre~\ref{chap_mart}}

\subsection*{Exercice~\ref{exo_mart1}}

On choisit la filtration canonique. 
Comme $\expec{\abs{X_n}}=\expec{\abs{Y_1}}^n$, une premi\`ere condition est
$\expec{\abs{Y_1}}<\infty$, c'est-\`a-dire $Y_1\in L^1$.

\noindent
D'autre part, $\econd{X_{n+1}}{\cF_n} = \econd{Y_{n+1}X_n}{\cF_n} =
X_n\econd{Y_{n+1}}{\cF_n} = X_n\expec{Y_1}$, o\`u nous avons utilis\'e
$X_n\measurable\cF_n$, $Y_{n+1}\perp\cF_n$ et $\expec{Y_{n+1}}=\expec{Y_1}$. La
suite $X_n$ est donc une surmartingale, une sous-martingale ou une martingale
selon que $\expec{Y_1}\leqs 1$, $\expec{Y_1}\geqs 1$ ou $\expec{Y_1}=1$.

\subsection*{Exercice~\ref{exo_mart3}}

\begin{enum}
\item	
Le processus est clairement adapt\'e et int\'egrable. 

De plus, 
$\econd{X_{n+1}}{\cF_n}=\econd{\indicator{B_{n+1}}+X_n}{\cF_n}=
\econd{\indicator{B_{n+1}}}{\cF_n}+X_n\geqs X_n$. 

\item	
La d\'ecomposition de Doob donne $X_n=M_n+A_n$ avec 
\begin{align*}
A_n &= \sum_{m=1}^n \econd{\indicator{B_m}}{\cF_{m-1}}\;, \\
M_n &= \sum_{m=1}^n \indicator{B_m} - \econd{\indicator{B_m}}{\cF_{m-1}}\;.
\end{align*}

\item	
Comme
$\cF_1=\sigma(\indicator{B_1})=\set{\emptyset,\Omega,B_1,B_1^c}$, 
l'\'equation~\eqref{ec5} donne 
\[
\econd{\indicator{B_2}}{\cF_1}(\omega) = 
\begin{cases}
\vrule height 6pt depth 16pt width 0pt
\dfrac{\expec{\indicator{B_2}\indicator{B_1}}}{\fP(B_1)}
= \ppcond{B_2}{B_1} & \text{si $\omega\in B_1$\;,} \\
\vrule height 16pt depth 6pt width 0pt
\dfrac{\expec{\indicator{B_2}\indicator{B_1^c}}}{\fP(B_1^c)}
= \ppcond{B_2}{B_1^c} & \text{si $\omega\in B_1^c$\;.}
\end{cases}
\]
De mani\`ere g\'en\'erale, 
$\econd{\indicator{B_m}}{\cF_{m-1}}=\ppcond{B_m}{C^{m-1}_i}$ pour tout
$\omega\in C^{m-1}_i$, o\`u les $C^{m-1}_i$ sont les \'el\'ements de la
partition
engendrant $\cF_{m-1}$ (c'est-\`a-dire
$\cF_{m-1}=\sigma(\bigsqcup_iC^{m-1}_i)$), obtenus par intersection des $B_i$ et
$B_i^c$ pour $i\leqs m-1$.

\end{enum}

\subsection*{Exercice~\ref{exo_mart4}}

\begin{enum}
\item	
Apr\`es $n$ tirages, l'urne contient $n+2$ boules, dont $m+1$ vertes,
o\`u $m\in\set{0,1,\dots,n}$. La probabilit\'e de tirer d'abord les $m$ boules
vertes, puis les $l=n-m$ rouges est 
\[
\frac12 \cdot \frac23 \dots \frac{m}{m+1} \cdot \frac{1}{m+2} \dots
\frac{l}{n+1} = \frac{m!l!}{(n+1)!}\;.
\]
Pour tout autre ordre des boules, les termes sont arrang\'es diff\'eremment,
mais le r\'esultat est le m\^eme. On a donc 
\[
\Bigprob{X_n=\frac{m+1}{n+2}} = \binom{n}{m}\frac{m!l!}{(n+1)!} 
= \frac1{n+1}\;,
\qquad m=1,2,\dots,n\;,
\]
c'est-\`a-dire que la distribution de $X_n$ est uniforme sur
$\bigset{\frac1{n+2},\frac2{n+2},\dots,\frac{n+1}{n+2}}$.

\item	
Apr\`es $n$ tirages, le nombre de boules est $N_n=r_n+v_n=r+v+nc$. On a 
\[
X_n = 
\begin{cases}
\vrule height 6pt depth 16pt width 0pt
\dfrac{v_{n-1}+c}{v_{n-1}+r_{n-1}+c}
& \text{avec probabilit\'e $\dfrac{v_{n-1}}{v_{n-1}+r_{n-1}}=X_{n-1}$\;,} \\
\vrule height 16pt depth 6pt width 0pt
\dfrac{v_{n-1}}{v_{n-1}+r_{n-1}+c}
& \text{avec probabilit\'e
$\dfrac{r_{n-1}}{v_{n-1}+r_{n-1}}=1-X_{n-1}$\;.} \\
\end{cases}
\]
On en d\'eduit que 
\[
X_n - X_{n-1} = 
\begin{cases}
\vrule height 6pt depth 16pt width 0pt
\dfrac{c(1-X_{n-1})}{N_n}
& \text{avec probabilit\'e $X_{n-1}$\;,} \\
\vrule height 16pt depth 6pt width 0pt
\dfrac{-cX_{n-1}}{N_n}
& \text{avec probabilit\'e $1-X_{n-1}$\;,} \\
\end{cases}
\]
et donc 
\begin{align*}
\bigecond{(X_n-X_{n-1})^2}{\cF_{n-1}} &= 
\frac{c^2}{N_n^2} \bigbrak{X_{n-1}(1-X_{n-1})^2 + X_{n-1}^2(1-X_{n-1})} \\
&= \frac{c^2}{N_n^2} X_{n-1}(1-X_{n-1})\;. 
\end{align*}
Le processus croissant vaut donc 
\[
\braket{X}_n = c^2\sum_{m=1}^n \frac{X_{m-1}(1-X_{m-1})}{N_m^2}\;. 
\]

\item	
Comme\/ $N_m\geqs cm$ et\/ $X_{m-1}(1-X_{m-1})$ est born\'e, le crit\`ere
de Riemann montre que\/ $\braket{X}_n$ converge. 
\end{enum}

\subsection*{Exercice~\ref{exo_mart6}}

\begin{enum}
\item 	On a 
\[
\bigecond{Y_{n+1}}{\cF_n} = 
\bigecond{f(X_n+U_{n+1})}{\cF_n} =
\frac{1}{2\pi} \int_0^{2\pi} f(X_n + r\e^{\icx\theta}) \6\theta\;.
\]
Par cons\'equent, $Y_n$ est une sous-martingale si $f$ est sous-harmonique, une
surmartingale si $f$ est surharmonique, et une martingale si $f$ est
harmonique. 
\item 	Si $f$ est la partie r\'eelle d'une fonction
analytique, le th\'eor\`eme de Cauchy s'\'ecrit 
\[
f(z) = \frac{1}{2\pi\icx} \int_{\cC} \frac{f(w)}{w-z} \6w
\]
o\`u $\cC$ est un contour entourant $z$. En prenant un contour de la forme 
$w=z+r\e^{\icx\theta}$, avec $0\leqs\theta<2\pi$, on obtient que $f$ est
harmonique, donc que $Y_n$ est une martingale.  
\end{enum}

\subsection*{Exercice~\ref{exo_mart5}}

\begin{enum}
\item	
On a 
$\econd{Z_{n+1}}{\cF_n} = \econd{\xi_{1,n+1}}{\cF_n} + \dots +
\econd{\xi_{Z_n,n+1}}{\cF_n} = Z_n\mu$. 

\item	
$X_n$ \'etant une martingale, on a $\expec{X_n}=\expec{X_0}=\expec{Z_0}=1$,
donc $\expec{Z_n}=\mu^n\to 0$ lorsque $n\to\infty$. 

Par cons\'equent, $\prob{Z_n>0} = \sum_{k\geqs1}\prob{Z_n=k} \leqs
\expec{Z_n}\to 0$, d'o\`u $\prob{Z_n=0}\to 1$. $Z_n$ ayant valeurs enti\`eres,
cela signifie que pour toute r\'ealisation $\omega$, il existe $n_0(\omega)$ tel
que $Z_n(\omega)=0$ pour tout $n\geqs n_0(\omega)$, et donc aussi
$X_n(\omega)=0$ pour ces $n$.

\item	
\begin{enum}
\item	
On a $\varphi'(s)=\sum_{k=1}^\infty kp_k s^{k-1} \geqs 0$. En fait, $\varphi$
est m\^eme strictement croissante pour $s>0$. En effet, on a n\'ecessairement
$p_0<1$, car sinon on aurait $\mu=0$, donc il existe au moins un $k\geqs1$ tel
que $p_k>0$.  

De m\^eme, $\varphi''(s)=\sum_{k=2}^\infty k(k-1)p_k s^{k-2} \geqs 0$. En fait,
$\varphi$ est strictement convexe pour $s>0$. En effet, si tous les $p_k$
pour $k\geqs2$ \'etaient nuls, on aurait $\mu=p_1<1$. Il existe donc
n\'ecessairement un $k\geqs2$ tel que $p_k>0$, d'o\`u $\varphi''(s)>0$ pour
$s>0$.

\item	
Si $Z_1=k$, on a au temps $1$ $k$ individus dont la descendance \'evolue de
mani\`ere ind\'ependante. Chacune des $k$ descendances se sera \'eteinte au
temps $m$ avec probabilit\'e $\theta_{m-1}$. Par ind\'ependance, toutes les $k$
lign\'ees se seront \'eteintes \`a la fois au temps $m$ avec probabilit\'e
$\theta_{m-1}^k$. 

Il suit $\theta_m = \sum_{k=0}^\infty \pcond{Z_m=0}{Z_1=k} \prob{Z_1=k} 
= \sum_{k=0}^\infty \theta_{m-1}^k p_k = \varphi(\theta_{m-1})$.

\item	
Notons d'abord que $\varphi(1)=1$, $\varphi'(1)=\expec{\xi}=\mu$,
$\varphi(0)=p_0\geqs0$ et $\varphi'(0)=p_1<1$. La fonction
$\psi(s)=\varphi(s)-s$ satisfait donc $\psi(1)=0$, $\psi'(1)=\mu-1>0$,
$\psi(0)\geqs0$ et $\psi'(0)<0$. Etant strictement convexe sur $(0,1]$, $\psi$
admet un unique minimum en $s_0=(0,1)$. Elle s'annule donc exactement deux
fois: une fois en un $\rho\in[0,s_0)$, et une fois en $1$. 

\item	
On a $\theta_0=0$. Si $p_0=0$, $\theta_m=0$ pour tout $m$, mais dans ce cas on
a \'egalement $\rho=0$. Si $p_0>0$, on a $\theta_1=p_0>0$, et
$\rho=\varphi(\rho)>p_0$, donc $0<\theta_1<\rho$. Par r\'ecurrence, on voit que
la suite des $\theta_m$ est croissante et major\'ee par $\rho$. Elle admet donc
une limite, qui est n\'ecessairement $\rho$.
\end{enum}
Le fait que $Z_n=0$ implique $Z_m=0$ pour tout $m>n$ permet d'\'ecrire 

$\prob{\exists n \colon Z_n=0} = \fP \bigpar{\bigcup_n\set{Z_n=0}}
= \lim_{n\to\infty}\prob{Z_n=0} = \rho < 1$.

\item	
On a $\varphi(s)=\frac18 (1+3s+3s^2+s^3)$. En utilisant le fait que
$\varphi(s)-s$ s'anulle en $s=1$, on obtient par division euclidienne 
$8(\varphi(s)-s)=(s-1)(s^2+4s-1)$. La seule racine dans $[0,1)$ est
$\rho=\sqrt{5}-2$. La probabilit\'e de survie vaut donc $3-\sqrt{5}\simeq0.764$.

\item	
Il est commode d'introduire les variables centr\'ees $\eta_{i,n}=\xi_{i,n}-\mu$.

On a $Z_n-\mu Z_{n-1} = \eta_{1,n} + \dots + \eta_{Z_{n-1},n}$, ce qui
implique, par ind\'ependance des $\eta_{i,n}$, 
$\econd{(Z_n-\mu Z_{n-1})^2}{\cF_{n-1}} = \econd{(\eta_{1,n} + \dots +
\eta_{Z_{n-1},n})^2}{\cF_{n-1}} = Z_{n-1}\sigma^2$, et donc 

$\econd{(X_n-X_{n-1})^2}{\cF_{n-1}} = Z_{n-1}\sigma^2/\mu^{2n}
= X_{n-1}\sigma^2/\mu^{n+1}$. Il suit 
\[
\braket{X}_\infty = \sigma^2 \sum_{m=1}^\infty \frac{X_{m-1}}{\mu^{m+1}}\;.
\]
De plus, 
\[
\bigexpec{\braket{X}_\infty} = \sigma^2 \sum_{m=1}^\infty
\frac{\expec{X_{m-1}}}{\mu^{m+1}} = \sigma^2 \sum_{m=1}^\infty
\frac{1}{\mu^{m+1}} = \frac{\sigma^2}{\mu(\mu-1)} < \infty\;.
\]
\end{enum}



\section{Exercices du Chapitre~\ref{chap_arret}}

\subsection*{Exercice~\ref{exo_doob1}}

\begin{enum}
\item	
La d\'ecomposition 
\begin{align*}
\set{N\wedge M=n} 
&= \Bigpar{\set{N=n}\cap\set{M\geqs n}} \bigcup 
\Bigpar{\set{N\geqs n}\cap\set{M=n}} \\
&= \Bigpar{\set{N=n}\cap\set{M<n}^c} \bigcup 
\Bigpar{\set{N<n}^c\cap\set{M=n}}
\end{align*}
montre que $\set{N\wedge M=n} \in \cF_n$ pour tout $n$, et donc que $N\wedge M$
est un temps d'arr\^et. De m\^eme, la d\'ecomposition
\[
\set{N\vee M=n}
= \Bigpar{\set{N=n}\cap\set{M\leqs n}} \bigcup 
\Bigpar{\set{N\leqs n}\cap\set{M=n}}
\]
montre que $\set{N\vee M=n} \in \cF_n$ pour tout $n$, et donc que $N\vee M$
est un temps d'arr\^et. 

{\bf Remarque~:} On peut \'egalement observer que $N$ est un temps d'arr\^et si
et seulement si $\set{N\leqs n}\in\cF_n$ pour tout $n$. Cela permet d'utiliser
la d\'ecomposition 
\[
\set{N\wedge M\leqs n} = \set{N\leqs n}\bigcap\set{M\leqs n}
\]
pour montrer que $N\wedge M$ est un temps d'arr\^et. 

\item	
Il suffit d'observer que le fait que $N_k\nearrow N$ implique 
\[
\set{N\leqs n} = \bigcap_{k\in\N} \set{N_k\leqs n} \in \cF_n\;. 
\]
\end{enum}

\subsection*{Exercice~\ref{exo_doob2}}

\begin{enum}
\item	
Notons $\sigma^2$ la variance des $\xi_m$.
$X_n^2$ \'etant une sous-martingale, on a 
\[
P_n(\lambda) = \Bigprob{\max_{1\leqs m\leqs n}X_m^2\geqs\lambda^2}
\leqs \frac1{\lambda^2} \expec{X_n^2} = \frac{n\sigma^2}{\lambda^2}\;.
\]

\item	
Pour tout $c>0$, $(X_n+c)^2$ est une sous-martingale et on a
($x\mapsto(x+c)^2$ \'etant croissante sur $\R_+$)
\[
P_n(\lambda) = \Bigprob{\max_{1\leqs m\leqs n}(X_m+c)^2\geqs(\lambda+c)^2}
\leqs \frac1{(\lambda+c)^2} \bigexpec{(X_n+c)^2} =
\frac{n\sigma^2+c^2}{(\lambda+c)^2}\;,
\]
puisque $\expec{X_n}=0$. Cette borne est minimale pour $c=n\sigma^2/\lambda$,
et donne 
\[
P_n(\lambda) \leqs 
\frac{n\sigma^2}{\lambda^2+n\sigma^2}\;.
\]
Contrairement \`a la premi\`ere borne, celle-ci est toujours inf\'erieure \`a
$1$.

\item	
Pour tout $c>0$, $\e^{cX_n^2}$ est une sous-martingale et on a 
\[
P_n(\lambda) = \Bigprob{\max_{1\leqs m\leqs n}\e^{cX_m^2}\geqs\e^{c\lambda^2}}
\leqs \e^{-c\lambda^2} \bigexpec{\e^{cX_n^2}}\;.
\]
Or comme $X_n$ est normale centr\'ee de variance $n$, on a 
\[
\bigexpec{\e^{cX_n^2}} = \int_{-\infty}^\infty 
\e^{cx^2}\frac{\e^{-x^2/2n}}{\sqrt{2\pi n}}\6x 
= \frac{1}{\sqrt{1-2nc}}\;,
\]
et donc 
\[
P_n(\lambda) \leqs \frac{\e^{-c\lambda^2}}{\sqrt{1-2nc}}\;.
\]
Le meilleure borne est obtenue pour $2nc=1-n/\lambda^2$ et donne 
\[
P_n(\lambda) \leqs 
\frac{\lambda}{\sqrt{n}} \e^{-(\lambda^2-n)/2n}\;.
\]
Cette borne est utile lorsque $\lambda^2\gg n$. Dans ce cas elle fournit une
d\'ecroissance exponentielle en $\lambda^2/2n$. 

\item	
Pour tout $c>0$, $\e^{cX_n}$ est une sous-martingale et on a 
\[
\Bigprob{\max_{1\leqs m\leqs n}X_m\geqs\lambda} =
\Bigprob{\max_{1\leqs m\leqs n}\e^{cX_m}\geqs\e^{c\lambda}}
\leqs \e^{-c\lambda} \bigexpec{\e^{cX_n}}\;.
\]
Par compl\'etion du carr\'e on trouve 
\[
\bigexpec{\e^{cX_n}} = \int_{-\infty}^\infty 
\e^{cx}\frac{\e^{-x^2/2n}}{\sqrt{2\pi n}}\6x 
= \e^{c^2n/2}\;, 
\]
d'o\`u 
\[
\Bigprob{\max_{1\leqs m\leqs n}X_m\geqs\lambda} 
\leqs \e^{c^2n/2 - c\lambda}\;.
\]
La borne est optimale pour $c=\lambda/n$ et vaut 
\[
\Bigprob{\max_{1\leqs m\leqs n}X_m\geqs\lambda} 
\leqs \e^{-\lambda^2/2n}\;.
\]
\end{enum}


\section{Exercices du Chapitre~\ref{chap_conv}}

\subsection*{Exercice~\ref{exo_conv1}}

\begin{enum}
\item	
Nous avons montr\'e pr\'ec\'edemment que 
\[
\bigexpec{\braket{X}_\infty} = \sigma^2 \sum_{m=1}^\infty
\frac{\expec{X_{m-1}}}{\mu^{m+1}} = \sigma^2 \sum_{m=1}^\infty
\frac{1}{\mu^{m+1}} = \frac{\sigma^2}{\mu(\mu-1)} < \infty\;.
\]
Par la proposition~6.3.2 du cours, $\expec{X_n^2}$ est uniform\'ement born\'ee,
donc $X_n$ converge dans $L^2$ vers une variable al\'eatoire $X$. Comme $X_n$
converge \`a fortiori dans $L^1$, on a $\expec{X}=1$, et \'egalement 
$\prob{X>0}=1-\rho$. 

\item	
S'il y a $Z_n$ individus au temps $n$, alors $Z_{n+1}=0$ si et seulement si
chacun des $Z_n$ individus n'a aucun descendant, ce qui arrive avec
probabilit\'e $p_0^{Z_n}$. Ceci montre que
$\econd{\indexfct{Z_{n+1}=0}}{\cF_n}=p_0^{Z_n}$, et donc que
$\econd{\indicator{B}}{\cF_n}\geqs p_0^{Z_n}$. Or si $\omega\in A$, alors il
existe $L=L(\omega)$ tel que $Z_n(\omega)\leqs L(\omega)$ pour tout $n$. 
Il suit que $\lim_{n\to\infty}\econd{\indicator{B}}{\cF_n}(\omega)\geqs p_0^L$
pour ces $\omega$. Mais par la loi 0--1 de L\'evy, cette limite vaut
$\indicator{B}(\omega)$. Par cons\'equent, $\indicator{B}(\omega)=1$ pour tout
$\omega\in A$, ou encore $A\subset B$. 

D'une part, par d\'efinition, $Z_\infty=\infty$ dans $A^c$. D'autre
part, $B\subset\set{\lim_{n\to\infty}Z_n=0}$ et donc $Z_\infty=0$ dans
$B$. Ceci montre qu'en fait $A=B=\set{Z_\infty=0}$ et
$A^c=\set{Z_\infty=\infty}$. 
\end{enum}

\subsection*{Exercice~\ref{exo_conv2}}

\begin{enum}
\item	
Par construction, $\xi_n$ est ind\'ependant de $\cF_n =
\sigma(\xi_0,\dots,\xi_{n-1})$. Par cons\'equent \\
 $\econd{\xi_n}{\cF_n}=\expec{\xi_n}=1$, et il suit que 
$\econd{X_{n+1}}{\cF_n}=(1-\lambda)X_n+\lambda X_n=X_n$.

\item	
Comme $X_n$ est une martingale, 
$\expec{X_n}=\expec{\econd{X_n}{\cF_0}}=\expec{X_0}=1$.

\item	
$X_n$ \'etant une surmartingale positive (donc $-X_n$ une sous-martingale
born\'ee sup\'erieu\-rement), elle converge presque s\^urement vers une variable
al\'eatoire int\'egrable $X$.

\item	
Comme $\xi_n$ est ind\'ependante de $\cF_n$, avec $\expec{\xi_n}=1$ et
$\expec{\xi_n^2}=2$, on obtient \\
$\expec{X_{n+1}^2}=(1+\lambda^2)\expec{X_n^2}$
donc $\expec{X_n^2}=(1+\lambda^2)^n$.

\item	
La suite $\expec{X_n^2}$ diverge, donc la suite des $X_n$ ne converge pas dans
$L^2$.

\item	
On a $\econd{(X_{n+1}-X_n)^2}{\cF_n} = \lambda^2 X_n^2 \expec{(\xi_n-1)^2} =
\lambda^2 X_n^2$, d'o\`u 
\[
\braket{X}_n = \sum_{m=0}^{n-1} \bigecond{(X_{m+1}-X_m)^2}{\cF_m}
= \lambda^2 \sum_{m=0}^{n-1} X_m^2\;.
\]

\item	
\begin{enum}
\item	
Comme $X_{n+1}=X_n\xi_n$, on v\'erifie par r\'ecurrence que
$X_n(\Omega)=\set{0,2^n}$ avec 
\begin{align*}
\prob{X_n=2^n} &= \frac{1}{2^n}\;, \\
\prob{X_n=0}   &= 1 - \frac{1}{2^n}\;.
\end{align*}

\item	
Comme $\prob{X_n=0}\to 1$ lorsque $n\to\infty$ et $X_n(\omega)=0$ implique
$X_m(\omega)=0$ pour tout $m\geqs n$, $X_n$ converge presque
s\^urement vers $X=0$. 

\item	
On a $\expec{\abs{X_n-X}}=\expec{\abs{X_n}}=\expec{X_n}=1$ pour tout $n$, donc
$X_n$ ne converge pas dans $L^1$. Les $X_n$ ne peuvent donc pas \^etre
uniform\'ement int\'egrables. 

On peut aussi le voir directement \`a partir de la d\'efinition
d'int\'egrabilit\'e uniforme~: on a 
\[
\bigexpec{\abs{X_n}\indexfct{X_n>M}} = 2^n \bigprob{X_n>M} = 
\begin{cases}
1 & \text{si $2^n > M$\;,} \\
0 & \text{sinon\;.}
\end{cases}
\]
Par cons\'equent, $\sup_n \bigexpec{\abs{X_n}\indexfct{X_n>M}} = 1$ pour tout
$M$.

\item	
C'est \`a vous de voir --- sachant que vous allez perdre votre mise presque
s\^urement. Toutefois, il y a une probabilit\'e non nulle de gagner beaucoup
d'argent apr\`es tout nombre fini de tours. 
\end{enum}
\end{enum}

\subsection*{Exercice~\ref{exo_conv3}}

\begin{enum}
\item	
On a $Y_n = Y_{n-1} + H_n (X_n-X_{n-1})$ avec
$H_n = 2^n \indexfct{X_1=-1,X_2=-2,\dots,X_{n-1}=1-n}$ qui est clairement
pr\'evisible.

\item	
$\econd{Y_n}{\cF_{n-1}} = Y_{n-1} + H_n \econd{X_n-X_{n-1}}{\cF_{n-1}}
= Y_{n-1} + H_n \expec{X_n-X_{n-1}} = Y_{n-1}$.

\item	
On a $\econd{(Y_n-Y_{n-1})^2}{\cF_{n-1}} = H_n^2$ donc 
$\braket{Y}_n = \sum_{m=1}^n H_m^2$. \\
Comme $\expec{H_m^2} = 2^{2m} \prob{X_1=-1,X_2=-2,\dots,X_{n-1}=1-n} =
2^{m+1}$, on obtient 
$\expec{\braket{Y}_n} = \sum_{m=1}^n 2^{m+1} = 4(2^n-1)$. 
Par cons\'equent, $Y^n$ ne converge pas dans $L^2$. 

\item	
Par inspection, on voit que $Y_n$ prend les valeurs $1$ et $1-2^n$, avec 
$\prob{Y_n=1-2^n}=2^{-n}$ (si l'on a perdu $n$ fois) et donc 
$\prob{Y_n=1}=1-2^{-n}$. 

On observe que $\expec{Y_n^+} \leqs 2$ pour tout $n$, donc $Y_n$ converge
presque s\^urement vers une variable $Y_\infty$. L'expression de la loi de $Y_n$
montre que $Y_\infty=1$ presque s\^urement, c'est-\`a-dire qu'on aura gagn\'e
un Euro avec probabilit\'e $1$.  

\item	
Non, car $\expec{Y_n}=0\neq1=\expec{Y_\infty}$. 

\item	
On a $N = \inf\setsuch{n\geqs1}{Z_n-2^{n-1}<-L \text{ ou }Z_n=1}$. C'est un
temps d'arr\^et puisqu'il s'agit d'un temps de premier passage. Sa loi est
donn\'ee par $\prob{N=n}=2^{-n}$ pour $n=1,\dots,k-1$ et
$\prob{N=k}=2^{-(k-1)}$. 

\item	
Oui car $Z_n=Y_{n\wedge N}$ est une martingale arr\^et\'ee.

\item	
Comme dans le cas de $Y_n$, $Z_n$ est une martingale telle que $\expec{Z_n^+}$
est born\'ee, donc elle converge vers une variable al\'eatoire $Z_\infty$. On
trouve $\prob{Z_\infty=1}=1-2^{-k}$ et $\prob{Z_\infty=1-2^k}=2^{-k}$. En
particulier, $\expec{Z_\infty}=0=\expec{Z_n}$ donc $Z_n$ converge dans $L^1$. 
Avec la contrainte de la banque, la grande probabilit\'e de faire un petit gain
est donc compens\'ee par la petite probabilit\'e de faire une grande perte! 
\end{enum}

\subsection*{Exercice~\ref{exo_conv4}}

\begin{enum}
\item	
$\econd{X_n}{\cF_{n-1}} = \econd{2U_nX_{n-1}}{\cF_{n-1}} 
= 2X_{n-1}\econd{U_n}{\cF_{n-1}} = 2X_{n-1}\expec{U_n} = X_{n-1}$.

\item	
On a 
\[
\bigecond{X_n^2}{\cF_{n-1}} = \bigecond{4U_n^2X_{n-1}^2}{\cF_{n-1}}
= 4X_{n-1}^2 \bigexpec{U_n^2} = \frac{4}{3} X_{n-1}^2\;. 
\]
Par cons\'equent, le processus croissant est donn\'e par 
\[
\braket{X}_n = \sum_{m=1}^n \bigecond{X_m^2 - X_{m-1}^2}{\cF_{m-1}}
= \frac{1}{3} \sum_{m=1}^n X_{m-1}^2\;.
\]
Comme 
\[
\bigexpec{\braket{X}_n} = \frac{1}{3} \sum_{m=1}^n \bigexpec{X_{m-1}^2} 
= \frac{1}{3} \sum_{m=1}^n\biggpar{\frac{4}{3}}^{m-1}\;,
\]
on a $\braket{X}_\infty = \infty$, donc la suite ne peut pas converger dans
$L^2$ (on peut aussi observer directement que $\expec{X_n^2}$ diverge).

\item	
$X_n$ est une surmartingale positive, donc elle converge presque s\^urement (on
peut aussi observer que c'est une sous-martingale telle que
$\expec{X_n^+}=\expec{X_n}=1\;\forall n$). 

\item	
Comme $Y_n=Y_{n-1} + \log 2 +  \log(U_n)$, on a 
\[
\bigexpec{Y_n} = \bigexpec{Y_{n-1}} + \log 2 + \bigexpec{\log(U_n)}
= \bigexpec{Y_{n-1}} - \bigbrak{1-\log2}\;,
\]
et donc $\expec{Y_n} = -n[1-\log2]$ tend vers $-\infty$ lorsque $n\to\infty$.
On peut donc s'attendre \`a ce que $Y_n$ converge vers $-\infty$, donc que
$X_n$ converge vers $0$ presque s\^urement.

Pour le montrer, nous devons contr\^oler les fluctuations de 
$Z_n = Y_n + n [1-\log2]$. On peut l'\'ecrire sous la forme 
\[
Z_n = \sum_{k=1}^n V_k 
\qquad
\text{o\`u}
\qquad
V_k = 1 + \log(U_k)\;.
\]
Nous allons montrer que pour $n$ assez grand, $Z_n \leqs c n$ presque
s\^urement pour tout $c\in]0,1[$. En prenant $0<c<1-\log2$, cela montrera qu'en
effet $Y_n$ converge presque s\^urement vers $-\infty$, et donc $X_n\to0$ p.s.

\begin{itemiz}
\item 
Une premi\`ere m\'ethode consiste \`a \'ecrire 
\[
\bigprob{Z_n>cn} = \bigprob{\e^{\gamma Z_n} > \e^{\gamma cn}}
\leqs \e^{-\gamma cn} \bigexpec{\e^{\gamma Z_n}}
= \bigbrak{\e^{-\gamma c}\expec{\e^{\gamma V_1}}}^n\;.
\]
Comme $\e^{\gamma V_1} = \e^\gamma U_1^\gamma$, on a 
\[
\bigexpec{\e^{\gamma V_1}} = \e^\gamma \int_0^1 x^\gamma\6x  
= \frac{\e^\gamma}{\gamma+1}\;.
\]
Il suit donc que 
\[
\bigprob{Z_n>cn} \leqs
\biggbrak{\frac{\e^{-\gamma(c-1)}}{\gamma+1}}^n
\]
pour tout $\gamma>0$. La meilleure borne est obtenue pour $\gamma=c/(1-c)$, et
a la forme 
\[
\bigprob{Z_n>cn} \leqs
\bigbrak{\e^c(1-c)}^n\;.
\]
La s\'erie de terme g\'en\'eral $\brak{\e^c(1-c)}^n$ converge pour tout
$c\in]0,1[$, donc le lemme de Borel--Cantelli montre que 
\[
\limsup_{n\to\infty} \frac{Z_n}{n} \leqs c 
\]
presque s\^urement. C'est le r\'esultat cherch\'e.

\item
Une seconde m\'ethode de montrer que $Z_n \leqs c n$ presque
s\^urement pour $n$ assez grand consiste \`a \'ecrire  
\[
\bigprob{\abs{Z_n}>cn} = \bigprob{Z_n^4 > (cn)^4} 
\leqs \frac{1}{(cn)^4} \bigexpec{Z_n^4}\;.
\]
On a 
\[
Z_n^4 = \sum_{i,j,k,l=1}^n V_iV_jV_kV_l\;.
\]
On trouve facilement les moments  
\[
\bigexpec{V_i} = 0\;, \quad
\bigexpec{V_i^2} = 1\;, \quad
\bigexpec{V_i^3} = -2\;, \quad
\bigexpec{V_i^4} = 9\;.
\]
Comme les $V_i$ sont ind\'ependants, 
les seuls termes contribuant \`a $\expec{Z_n^4}$ sont ceux qui contiennent
soit quatre indices identiques, soit deux paires d'indices identiques. Un peu de
combinatoire nous fournit alors 
\[
\bigexpec{Z_n^4} = n \bigexpec{V_1^4} + \binom{4}{2} \frac{n(n-1)}{2}
\bigexpec{V_1^2}^2 = 3n^2+6n\;.
\]
Ceci implique 
\[
\bigprob{\abs{Z_n}>cn} \leqs \frac{3}{c^4 n^2}\;,
\]
et le lemme de Borel--Cantelli permet de conclure. 
\end{itemiz}

\item	
Nous avons montr\'e que $X_n\to0$ presque s\^urement. Par cons\'equent, nous
avons aussi $X_n\to0$ en probabilit\'e. Comme par ailleurs,
$\expec{\abs{X_n}}=1$ pour tout $n$, le Th\'eor\`eme~6.4.3 du cours montre que
$X_n$ ne peut pas converger dans $L^1$. 
\end{enum}

\subsection*{Exercice~\ref{exo_conv5}}

\begin{enum}
\item	
Commen\c cons par montrer que les $X_n$ sont int\'egrables. Si $K$ d\'enote la
constante de Lipschitz, on a 
$\abs{f((k+1)2^{-n})-f(k2^{-n})} \leqs K2^{-n}$, d'o\`u 
\[
\bigexpec{\abs{X_n}} \leqs \sum_{k=0}^{2^n-1} K \,\bigprob{U\in I_{k,n}} \leqs
K\;.
\]
Pour calculer les esp\'erances conditionnelles, on observe que chaque
intervalle $I_{\ell,n}$ est la r\'eunion disjointe des deux intervalles
$I_{2\ell,n+1}$ et $I_{2\ell+1,n+1}$ de m\^eme longueur. Par cons\'equent 
\[
\bigecond{\indexfct{U\in I_{2\ell,n+1}}}{\cF_n} = 
\bigecond{\indexfct{U\in I_{2\ell+1,n+1}}}{\cF_n} = 
\frac12 \indexfct{U\in I_{\ell,n}}\;.
\]
En s\'eparant les termes pairs et impairs, on a 
\begin{align*}
\bigecond{X_{n+1}}{\cF_n} 
={}& \sum_{\ell=0}^{2^n-1} \biggl[
\frac{f((2\ell+1)2^{-(n+1)}) - f(2\ell\,2^{-(n+1)})}{2^{-(n+1)}}
\bigecond{\indexfct{U\in I_{2\ell,n+1}}}{\cF_n} \\
&{} \phantom{\sum_{\ell=0}^{2^n-1}}
+ \frac{f((2\ell+2)2^{-(n+1)}) - f((2\ell+1)\,2^{-(n+1)})}{2^{-(n+1)}}
\bigecond{\indexfct{U\in I_{2\ell+1,n+1}}}{\cF_n}
\biggr] \\
={}& \sum_{\ell=0}^{2^n-1} 
\frac{f((2\ell+2)2^{-(n+1)}) - f(2\ell\,2^{-(n+1)})}{2^{-(n+1)}} 
\frac12\indexfct{U\in I_{\ell,n}} \\
={}& \sum_{\ell=0}^{2^n-1} 
\frac{f((2\ell+2)2^{-n}) - f(2\ell\,2^{-(n+1)})}{2^{-n}} 
\indexfct{U\in I_{\ell,n}} \\
={}& X_n\;.
\end{align*}

\item	
On a $\expec{X_n^+} \leqs \expec{\abs{X_n}} \leqs K$, donc $X_n$ converge
presque s\^urement vers une variable al\'eatoire $X_\infty$. 

\item	
En bornant chaque fonction indicatrice par $1$ et en utilisant \`a nouveau le
caract\`ere lipschitzien, on voit que $\abs{X_n(\omega)}$ est born\'e par $K$
pour tout $\omega$. Par cons\'equent, 
\[
\indexfct{\abs{X_n}>M} = 0 
\qquad
\text{pour $M > K$\;,}
\]
donc les $X_n$ sont uniform\'ement int\'egrables. Il suit que $X_n$ converge
vers $X_\infty$ dans $L^1$. 

\item	
Pour tout $n$, on a 
\begin{align*}
X_n \indexfct{U\in[a,b]}
&= 
\sum_{k=0}^{2^n-1} \frac{f((k+1)2^{-n})-f(k2^{-n})}{2^{-n}}
\indexfct{U\in I_{k,n}\cap[a,b]} \\
&= 
\sum_{k \colon I_{k,n}\cap[a,b]\neq\emptyset} 
\frac{f((k+1)2^{-n})-f(k2^{-n})}{2^{-n}}
\indexfct{U\in I_{k,n}}\;. 
\end{align*}
Soient $k_-(n)$ et $k_+(n)$ le plus petit et le plus grand $k$ tel que 
$I_{k,n}\cap[a,b]\neq\emptyset$. Prenant l'esp\'erance, comme 
$\prob{U\in I_{k,n}}=2^{-n}$ on voit que 
\[
\bigexpec{X_n \indexfct{U\in[a,b]}}
= f((k_+(n)+1)2^{-n}) - f(k_-(n)2^{-n})\;.
\]
Lorsque $n\to\infty$, on a $k_-(n)2^{-n}\to a$ et $(k_+(n)+1)2^{-n}\to b$. 

\item	
D'une part, 
\[
\bigexpec{X_\infty\indexfct{U\in[a,b]}}
= \int_0^1 X_\infty(\omega) \indexfct{\omega\in[a,b]} \6\omega
= \int_a^b X_\infty(\omega) \6\omega\;.
\]
D'autre part, par le th\'eor\`eme fondamental du calcul int\'egral, 
\[
f(b) - f(a) = \int_a^b f'(\omega)\6\omega\;.
\]
On en conclut que 
\[
X_\infty(\omega) = f'(\omega)\;.
\]
\end{enum}

\subsection*{Exercice~\ref{exo_conv7}}

\begin{enum}
\item 	Comme 
\[
f(Y) = \indexfct{Y<c}f(c) + \indexfct{Y\geqs c}\int_c^Y f'(\lambda)\6\lambda
\leqs f(c) + \int_c^\infty \indexfct{\lambda<Y} f'(\lambda)\6\lambda\;,
\]
on obtient, en prenant l'esp\'erance, 
\[
\bigexpec{f(Y)} \leqs f(c) 
+ \int_c^\infty \bigprob{Y>\lambda} f'(\lambda)\6\lambda\;. 
\]

\item	
Pour tout $M>1$ nous pouvons \'ecrire 
\[
\bigexpec{\Xbar_n \wedge M}
\leqs 1 + \int_1^\infty \bigprob{\Xbar_n \wedge M > \lambda} \6\lambda\;.
\]
Par l'in\'egalit\'e de Doob, 
\begin{align*}
\int_1^\infty \bigprob{\Xbar_n \wedge M > \lambda} \6\lambda
&\leqs \int_1^\infty \frac1\lambda \bigexpec{\Xbar_n \indexfct{\Xbar_n\wedge
M\geqs\lambda}} \6\lambda \\
&= \int_1^\infty \frac1\lambda \int_\Omega X_n^+ \indexfct{\Xbar_n\wedge
M\geqs\lambda}\6\fP \6\lambda \\
&= \int_\Omega X_n^+ \int_1^{\Xbar_n\wedge M} \frac1\lambda \6\lambda
\indexfct{\Xbar_n\wedge M > 1} \6\fP \\
&= \int_\Omega X_n^+ \log^+(\Xbar_n\wedge M) \6\fP \\
&= \bigexpec{X_n^+ \log^+(\Xbar_n\wedge M)} \\
&\leqs \bigexpec{X_n^+ \log^+(X_n^+)} + \frac{1}{\e}\bigexpec{\Xbar_n\wedge
M}\;.
\end{align*}
Il suit que 
\[
\biggpar{1-\frac{1}{\e}} \bigexpec{\Xbar_n\wedge M}
\leqs 1 + \bigexpec{X_n^+ \log^+(X_n^+)}\;.
\]
Le r\'esultat s'obtient en faisant tendre $M$ vers l'infini, et en invoquant
le th\'eor\`eme de convergence domin\'ee. 

\item	Pour une variable $Y$ int\'egrable, on a n\'ecessairement 
\[
\lim_{M\to\infty} \expec{Y \indexfct{Y>M}} 
= \lim_{M\to\infty} \Bigbrak{\expec{Y} - \expec{Y \indexfct{Y\leqs M}}}
= 0
\]
en vertu du th\'eor\`eme de convergence monotone. Comme par ailleurs
\[
\abs{X_n} \indexfct{\abs{X_n} > M} \leqs Y \indexfct{Y>M}
\]
pour tout $n$, 
le r\'esultat suit en prenant le sup puis l'esp\'erance des deux c\^ot\'es. 

\item	
Soit $Y = \sup_n \abs{X_n}$. On a $Y \leqs \Xbar_n^+ + \Xbar_n^-$,
o\`u $\Xbar_n^- = \sup_n (-X_n)$. Alors le r\'esultat du point 2.\
implique $\expec{Y} < \infty$, et le point 3.\ montre que que la suite des
$X_n$ est uniform\'ement int\'egrable. Le th\'eor\`eme de convergence $L^1$
permet de conclure. 

uniforme. 

%
%


\end{enum}

\subsection*{Exercice~\ref{exo_conv6}}

\begin{enum}
\item	
Le th\'eor\`eme de d\'erivation sous le signe int\'egral de Lebesgue montre que
$\psi(\lambda)$ est de classe $\cC^\infty$ avec 
\[
\psi^{(n)}(0) = \expec{X_1^n} 
\]
pour tout $n\geqs0$. Le r\'esultat suit alors de la formule de Taylor et des
hypoth\`eses sur la loi de $X_1$. 

\item	
La relation 
\[
Y_{n+1} = \frac{\e^{\lambda X_{n+1}}}{\psi(\lambda)}Y_n
\]
montre que $\expec{\abs{Y_n}}$ est born\'ee par $1$ et que 
$\econd{Y_{n+1}}{Y_n}=Y_n$. 

\item	
Le probabilit\'e peut se r\'e\'ecrire 
\[
\biggprob{\sup_{n\leqs N} \biggpar{S_n - n \frac{\log\psi(\lambda)}{\lambda}} 
> a} = \biggprob{\sup_{n\leqs N} Y_n > \e^{\lambda a}}
\]
et la borne suit de l'in\'egalit\'e de Doob. 

\item	
On remarque que pour $n\in]t^k,t^{k+1}]$, on a 
\[
h(n)c_k \geqs h(t^k)c_k = \frac{\alpha}{2}h(t^k) +
t^{k+1}\frac{\log\psi(\lambda_k)}{\lambda_k}
\geqs a_k + n \frac{\log\psi(\lambda_k)}{\lambda_k}\;.
\]
La probabilit\'e cherch\'ee est donc born\'ee par
$\e^{-\lambda_ka_k}=\e^{-\alpha\log(k\log t)}=(k\log t)^{-\alpha}$ en vertu du
r\'esultat pr\'ec\'edent. 

\item	
Cela suit du lemme de Borel--Cantelli, et du fait que 
\[
\sum_{k=1}^\infty \frac{1}{(k\log t)^{\alpha}}
= \frac{1}{(\log t)^{\alpha}}\sum_{k=1}^\infty \frac{1}{k^\alpha}
< \infty\;.
\]

\item	
\[
c_k = \frac{\alpha}{2} + \frac{t}{\lambda_k^2}
\log\biggpar{1+\frac12\lambda_k^2+\order{\lambda_k^2}}
= \frac{\alpha+t}{2} + \order{1}\;.
\]

\item	
Soit $\varepsilon>0$. Prenons $\alpha=t=1+\varepsilon/2$. Les r\'esultats
pr\'ec\'edents montrent que pour presque tout $\omega$, il existe
$k_0(\omega,\varepsilon)<\infty$ tel que 
\[
\frac{S_n(\omega)}{h(n)} \leqs 1+\frac{\varepsilon}{2} + r(k)
\]
pour tout $k\geqs k_0(\omega,\varepsilon)$ et $n\in]t^k,t^{k+1}]$. 
En prenant de plus $k$ assez grand pour que $r(k)<\varepsilon/2$, on obtient 
$S_n/h(n) \leqs 1+\varepsilon$ pour tous les $n$ assez grands, d'o\`u le
r\'esultat.
\end{enum}


\section{Exercices du Chapitre~\ref{chap_mb}}

\subsection*{Exercice~\ref{exo_mb1}}

Pour $t>s\geqs0$, 
\begin{align*}
\econd{\cosh(\gamma B_t)}{\cF_s} 
&= \frac{\e^{\gamma B_s}}{2} \econd{\e^{\gamma(B_t-B_s)}}{\cF_s}
+ \frac{\e^{-\gamma B_s}}{2} \econd{\e^{-\gamma(B_t-B_s)}}{\cF_s} \\
&= \frac{\e^{\gamma B_s}}{2} \e^{\gamma^2(t-s)/2}
+ \frac{\e^{-\gamma B_s}}{2} \e^{\gamma^2(t-s)/2} \\
&= \cosh(\gamma B_s)\e^{\gamma^2(t-s)/2}\;.
\end{align*}
On a $1=\expec{X_{t\wedge\tau}}$. Faisant tendre $t$ vers l'infini, on obtient
par le th\'eor\`eme de convergence domin\'ee 
$1=\expec{X_{\tau}}=\cosh(\gamma a)\expec{\e^{-\gamma^2\tau/2}}$. Il suffit
alors de prendre $\gamma=\sqrt{2\lambda}$.

\subsection*{Exercice~\ref{exo_mb2}}

\begin{enum}
\item	
$X_t$ \'etant une martingale, $\expec{\abs{X_t}}<\infty$. 
Pour tout $A\in\cF_s$, on a 
\[
\expec{X_t\indicator{A}} = \expec{X_s\indicator{A}}\;.
\]
Si $f$ est contin\^ument diff\'erentiable en $\gamma$ et
$\expec{\abs{\partial_\gamma f(B_t,t,\gamma)}}<\infty$, on peut prendre la
d\'eriv\'ee des deux c\^ot\'es et permuter esp\'erance et d\'eriv\'ee, ce qui
montre le r\'esultat. 

\item	
En d\'eveloppant l'exponentielle ou en calculant des d\'eriv\'ees successives,
on trouve
\[
f(x,t,\gamma) = 1 + \gamma x + \frac{\gamma^2}{2!} (x^2-t) 
+ \frac{\gamma^3}{3!} (x^3-3tx)
+ \frac{\gamma^4}{4!} (x^4-6tx^2+3t^2)
+ \Order{\gamma^5}\;.
\]
On retrouve le fait que $B_t$ et $B_t^2-t$ sont des martingales, mais on trouve
aussi deux nouvelles martingales: $B_t^3-3tB_t$ et $B_t^4-6tB_t^2+3t^2$. 

\item	
On a $\expec{B_{t\wedge\tau}^4-6(t\wedge\tau)B_{t\wedge\tau}^2} = 
-3\expec{(t\wedge\tau)^2}$. Nous avons d\'ej\`a \'etabli que
$\expec{\tau}=a^2<\infty$. Lorsque $t$ tend vers l'infini, 
$\expec{(t\wedge\tau)^2}$ tend vers $\expec{\tau^2}$ par le th\'eor\`eme de
convergence monotone. D'autre part, par le th\'eor\`eme de convergence
domin\'ee, $\expec{B_{t\wedge\tau}^4-6(t\wedge\tau)B_{t\wedge\tau}^2}$ tend
vers $\expec{B_\tau^4-6\tau B_\tau^2}=a^4-6a^2\expec{\tau}=-5a^4$. Ainsi,
\[
\expec{\tau^2} = \frac53 a^4\;.
\]
\end{enum}

\subsection*{Exercice~\ref{exo_mb3}}

\begin{enum}
\item	
Soit la martingale $M_t=\e^{\gamma B_t-\gamma^2 t^2/2}=\e^{\gamma X_t-\lambda
t}$. Dans ce cas, nous ne savons pas si le temps d'arr\^et $\tau$ est born\'e.
Mais la preuve de la Proposition~\ref{prop_mbm1} peut \^etre adapt\'ee
afin de montrer que 
\[
1 = \bigexpec{M_{t\wedge\tau}\indexfct{\tau<\infty}}
= \bigexpec{\e^{\gamma
X_{t\wedge\tau}-\lambda(t\wedge\tau)}\indexfct{\tau<\infty}}\;.
\]
Faisant tendre $t$ vers l'infini, on obtient 
\[
1 = \bigexpec{M_\tau\indexfct{\tau<\infty}}
= \bigexpec{\e^{\gamma
X_\tau-\lambda\tau}\indexfct{\tau<\infty}}
= \e^{\gamma a}\bigexpec{\e^{-\lambda\tau}\indexfct{\tau<\infty}}\;,
\]
et donc 
\[
 \bigexpec{\e^{-\lambda\tau}\indexfct{\tau<\infty}} = 
\e^{-\gamma a} = \e^{-a(b+\sqrt{b^2+2\lambda})}\;.
\]

\item	
On a $\expec{\e^{-\lambda\tau}\indexfct{\tau<\infty}} =
\e^{-a\sqrt{2\lambda}}$. 

En particulier, prenant $\lambda=0$, on obtient $\prob{\tau<\infty}=1$. 
Donc en fait $\expec{\e^{-\lambda\tau}} = \e^{-a\sqrt{2\lambda}}$.
Remarquons toutefois qu'en prenant la d\'eriv\'ee par rapport \`a $\lambda$, on
voit que $\expec{\tau}=\lim_{\lambda\to0+}\expec{\tau\e^{-\lambda\tau}}
=-\lim_{\lambda\to 0+}\dtot{}{\lambda}\e^{-a\sqrt{2\lambda}}=+\infty$. Cela est
li\'e au fait que, comme la marche al\'eatoire, le mouvement Brownien est
r\'ecurrent nul.

\item	
Prenant $\lambda=0$, on obtient $\prob{\tau<\infty}=\e^{-ab}$. Le mouvement
Brownien a donc une probabilit\'e $1-\e^{-ab}$ de ne jamais atteindre la droite
$x=a+bt$. 
\end{enum}

\subsection*{Exercice~\ref{exo_mb4}}

\begin{enum}
\item 	Approche martingale. 
\begin{enum}
\item 	On a $\expec{\abs{\e^{\icx \lambda X_t}}} = \expec{\e^{-\lambda\im X_t}}
< \infty$ pour $\lambda\geqs0$ et $t<\infty$. De plus, si $t>s\geqs0$, alors 
\begin{align*}
\pecond{\e^{\icx \lambda X_t}}{\cF_s}
&= \e^{\icx \lambda X_s} \expec{\e^{\icx \lambda (X_t-X_s)}} \\
&= \e^{\icx \lambda X_s} \expec{\e^{\icx \lambda (B^{(1)}_t - B^{(1)}_s)}}
\expec{\e^{- \lambda (B^{(2)}_t - B^{(2)}_s)}} \\
&= \e^{\icx \lambda X_s} \e^{-(t-s)\lambda^2/2}\e^{(t-s)\lambda^2/2} \\
&= \e^{\icx \lambda X_s}\;.
\end{align*}
Par cons\'equent, $\e^{\icx \lambda X_t}$ est bien une martingale. 

\item 	Comme $\e^{\icx \lambda X_t}$, le th\'eor\`eme d'arr\^et montre que 
\[
\expec{\e^{\icx \lambda X_{t\wedge\tau}}} = \e^{\icx \lambda X_0} =
\e^{-\lambda}\;.
\]
Si $\lambda\geqs 0$, le membre de gauche est born\'e par $1$ puisque $\re (\icx
\lambda X_{t\wedge\tau})\leqs 0$. On peut donc prendre la limite $t\to\infty$. 
Si $\lambda<0$, on peut appliquer un raisonnement similaire \`a la
martingale $\e^{-\icx \lambda X_t}$. Par cons\'equent on obtient 
\[
\expec{\e^{\icx \lambda X_{t\wedge\tau}}} = \e^{-\abs{\lambda}}\;.
\]

\item 	La densit\'e de $X_\tau$ s'obtient par transform\'ee de Fourier
inverse, 
\[
f_{X_\tau}(x) = \frac{1}{2\pi} \int_{-\infty}^\infty 
\e^{-\abs{\lambda}} \e^{-\icx\lambda x}\6\lambda 
= \frac{1}{2\pi} \biggpar{\frac{1}{1+\icx x} + \frac{1}{1-\icx x}}
= \frac{1}{\pi(1+x^2)}\;.
\]
$X_\tau$ suit donc une loi de Cauchy (de param\`etre $1$). 

\end{enum}

\item	Approche principe de r\'eflexion.
\begin{enum}
\item 	$B^{(1)}_t$ suit une loi normale centr\'ee de variance $t$, donc sa
densit\'e est 
\[
\frac{1}{\sqrt{2\pi t}} \e^{-x^2/2t}\;.
\]

\item	Le principe de r\'eflexion implique 
\[
\bigprob{\tau < t} = 2 \bigprob{1+B^{(2)}_t < 0} 
= 2 \biggprob{B^{(2)}_1 < -\frac{1}{\sqrt{t}}} = 2
\int_{-\infty}^{-1/\sqrt{t}}
\frac{\e^{-y^2/2}}{\sqrt{2\pi}}\6y
\]
ce qui donne pour la densit\'e de $\tau$
\[
\dtot{}{t} \bigprob{\tau < t} 
= \frac{1}{\sqrt{2\pi t^3}}\e^{-1/2t}\;.
\]

\item 	La densit\'e de $X_\tau = B^{(1)}_\tau$ vaut donc 
\[
\int_0^\infty \frac{1}{\sqrt{2\pi t^3}}\e^{-1/2t} \frac{1}{\sqrt{2\pi t}}
\e^{-x^2/2t}\6t 
= \frac{1}{2\pi} \int_0^\infty \frac{\e^{-(1+x^2)/2t}}{t^2}\6t
= \frac{1}{\pi(1+x^2)}
\]
o\`u on a utilis\'e le changement de variable $u=1/t$. 

\end{enum}

\item 	Approche invariance conforme. 

\begin{enum}
\item	On v\'erifie que $f'(z)\neq 0$, que $f$ admet une r\'eciproque, et que
$\abs{f(x)}=1$ pour $x$ r\'eel. 

\item 	Le lieu de sortie du disque $\fD$ du mouvement Brownien issu de
$0$ a une distribution uniforme, par sym\'etrie. Par le th\'eor\`eme de
transfert, la loi de $X_\tau$ est alors donn\'ee par 
\[
\bigprob{X_\tau\in\6x} = \frac{1}{2\pi} \abs{f'(x)}\6x =
\frac{1}{\pi(1+x^2)}\6x\;.
\]

\end{enum}

\end{enum}


\section{Exercices du Chapitre~\ref{chap_is}}

\subsection*{Exercice~\ref{exo_ito1}}

\begin{enum}
\item	
$X_t$ \'etant l'int\'egrale d'un processus adapt\'e, on a $\expec{X_t}=0$.

Par cons\'equent, l'isom\'etrie d'It\^o donne 
$\Variance(X_t) = \expec{X_t^2} = \int_0^t \e^{2s}\6s =
\frac12\brak{\e^{2t}-1}$.

Enfin, par lin\'earit\'e $\expec{Y_t}=0$ et par bilin\'earit\'e
$\Variance(Y_t) = \e^{-2t}\Variance(X_t) = \frac12\brak{1-\e^{-2t}}$. 

\item	
Etant des int\'egrales stochastiques de fonctions d\'eterministes, $X_t$ et
$Y_t$ suivent des lois normales (centr\'ees, de variance calcul\'ee ci-dessus). 

\item	
La fonction caract\'eristique de $Y_t$ est $\expec{\e^{\icx u
Y_t}}=\e^{-u^2\Variance(Y_t)/2}$. Elle converge donc vers $\e^{-u^2/4}$ lorsque
$t\to\infty$. Par cons\'equent, $Y_t$ converge en loi vers une
variable $Y_\infty$, de loi normale centr\'ee de variance $1/2$. 

\item	
La formule d'It\^o avec $u(t,x) = \e^{-t}x$ donne 
\[
\6Y_t = -\e^{-t}X_t\6t + \e^{-t}\6X_t = -Y_t\6t + \6B_t\;. 
\]
$Y_t$ est appel\'e \defwd{processus d'Ornstein--Uhlenbeck}. 
\end{enum}

\subsection*{Exercice~\ref{exo_ito2}}

\begin{enum}
\item	
$X_t$ \'etant l'int\'egrale d'un processus adapt\'e, on a $\expec{X_t}=0$.

Par cons\'equent, l'isom\'etrie d'It\^o donne 
$\Variance(X_t) = \expec{X_t^2} = \int_0^t s^2\6s =
\frac13t^3$.

\item	
$X_t$ suit une loi normale centr\'ee de variance $\frac13t^3$.

\item	
La formule d'It\^o avec $u(t,x)=tx$ donne $\6\,(tB_t)=B_t\6t+t\6B_t$. 

\item	
Comme\/ $B_s\6s = \6\,(sB_s) - s\6B_s$, on a la formule d'int\'egration par
parties 
\[
Y_t = \int_0^t \6\,(sB_s) - \int_0^t s\6B_s
= tB_t - X_t\;.
\]
$Y_t$ suit donc une loi normale de moyenne nulle.

\item	
\begin{enum}
\item	
Comme $\expec{B_sB_u}=s\wedge u$, 
\begin{align*}
\expec{Y_t^2} &= \E \int_0^t\int_0^t B_s B_u \,\6s\6u 
= \int_0^t\int_0^t(s\wedge u)\,\6s\6u \\
&= \int_0^t \biggbrak{\int_0^u s\6s + \int_u^t u\6s}\6u
= \int_0^t \biggbrak{\frac12 u^2 + ut - u^2}\6u
= \frac13t^3\;.
\end{align*}

\item	
Pour calculer la covariance, on introduit une partition $\set{t_k}$ de $[0,t]$,
d'espacement $1/n$. Alors 
\begin{align*}
\cov(B_t,X_t) &= \expec{B_tX_t} \\
&= \E \int_0^t sB_t\6B_s \\
&= \lim_{n\to\infty} \sum_k t_{k-1} \expec{B_t (B_{t_k}-B_{t_{k-1}})} \\
&= \lim_{n\to\infty} \sum_k t_{k-1} (t_k-t_{k-1}) \\
&= \int_0^t s\6s = \frac12t^2\;.
\end{align*}
Il suit que 
\[
\Variance(Y_t) = \Variance(tB_t) + \Variance(X_t) - 2\cov(tB_t,X_t) 
= t^3 + \frac13t^3 - 2t\cov(B_t,X_t) = \frac13t^3\;. 
\]
\end{enum}

$Y_t=tB_t-X_t$ \'etant une combinaison lin\'eaire de variables normales
centr\'es, elle suit \'egalement une loi normale centr\'ee, en l'occurrence de
variance $t^3/3$. Remarquons que $Y_t$ repr\'esente l'aire (sign\'ee) entre la
trajectoire Brownienne et l'axe des abscisses.  
\end{enum}

\subsection*{Exercice~\ref{exo_ito3}}

\begin{enum}
\item	
En compl\'etant le carr\'e ($y-y^2/2\sigma^2 = \sigma^2/2 -
(y-\sigma^2)^2/2\sigma^2$), il vient  
\[
\expec{\e^Y}= \int_{-\infty}^\infty \e^y
\frac{\e^{-y^2/2\sigma^2}}{\sqrt{2\pi\sigma^2}} \6y 
= \e^{\sigma^2/2} \int_{-\infty}^\infty
\frac{\e^{-(y-\sigma^2)^2/2\sigma^2}}{\sqrt{2\pi\sigma^2}} \6y 
=\e^{\sigma^2/2}\;.
\]

\item	
$\varphi(s)$ \'etant adapt\'e, on a $\expec{X_t}=0$ pourvu que $\varphi$ soit
int\'egrable. L'isom\'etrie d'It\^o montre que 
\[
\Variance{X_t} = \int_0^t \varphi(s)^2 \6s =: \Phi(t)\;,
\]
pourvu que $\varphi$ soit de carr\'e int\'egrable. 

\item	
Soit $t>s\geqs0$. 
La diff\'erence $X_t-X_s=\int_s^t \varphi(u)\6B_u$ est ind\'ependante de
$\cF_s$, et suit une loi normale centr\'ee de variance $\Phi(t)-\Phi(s)$. Par
cons\'equent, 
\[
\econd{\e^{X_t}}{\cF_s} = 
\econd{\e^{X_s}\e^{X_t-X_s}}{\cF_s} = 
\e^{X_s} \expec{\e^{X_t-X_s}} = 
\e^{(\Phi(t)-\Phi(s))/2}
\]
en vertu de 1., ce qui \'equivaut \`a la propri\'et\'e de martingale pour
$M_t$. 

\item	
Soit $\gamma>0$. 
En rempla\c cant $\varphi$ par $\gamma\varphi$ dans la d\'efinition de $X_t$,
on voit que 
\[
M^{(\gamma)}_t = \exp\biggset{\gamma X_t - \frac{\gamma^2}2 \int_0^t
\varphi(s)^2 \6s}
\]
est \'egalement une martingale. Il suit que 
\begin{align*}
\biggprob{\sup_{0\leqs s\leqs t} X_s > \lambda}
&= \biggprob{\sup_{0\leqs s\leqs t} \e^{\gamma X_s} > \e^{\gamma\lambda}} \\
&= \biggprob{\sup_{0\leqs s\leqs t} M^{(\gamma)}_s \e^{\gamma^2\Phi(s)/2} >
\e^{\gamma\lambda}} \\
&\leqs \biggprob{\sup_{0\leqs s\leqs t} M^{(\gamma)}_s  >
\e^{\gamma\lambda - \gamma^2\Phi(t)/2}} \\
&\leqs \e^{\gamma^2\Phi(t)/2 - \gamma\lambda} \expec{M^{(\gamma)}_t}\;,
\end{align*}
la derni\`ere in\'egalit\'e d\'ecoulant de l'in\'egalit\'e de Doob. Comme
$\expec{M^{(\gamma)}_t}=1$ par la propri\'et\'e de martingale, le r\'esultat
suit en optimisant sur $\gamma$, c'est-\`a-dire en prenant
$\gamma=\lambda/\Phi(t)$. 
\end{enum}

\subsection*{Exercice~\ref{exo_ito4}}

\begin{enum}
\item 	
Soit $t_k=t_k(n) = k2^{-n}$ et $N=\intpart{2^nT}$. Alors 
\begin{align*}
\int_0^T B_t \circ \6B_t 
&= \lim_{n\to\infty}
\sum_{k=1}^N \frac{B_{t_k} + B_{t_{k-1}}}{2} \Delta B_k \\
&= \frac12\lim_{n\to\infty}
\sum_{k=1}^N \bigpar{B_{t_k}^2 - B_{t_{k-1}}^2}  \\
&= \frac12 B_T^2\;.
\end{align*}

\item
On choisit une partition $t_k(n)$ comme ci-dessus. Alors les processus 
\begin{align*}
X_t^{(n)} &= \sum_{k=1}^{N} \frac{g(X_{t_k})+g(X_{t_{k-1}})}{2} \Delta B_k \\
Y_t^{(n)} &= \sum_{k=1}^{N} g(X_{t_{k-1}}) \Delta B_k 
\end{align*}
avec $N=N(t)$ correspondant \`a l'intervalle de la partition contenant $t$,
convergent respectivement vers $X_t$ et $Y_t$ lorsque $n\to\infty$. 
Leur diff\'erence s'\'ecrit 
\[
X_t^{(n)} - Y_t^{(n)} 
= \frac12 \sum_{k=1}^{N} \bigbrak{g(X_{t_k})-g(X_{t_{k-1}})} \Delta B_k\;.
\]
La formule de Taylor implique 
\[
g(X_{t_k})-g(X_{t_{k-1}})
= g'(X_{t_{k-1}})(X_{t_k}-X_{t_{k-1}}) 
+ \order{X_{t_k}-X_{t_{k-1}}}\;.
\]
Or 
\begin{align*}
X_{t_k} - X_{t_{k-1}}
&= \int_{t_{k-1}}^{t_k} g(X_s) \circ \6B_s \\
&= g(X_{t_{k-1}})\Delta B_k 
+ \int_{t_{k-1}}^{t_k} \bigbrak{g(X_s)-g(X_{t_{k-1}})}\circ\6B_s \\
&= g(X_{t_{k-1}})\Delta B_k + \order{t_k-t_{k-1}}\;.
\end{align*}
On en conclut que 
\[
g(X_{t_k}) - g(X_{t_{k-1}})
= g'(X_{t_{k-1}}) g(X_{t_{k-1}}) \Delta B_k
+ \order{\Delta B_k} 
+ \order{t_k-t_{k-1}}\;.
\]
En substituant dans l'expression de $X_t^{(n)}-Y_t^{(n)}$, on obtient donc 
\[
X_t^{(n)}-Y_t^{(n)} 
= \frac12 \sum_{k=1}^N \bigbrak{g'(X_{t_{k-1}}) g(X_{t_{k-1}}) \Delta B_k^2 
 + \order{\Delta B_k^2} + 
\order{(t_k-t_{k-1})\Delta B_k}}\;.
\]
En proc\'edant comme dans la preuve de la formule d'It\^o, on peut remplacer
$\Delta B_k^2$ par $\Delta t_k$, et il suit 
\[
\lim_{n\to\infty} \bigbrak{X_t^{(n)}-Y_t^{(n)}} 
= \frac12 \int_0^t g'(X_s)g(X_s)\6s\;. 
\]
\end{enum}


\section{Exercices du Chapitre~\ref{chap_sde}}

\subsection*{Exercice~\ref{exo_sde1}}

\begin{enum}
\addtocounter{enumi}{1}
\item	
\begin{align*}
\6Y_t 
&= -a(t) \e^{-\alpha(t)} X_t \6t + \e^{-\alpha(t)}\6X_t \\ 
&= \e^{-\alpha(t)}b(t)\6t + \e^{-\alpha(t)}c(t)\6B_t\;.
\end{align*}

\item	
En int\'egrant, il vient 
\[
Y_t = Y_0 + \int_0^t \e^{-\alpha(s)}b(s)\6s + \int_0^t
\e^{-\alpha(s)}c(s)\6B_s\;,
\]
puis, comme $X_0=Y_0$, 
\[
X_t = X_0\e^{\alpha(t)} + \int_0^t \e^{\alpha(t)-\alpha(s)}b(s)\6s + \int_0^t
\e^{\alpha(t)-\alpha(s)}c(s)\6B_s\;.
\]

\item	
Dans ce cas, $\alpha(t)=-\log(1+t)$, donc $\e^{\alpha(t)}=(1+t)^{-1}$ et 
\[
X_t = \int_0^t \frac{1+s}{1+t} \frac{1}{1+s} \6B_s 
= \frac{B_t}{1+t}\;.
\]
\end{enum}

\subsection*{Exercice~\ref{exo_sde2}}

La formule d'It\^o donne 
\[
\6Y_t = \frac{\6X_t}{\sqrt{1-X_t^2}} 
+ \frac12 \frac{X_t}{(1-X_t^2)^{3/2}} \6X_t^2
= \6B_t\;.
\]
Par cons\'equent, $X_t=\sin(B_t)$ pour tous les
$t\leqs\inf\setsuch{s>0}{\abs{B_s}=\frac\pi2}$.

\subsection*{Exercice~\ref{exo_sde3}}

\begin{enum}
\item	
Avec $\alpha(t)=\log(1-t)$, il vient 
\[
X_t = a(1-t) + bt + (1-t)\int_0^t \frac{1}{1-s} \6B_s\;.
\]

\item	
Par l'isom\'etrie d'It\^o, 
\[
\Variance(X_t) = (1-t)^2 \int_0^t \frac{1}{(1-s)^2}\6s 
= (1-t)^2 \biggbrak{\frac1{1-t}-1} = t(1-t)\;.
\]
Par cons\'equent, $\Variance(X_t-b)\to0$ lorsque $t\to 1_-$, donc $X_t\to b$
dans $L^2$ lorsque $t\to 1_-$. 

\item 	
Comme $M_t$ est une martingale, $M_t^2$ est une sous-martingale, et
l'in\'egalit\'e de Doob nous permet d'\'ecrire 
\begin{align*}
\biggprob{\sup_{1-2^{-n}\leqs t\leqs 1-2^{-n-1}}(1-t)\abs{M_t}>\varepsilon}
&\leqs \biggprob{\sup_{1-2^{-n}\leqs t\leqs 1-2^{-n-1}} \abs{M_t} >
2^n\varepsilon} \\
&= \biggprob{\sup_{1-2^{-n}\leqs t\leqs 1-2^{-n-1}} M_t^2 >
2^{2n}\varepsilon^2} \\
&\leqs \frac{1}{\varepsilon^2 2^{2n}} \bigexpec{M_{1-2^{-n-1}}^2} \\
&\leqs \frac{2}{\varepsilon^2 2^n}\;.
\end{align*}
Soit alors l'\'ev\'enement 
\[
A_n = \biggsetsuch{\omega}{\sup_{1-2^{-n}\leqs t\leqs
1-2^{-n-1}}(1-t)\abs{M_t}>2^{-n/4}}\;.
\]
Nous avons $\pprob{A_n}\leqs 2\cdot2^{-n/2}$, qui est sommable. Le lemme de
Borel--Cantelli montre alors que 
\[
\biggprob{\sup_{1-2^{-n}\leqs t\leqs 1-2^{-n-1}}(1-t)\abs{M_t}\leqs 2^{-n/4}, 
n\to\infty} = 1\;,
\]
et donc que $(1-t)M_t\to0$ presque s\^urement lorsque $t\to1_-$. Par
cons\'equent, $X_t\to b$ presque s\^urement lorsque $t\to1_-$. 

\item	
On a $X_0=a$ et $X_1=b$ presque s\^urement. De plus,  
les incr\'ements de $X_t$ sont ind\'ependants et gaussiens. La seule
diff\'erence par rapport au mouvement Brownien est que la variance de $X_t-X_s$
est donn\'ee par $t(1-t)-s(1-s)$ au lieu de $t-s$. 
\end{enum}

\subsection*{Exercice~\ref{exo_sde4}}

En appliquant la formule d'It\^o \`a $F_t=u(t,B_t)$ avec 
$u(t,x)=\e^{-\alpha x + \alpha^2 t/2}$, on obtient 
\begin{align*}
\6F_t 
&= \frac12 \alpha^2 F_t\6t - \alpha F_t\6B_t + \frac12 \alpha^2 F_t\6B_t^2 \\
&= \alpha^2 F_t \6t - \alpha F_t \6B_t\;.
\end{align*}
Soit alors $X_t=F_tY_t=u(F_t,Y_t)$. On applique maintenant la formule d'It\^o
\`a plusieurs variables, avec la fonction $u(x_1,x_2)=x_1x_2$. Cela nous donne
\begin{align*}
\6X_t &= F_t\6Y_t + Y_t\6F_t + \6F_t\6Y_t \\
&= rF_t\6t + \alpha F_tY_t\6B_t - \alpha F_tY_t\6B_t + \alpha^2 F_tY_t\6t 
- \alpha^2 F_t Y_t \6t \\
&= rF_t\6t\;.
\end{align*}
Comme $X_0=F_0Y_0=1$, on a donc 
\[
X_t = 1 + r\int_0^t F_s\6s \;,
\]
et finalement 
\[
Y_t = F_t^{-1} X_t 
= \e^{\alpha B_t - \frac12 \alpha^2 t}
+ r \int_0^t \e^{\alpha (B_t-B_s) - \frac12 \alpha^2 (t-s)} \6s\;.
\]


\section{Exercices du Chapitre~\ref{chap_diff}}

\subsection*{Exercice~\ref{exo_diff1}}

\begin{enum}
\item	
\[
L = -x \dpar{}{x} + \frac12 \dpar{^2}{x^2}\;, \qquad
L^*\rho = \dpar{}{x} \bigpar{x \rho} + \frac12 \dpar{^2\rho}{x^2}\;.
\]
\item	
On trouve $L^*\rho=0$. Par cons\'equent, $\rho(x)$ est une solution
stationnaire de l'\'equation de Kolmogorov progressive (ou de Fokker--Planck)
$\sdpar{u}{t}=L^*u$, ce qui signifie que c'est une mesure invariante du
syst\`eme~: Si $X_0$ suit la loi $\rho$, alors $X_t$ suit la m\^eme loi pour
tout $t>0$. 

Remarquons que $\rho$ est la densit\'e d'une variable al\'eatoire normale,
centr\'ee, de variance $1/2$. Nous avons d\'ej\`a obtenu dans
l'exercice~\ref{exo_ito1},
que $\rho$ est la loi asymptotique de la solution de la m\^eme EDS
avec $X_0=0$. En fait on peut montrer que pour toute distribution initiale, la
loi de $X_t$ tend  vers la distribution stationnaire $\rho$. 
\end{enum}

\subsection*{Exercice~\ref{exo_diff2}}

\begin{enum}
\item	
\[
L = \frac12 x^2 \dpar{^2}{x^2}\;.
\]

\item	
On a $Lu=0$ si $u''(x)=0$, dont la solution g\'en\'erale est $u(x)=c_1
x+c_2$. 

\item	
On sait que $u(x)=\probin{x}{\tau_a<\tau_b}$ est solution du probl\`eme 
\[
\begin{cases}
Lu(x) = 0 & \text{pour $x\in[a,b]$\;,} \\
u(a) = 1\;, & \\
u(b) = 0\;. &
\end{cases}
\]
En substituant la solution g\'en\'erale dans les conditions aux bords, on peut
d\'eterminer les constantes d'int\'egration $c_1$ et $c_2$, d'o\`u la
solution 
\[
\probin{x}{\tau_a<\tau_b} = \frac{b-x}{b-a}\;.
\]
\end{enum}

\subsection*{Exercice~\ref{exo_diff3}}

\begin{enum}
\item	
\[
L = r x \dpar{}{x} + \frac12 x^2 \dpar{^2}{x^2}\;.
\]

\item	
En substituant, on obtient $Lu(x)=c_1\gamma(r+\frac12(\gamma-1))x^\gamma$, donc
$Lu=0$ \`a condition de prendre $\gamma=1-2r$.

{\it Remarque~:} La solution g\'en\'erale s'obtient en observant que
$v(x)=u'(x)$ satisfait l'\'equation
\[
\frac{v'(x)}{v(x)} = -\frac{2r}{x}\;,
\]
que l'on peut int\'egrer. 

\item	
Dans ce cas, on a $\gamma>0$. En proc\'edant comme \`a l'exercice
pr\'ec\'edent, on obtient
\[
 \probin{x}{\tau_b<\tau_a} = \frac{x^\gamma - a^\gamma}{b^\gamma - a^\gamma}\;.
\]
Comme $a^\gamma\to0$ lorsque $a\to0$, il suit que 
\[
 \probin{x}{\tau_b<\tau_0} = \Bigpar{\frac{x}{b}}^\gamma\;.
\]
La probabilit\'e que $X_t$ n'atteigne jamais $b$ est donc $1-(X_0/b)^\gamma$.

\item	
\begin{enum}
\item	
Dans ce cas, on a $\gamma<0$. En proc\'edant comme \`a l'exercice
pr\'ec\'edent, on obtient
\[
 \probin{x}{\tau_a<\tau_b} = \frac{x^\gamma - b^\gamma}{a^\gamma - b^\gamma}\;.
\]
Comme $a^\gamma\to+\infty$ lorsque $a\to0$, toutes les autres grandeurs \'etant
constantes, on obtient en faisant tendre $a$ vers $0$ 
\[
 \probin{x}{\tau_0<\tau_b} = 0 
\qquad \forall x\in]a,b[\;. 
\]

\item	
L'\'equation $Lu=-1$ donne $\alpha=-1/(r-1/2)$ et la condition au bord donne
$\beta=\log b/(r-1/2)$. 

\item	
C'est pr\'ecis\'ement la solution du probl\`eme ci-dessus, donc 
\[
 \expecin{\!x}{\tau_b} = \frac{1}{r-1/2} \log\biggpar{\frac bx}\;.
\]
Cela montre que les solutions tendent \`a cro\^\i tre exponentiellement. En
effet, on a $\expecin{\!x}{\tau_b}=T$ pour 
\[
 b = x \e^{(r-1/2)T}\;. 
\]
\end{enum}
\end{enum}

\subsection*{Exercice~\ref{exo_diff6}}

\begin{enum}
\item	Par variation de la constante, 
\[
X_t = x \e^{-t} + \sigma \int_0^t \e^{-(t-s)} \6B_s\;.
\]

\item	Le g\'en\'erateur infinit\'esimal $L$ de $X_t$ est  
\[
L = - x\dtot{}{x} + \frac{\sigma^2}{2} \dtot{^2}{x^2}\;.
\]

\item	En admettant que $\tau$ est fini presque s\^urement, 
la formule de Dynkin implique que $h(x) =
\expecin{x}{\indicator{a}(X_\tau)}$ satisfait le syst\`eme 
\begin{align*}
-xh'(x) + \frac{\sigma^2}{2} h''(x) &= 0
&& \text{pour $a<x<b$\;,} \\
h(a) &= 1\;, \\
h(b) &= 0\;.
\end{align*}
Soit $g(x) = h'(x)$. On a 
\[
g'(x) = \frac{2}{\sigma^2} xg(x)\;,
\]
dont la solution g\'en\'erale est, par s\'eparation des variables, 
\[
g(x) = c\e^{x^2/\sigma^2}\;.
\]
Il suit que 
\[
h(x) = 1 + c\int_a^x \e^{y^2/\sigma^2} \6y\;,
\]
et la condition $h(b)=0$ implique 
\[
c = -\frac{1}{\displaystyle\int_a^b \e^{y^2/\sigma^2} \6y}\;.
\]
On peut \'ecrire la solution sous la forme 
\[
h(x) = \frac{\displaystyle\int_x^b \e^{y^2/\sigma^2} \6y}
{\displaystyle\int_a^b \e^{y^2/\sigma^2} \6y}\;.
\]

\item	Lorsque $\sigma\to0$, chaque int\'egrale est domin\'ee par la valeur
maximale de $y$ dans le domaine d'int\'egration. Il suit que pour $a<x<b$, 
\[
h(x) \simeq \frac{\e^{x^2/\sigma^2} + \e^{b^2/\sigma^2}}
{\e^{a^2/\sigma^2} + \e^{b^2/\sigma^2}}\;.
\]
Par cons\'equent, 
\begin{itemiz}
\item 	si $\abs{a} > b$, $h(x)\to0$;
\item 	si $a=-b$, $h(x)\to 1/2$;
\item 	si $\abs{a} < b$, $h(x)\to1$. 
\end{itemiz}
\end{enum}

\subsection*{Exercice~\ref{exo_diff4}}

\begin{enum}
\item	
\begin{enum}
\item	
Soit $\varphi(x)=x$. Alors
$(L\varphi)(x)=f(x)\varphi'(x)+\frac12 g(x)^2\varphi''(x) = f(x)$, et par la
d\'efinition du g\'en\'erateur, 
\[
\lim_{h\to0_+} \frac{\bigexpecin{x}{X_h} - x}h 
=(L\varphi)(x)=f(x)\;.
\]
Ainsi le coefficient de d\'erive $f(x)$ s'interpr\`ete comme la d\'eriv\'ee de
la position moyenne. 

\item	
On a 
\begin{align*}
 \dtot{}{t} \biggbrak{
\e^{-\gamma \expecin{x}{X_t}}
\Bigexpecin{x}{ \e^{\gamma X_t}}}
\biggr|_{t=0_+}
&= 
 \dtot{}{t} \biggbrak{
\e^{-\gamma \expecin{x}{X_t}}}
\biggr|_{t=0_+} \e^{\gamma x}
+  \e^{-\gamma x}\dtot{}{t} \biggbrak{
\Bigexpecin{x}{ \e^{\gamma X_t}}}
\biggr|_{t=0_+} \\
&= 
 -\gamma \dtot{}{t} \biggbrak{
\expecin{x}{X_t}}
\biggr|_{t=0_+} \e^{-\gamma x}\e^{\gamma x}
+  \e^{-\gamma x}
\bigpar{L\e^{\gamma x}}(x) \\
&= -\gamma f(x) + \e^{-\gamma x}
\biggbrak{f(x)\gamma\e^{\gamma x} + \frac{1}{2} g(x)^2\gamma^2\e^{\gamma x}} \\
&= \frac{1}{2}\gamma^2 g(x)^2\;.
\end{align*}
\item	
Comme 
\[
\dtot{}{t} 
\Bigexpecin{x}{\e^{\gamma \brak{X_t - \expecin{x}{X_t}}}} \Bigr|_{t=0_+} 
= \sum_{k\geqs 0} \frac{\gamma^k}{k!}
\dtot{}{t} 
\Bigexpecin{x}{\bigbrak{X_t - \expecin{x}{X_t}}^k} \Bigr|_{t=0_+}
\]
on obtient, par identification terme \`a terme des s\'eries,
\[
\dtot{}{t} 
\Bigexpecin{x}{\bigbrak{X_t - \expecin{x}{X_t}}^k} \Bigr|_{t=0_+}
= 
\begin{cases}
0 & \text{si $k=1$\;,} \\
g(x)^2 &\text{si $k=2$\;,} \\
0 &\text{si $k\geqs3$\;.}
\end{cases}
\]
En particulier, $g(x)^2$ s'interpr\`ete comme la vitesse de croissance de la
variance du processus. 
\end{enum}

\item	
\begin{enum}
\item	
Le processus $X^{(N)}_t$ \'etant lin\'eaire sur l'intervalle $[0,1/N]$, on a 
\begin{align*}
\lim_{h\to0_+} \frac{\bigexpecin{x}{ X^{(N)}_h-x}}h
&=  \frac{\bigexpecin{x}{ X^{(N)}_{1/N}-x}}{1/N} \\
&= \frac{\bigexpecin{y}{N^{-\alpha} Y^{(N)}_{1}-N^{-\alpha}y}}{1/N}
\biggr|_{y=N^\alpha x}  \\
&= N^{-\alpha+1} v^{(N)}(N^\alpha x)\;.
\end{align*}
De m\^eme, on obtient 
\[
\lim_{h\to0_+} \frac{\bigexpecin{x}{ \brak{X^{(N)}_h - 
\expecin{x}{X^{(N)}_h}}^k}}h
= N^{-k\alpha+1} m^{(N)}_k(N^\alpha x)\;.
\]
\item	
En vertu du point 1., il faut que 
\begin{align*}
\lim_{N\to\infty} N^{-\alpha+1} v^{(N)}(N^\alpha x) &= f(x)\;, \\
\lim_{N\to\infty} N^{-2\alpha+1} m^{(N)}_2(N^\alpha x) &= g(x)^2\;, \\
\lim_{N\to\infty} N^{-k\alpha+1} m^{(N)}_k(N^\alpha x) &= 0\;,
& k\geqs 3\;. 
\end{align*}
\end{enum}
\item	
Dans ce cas on a $v^{(N)}(y)=0$ et $m^{(N)}_k(y)=1$ pour $k$ pair et $0$ pour
$k$ impair. En prenant $\alpha=1/2$, les conditions sont donc satisfaites avec
$f(x)=0$ et $g(x)=1$. 

\item	
On a 
\[
v^{(N)}(y) 
= (y-1) \frac{y}{N} + (y+1) \Bigpar{1-\frac{y}{N}} - y
= 1 - \frac{2y}{N}
\]
et
\begin{align*}
m^{(N)}_k(y)
&= \Bigpar{y-1-y-1+\frac{2y}{N}}^k \frac{y}{N}
+ \Bigpar{y+1-y-1+\frac{2y}{N}}^k\Bigpar{1-\frac{y}{N}}\\
&= 2^k \frac{y}{N}\Bigpar{1-\frac{y}{N}}
\biggbrak{\Bigpar{\frac{y}{N}}^{k-1} + (-1)^k \Bigpar{1-\frac{y}{N}}^{k-1}}
\;.
\end{align*}
Par cons\'equent, 
\begin{align*}
\lim_{N\to\infty} N^{1/2}  v^{(N)}\Bigpar{N^{1/2}x + \frac N2} &= -2x\;,
\\
\lim_{N\to\infty} m^{(N)}_2\Bigpar{N^{1/2}x + \frac N2}& =
1\;,
\\
\lim_{N\to\infty} N^{1-k/2}  m^{(N)}_k\Bigpar{N^{1/2}x + \frac N2}& =
0\;, & k\geqs3\;.
\end{align*}
La diffusion limite est donc le processus d'Ornstein--Uhlenbeck 
\[
\6X_t = -2X_t\6t + \6B_t\;.
\]
\end{enum}

\subsection*{Exercice~\ref{exo_diff5}}

\begin{enum}
\item	
$X_t$ est la proportion de temps avant $t$ pendant laquelle le mouvement
Brownien est positif. Le r\'esultat montre qu'elle est ind\'ependante de $t$,
ce qui est une cons\'equence de l'invariance d'\'echelle du mouvement Brownien.

\item	
La propri\'et\'e diff\'erentielle du mouvement Brownien montre que $B_{tu}$ est
\'egal \`a $t^{-1/2}B_u$ en loi, donc que $\indexfct{B_{tu}>0}$ est
\'egal \`a $\indexfct{B_{u}>0}$ en loi. Le changement de variable $s=tu$ montre
que 
\[
X_t = \frac1t\int_0^t \indexfct{B_{tu}>0} t\6u 
= \int_0^1 \indexfct{B_{tu}>0} \6u
\]
qui est \'egal \`a $X_1$ en loi. 

\item	
On a $v(t,0)=\expec{\e^{-\lambda tX_t}}=\expec{\e^{-\lambda tX_1}}$. 
Par cons\'equent 
\[
g_\rho(0) = \E \int_0^\infty \e^{-(\rho+\lambda X_1)t}\6t 
= \biggexpec{\frac{1}{\rho+\lambda X_1}}\;.
\]

\item	
\[
 \dpar vt(t,x) = \frac12 \dpar{^2v}{x^2}(t,x) - \lambda \indexfct{x>0} v(t,x)\;.
\]

\item	
\begin{align*}
g_\rho''(x) &= \int_0^\infty \dpar{^2v}{x^2}(t,x) \e^{-\rho t}\6t \\
&= 2\int_0^\infty \dpar vt(t,x)\e^{-\rho t}\6t 
+ 2\lambda \indexfct{x>0}\int_0^\infty v(t,x)\e^{-\rho t}\6t \\
&= 2v(t,x) \e^{-\rho t}\Bigr|_0^\infty + 2\rho \int_0^\infty
v(t,x)\e^{-\rho t}\6t + 2\lambda \indexfct{x>0} g_\rho(x) \\
&= -2 + 2\rho g_\rho(x)+ 2\lambda \indexfct{x>0} g_\rho(x)\;.
\end{align*}
On a donc l'\'equation diff\'erentielle ordinaire 
\[
g_\rho''(x) = 
\begin{cases}
2(\rho+\lambda)g(x) - 2 & \text{si $x>0$\;,} \\
2\rho g(x) - 2 & \text{si $x<0$\;.} 
\end{cases}
\]
Consid\'erons s\'epar\'ement les \'equations sur $\R_+$ et $\R_-$. Les
\'equations homog\`enes admettent les solutions lin\'eairement ind\'ependantes
$\e^{\gamma_\pm x}$ et $\e^{-\gamma_\pm x}$, o\`u
$\gamma_+=\sqrt{2(\rho+\gamma)}$ et $\gamma_-=\sqrt{2\rho}$. De plus chaque
\'equation admet une solution particuli\`ere constante, \'egale \`a
$A_+=1/(\rho+\lambda)$, respectivement $A_-=1/\rho$.  

\item	
Pour que $g_\rho$ soit born\'ee, il faut que $B_+=C_-=0$. La continuit\'e de
$g_\rho$ et $g'_\rho$ permet de d\'eterminer $B_-$ et $C_+$. En particulier 
\[
C_+ =
\frac{\lambda}{\sqrt{\rho}\,(\sqrt{\rho+\lambda}+\sqrt{\rho}\,)(\rho+\lambda)}
= \frac{\sqrt{\rho+\lambda}-\sqrt{\rho}}{\sqrt{\rho}\,(\rho+\lambda)}\;.
\]
Il suit que 
\[
g_\rho(0) = \frac{1}{\rho+\lambda} + C_+ 
=  \frac{1}{\sqrt{\rho(\lambda+\rho)}}\;.
\]

\item	
Nous avons 
\[
\frac{1}{\sqrt{1+\lambda}} = g_1(0) = 
\biggexpec{\frac{1}{1+\lambda X_1}} = 
\sum_{n=0}^\infty (-\lambda)^n \expec{X_1^n}\;.
\]
L'identit\'e montre que 
\[
\expec{X_1^n} = \frac{1}{\pi} \int_0^1 \frac{x^n}{\sqrt{x(1-x)}}\6x
\]
pour tout $n\geqs0$, donc que $X_1$ admet la densit\'e $1/\sqrt{x(1-x)}$. Le
r\'esultat suit en int\'egrant cette densit\'e et en utilisant l'\'egalit\'e en
loi de $X_t$ et $X_1$.
\end{enum}



\newpage
%
%

\nocite{Durrett}
\nocite{Bolthausen_skript}
\nocite{Friedman}
\nocite{Gihman-Skorohod}
\nocite{Resnick}

\bibliography{eds}
\bibliographystyle{amsalpha}               

\end{document}